\DeclareMathAlphabet\gothic{U}{euf}{m}{n}
\def\eqnarray{\stepcounter{equation}\let\@currentlabel=\theequation
\global\@eqnswtrue
\tabskip\@centering\let\\=\@eqncr
$$\halign to \displaywidth\bgroup\hfil\global\@eqcnt\z@
  $\displaystyle\tabskip\z@{##}$&\global\@eqcnt\@ne
  \hfil$\displaystyle{{}##{}}$\hfil
  &\global\@eqcnt\tw@ $\displaystyle{##}$\hfil
  \tabskip\@centering&\llap{##}\tabskip\z@\cr}
\def\endeqnarray{\@@eqncr\egroup
      \global\advance\c@equation\m@ne$$\global\@ignoretrue}
\def\@yeqncr{\@ifnextchar [{\@xeqncr}{\@xeqncr[5pt]}}
\begin{document}
\bibliographystyle{tom}

\newtheorem{lemma}{Lemma}[section]
\newtheorem{thm}[lemma]{Theorem}
\newtheorem{cor}[lemma]{Corollary}
\newtheorem{prop}[lemma]{Proposition}
\newtheorem{ddefinition}[lemma]{Definition}

\theoremstyle{definition}

\newtheorem{remark}[lemma]{Remark}
\newtheorem{exam}[lemma]{Example}

\newcommand{\gota}{\gothic{a}}
\newcommand{\gotb}{\gothic{b}}
\newcommand{\gotc}{\gothic{c}}
\newcommand{\gote}{\gothic{e}}
\newcommand{\gotf}{\gothic{f}}
\newcommand{\gotg}{\gothic{g}}
\newcommand{\gothh}{\gothic{h}}
\newcommand{\gotk}{\gothic{k}}
\newcommand{\gotl}{\gothic{l}}
\newcommand{\gotm}{\gothic{m}}
\newcommand{\gotn}{\gothic{n}}
\newcommand{\gotp}{\gothic{p}}
\newcommand{\gotq}{\gothic{q}}
\newcommand{\gotr}{\gothic{r}}
\newcommand{\gots}{\gothic{s}}
\newcommand{\gott}{\gothic{t}}
\newcommand{\gotu}{\gothic{u}}
\newcommand{\gotv}{\gothic{v}}
\newcommand{\gotw}{\gothic{w}}
\newcommand{\gotz}{\gothic{z}}
\newcommand{\gotA}{\gothic{A}}
\newcommand{\gotB}{\gothic{B}}
\newcommand{\gotG}{\gothic{G}}
\newcommand{\gotL}{\gothic{L}}
\newcommand{\gotS}{\gothic{S}}
\newcommand{\gotT}{\gothic{T}}

\newcounter{teller}
\renewcommand{\theteller}{(\alph{teller})}
\newenvironment{tabel}{\begin{list}%
{\rm  (\alph{teller})\hfill}{\usecounter{teller} \leftmargin=1.1cm
\labelwidth=1.1cm \labelsep=0cm \parsep=0cm}
                      }{\end{list}}

\newcounter{tellerr}
\renewcommand{\thetellerr}{(\roman{tellerr})}
\newenvironment{tabeleq}{\begin{list}%
{\rm  (\roman{tellerr})\hfill}{\usecounter{tellerr} \leftmargin=1.1cm
\labelwidth=1.1cm \labelsep=0cm \parsep=0cm}
                         }{\end{list}}

\newcounter{tellerrr}
\renewcommand{\thetellerrr}{(\Roman{tellerrr})}
\newenvironment{tabelR}{\begin{list}%
{\rm  (\Roman{tellerrr})\hfill}{\usecounter{tellerrr} \leftmargin=1.1cm
\labelwidth=1.1cm \labelsep=0cm \parsep=0cm}
                         }{\end{list}}

\newcounter{proofstep}
\newcommand{\nextstep}{\refstepcounter{proofstep}\vertspace \par 
          \noindent{\bf Step \theproofstep.} \hspace{5pt}}
\newcommand{\firststep}{\setcounter{proofstep}{0}\nextstep}

\newcommand{\Ni}{\mathds{N}}
\newcommand{\Qi}{\mathds{Q}}
\newcommand{\Ri}{\mathds{R}}
\newcommand{\Ci}{\mathds{C}}
\newcommand{\Ti}{\mathds{T}}
\newcommand{\Zi}{\mathds{Z}}
\newcommand{\Fi}{\mathds{F}}

\renewcommand{\proofname}{{\bf Proof}}

\newcommand{\vertspace}{\vskip10.0pt plus 4.0pt minus 6.0pt}

\newcommand{\simh}{{\stackrel{{\rm cap}}{\sim}}}
\newcommand{\ad}{{\mathop{\rm ad}}}
\newcommand{\Ad}{{\mathop{\rm Ad}}}
\newcommand{\alg}{{\mathop{\rm alg}}}
\newcommand{\clalg}{{\mathop{\overline{\rm alg}}}}
\newcommand{\Aut}{\mathop{\rm Aut}}
\newcommand{\arccot}{\mathop{\rm arccot}}
\newcommand{\capp}{{\mathop{\rm cap}}}
\newcommand{\rcapp}{{\mathop{\rm rcap}}}
\newcommand{\diam}{\mathop{\rm diam}}
\newcommand{\divv}{\mathop{\rm div}}
\newcommand{\dom}{\mathop{\rm dom}}
\newcommand{\codim}{\mathop{\rm codim}}
\newcommand{\RRe}{\mathop{\rm Re}}
\newcommand{\IIm}{\mathop{\rm Im}}
\newcommand{\tr}{{\mathop{\rm tr \,}}}
\newcommand{\Tr}{{\mathop{\rm Tr \,}}}
\newcommand{\Vol}{{\mathop{\rm Vol}}}
\newcommand{\card}{{\mathop{\rm card}}}
\newcommand{\rank}{\mathop{\rm rank}}
\newcommand{\supp}{\mathop{\rm supp}}
\newcommand{\sgn}{\mathop{\rm sgn}}
\newcommand{\essinf}{\mathop{\rm ess\,inf}}
\newcommand{\esssup}{\mathop{\rm ess\,sup}}
\newcommand{\Int}{\mathop{\rm Int}}
\newcommand{\lcm}{\mathop{\rm lcm}}
\newcommand{\loc}{{\rm loc}}
\newcommand{\HS}{{\rm HS}}
\newcommand{\Trn}{{\rm Tr}}
\newcommand{\n}{{\rm N}}
\newcommand{\WOT}{{\rm WOT}}

\newcommand{\at}{@}

\newcommand{\mod}{\mathop{\rm mod}}
\newcommand{\spann}{\mathop{\rm span}}
\newcommand{\one}{\mathds{1}}

\hyphenation{groups}
\hyphenation{unitary}

\newcommand{\tfrac}[2]{{\textstyle \frac{#1}{#2}}}

\newcommand{\ca}{{\cal A}}
\newcommand{\cb}{{\cal B}}
\newcommand{\cc}{{\cal C}}
\newcommand{\cd}{{\cal D}}
\newcommand{\ce}{{\cal E}}
\newcommand{\cf}{{\cal F}}
\newcommand{\ch}{{\cal H}}
\newcommand{\chs}{{\cal HS}}
\newcommand{\ci}{{\cal I}}
\newcommand{\ck}{{\cal K}}
\newcommand{\cl}{{\cal L}}
\newcommand{\cm}{{\cal M}}
\newcommand{\cn}{{\cal N}}
\newcommand{\co}{{\cal O}}
\newcommand{\cp}{{\cal P}}
\newcommand{\cs}{{\cal S}}
\newcommand{\ct}{{\cal T}}
\newcommand{\cx}{{\cal X}}
\newcommand{\cy}{{\cal Y}}
\newcommand{\cz}{{\cal Z}}

\newlength{\hightcharacter}
\newlength{\widthcharacter}
\newcommand{\covsup}[1]{\settowidth{\widthcharacter}{$#1$}\addtolength{\widthcharacter}{-0.15em}\settoheight{\hightcharacter}{$#1$}\addtolength{\hightcharacter}{0.1ex}#1\raisebox{\hightcharacter}[0pt][0pt]{\makebox[0pt]{\hspace{-\widthcharacter}$\scriptstyle\circ$}}}
\newcommand{\cov}[1]{\settowidth{\widthcharacter}{$#1$}\addtolength{\widthcharacter}{-0.15em}\settoheight{\hightcharacter}{$#1$}\addtolength{\hightcharacter}{0.1ex}#1\raisebox{\hightcharacter}{\makebox[0pt]{\hspace{-\widthcharacter}$\scriptstyle\circ$}}}
\newcommand{\scov}[1]{\settowidth{\widthcharacter}{$#1$}\addtolength{\widthcharacter}{-0.15em}\settoheight{\hightcharacter}{$#1$}\addtolength{\hightcharacter}{0.1ex}#1\raisebox{0.7\hightcharacter}{\makebox[0pt]{\hspace{-\widthcharacter}$\scriptstyle\circ$}}}

\thispagestyle{empty}

\vspace*{1cm}
\begin{center}
{\Large\bf Dirichlet-to-Neumann and elliptic operators \\[2mm]
on $C^{1+\kappa}$-domains: Poisson and Gaussian bounds\\[10mm]
\large A.F.M. ter Elst$^1$ and E.M.  Ouhabaz$^2$}

\end{center}

\vspace{5mm}

\begin{center}
{\bf Abstract}
\end{center}

\begin{list}{}{\leftmargin=1.8cm \rightmargin=1.8cm \listparindent=10mm 
   \parsep=0pt}
\item
We prove Poisson upper bounds for the heat kernel  of the Dirichlet-to-Neumann operator  
with variable H\"older coefficients 
when  the underlying domain
is bounded and has a $C^{1+\kappa}$-boundary for some $\kappa > 0$.  
We also prove a number of other results such as gradient estimates for  heat kernels and 
Green functions $G$ of elliptic operators with possibly complex-valued coefficients. 
We establish H\"older continuity 
of $\nabla_x \nabla_y G$ up to the boundary. 
These results are used to prove $L_p$-estimates for commutators of 
Dirichlet-to-Neumann operators 
on the boundary of $C^{1+ \kappa}$-domains.  
Such estimates are the keystone in our approach for the  Poisson bounds. 
\end{list}

\vspace{5mm}
\noindent
May 2017

\vspace{5mm}
\noindent
AMS Subject Classification: 35K08, 58G11, 47B47.

\vspace{5mm}
\noindent
Keywords: Dirichlet-to-Neumann operator, Poisson bounds, 
elliptic operators with complex coefficients, heat kernel bounds, 
gradient estimates for Green functions, commutator estimates.

\vspace{15mm}

\noindent
{\bf Home institutions:}    \\[3mm]
\begin{tabular}{@{}cl@{\hspace{10mm}}cl}
1. & Department of Mathematics  & 
  2. & Institut de Math\'ematiques de Bordeaux \\
& University of Auckland   & 
  & Universit\'e de Bordeaux, UMR 5251,  \\
& Private bag 92019 & 
  &351, Cours de la Lib\'eration  \\
& Auckland 1142 & 
  &  33405 Talence \\
& New Zealand  & 
  & France\\
  & terelst@math.auckland.ac.nz&
 & Elmaati.Ouhabaz@math.u-bordeaux.fr \\[8mm]
\end{tabular}

\newpage

\tableofcontents

\section{Introduction} \label{S1}

Let $\Omega \subset \Ri^d$ be a bounded connected open set with 
Lipschitz boundary and $d \geq 2$.
 Denote by  $\Gamma = \partial \Omega$  the boundary of $\Omega$, 
endowed  with the 
$(d-1)$-dimensional Hausdorff measure.
Note that $\Gamma$ is not connected in general.
Let $C := (c_{kl})_{1\le k, l \le d}$ be real-valued matrix satisfying the usual 
ellipticity condition and $c_{kl} = c_{lk}  \in L_\infty(\Omega)$ for all 
$k, l \in \{1, \ldots, d\}$.
 Let $V \in L_\infty(\Omega,\Ri)$.  
The Dirichlet-to-Neumann operator $\cn_V$ is an unbounded operator on 
$L_2(\Gamma)$ defined as follows. 
Given  $\varphi \in L_2(\Gamma)$,
we solve (if possible) the Dirichlet problem
\begin{eqnarray*}
-\sum_{k,l= 1}^d \partial_l\left(c_{kl} \partial_k\,  u \right) + V u & = & 0 
      \quad \mbox{weakly on } \Omega,   \\[0pt]
u|_\Gamma & = & \varphi \nonumber
\end{eqnarray*}
with $u \in W^{1,2}(\Omega)$.
We define  the weak conormal derivative $\partial_\nu^C u$, which is 
formally equal to
\[
\sum_{k,l=1}^d n_l \, c_{kl} \, \partial_l u, 
\]
where $(n_1, \ldots, n_d)$ is the outer normal vector to $\Omega$. 
If $u$ has a weak conormal derivative in 
$L_2(\Gamma)$, then we say that $\varphi \in D(\cn_V)$ and 
$\cn_V \varphi = \partial_\nu^C u$.
If $V = 0$ we write $\cn$ instead of $\cn_0$. 
See the beginning of Section~\ref{S2} for more details on this  definition.
In particular, we shall always assume that
$0 \notin \sigma(A_D+V)$, where 
$A_D := -\sum_{k,l= 1}^d \partial_l\left(c_{kl} \partial_k \right)$ and subject 
to the Dirichlet boundary condition. 

The Dirichlet-to-Neumann operator, also known as voltage-to-current map,  
arises in the problem  of electrical impedance tomography and in various 
inverse problems (e.g., Calder\'on's problem).  
It is also used in the theory of homogenization and  analysis of elliptic systems with rapidly 
oscillating coefficients (see Kenig, Lin and Shen \cite{KLS} and the references there). 
Our aim in the present  paper is to address  another problem, 
namely upper bounds for the heat kernel associated with the Dirichlet-to-Neumann operator. 
Heat kernel bounds (mainly Gaussian bounds) for various differential operators on 
domains of $\Ri^d$ as well as on Riemannian manifolds have attracted 
a lot of attention in recent years. 
It turns out that they are a powerful tool to 
attack problems in harmonic analysis, such as Calder\'on-Zygmund operators, 
Riesz transforms, spectral multipliers as well as other problems in spectral theory 
and evolution equations. 
See for example the monograph \cite{Ouh5} and the references therein. 

It is well known that $\cn_V$ is a lower-bounded and self-adjoint operator on $L_2(\Gamma)$
with compact resolvent.
Therefore, $- \cn_V$ generates a $C_0$-semigroup $S^V$ on $L_2(\Gamma)$.
If $\Omega$ has  $C^\infty$-boundary, $c_{kl} = \delta_{kl}$  and $V \ge 0$, 
then it was shown by ter Elst and Ouhabaz \cite{EO4} that 
$S^V$ is given by a kernel which satisfies  Poisson upper bounds. 
In the present paper we extend considerably this result to deal with variable 
H\"older-continuous coefficients $c_{kl}$ and less regular domains.
Our main result in this direction reads as follows. 

\begin{thm} \label{thm1.1}
Suppose $\Omega \subset \Ri^d$ is bounded connected with a $C^{1+ \kappa}$-boundary 
$\Gamma$ for some $\kappa \in (0,1)$.
Suppose also each $c_{kl} = c_{lk}$ is real valued and 
H\"older continuous on~$\Omega$.
Let $V \in L_\infty(\Omega,\Ri)$ and suppose that 
$0 \notin\sigma(A_D +V)$.
Denote by $\cn_V$ the corresponding Dirichlet-to-Neumann operator. 
Then the semigroup generated by $-\cn_V$ has a kernel $K^V$ and  there exists a $c > 0$ such that 
\[
| K^V_t(z,w) | 
\leq \frac{c \, (t \wedge 1)^{-(d-1)} \, e^{-\lambda_1 t}}
         {\displaystyle \Big( 1 + \frac{|z-w|}{t} \Big)^d }
\]
for all $z, w \in \Gamma$ and $t > 0$, where $\lambda_1 $ is the 
first eigenvalue of the operator $\cn_V$.
\end{thm}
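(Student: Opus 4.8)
\emph{Proof proposal.} The plan is to derive the Poisson bound from two properties of the self-adjoint semigroup $e^{-t\cn_V}$ on $L_2(\Gamma)$: an ultracontractivity estimate reflecting that $\cn_V$ is of order one on the $(d-1)$-dimensional boundary $\Gamma$, and polynomially decaying off-diagonal $L_2$-bounds at rate $|z-w|/t$, which is the finite-speed-of-propagation signature of a first-order operator. The decisive ingredient for the second property is an $L_2$-commutator estimate for $\cn_V$, and this is where the Green function estimates enter.

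First I would record the kernel structure of $\cn_V$. Writing the solution of the Dirichlet problem for $A_D+V$ via the Poisson kernel $w\mapsto-\partial_{\nu(w)}^C G(\cdot,w)$, where $G$ is the Green function of $A_D+V$, one obtains (with a suitable sign convention) that $\cn_V$ acts on $\Gamma$ with off-diagonal kernel $L(z,w)=-\partial_{\nu(z)}^C\partial_{\nu(w)}^C G(z,w)$. The gradient bounds for $G$ proved later in the paper --- in particular $|\nabla_x\nabla_y G(x,y)|\le c\,|x-y|^{-d}$ and the H\"older continuity of $\nabla_x\nabla_y G$ up to $\Gamma$ --- then say precisely that $L$ is a Calder\'on--Zygmund kernel on the $(d-1)$-dimensional space $\Gamma$, with $|L(z,w)|\le c\,|z-w|^{-d}$. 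Consequently, for a Lipschitz function $\eta$ on $\Gamma$ the commutator $[\cn_V,M_\eta]$ has kernel $(\eta(z)-\eta(w))\,L(z,w)$, which is of size $\le c\,\|\nabla_\Gamma\eta\|_\infty\,|z-w|^{1-d}$ and satisfies the H\"older regularity of a Calder\'on--Zygmund kernel on $\Gamma$; together with an $L_2(\Gamma)$-bound coming from the sesquilinear form defining $\cn_V$ and the trace theorem, Calder\'on--Zygmund theory yields
\[
\|[\cn_V,M_\eta]\|_{L_p(\Gamma)\to L_p(\Gamma)}\le c\,\bigl(\|\nabla_\Gamma\eta\|_\infty+\|\eta\|_\infty\bigr),\qquad 1<p<\infty .
\]
This is the keystone commutator estimate; only the case $p=2$ is needed below.

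Next I would establish the two semigroup estimates and combine them. Since the form of $\cn_V$ is comparable to the $H^{1/2}(\Gamma)$-form (trace theory and well-posedness of the Dirichlet problem) and $\Gamma$ is a compact $(d-1)$-dimensional $C^{1+\kappa}$-manifold, the Sobolev embeddings of $H^{1/2}(\Gamma)$ and a Moser-type iteration give $\|e^{-t\cn_V}\|_{2\to\infty}\le c\,(t\wedge1)^{-(d-1)/2}\,e^{-\lambda_1 t}$, whence, by self-adjointness and the semigroup property, $\|e^{-t\cn_V}\|_{1\to\infty}\le c\,(t\wedge1)^{-(d-1)}\,e^{-\lambda_1 t}$ for all $t>0$ (for $t\ge1$ split off the finite-rank spectral projection at the bottom of $\sigma(\cn_V)$ and use the spectral gap, noting that the ground state is bounded by the $t=1$ case). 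For the off-diagonal bound, let $U,W\subset\Gamma$ with $R:=\mathop{\rm dist}(U,W)>0$ and pick a Lipschitz $\eta$ with $0\le\eta\le1$, $\eta|_U=0$, $\eta\equiv1$ on $\{z:\mathop{\rm dist}(z,U)\ge R\}$ and $\|\nabla_\Gamma\eta\|_\infty\le c/R$. Then $\mathds{1}_W e^{-t\cn_V}\mathds{1}_U=\mathds{1}_W\,[M_\eta,e^{-t\cn_V}]\,\mathds{1}_U$, and the Duhamel identity $[M_\eta,e^{-t\cn_V}]=\int_0^t e^{-(t-s)\cn_V}[\cn_V,M_\eta]\,e^{-s\cn_V}\,ds$ together with the commutator estimate (note $\|\eta\|_\infty\le1\le cR^{-1}$ since $\Gamma$ is bounded) and $\|e^{-s\cn_V}\|_{2\to2}\le e^{-\lambda_1 s}$ give $\|\mathds{1}_W e^{-t\cn_V}\mathds{1}_U\|_{2\to2}\le c\,(t/R)\,e^{-\lambda_1 t}$. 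Iterating this single-step bound over $\sim R/t$ concentric shells between $U$ and $W$, using the semigroup property, produces for every $N\in\Ni$ the bound $\|\mathds{1}_W e^{-t\cn_V}\mathds{1}_U\|_{2\to2}\le c_N\,(1+R/t)^{-N}\,e^{-\lambda_1 t}$. Finally, ultracontractivity gives that $e^{-t\cn_V}$ has a bounded measurable kernel $K^V$; splitting $e^{-t\cn_V}=e^{-t\cn_V/3}e^{-t\cn_V/3}e^{-t\cn_V/3}$, localising the two outer factors near $z$ and near $w$ on balls of radius $\sim|z-w|/3$ and the middle factor between the complementary regions, and combining the $L_1\to L_2$ and $L_2\to L_\infty$ bounds with the off-diagonal decay of the middle factor (taken of order $N=d$), one obtains after a short optimisation that
\[
|K^V_t(z,w)|\le c\,(t\wedge1)^{-(d-1)}\,e^{-\lambda_1 t}\,\bigl(1+|z-w|/t\bigr)^{-d}
\]
for all $z,w\in\Gamma$ and $t>0$; for large $t$ the bound also follows from the ground-state decomposition since $\Gamma$ is bounded. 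This is the assertion.

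The hard part is the Green function input used above: existence and, above all, H\"older continuity \emph{up to the boundary} of $\nabla_x\nabla_y G$ for an elliptic operator with merely H\"older-continuous --- and, in the general results of the paper, complex-valued --- coefficients on a domain whose boundary is only $C^{1+\kappa}$. With complex coefficients no maximum principle is available, so this requires a perturbative Schauder/Campanato analysis near $\Gamma$ after flattening the boundary and freezing coefficients, and it is the technically heaviest step. A secondary but non-trivial point is the passage from those bounds to the Calder\'on--Zygmund property of $L$ --- in particular the correct interpretation of the ``kernel'' of the unbounded operator $\cn_V$ and the $L_2$-boundedness of its commutator with a Lipschitz multiplier, for which one must work with the form defining $\cn_V$ rather than with the kernel alone.
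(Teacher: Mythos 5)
Your first half follows the paper's route: you express the Schwartz kernel of $\cn_V$ through $\nabla_x\nabla_y G$, get Calder\'on--Zygmund bounds and the commutator estimate from the Green-function estimates, and obtain ultracontractivity from the $H^{1/2}(\Gamma)$ embedding. The gap is in the final combination step, namely the claim that the single-step bound $\|\one_W e^{-t\cn_V}\one_U\|_{2\to2}\le c\,(t/R)\,e^{-\lambda_1 t}$ self-improves, by iterating over concentric shells, to $c_N(1+R/t)^{-N}$ for \emph{every} $N$. The mechanism fails: in the expansion of $(e^{-(t/m)\cn_V})^m$ with inserted cut-offs on nested shells $A_j=\{z:\mathrm{dist}(z,U)<jR/m\}$, a term connecting $U$ to $W$ is only forced to contain at least \emph{one} transition from some $A_j$ to $A_{j+1}^{\rm c}$ (the reverse transitions $A_j^{\rm c}\to A_{j+1}$ cost nothing since these sets overlap), so the dominant terms carry a single factor $c\,(t/m)/(R/m)=c\,t/R$ and you recover only the first-order decay you started with. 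Moreover the asserted conclusion is false: for the model operator, the flat Poisson semigroup $e^{-t\sqrt{-\Delta}}$ on $\Ri^{d-1}$, a Schur test shows the $L_2$ off-diagonal norm at separation $R\gg t$ is comparable to $t/R$ and the kernel is comparable to $t/R^{d}$, so decay of order $N>d$ cannot hold. Note that the target bound $t^{-(d-1)}(1+|z-w|/t)^{-d}=t/(t+|z-w|)^{d}$ carries exactly one power of $t$ against $d$ powers of the distance; it is not of the form $(t/R)^{N}$.

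What closes the argument --- and what the paper does --- is to use the \emph{iterated} commutators rather than shells. Since $\delta_g^{j}(\cn_V)$ has kernel $(g(w)-g(z))^{j}K_{\cn_V}(z,w)$, the bound $|K_{\cn_V}(z,w)|\le c\,|z-w|^{-d}$ gives $\|\delta_g^{j}(\cn_V)\|_{p\to q}\le c\,({\rm Lip}_\Gamma(g))^{j}$ whenever $j=1+(d-1)(\tfrac1p-\tfrac1q)$ (Riesz potentials on the $(d-1)$-dimensional $\Gamma$; $j=d$ gives $L_1\to L_\infty$). Feeding these, together with the $L_p$--$L_q$ bounds for $S^V$, into the Duhamel expansion of the $d$-fold commutator $\delta_g^{d}(S^V_t)$ yields $\|\delta_g^{d}(S^V_t)\|_{1\to\infty}\le c\,t$ for $t\in(0,1]$ and ${\rm Lip}_\Gamma(g)\le1$, hence $|g(w)-g(z)|^{d}\,|K^V_t(z,w)|\le c\,t$; optimising over $g$ and combining with the on-diagonal bound gives the Poisson estimate. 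You have all the ingredients for this (your kernel bound on $K_{\cn_V}$ is exactly what makes the higher commutators bounded), but as written you exploit only the first commutator, and that is not enough.
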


One immediate consequence of this result is that the semigroup $S^V$ acts as a  
holomorphic semigroup on $L_1(\Gamma)$.
Even the existence of  such semigroup on $L_1(\Gamma)$ as a $C_0$-semigroup is new in this generality.
The holomorphy of the semigroup follows as in Theorem 7.1 in \cite{EO4}.
 We can also draw further information, for example $\cn_V$ has a holomorphic 
functional calculus on $L_p(\Gamma)$ for all $p \in (1, \infty)$ with angle 
$\mu \in (\frac{\pi(d-1)}{2d}, \pi)$, see Theorem 7.2 in \cite{EO4}. 
The previous theorem has  another  consequence.
It allows to establish  existence results for evolution equations on $C(\Gamma)$ (the space
 of continuous functions on $\Gamma$).
This subject will be addressed in a forthcoming paper.

The strategy of proof is similar to  \cite{EO4} in the sense that we prove appropriate 
$L_p$--$L_q$ estimates for iterated commutators of 
the semigroup $S^V = (e^{-t\cn_V})_{t > 0}$ with $M_g$, 
a multiplication operator by a Lipschitz continuous function  
$g$ on~$\Gamma$.
In \cite{EO4} these estimates are essentially deduced from $L_p$--$L_q$ estimates 
of $S$ together with 
commutator estimates of Coifman--Meyer for pseudo-differential operators and this 
is the reason why we assumed  there that the boundary is  $C^\infty$.

One cannot use these commutator results of  Coifman--Meyer on $C^{1+\kappa}$-domains and 
this is the major difficulty here. 
We shall instead rely solely on a recent $L_2$--$L_2$ estimate for the commutator 
$[\cn, M_g]$ proved by Shen \cite{She2}. The result of \cite{She2} is valid even for 
$\Omega$ with Lipschitz boundary.
We extend this commutator estimate to $L_p(\Gamma)$ for all $p \in (1, \infty)$ under 
the assumption that $\Omega$ has a $C^{1+\kappa}$-boundary by appealing to 
Calder\'on--Zygmund theory. In order to do so we need appropriate bounds for the 
Schwartz kernel $K_{\cn_V}$ of $\cn_V$, namely
\[
| K_{\cn_V}(z,w) | \le \frac{c}{| z-w|^d}
\]
and
\[
| K_{\cn_V}(z,w) - K_{\cn_V}(z',w') | 
\le c \, \frac{ (| z-z'| + | w-w'|)^\kappa}{ | z-w|^{d+ \kappa}}
\]
for all $z, z', w, w' \in \Gamma$ with $z \not= w$ and $|z-z'| + |w-w'| \le \frac{1}{2}|z-w|$.
It turns out that one can express  the Schwartz kernel $K_{\cn_V}$ in terms of the 
trace on the boundary of second order derivatives $\partial_k^{(1)} \partial_l^{(2)} G_V$ 
of the Green function $G_V$ of $A_D + V$. 
Therefore, we need appropriate bounds and H\"older continuity for  
$\partial_k^{(1)} \partial_l^{(2)} G_V$. 
We take the opportunity to prove these bounds in the general setting of 
elliptic operators with complex-valued coefficients. 
We prove that the heat kernel $H_t$ of $A_D + V$ satisfies bounds
\[
|(\partial_x^\alpha \, \partial_y^\beta \, H_t)(x,y)|
\leq a \, t^{-d/2} \, t^{-(|\alpha| + |\beta|)/2} \, 
      e^{-b \, \frac{|x-y|^2}{t}} \, e^{\omega t}
\]
and 
\begin{eqnarray*}
\lefteqn{
|(\partial_x^\alpha \, \partial_y^\beta \, H_t)(x+h,y+k) 
     - (\partial_x^\alpha \, \partial_y^\beta \, H_t)(x,y)|
} \hspace{20mm}  \\*
& \leq & a \, t^{-d/2} \, t^{-(|\alpha| + |\beta|)/2} \, 
    \left( \frac{|h| + |k|}{\sqrt{t} + |x-y|} \right)^\kappa \, 
      e^{-b \, \frac{|x-y|^2}{t}}  \, e^{\omega t}
\end{eqnarray*}
for all $x,y \in \Omega$ and $h,k \in \Ri^d$ with $x+h,y+k \in \Omega$, 
$|h| + |k| \leq \tau \, \sqrt{t} + \tau' \, |x-y|$ and all $|\alpha|, | \beta| \le 1$.
These bounds are proved using Morrey and Campanato  spaces.
The idea of using these spaces in order to obtain Gaussian upper bounds together with 
H\"older regularity for  heat kernels of elliptic operators on $\Ri^d$ originates in a work of 
Auscher  \cite{Aus1}, see also ter Elst and Robinson \cite{ER15}
and for derivatives of the kernel on Lie groups see \cite{ER19}. 
Here the new difficulty is that we have boundary conditions and the approach  needs to  
be adjusted to this setting. 
In addition,  not only Gaussian upper bounds for the heat kernel are proved here 
but also Gaussian upper bounds and H\"older continuity for the derivatives 
$\partial_x^\alpha \, \partial_y^\beta \, H_t$ with $|\alpha|, | \beta | \le 1$.  
In order to obtain the necessary De Giorgi or energy estimates
for derivatives of weak solutions, we use estimates of Campanato \cite{Cam1}.

The previous  bounds on the heat kernel readily imply for the Green function 
$G_V$ the bounds
\begin{eqnarray*}
|(\partial_x^\alpha \, \partial_y^\beta \, G_V)(x,y)| 
& \leq & c \, |x-y|^{-(d-2+|\alpha| + |\beta|)}  \quad \mbox{and}    \\
|(\partial_x^\alpha \, \partial_y^\beta \, G_V)(x',y') 
    - (\partial_x^\alpha \, \partial_y^\beta \, G_V)(x,y)| 
& \leq & c \, \frac{(|x'-x| + |y'-y|)^\kappa}{|x-y|^{d-2+|\alpha| + |\beta| +\kappa}}   
\end{eqnarray*}
for all $x,x',y,y' \in \Omega$ with $x \neq y$ and
$|x-x'| + |y-y'| \leq \frac{1}{2} \, |x-y|$ if $d \geq 3$.
 If $\RRe V \geq 0$,  we have uniform constants $c$ (with respect to the 
coefficients $c_{kl}$ and $V$), a very useful fact when using approximation by smooth coefficients 
as we shall do in our proofs.  
If $d = 2$, then the estimates are the same when $|\alpha| + |\beta| \not= 0$.
Otherwise a logarithmic term appears.

These estimates on the 
Green function are used to  prove the  previous estimates on the Schwartz kernel 
$K_{\cn_V}$ of the Dirichlet-to-Neumann operator. 

We emphasize that if $c_{kl} = c_{lk}$ are real-valued then upper bounds  for 
$\nabla_x \nabla_y G$ are known 
(see for example Avellaneda and Lin \cite{AL1} and Kenig, Lin and Shen \cite{KLS}).
Note however that 
H\"older continuity of $\nabla_x \nabla_y G$ as stated above seems to be missing  
in the literature.  
 
 We return now to final step used in the proof of the Poisson bounds. 
Once $L_p$--$L_p$ estimates for the commutator $[\cn_V, M_g]$ are proved we obtain 
$L_p$--$L_q$ bounds for iterated commutators 
$\delta^j_g(\cn_V) := [ M_g, [\ldots,M_g, \cn_V]\ldots]]$ for all $j \in \{1,\dots,d\}$.
This together with  $L_p$--$L_q$
estimates for the semigroup $S^V$ is  used  to estimate the $L_1$--$L_\infty$ norm
of the iterated commutator $\delta^d_g(S^V_t) := [ M_g, [\ldots,[M_g, S^V_t]\ldots]]$.
 We then optimize over $g$ and obtain the Poisson bounds.

\subsection*{Notation} 
Throughout this paper we use the following notation. 
 For a function $R$ of two variables we denote by
$\partial_k^{(j)}R$ the $k^{\rm th}$-partial derivatives with respect to the 
$j^{\rm th}$ variable with $j = 1, 2$. 
We identify a uniformly continuous function 
on $\Omega$ with a uniformly continuous function on $\overline \Omega$. 
We emphasise that a function in $C^1(\Omega)$ 
is not bounded in general, nor
it is an element of $L_1(\Omega)$ in general, even if $\Omega$ is bounded.
We define 
\[
C^1(\overline \Omega) = \{ u \in C^1(\Omega) : 
\partial_k u \mbox{ is uniformly continuous for all } k \in \{ 1,\ldots,d \} \} 
.  \]
For a bounded domain $\Omega$ with Lipschitz boundary $\Gamma$,  let
$C^{0,1}(\Gamma)$ denote the space of Lipschitz continuous functions on $\Gamma$. 
It is endowed with the norm
\[
\| g \|_{C^{0,1}(\Gamma)} = \| g \|_{L_\infty(\Gamma)} 
   + \sup_{z,w \in \Gamma, \; z \not= w}  \frac{ | g(z) - g(w) |}{|z-w|}.
\]
For all $g \in  C^{0,1}(\Gamma)$ we use the notation 
${\rm Lip}_\Gamma(g) = \sup_{z,w \in \Gamma, \; z \not= w}  \frac{ | g(z) - g(w) |}{|z-w|}$. 
If $f \in L_\infty(\Omega)$ and $p \in [1,\infty]$, then we denote by $M_f$ the 
multiplication operator by the function $f$ on $L_p(\Omega)$.
Finally, the $L_p$--$L_q$ norm of an operator $T$ will be denotes by $\| T \|_{p\to q}$.

\section{Preliminaries and the first auxiliary results}\label{S2}

We assume throughout this section that $\Omega$ is a bounded 
Lipschitz domain of $\Ri^d$ with $d \geq 2$.
We assume that $c_{kl} = c_{lk}  \in L_\infty(\Omega, \Ri)$ such that
\[
\sum_{k,l=1}^d c_{kl}(x) \, \xi_k \, \overline{\xi_j} \ge \mu \, |\xi|^2
\]
for all $\xi \in \mathbb{C}^d$ and a.e.\ $x \in \Omega$, where $\mu > 0$ is a 
positive constant.
Let $V \in L_\infty (\Omega,\Ri)$ be a real-valued potential.
We define the space $H_V$ of harmonic functions for the operator
$-\sum_{k,l= 1}^d \partial_l\left(c_{kl} \partial_k \right) + V$ by
\[
H_V =\{ u \in W^{1,2}(\Omega):  
  - \sum_{k,l= 1}^d \partial_l \left(c_{kl} \, \partial_k\, u \right) + V u = 0 
    \mbox{ weakly on } \Omega \}.
\]
Here and in what follows 
$-\sum_{k,l= 1}^d \partial_l\left(c_{kl} \, \partial_k\, u \right)  + V u = 0$ 
{\bf weakly on $\Omega$} means that $u \in W^{1,2}(\Omega)$ and 
\[
\int_\Omega \sum_{k,l=1}^d c_{kl} \, (\partial_k u) \, \overline{\partial_l \chi} 
    + \int_\Omega V \, u \, \overline \chi  = 0 
\]
for all $\chi \in C_c^\infty(\Omega)$.

Define the continuous sesquilinear form 
$\gota_V \colon W^{1,2}(\Omega) \times W^{1,2}(\Omega) \to \Ci$ by
\begin{equation}
\gota_V(u,v) 
= \int_\Omega \sum_{k,l=1}^d c_{kl} \, (\partial_k u) \, \overline{\partial_l v} 
    + \int_\Omega V \, u \, \overline v.  
\label{formaV}
\end{equation}
It is clear that $H_V$ is a closed subspace of $W^{1,2}(\Omega)$
and 
\[
H_V = \{ u \in W^{1,2}(\Omega) : \gota_V(u,v) = 0 \mbox{ for all } v \in \ker \Tr \},  
\]
where $\Tr \colon W^{1,2}(\Omega) \to L_2(\Gamma)$ is the trace operator.

Define the form $\gota_V^D \colon W^{1,2}_0(\Omega) \times W^{1,2}_0(\Omega) \to \Ci$ by
$\gota_V^D = { \gota_V}|_{W^{1,2}_0(\Omega) \times W^{1,2}_0(\Omega)}$.
Then the associated operator is  $A_D + V$, where 
$A_D$ is the operator associated to the form $\gota_0^D$.
Formally, $A_D = -\sum_{k,l= 1}^d \partial_l\left(c_{kl} \, \partial_k \right)$,  
subject to the Dirichlet boundary condition. 

As in Section~2 in \cite{EO4}, one proves easily that if  $0 \notin \sigma(A_D+ V)$, 
then the  space $W^{1,2}(\Omega)$ has the decomposition
\begin{equation} \label{eq2.1}
 W^{1,2}(\Omega) = W_0^{1,2}(\Omega) \oplus H_V.  
 \end{equation}
In particular
\[
\Tr(H_V) = \Tr(W^{1,2}(\Omega)).
\]
A direct  corollary is that  $\Tr$ is  
injective as an operator from $H_V$ into $L_2(\Gamma)$.
Thus, we may define the form 
$\gotb_V \colon \Tr(W^{1,2}(\Omega)) \times \Tr(W^{1,2}(\Omega)) \to \Ci$ by 
\[
\gotb_V(\varphi, \psi) = \gota_V(u,v)
,   \]
where $u, v \in H_V$ are such that $\Tr u = \varphi$ and $\Tr v = \psi$.
One obtains as in \cite{EO4} that $\gotb_V$
is bounded from below and is a closed symmetric form.
Hence there exists an associated self-adjoint operator $\cn_V$ associated with $\gotb_V$.
This is the Dirichlet-to-Neumann operator. 

Let $u \in W^{1,2}(\Omega)$ and $f \in L_2(\Omega)$.
We say that $\ca u = f$ if $\gota_0(u,v) = (f,v)_{L_2(\Omega)}$ 
for all $v \in W^{1,2}_0(\Omega)$.
In particular, $u \in H_V$ if and only if $\ca u = - V \, u$.
If $u \in W^{1,2}(\Omega)$, then we say that $\ca u \in L_2(\Omega)$
if there exists an $f \in L_2(\Omega)$ such that $\ca u = f$.
Let $u \in W^{1,2}(\Omega)$ with $\ca u \in L_2(\Omega)$.
Then we say that $u$ has a {\bf weak conormal derivative} 
if there exists a $\psi \in L_2(\Gamma)$ such that 
\begin{equation}
\int_\Omega \sum_{k,l=1}^d c_{kl} \, (\partial_k u) \, \overline{\partial_l v}
   - \int_\Omega (\ca u) \, \overline v
= \int_\Gamma \psi \, \overline{\Tr v}
\label{elpbdton201;15}
\end{equation}
for all $v \in W^{1,2}(\Omega)$.
In that case $\psi$ is unique and we write $\partial_\nu^C u = \psi$.

We next present a couple of equivalent descriptions for the 
Dirichlet-to-Neumann operator $\cn_V$.

\begin{lemma} \label{ldtnpc239}
Let $\varphi,\psi \in L_2(\Gamma)$.
Then the following are equivalent.
\begin{tabeleq}
\item \label{ldtnpc239-1}
$\varphi \in D(\cn_V)$ and $\cn_V \varphi = \psi$.
\item \label{ldtnpc239-2}
There exists a $u \in H_V$ such that $\Tr u = \varphi$ and $\partial_\nu^C u = \psi$.
\item \label{ldtnpc239-3}
There exists a $u \in W^{1,2}(\Omega)$ such that $\Tr u = \varphi$ and 
\begin{equation}
\gota_V(u,v) = (\psi, \Tr v)_{L_2(\Gamma)}
\label{eSdtnpc2;40}
\end{equation}
for all $v \in W^{1,2}(\Omega)$.
\end{tabeleq}
\end{lemma}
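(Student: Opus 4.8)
The plan is to unfold each of the three conditions into a statement purely about the form $\gota_V$ and the trace operator, and then to observe that the unfolded statements are almost identical, the passage between them being governed by the direct sum decomposition \eqref{eq2.1}.

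First I would rewrite \eqref{ldtnpc239-1}. Since $\cn_V$ is by construction the self-adjoint operator associated with the form $\gotb_V$, the statement "$\varphi \in D(\cn_V)$ and $\cn_V \varphi = \psi$" is equivalent to "$\varphi \in D(\gotb_V) = \Tr(W^{1,2}(\Omega)) = \Tr(H_V)$ and $\gotb_V(\varphi, \Tr v) = (\psi, \Tr v)_{L_2(\Gamma)}$ for all $v \in H_V$". Taking the representative $u \in H_V$ with $\Tr u = \varphi$ (unique by injectivity of $\Tr$ on $H_V$) and using the definition $\gotb_V(\Tr u, \Tr v) = \gota_V(u,v)$, condition \eqref{ldtnpc239-1} becomes: there exists $u \in H_V$ with $\Tr u = \varphi$ and $\gota_V(u,v) = (\psi, \Tr v)_{L_2(\Gamma)}$ for all $v \in H_V$.

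Next I would rewrite \eqref{ldtnpc239-2}. If $u \in H_V$ then $\ca u = -V\,u \in L_2(\Omega)$, so the weak conormal derivative of $u$ is defined; substituting $\ca u = -V\,u$ into \eqref{elpbdton201;15} shows that $\partial_\nu^C u = \psi$ is equivalent to $\gota_V(u,v) = (\psi, \Tr v)_{L_2(\Gamma)}$ for all $v \in W^{1,2}(\Omega)$. Hence \eqref{ldtnpc239-2} becomes: there exists $u \in H_V$ with $\Tr u = \varphi$ and $\gota_V(u,v) = (\psi, \Tr v)_{L_2(\Gamma)}$ for all $v \in W^{1,2}(\Omega)$. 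As for \eqref{ldtnpc239-3}, testing the identity \eqref{eSdtnpc2;40} with $v \in W^{1,2}_0(\Omega) = \ker \Tr$ forces $\gota_V(u,v) = 0$ there, i.e.\ $u \in H_V$ automatically; so \eqref{ldtnpc239-3} is in fact word-for-word the rewritten form of \eqref{ldtnpc239-2}, and \eqref{ldtnpc239-2}$\Leftrightarrow$\eqref{ldtnpc239-3} comes for free.

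Finally, \eqref{ldtnpc239-2}$\Rightarrow$\eqref{ldtnpc239-1} is trivial since $H_V \subseteq W^{1,2}(\Omega)$, so the identity for all $v \in W^{1,2}(\Omega)$ certainly holds for all $v \in H_V$. For \eqref{ldtnpc239-1}$\Rightarrow$\eqref{ldtnpc239-2}, given $v \in W^{1,2}(\Omega)$ I would use \eqref{eq2.1} to write $v = v_0 + v_1$ with $v_0 \in W^{1,2}_0(\Omega)$ and $v_1 \in H_V$; then $\gota_V(u,v_0) = 0$ because $u \in H_V$ and $v_0 \in \ker \Tr$, while $\gota_V(u,v_1) = (\psi, \Tr v_1)_{L_2(\Gamma)} = (\psi, \Tr v)_{L_2(\Gamma)}$ by \eqref{ldtnpc239-1} and $\Tr v_1 = \Tr v$, and adding these gives the required identity for all $v \in W^{1,2}(\Omega)$. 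There is no genuine obstacle here; the only points requiring care are applying the decomposition \eqref{eq2.1} in the $v$-slot (not the $u$-slot) and invoking the characterisation of $H_V$ with $u$ in the first argument of $\gota_V$, together with the standing hypothesis $0 \notin \sigma(A_D+V)$, which is what makes \eqref{eq2.1} and the injectivity of $\Tr$ on $H_V$ available.
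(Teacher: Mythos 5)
Your proof is correct and follows essentially the same route as the paper: both arguments reduce all three conditions to the identity $\gota_V(u,v)=(\psi,\Tr v)_{L_2(\Gamma)}$ with $u\in H_V$, pass from test functions in $H_V$ to all of $W^{1,2}(\Omega)$ via the decomposition (\ref{eq2.1}), and use the definition (\ref{elpbdton201;15}) of the weak conormal derivative with $\ca u=-Vu$ to identify (ii) with (iii). The only cosmetic difference is that you close the cycle as (iii)$\Rightarrow$(ii)$\Rightarrow$(i) where the paper goes (iii)$\Rightarrow$(i) directly, but the content is identical.
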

\begin{proof}
`\ref{ldtnpc239-1}$\Rightarrow$\ref{ldtnpc239-2}'.
By definition there exists a $u \in H_V$ such that $\Tr u = \varphi$ and 
\begin{equation}
\int_\Omega \sum_{k,l=1}^d c_{kl} (\partial_k u) \, \overline{\partial_l v}
    + \int_\Omega V \, u  \, \overline v  
= \gota_V(u,v)
= \int_\Gamma  \psi \, \overline{\Tr v} 
\label{elpbdton201;10}
\end{equation}
for all $v \in H_V$.
Since $u \in H_V$ obviously (\ref{elpbdton201;10}) is valid for all 
$v \in W^{1,2}_0(\Omega)$.
Then by (\ref{eq2.1}) one deduces that (\ref{elpbdton201;10}) is valid 
for all $v \in W^{1,2}(\Omega)$.
Moreover, since $\ca u + V u = 0$ it follows from (\ref{elpbdton201;15})
that $\partial_\nu^C u = \psi$.

`\ref{ldtnpc239-2}$\Rightarrow$\ref{ldtnpc239-3}'.
Since $u \in H_V$ it follows that $\ca u + V u = 0$.
Then (\ref{elpbdton201;15}) implies that (\ref{elpbdton201;10}) is 
valid for all $v \in W^{1,2}(\Omega)$. 
But this is just (\ref{eSdtnpc2;40}).

`\ref{ldtnpc239-3}$\Rightarrow$\ref{ldtnpc239-1}'.
Now (\ref{elpbdton201;10}) with $v \in W^{1,2}_0(\Omega)$ gives 
$\ca u + V \, u = 0$, that is $u \in H_V$.
By definition of $\gotb_V$ one deduces that 
$\gotb_V(\varphi,\tau) = (\psi,\tau)_{L_2(\Gamma)}$
for all $\tau \in \Tr (H_V)$.
Hence Condition~\ref{ldtnpc239-1} is valid.
\end{proof}

For additional information regarding Condition~\ref{ldtnpc239-3} we refer to
\cite{AE2}.

The self-adjoint operator $-\cn_V$ generates a quasi-contraction holomorphic semigroup 
$S^V$ on $L_2(\Gamma)$.  
When $V = 0$ we write for simplicity  $\cn = \cn_0$ and $S = S^0$.
We also denote by $\lambda_1$ the first eigenvalue of the Dirichlet-to-Neumann operator 
$\cn_V$ without specifying the dependence on $V$. 

We summarize in the following two theorems some important properties of the 
semigroups $S^V$ and $S$.
The proofs are  the same as in 
\cite{EO4} Section~2, where these results are proved in the case 
$c_{kl} = \delta_{kl}$. 

\begin{thm}\label{th2.2} 
Suppose that $\Omega$ is bounded Lipschitz, $c_{kl} = c_{lk} \in L_\infty(\Omega,\Ri)$ 
satisfying the ellipticity condition and $V \in L_\infty(\Omega,\Ri)$ with
$0 \notin \sigma(A_D+V)$. 
\begin{tabel}
\item \label{th2.2-1} 
If  $A_D + V \ge 0$  
then the semigroup $S^V$ is positive (it maps positive functions on 
$\Gamma$ into positive functions). 
\item \label{th2.2-2} 
If $V \ge 0$ then $S^V$ is sub-Markovian.
Therefore $S^V$ acts as a contraction $C_0$-semigroup on $L_p(\Gamma)$ for all 
$p \in [1, \infty).$ 
\item \label{th2.2-3} 
If $V \ge 0$ then $S_t^V \varphi \le S_t \varphi$ for all $ t \ge 0$ and all 
positive $\varphi \in L_2(\Gamma)$. 
\end{tabel}
\end{thm}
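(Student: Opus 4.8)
The statement collects three semigroup-positivity facts for $S^V$ and $S$, and the remark in the excerpt already tells us the proofs follow those in \cite{EO4}, Section~2, where $c_{kl}=\delta_{kl}$; so the plan is to run the same form-theoretic arguments, checking that nothing used the identity coefficients. The unifying tool is the Beurling--Deny criteria, applied to the closed symmetric sesquilinear form $\gotb_V$ on $L_2(\Gamma)$ whose associated self-adjoint operator is $\cn_V$. Everything reduces to verifying invariance of suitable closed convex sets under the resolvent (equivalently, under the semigroup), using the description of $\gotb_V$ in terms of $\gota_V$ and harmonic extensions: $\gotb_V(\varphi,\psi)=\gota_V(u,v)$ where $u,v\in H_V$ are the $\gota_V$-harmonic extensions of $\varphi,\psi$.

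For part~\ref{th2.2-1}, assume $A_D+V\ge 0$. The plan is to show $S^V$ is positive via the first Beurling--Deny criterion: it suffices that for $\varphi\in\Tr(W^{1,2}(\Omega))$ real-valued, $|\varphi|$ lies in the form domain and $\gotb_V(|\varphi|,|\varphi|)\le\gotb_V(\varphi,\varphi)$. The key point is to compare the harmonic extension $u$ of $\varphi$ with the harmonic extension $\tilde u$ of $|\varphi|$. Since $A_D+V\ge 0$, the harmonic extension is order-preserving in the appropriate sense; one writes $|u|$ (which has trace $|\varphi|$), notes $|u|\in W^{1,2}(\Omega)$ with $\int\sum c_{kl}(\partial_k|u|)\overline{\partial_l|u|}+\int V|u|^2 \le \gota_V(u,u)$ by the usual pointwise-a.e.\ gradient inequality combined with $A_D+V\ge0$ controlling the potential term, and then uses that $\tilde u$ minimizes $\gota_V$ among $W^{1,2}$-functions with trace $|\varphi|$ (a consequence of the decomposition \eqref{eq2.1} and $0\notin\sigma(A_D+V)$ when $A_D+V\ge0$). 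This gives $\gotb_V(|\varphi|,|\varphi|)=\gota_V(\tilde u,\tilde u)\le\gota_V(|u|,|u|)\le\gota_V(u,u)=\gotb_V(\varphi,\varphi)$, as needed. That the real subspace is invariant (so we may restrict to real $\varphi$) follows from $c_{kl}$, $V$ real-valued.

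For part~\ref{th2.2-2}, assume $V\ge 0$; then automatically $A_D+V\ge0$, so by~\ref{th2.2-1} $S^V$ is positive. To get sub-Markovianity we add the second Beurling--Deny criterion: for real $\varphi$, $1\wedge\varphi^+$ should be in the form domain with $\gotb_V(1\wedge\varphi^+,1\wedge\varphi^+)\le\gotb_V(\varphi,\varphi)$; equivalently one checks invariance of the order interval $[0,1]$. The truncation argument is the same as before applied to $(0\vee u)\wedge 1$, using $V\ge0$ so that replacing $u$ by a truncation only decreases $\int V|u|^2$, and again minimality of the harmonic extension. Contractivity on $L_2(\Gamma)$ plus sub-Markovianity gives boundedness on $L_1$ and $L_\infty$; by Riesz--Thorin, $S^V$ extends to a contraction semigroup on every $L_p(\Gamma)$, $p\in[1,\infty)$, and strong continuity on $L_p$ for $p<\infty$ follows from density of $L_2(\Gamma)\cap L_p(\Gamma)$ together with the uniform boundedness.

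For part~\ref{th2.2-3}, assume $V\ge0$ and let $0\le\varphi\in L_2(\Gamma)$. Both $S_t^V\varphi$ and $S_t\varphi$ are nonnegative by~\ref{th2.2-1}. The cleanest route is a domination criterion for positive semigroups (Ouhabaz's criterion, as in \cite{Ouh5}): $S_t^V\le S_t$ holds iff the form domains are nested appropriately—here they coincide, both being $\Tr(W^{1,2}(\Omega))$—and $\gotb_0(\varphi,\psi)\le\gotb_V(\varphi,\psi)$ for all $0\le\varphi,\psi$ in the (common) form domain with $\varphi\psi$ admissible. The inequality $\gotb_0\le\gotb_V$ on positive elements is where $V\ge0$ enters: writing $u$ for the $\gota_V$-harmonic extension of $\varphi$ and $u_0$ for the $\gota_0$-harmonic extension, $\gotb_0(\varphi,\varphi)=\gota_0(u_0,u_0)\le\gota_0(u,u)\le\gota_0(u,u)+\int V|u|^2=\gotb_V(\varphi,\varphi)$, using minimality of $u_0$ for $\gota_0$ and $V\ge0$; the off-diagonal case follows by polarization on the positive cone. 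I expect the main technical obstacle to be the careful verification that the harmonic-extension map behaves well under the lattice operations $u\mapsto|u|$, $u\mapsto 0\vee u\wedge 1$ in $W^{1,2}(\Omega)$ with \emph{variable} $c_{kl}$—specifically the pointwise gradient inequalities $\sum_{k,l}c_{kl}(\partial_k F(u))\overline{\partial_l F(u)}\le\sum_{k,l}c_{kl}(\partial_k u)\overline{\partial_l u}$ for contractions $F$, which do hold because $C$ is symmetric positive-definite so the quadratic form $\xi\mapsto\sum c_{kl}\xi_k\overline{\xi_l}$ is monotone under $|\nabla F(u)|\le|\nabla u|$ componentwise—and the identification, via \eqref{eq2.1} and $0\notin\sigma(A_D+V)$, of $H_V$-extensions with energy minimizers, which requires $A_D+V\ge0$ for the relevant parts. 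These are routine given the form setup already established, exactly as in \cite{EO4}.
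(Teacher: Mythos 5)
Your overall strategy is the one the paper (implicitly, by deferring to Section~2 of \cite{EO4}) uses: Beurling--Deny criteria for $\gotb_V$ via the harmonic extension, exploiting that $\gota_V(w,w)=\gota_V^D(w_0,w_0)+\gota_V(u,u)$ for the decomposition $w=w_0+u$ of (\ref{eq2.1}), so that the $H_V$-extension is the energy minimizer when $A_D+V\ge 0$. Parts \ref{th2.2-1} and \ref{th2.2-2} are fine as you describe them (in \ref{th2.2-1} you even have equality $\gota_V(|u|,|u|)=\gota_V(u,u)$ for real $u$, since $|\nabla|u||=|\nabla u|$ a.e.\ and $V|u|^2=Vu^2$).

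There is, however, a genuine gap in part \ref{th2.2-3}. Ouhabaz's domination criterion requires the \emph{off-diagonal} inequality $\gotb_0(\varphi,\psi)\le\gotb_V(\varphi,\psi)$ for all $0\le\varphi,\psi$ in the common form domain, and this does \emph{not} follow from the diagonal inequality ``by polarization on the positive cone'': polarization expresses $\gotb(\varphi,\psi)$ through $\gotb(\varphi+\psi)$ and $\gotb(\varphi-\psi)$, and the inequality for the subtracted term points the wrong way (also $\varphi-\psi$ leaves the positive cone). The correct route is the identity behind Corollary~\ref{cdtnpc536}: with $u=\gamma_V\varphi$ and $v_0=\gamma_0\psi$ one has $\gotb_V(\varphi,\psi)-\gotb_0(\varphi,\psi)=\int_\Omega V\,u\,\overline{v_0}$, because $\gota_V(u,\gamma_V\psi)=\gota_V(u,v_0)$ and $\gota_0(\gamma_0\varphi,v_0)=\gota_0(u,v_0)$ (the differences of the liftings lie in $W^{1,2}_0(\Omega)=\ker\Tr$). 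So what you actually need is positivity of the harmonic liftings: if $\varphi\ge0$ then $\gamma_V\varphi\ge0$ (and likewise $\gamma_0\psi\ge0$). This is proved by testing: $w:=(\gamma_V\varphi)^-$ has trace $\varphi^-=0$, hence $w\in W^{1,2}_0(\Omega)$, and $0=\gota_V(\gamma_V\varphi,w)=-\gota_V^D(w,w)$ because $\nabla(\gamma_V\varphi)^+$ and $\nabla w$ have disjoint supports and $(\gamma_V\varphi)^+w=0$; coercivity of $\gota_V^D$ (from $A_D+V\ge0$ and $0\notin\sigma(A_D+V)$) then forces $w=0$. With $V\ge0$, $u\ge0$ and $v_0\ge0$ this gives $\int_\Omega V\,u\,v_0\ge0$, i.e.\ the required off-diagonal inequality. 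With that replacement your part \ref{th2.2-3} is complete.
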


We note that in the first assertion, if the assumption $A_D + V \ge 0$ is not satisfied 
then the semigroup $S^V$ may not be positive for all $t > 0$ (see \cite{Daners4}). 
This is the reason why our Poisson bound in the main theorem is formulated for 
$| K^V_t(x,y)|$ and not  for $K^V_t(x,y)$. 

Now we state $L_p$--$L_q$ estimates for the semigroup $S^V$.
Note that $\lambda_1 \geq 0$ in the next theorem.

\begin{thm}\label{th2.3}
Let $0 \le V \in L_\infty(\Omega)$ and let
$\lambda_1 \in \sigma(\cn_V)$ be the first eigenvalue of $\cn_V$.
Then for all $1 \le p \le q \le \infty$ and $t > 0$ the operator 
$S_t^V$ is bounded from $L_p(\Gamma)$ into $L_q(\Gamma)$.
Moreover, there exists a $C > 0$ such that 
\[
\|S_t^V \|_{p \to q} 
\le C \, (t \wedge 1)^{-(d-1)(\frac{1}{p} - \frac{1}{q})} \, e^{-\lambda_1 t} 
\]
for all $t > 0$ and $p,q \in [1,\infty]$ with $p \leq q$.
\end{thm}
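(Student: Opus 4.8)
The plan is to establish the $L_p$--$L_q$ bounds by first obtaining the extreme case $L_1 \to L_\infty$ and then interpolating.  The key input is Theorem~\ref{th2.2}: since $V \geq 0$, the semigroup $S^V$ is sub-Markovian, hence a contraction on $L_p(\Gamma)$ for every $p \in [1,\infty]$, and moreover $S^V_t \varphi \leq S_t \varphi$ pointwise for positive $\varphi$, so it suffices to prove the ultracontractivity-type estimate $\|S^V_t\|_{1 \to \infty} \leq C\,(t\wedge 1)^{-(d-1)}$ for the operator $\cn_V$ itself (the exponential factor $e^{-\lambda_1 t}$ is then extracted at the end).  The exponent $d-1$ is exactly the ``dimension'' of the boundary manifold $\Gamma$, which is the natural homogeneity here.

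First I would record that $\cn_V$ is associated with the closed, symmetric, lower-bounded form $\gotb_V$ on $L_2(\Gamma)$, and that by Nash-type or Sobolev-type inequalities on the $(d-1)$-dimensional compact manifold $\Gamma$ one has a Gagliardo--Nirenberg inequality of the form $\|\varphi\|_{2(d-1)/(d-2)}^2 \lesssim \gotb_V(\varphi,\varphi) + \|\varphi\|_2^2$ (the relevant fact being that the form domain $D(\gotb_V) = \Tr(W^{1,2}(\Omega)) = W^{1,2}(\Gamma)^{?}$ embeds into the correct Sobolev space $H^{1/2}(\Gamma)$, which in turn embeds into $L_{r}(\Gamma)$ for $r = \frac{2(d-1)}{d-2}$).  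Actually, as in \cite{EO4} Section~2, the cleaner route is via the half-Laplacian heuristic: $\cn_V$ behaves like a first-order elliptic operator on $\Gamma$, so its semigroup should satisfy the same small-time bounds as $e^{-t(-\Delta_\Gamma)^{1/2}}$, namely $\|S^V_t\|_{1\to\infty} \lesssim t^{-(d-1)}$ for $t \leq 1$.  Concretely, I would invoke the equivalence between ultracontractivity $\|S^V_t\|_{1\to\infty} \leq C t^{-(d-1)}$ for $t \in (0,1]$ and a Nash inequality $\|\varphi\|_2^{2+2/(d-1)} \leq C\, \bigl(\gotb_V(\varphi,\varphi) + \|\varphi\|_2^2\bigr)\,\|\varphi\|_1^{2/(d-1)}$ (Coulhon's characterization), and verify the Nash inequality from the Sobolev embedding $D(\gotb_V) \hookrightarrow L_{2(d-1)/(d-2)}(\Gamma)$ together with Hölder's inequality.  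For $t \geq 1$ one just uses $\|S^V_t\|_{1\to\infty} \leq \|S^V_1\|_{1\to\infty}\,\|S^V_{t-1}\|_{\infty\to\infty} \leq C$ using the contractivity on $L_\infty$.

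Once $\|S^V_t\|_{1\to\infty} \leq C(t\wedge 1)^{-(d-1)}$ is in hand, the general case $\|S^V_t\|_{p\to q} \leq C(t\wedge 1)^{-(d-1)(1/p-1/q)}$ follows by the semigroup property and the Riesz--Thorin interpolation theorem: write $S^V_t = S^V_{t/2}\,S^V_{t/2}$ and interpolate the $L_1\to L_\infty$ bound against the $L_1 \to L_1$ and $L_\infty \to L_\infty$ contractivity bounds to get $L_p \to L_{p'}$; then combine with contractivity again to interpolate down to arbitrary $p \leq q$.  Finally, to insert the sharp exponential decay $e^{-\lambda_1 t}$, note that $\cn_V - \lambda_1 \geq 0$, so $\widetilde S_t := e^{\lambda_1 t} S^V_t$ is the semigroup generated by $-(\cn_V - \lambda_1)$, which is again lower-bounded by $0$; applying the already-proven bound to $\widetilde S$ on $t \geq 1$ (where its generator has spectral gap bounded below, so $\|\widetilde S_t\|_{2\to 2}$ is bounded) and combining with the small-time estimate gives $\|S^V_t\|_{p\to q} \leq C(t\wedge 1)^{-(d-1)(1/p-1/q)} e^{-\lambda_1 t}$.

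The main obstacle I anticipate is the Sobolev/Nash inequality for the form $\gotb_V$: one must show that $D(\gotb_V)$, which is the trace space $\Tr(W^{1,2}(\Omega))$ equipped with $\gotb_V$-norm, coincides (with equivalent norms) with $H^{1/2}(\Gamma)$ and hence embeds into $L_{2(d-1)/(d-2)}(\Gamma)$.  This requires the trace theorem and its right inverse (a bounded extension operator $H^{1/2}(\Gamma) \to W^{1,2}(\Omega)$), both of which hold on Lipschitz domains, together with the coercivity estimate $\gotb_V(\varphi,\varphi) + \mu\|\varphi\|_2^2 \gtrsim \|\varphi\|_{H^{1/2}(\Gamma)}^2$ coming from the decomposition~(\ref{eq2.1}) and ellipticity.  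Since \cite{EO4} carries this out in the case $c_{kl} = \delta_{kl}$ and the argument there uses only boundedness/coercivity of $\gota_V$ and the decomposition~(\ref{eq2.1}) — both of which we have verified in the present generality — the proof is ``the same as in \cite{EO4} Section~2'' as asserted, and the only real work is checking that nothing in that argument used the identity coefficients in an essential way.
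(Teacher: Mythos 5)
Your proposal follows essentially the same route the paper intends: the paper itself only says the proof is ``the same as in \cite{EO4} Section~2'', and the chain you describe --- Sobolev embedding of the trace space $\Tr(W^{1,2}(\Omega))$ into $L_{2(d-1)/(d-2)}(\Gamma)$, hence a Nash/Sobolev inequality for $\gotb_V$, hence ultracontractivity $\|S^V_t\|_{1\to\infty}\lesssim (t\wedge 1)^{-(d-1)}$, then interpolation against the sub-Markovian contractivity from Theorem~\ref{th2.2}, and finally insertion of $e^{-\lambda_1 t}$ via the $L_2$ spectral bound --- is exactly what is carried out in Lemma~\ref{lem8.00} and Lemma~\ref{ldtnpc810} of this paper and in \cite{EO4}.

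The one genuine gap is the case $d=2$: there the exponent $\frac{2(d-1)}{d-2}$ is meaningless and the embedding $H^{1/2}(\Gamma)\hookrightarrow L_r(\Gamma)$ must be replaced by a different argument (the paper points to the proof of Theorem~2.6 in \cite{EO4}, where one uses, e.g., an embedding into an Orlicz/exponential class or a direct one-dimensional estimate on the curve $\Gamma$). Since the theorem is asserted for all $d\geq 2$, you should either restrict your Sobolev step to $d\geq 3$ and supply the $d=2$ case separately, or use a formulation of the Nash inequality that survives in the borderline dimension. Everything else in your outline is sound, and your closing observation --- that the argument of \cite{EO4} uses only boundedness, coercivity and the decomposition~(\ref{eq2.1}), none of which depend on $c_{kl}=\delta_{kl}$ --- is precisely the justification the paper relies on.
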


Actually, it will follow from Theorem~\ref{thm1.1} that 
this theorem is also valid for general 
$V \in L_\infty(\Omega)$, possibly with $\lambda_1 < 0$.

We finish this section with a known formula. 
Again let $V \in L_\infty(\Omega)$ with $0 \notin \sigma(A_D+V)$. 
Define the {\bf harmonic lifting} $\gamma_V \colon \Tr(W^{1,2}(\Omega)) \to H^1(\Omega)$
as follows.
Given $\varphi \in H^{1/2}(\Gamma) := \Tr(W^{1,2}(\Omega))$ 
it follows from (\ref{eq2.1}) that 
there exists a unique $u \in H_V$ with $\Tr u = \varphi$. 
We define 
\[ 
\gamma_V  \varphi  := u
. \]
There is a simple relation between $\gamma_V$ and $\gamma_0$, where the latter is 
the harmonic lifting in case $V = 0$.
Let $\varphi \in \Tr(W^{1,2}(\Omega))$. 
Write $u_0 = \gamma_0 \varphi$ and $u = \gamma_V \varphi$.
Then $u - u_0 \in W^{1,2}_0(\Omega)$.
Moreover, $(\ca + V) u = 0$ and $\ca u_0 = 0$.
So $(A_D + V) (u - u_0) = (\ca + V) (u - u_0) = - V u_0$.
Therefore $u - u_0 = - (A_D + V)^{-1} \, M_V u_0$
and 
\begin{equation}
\gamma_V
= \gamma_0 - (A_D + V)^{-1} \, M_V \, \gamma_0 .
\label{eSdtnpc2;30}
\end{equation}
We shall use this relation in Sections~\ref{Sdtnpc5new}.

\section{Heat kernel bounds for elliptic operators on $C^{1+\kappa}$-domains}

\label{S3}
\label{Sdtnpc4}

Let $\Omega \subset \Ri^d$ be an open set.
Let $\mu,M > 0$.
We define $\ce(\Omega,\mu,M)$
to be the set of all measurable $C \colon \Omega \to \Ci^{d \times d}$ such that 
\[
\begin{array}{ll}
\RRe \langle C(x) \, \xi, \xi \rangle \geq \mu \, |\xi|^2
   & \mbox{for all } x \in \Omega \mbox{ and } \xi \in \Ci^d \mbox{, and,}  \\[5pt]
\|C(x)\| \leq M
   & \mbox{for all } x \in \Omega .
\end{array}
\]
where $\|C(x)\|$ is 
the $\ell_2$-norm of $C(x)$ in $\Ci^d$ and $\langle \cdot , \cdot \rangle$ is the 
inner product on $\Ci^d$.
Here and in the sequel $c_{kl}(x)$ is the appropriate
matrix coefficient of $C(x)$.
We define $\ce(\Omega) = \bigcup_{\mu,M > 0} \ce(\Omega,\mu,M)$.
For all $C \in \ce(\Omega)$ define the closed sectorial form
\[
\gota \colon W^{1,2}_0(\Omega) \times W^{1,2}_0(\Omega) \to \Ci
\]
 by
\[
\gota(u,v)
= \int_\Omega \sum_{k,l=1}^d c_{kl} \, (\partial_k u) \, \overline{(\partial_l v)}
\]
and let $A^C_D$ be the associated operator.
Note that $A^C_D$ has Dirichlet boundary conditions.
If no confusion is possible then we drop the $C$ and write 
$A_D = A^C_D$.

Let $\kappa \in (0,1)$.
The space $C^\kappa(\Omega)$ is the space of all H\"older continuous functions 
of order $\kappa$ on $\Omega$ with semi-norm
\[
|||u|||_{C^\kappa(\Omega)}
= \sup \{ \frac{|u(x) - u(y)|}{|x-y|^\kappa} : x,y \in \Omega, \; 0 < |x-y| \leq 1 \}
.  \]
Let $\mu,M > 0$.
We define $\ce^\kappa(\Omega,\mu,M)$
to be the set of all continuous $C \in \ce(\Omega,\mu,M)$ such that 
\[
\begin{array}{ll}
c_{kl} \in C^\kappa(\Omega)
   & \mbox{for all } k,l \in \{ 1,\ldots,d \} \mbox{, and}  \\[5pt]
|||c_{kl}|||_{C^\kappa(\Omega)} \leq M  
   & \mbox{for all } k,l \in \{ 1,\ldots,d \} .
\end{array}
\]
Define $\ce^\kappa(\Omega) = \bigcup_{\mu,M > 0} \ce^\kappa(\Omega,\mu,M)$.

The main theorem of this section is the following.

\begin{thm} \label{tdtnpc401}
Let $\kappa,\tau' \in (0,1)$ and $\mu,M,\tau > 0$.
Let $\Omega \subset \Ri^d$ be an open bounded set with a $C^{1+\kappa}$-boundary.
Then there exist $a,b > 0$ and $\omega \in \Ri$ such that for every $C \in \ce^\kappa(\Omega,\mu,M)$
and $V \in L_\infty(\Omega,\Ri)$ with $\|V\|_\infty \leq M$ there exists 
a function $(t,x,y) \mapsto H_t(x,y)$ from 
$(0,\infty) \times \Omega \times \Omega$ into $\Ci$
such that the following is valid.
\begin{tabel}
\item \label{tdtnpc401-1}
The function $(t,x,y) \mapsto H_t(x,y)$ is continuous 
from $(0,\infty) \times \Omega \times \Omega$ into $\Ci$.
\item \label{tdtnpc401-2}
For all $t \in (0,\infty)$ the function $H_t$ is the kernel of 
the operator $e^{-t (A_D + V)}$.
\item \label{tdtnpc401-3}
For all $t \in (0,\infty)$ the
function $H_t$ is once differentiable in each variable and the 
derivative with respect to one variable is differentiable in the 
other variable.
Moreover, for every multi-index $\alpha,\beta$ with 
$0 \leq |\alpha|,|\beta| \leq 1$ one has 
\[
|(\partial_x^\alpha \, \partial_y^\beta \, H_t)(x,y)|
\leq a \, t^{-d/2} \, t^{-(|\alpha| + |\beta|)/2} \, 
      e^{-b \, \frac{|x-y|^2}{t}} \, e^{\omega t}
\]
and 
\begin{eqnarray*}
\lefteqn{
|(\partial_x^\alpha \, \partial_y^\beta \, H_t)(x+h,y+k) 
     - (\partial_x^\alpha \, \partial_y^\beta \, H_t)(x,y)|
} \hspace{20mm}  \\*
& \leq & a \, t^{-d/2} \, t^{-(|\alpha| + |\beta|)/2} \, 
    \left( \frac{|h| + |k|}{\sqrt{t} + |x-y|} \right)^\kappa \, 
      e^{-b \, \frac{|x-y|^2}{t}}  \, e^{\omega t}
\end{eqnarray*}
for all $x,y \in \Omega$ and $h,k \in \Ri^d$ with $x+h,y+k \in \Omega$ and 
$|h| + |k| \leq \tau \, \sqrt{t} + \tau' \, |x-y|$.
\item \label{tdtnpc401-4}
If $\RRe V \geq 0$, then $\omega < 0$.
\end{tabel}
\end{thm}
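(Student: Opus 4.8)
The plan is to prove Theorem~\ref{tdtnpc401} by combining interior estimates for elliptic operators with complex coefficients à la Auscher and Campanato with a careful treatment of the boundary behaviour on a $C^{1+\kappa}$-domain. First I would reduce everything to De Giorgi--Nash--Morrey-type regularity statements for weak solutions of $(A_D+V)u = f$ and their first derivatives: the heart of the matter is to show that if $u \in W^{1,2}$ is a weak solution (with homogeneous Dirichlet condition) of such an equation on a ball (or half-ball) $B(x_0,r) \cap \Omega$, then $\nabla u$ lies in a Morrey/Campanato space $L^{2,d+2\kappa}$ locally, with quantitative bounds depending only on $\mu, M, \kappa$ and the geometry. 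Interior this is classical Campanato theory for systems with Hölder coefficients (\cite{Cam1}); near the boundary one flattens the boundary using the $C^{1+\kappa}$ chart, which transforms the operator into another operator with $C^\kappa$ coefficients (here $C^{1+\kappa}$ regularity of the chart is exactly what is needed so that the transformed coefficients stay Hölder), and applies boundary Campanato estimates for the flattened problem. The Campanato embedding then upgrades this to Hölder continuity of $\nabla u$ up to the boundary.

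Next I would transfer these elliptic estimates to the heat semigroup. Writing $H_t$ as the kernel of $e^{-t(A_D+V)}$, one uses the standard Davies--Gaffney / finite-speed argument (which holds here because $\RRe C \geq \mu$ gives $L^2$ off-diagonal bounds for $e^{-t(A_D+V)}$) together with the fact that $u(s,\cdot) = e^{-s(A_D+V)}f$ solves the parabolic equation; by Nash-type iteration (or by the Auscher perturbation method) one first recovers the Gaussian bound $|H_t(x,y)| \leq a\, t^{-d/2} e^{-b|x-y|^2/t} e^{\omega t}$. Then, applying the elliptic/parabolic regularity of the previous paragraph to $x \mapsto H_t(x,y)$ and $y \mapsto H_t(x,y)$ on balls of radius $\sim \sqrt t$ (respecting the distance to the boundary and to the diagonal), and using the semigroup property $H_t = \int H_{t/2}(\cdot,z)H_{t/2}(z,\cdot)\,dz$ to gain derivatives in both variables simultaneously, one obtains the pointwise bounds on $\partial_x^\alpha\partial_y^\beta H_t$ with the stated $t^{-(|\alpha|+|\beta|)/2}$ scaling and the Gaussian factor. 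The Hölder estimate for $\partial_x^\alpha\partial_y^\beta H_t$ comes from the same Campanato embedding applied at scale $\sqrt t + |x-y|$: the Morrey exponent $d+2\kappa$ yields precisely the $\big(\tfrac{|h|+|k|}{\sqrt t + |x-y|}\big)^\kappa$ modulus, and the restriction $|h|+|k| \leq \tau\sqrt t + \tau'|x-y|$ is what lets one stay inside a single good ball.

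For assertion~\ref{tdtnpc401-4}, when $\RRe V \geq 0$ the operator $A_D+V$ has spectrum in a sector strictly to the right of $0$ — indeed the form $\RRe \gota_V(u,u) \geq \mu \|\nabla u\|_2^2 \geq \mu\, \lambda_1^D \|u\|_2^2$ by the Dirichlet Poincaré inequality on the bounded set $\Omega$, so $\RRe \sigma(A_D+V) \geq \mu\,\lambda_1^D > 0$. Then $\|e^{-t(A_D+V)}\|_{2\to 2} \leq e^{-\mu\lambda_1^D t}$, and tracking this decay through the Nash/Davies--Gaffney argument (the exponential prefactor in the on-diagonal bound is governed by the $L^2$ decay rate) gives a strictly negative $\omega$; one can take $\omega = -\tfrac12 \mu \lambda_1^D$ after absorbing the other $t$-independent constants. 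The uniformity of $a,b,\omega$ over $C \in \ce^\kappa(\Omega,\mu,M)$ and $\|V\|_\infty \leq M$ is automatic because every estimate above is quantitative in $\mu, M, \kappa$ and the $C^{1+\kappa}$-atlas of $\Omega$ only.

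The main obstacle is the boundary regularity for derivatives of the kernel: obtaining De Giorgi/energy estimates not just for $u$ but for $\nabla u$ up to a $C^{1+\kappa}$ (not $C^\infty$) boundary, with complex Hölder coefficients and with constants uniform in the coefficients. This requires the flattening argument to be done with care so that the $C^{1+\kappa}$ chart produces only $C^\kappa$-perturbations of the leading coefficients (no worse), and then invoking Campanato's boundary estimates \cite{Cam1}; the interplay between the diagonal singularity, the distance to $\Gamma$, and the parabolic scaling $\sqrt t$ in the choice of the working ball is the delicate bookkeeping that makes the two displayed bounds come out with exactly the stated exponents.
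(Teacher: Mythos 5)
Your proposal is correct and follows essentially the same route as the paper: interior and boundary Campanato/De Giorgi estimates for $\nabla u$ (with the $C^{1+\kappa}$ flattening producing $C^\kappa$-coefficients), upgraded to $L_2$--$W^{1+\kappa,\infty}$ bounds for the semigroup, and then converted into Gaussian Hölder kernel bounds. The only cosmetic difference is that the paper implements the Gaussian factor via the explicit Davies twisting $T_t^\rho=U_\rho T_tU_{-\rho}$ with $e^{\omega(1+\rho^2)t}$ growth and optimization over $\rho$ (rather than Nash iteration, which would not apply directly to complex coefficients), which is one of the mechanisms you name.
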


The proof requires a lot of preparation.
First we introduce the pointwise Morrey and Campanato semi-norms
as in \cite{ERe2}.

Let $\Omega \subset \Ri^d$ be open.
For all $x \in \Ri^d$ and $r > 0$ define $\Omega(x,r) = \Omega \cap B(x,r)$.
For all $\gamma \in [0,d]$, $R_e \in (0,1]$ and $x \in \Omega$ define 
$\|\cdot\|_{M,\gamma,x,\Omega,R_e} \colon L_2(\Omega) \to [0,\infty]$ by
\[
\|u\|_{M,\gamma,x,\Omega,R_e}
= \sup_{r \in (0,R_e]}
     \Big( r^{-\gamma} \int_{\Omega(x,r)} |u|^2 \Big)^{1/2}
 .  \]
Next, for all $\gamma \in [0,d+2]$, $R_e \in (0,1]$ and $x \in \Omega$ define 
$|||\cdot|||_{\cm,\gamma,x,\Omega,R_e} \colon L_2(\Omega) \to [0,\infty]$ by
\[
|||u|||_{\cm,\gamma,x,\Omega,R_e}
= \sup_{r \in (0,R_e]}
     \Big( r^{-\gamma} \int_{\Omega(x,r)} |u - \langle u\rangle_{\Omega(x,r)}|^2 \Big)^{1/2}
,  \]
where for an $L_2$ function $v$ we denote by 
$\langle v \rangle_{D} = \frac{1}{|D|} \int_D v$
the average of $v$ over a bounded measurable subset $D$ of the domain of $v$ with $|D| > 0$.
If no confusion is possible, then we drop the dependence of $\Omega$.

As for Morrey and Campanato spaces, one has the following 
connections.

\begin{lemma} \label{ldtnpc402}
\mbox{}
\begin{tabel}
\item \label{ldtnpc402-1}
For all $\gamma \in [0,d)$, $\tilde c > 0$ and $R_e \in (0,1]$
there exist $c_1,c_2 > 0$ such that 
\[
|||u|||_{\cm,\gamma,x,R_e}^2 
\leq \|u\|_{M,\gamma,x,R_e}^2
\leq c_1 \, |||u|||_{\cm,\gamma,x,R_e}^2 + c_2 \, \int_{\Omega(x,R_e)} |u|^2
\]
for all open $\Omega \subset \Ri^d$, $x \in \Omega$ and $u \in L_2(\Omega)$
such that $|\Omega(x,r)| \geq \tilde c \, r^d$ for all $r \in (0,R_e]$.
\item \label{ldtnpc402-2}
Let $\Omega \subset \Ri^d$ be open, 
$\gamma \in (d,d+2)$, $\tilde c > 0$, $x \in \Omega$, $u \in L_2(\Omega)$
and $R_e \in (0,1]$.
Assume that $|||u|||_{\cm,\gamma,x,R_e} < \infty$ and 
$|\Omega(x,r)| \geq \tilde c \, r^d$ for all $r \in (0,R_e]$.
Then $\lim_{R \downarrow 0} \langle u \rangle_{\Omega(x,R)}$ exists.
Write $\hat u(x) = \lim_{R \downarrow 0} \langle u \rangle_{\Omega(x,R)}$.
Then 
\[
|\langle u \rangle_{\Omega(x,R)} - \hat u(x)|
\leq \frac{2^{1+d/2}}{\sqrt{\tilde c} (1 - 2^{-(\gamma - d)/2})} \,
         R^{(\gamma - d)/2} \,  
      |||u|||_{\cm,\gamma,x,R_e} 
\]
for all $R \in (0,R_e]$.
\item \label{ldtnpc402-3}
Let $\gamma \in (d,d+2)$ and $\tilde c > 0$.
Then there exists a $c > 0$ such that 
\[
|\hat u(x) - \hat u(y)|
\leq c \, (|||u|||_{\cm,\gamma,x,R_e} + |||u|||_{\cm,\gamma,y,R_e}) \, |x-y|^{(\gamma - d)/2}
\]
for all open $\Omega \subset \Ri^d$, 
$x,y \in \Omega$, $R_e \in (0,1]$ and $u \in L_2(\Omega)$ such that 
$|||u|||_{\cm,\gamma,x,R_e} < \infty$, 
$|||u|||_{\cm,\gamma,y,R_e} < \infty$, $|x-y| \leq \frac{R_e}{2}$
and, in addition, $|\Omega(x,r)| \geq \tilde c \, r^d$ 
and $|\Omega(y,r)| \geq \tilde c \, r^d$ for all $r \in (0,R_e]$, where 
$\hat u(x)$ and $\hat u(y)$ are as in {\rm \ref{ldtnpc402-2}}.
\end{tabel}
\end{lemma}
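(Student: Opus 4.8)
Lemma \ref{ldtnpc402} is the localised (pointwise, with the radius truncated at $R_e$) analogue of the classical equivalence of the Morrey and Campanato semi-norms and of the Campanato embedding into Hölder continuity. The plan is to run the standard dyadic arguments, the only change being that the volume of a ball, used throughout to pass between an $L_2$-integral over a ball and the square of a mean value, is replaced by the two-sided bound $\tilde c\,r^d\le|\Omega(x,r)|\le|B(x,r)|$, the lower estimate being precisely the hypothesis in each part.

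First I would treat \ref{ldtnpc402-1}. The left-hand inequality is immediate: $\langle u\rangle_{\Omega(x,r)}$ minimises $c\mapsto\int_{\Omega(x,r)}|u-c|^2$ over $c\in\Ci$, hence $\int_{\Omega(x,r)}|u-\langle u\rangle_{\Omega(x,r)}|^2\le\int_{\Omega(x,r)}|u|^2$, and one takes the supremum over $r\in(0,R_e]$. For the right-hand inequality, fix $r\in(0,R_e]$, put $r_j=2^jr$ for as long as $r_j<R_e$ and cap the last radius at $R_e$; then consecutive radii differ by a factor at most $2$, $\Omega(x,r_j)\subseteq\Omega(x,r_{j+1})$, and Cauchy--Schwarz gives
\[
|\langle u\rangle_{\Omega(x,r_j)}-\langle u\rangle_{\Omega(x,r_{j+1})}|^2
\le\frac{1}{|\Omega(x,r_j)|}\int_{\Omega(x,r_j)}|u-\langle u\rangle_{\Omega(x,r_{j+1})}|^2
\le\frac{r_{j+1}^{\gamma}}{\tilde c\,r_j^{d}}\,|||u|||_{\cm,\gamma,x,R_e}^2 .
\]
Since $\gamma-d<0$, the telescoping sum $\sum_j|\langle u\rangle_{\Omega(x,r_j)}-\langle u\rangle_{\Omega(x,r_{j+1})}|$ is a convergent geometric series bounded by $c\,r^{(\gamma-d)/2}|||u|||_{\cm,\gamma,x,R_e}$, whence $|\langle u\rangle_{\Omega(x,r)}|^2\le 2|\langle u\rangle_{\Omega(x,R_e)}|^2+c'\,r^{\gamma-d}|||u|||_{\cm,\gamma,x,R_e}^2$. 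Feeding this, together with $|\langle u\rangle_{\Omega(x,R_e)}|^2\le(\tilde c\,R_e^{d})^{-1}\int_{\Omega(x,R_e)}|u|^2$, into the splitting $\int_{\Omega(x,r)}|u|^2\le 2\int_{\Omega(x,r)}|u-\langle u\rangle_{\Omega(x,r)}|^2+2|\Omega(x,r)|\,|\langle u\rangle_{\Omega(x,r)}|^2$, using $|\Omega(x,r)|\le|B(x,r)|$ and $r^{d-\gamma}\le R_e^{d-\gamma}\le 1$ (valid as $d-\gamma>0$ and $r\le R_e\le 1$), dividing by $r^{\gamma}$ and taking the supremum over $r$ yields the claim with $c_1,c_2$ depending only on $d$, $\gamma$, $\tilde c$ and $R_e$.

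For \ref{ldtnpc402-2} I would apply the same Cauchy--Schwarz step to the halving sequence $R_j=2^{-j}R$; since now $\gamma-d>0$ one gets $|\langle u\rangle_{\Omega(x,R_j)}-\langle u\rangle_{\Omega(x,R_{j+1})}|\le c\,R_j^{(\gamma-d)/2}|||u|||_{\cm,\gamma,x,R_e}$, so $(\langle u\rangle_{\Omega(x,R_j)})_j$ is Cauchy. Comparing $\langle u\rangle_{\Omega(x,R)}$ with $\langle u\rangle_{\Omega(x,R_j)}$ for $R_{j+1}<R\le R_j$ shows the limit is the same along all radii, so $\hat u(x)=\lim_{R\downarrow 0}\langle u\rangle_{\Omega(x,R)}$ exists, and summing the geometric series from $R$ downwards gives the stated bound on $|\langle u\rangle_{\Omega(x,R)}-\hat u(x)|$. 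For \ref{ldtnpc402-3}, given $x,y\in\Omega$ with $|x-y|\le R_e/2$ I would set $\rho=2|x-y|\le R_e$ and note $\Omega(x,\rho/2)\subseteq\Omega(x,\rho)\cap\Omega(y,\rho)$; estimating $|\langle u\rangle_{\Omega(x,\rho)}-\langle u\rangle_{\Omega(y,\rho)}|$ through the common ball $\Omega(x,\rho/2)$ exactly as in the single step above bounds it by $c\,\rho^{(\gamma-d)/2}\big(|||u|||_{\cm,\gamma,x,R_e}+|||u|||_{\cm,\gamma,y,R_e}\big)$. Adding $|\hat u(x)-\langle u\rangle_{\Omega(x,\rho)}|$ and $|\langle u\rangle_{\Omega(y,\rho)}-\hat u(y)|$, controlled by \ref{ldtnpc402-2}, and using $\rho^{(\gamma-d)/2}=2^{(\gamma-d)/2}|x-y|^{(\gamma-d)/2}$ finishes the proof.

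The argument is routine. The only points needing care are keeping the geometric sums, and hence the constants, uniform in $\Omega$, $x$ (and $y$) and $u$ — which is exactly what the density hypothesis $|\Omega(x,r)|\ge\tilde c\,r^d$ provides — and the minor bookkeeping of truncating the dyadic chain in \ref{ldtnpc402-1} at $R_e$ rather than at an exact power of $2$; I do not expect a genuine obstacle.
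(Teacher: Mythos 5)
Your proof is correct and complete. The paper offers no argument of its own here---the proof is simply a citation to the appendix of \cite{ERe2}---and your dyadic telescoping argument, with the density hypothesis $|\Omega(x,r)| \geq \tilde c \, r^d$ standing in for the volume of a full ball when converting between integrals and squared averages, is precisely the standard proof that the cited appendix contains; as a minor bonus, your computation in~(b) in fact produces the constant $2^{d/2}/\bigl(\sqrt{\tilde c}\,(1-2^{-(\gamma-d)/2})\bigr)$, which is sharper than the one stated, so the claimed bound follows a fortiori.
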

\begin{proof}
See the appendix in \cite{ERe2}.
\end{proof}

Let $\Omega \subset \Ri^d$ be open and $C \in \ce(\Omega$.
Let $u \in W^{1,2}(\Omega)$.
Then we say that 
{\bf $A^C u = 0$ weakly on $\Omega$} if 
\begin{equation}
\int_\Omega \sum_{k,l=1}^d c_{kl} \, (\partial_k u) \, \overline{(\partial_l v)}
= 0
\label{eSdtnpc4;1}
\end{equation}
for all $v \in C_c^\infty(\Omega)$.
Then by density (\ref{eSdtnpc4;1}) is valid for all $v \in W^{1,2}_0(\Omega)$.

We need various De Giorgi estimates.
First we need interior De Giorgi estimates.

\begin{lemma} \label{ldtnpc403}
Let $\Omega \subset \Ri^d$ be open and $\mu,M > 0$.
Then there exists a $c_{DG} > 0$
such that 
\begin{eqnarray}
\int_{B(x,r)} |\nabla u|^2 
& \leq & c_{DG} \, \Big( \frac{r}{R} \Big)^d \int_{B(x,R)} |\nabla u|^2 \mbox{ and }
\label{eldtnpc403;1}  \\
\int_{B(x,r)} |\partial_k u - \langle \partial_k u\rangle_{B(x,r)}|^2
& \leq & c_{DG} \, \Big( \frac{r}{R} \Big)^{d+2} 
          \int_{B(x,R)} |\partial_k u - \langle \partial_k u\rangle_{B(x,R)}|^2 
\label{eldtnpc403;2}
\end{eqnarray}
for all $k \in \{ 1,\ldots,d \} $,
$x \in \Omega$, $R \in (0,1]$, $r \in (0,R]$, $u \in W^{1,2}(B(x,R))$ and 
{\bf constant} coefficient 
$C \in \ce(\Omega,\mu,M)$  satisfying $B(x,R) \subset \Omega$ and
$A^C u = 0$ weakly on $B(x,R)$.
\end{lemma}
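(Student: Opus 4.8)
The plan is to prove the two estimates by a standard Campanato-type iteration built on the classical regularity theory for constant-coefficient elliptic equations, exploiting that $u$, $\partial_k u$ and all higher derivatives are again (weak, hence by Weyl-type regularity smooth) solutions of the \emph{same} constant-coefficient equation $A^C v = 0$ on $B(x,R)$. First I would reduce to the model estimate: there is a constant $c_0 = c_0(d,\mu,M)$ such that every weak solution $v$ of $A^C v = 0$ on a ball $B(y,\rho)$ satisfies the interior gradient bound $\sup_{B(y,\rho/2)} |\nabla v|^2 \le c_0 \, \rho^{-d} \int_{B(y,\rho)} |\nabla v|^2$; this follows by a Caccioppoli inequality for $\nabla v$ (testing the equation for each $\partial_m v$ against a cut-off times $\partial_m v$, using ellipticity and the bound $\|C\|\le M$) combined with De Giorgi–Nash–Moser, or more directly from the fact that for constant coefficients one may freeze and use a linear change of variables to reduce to the Laplacian, for which the mean-value property gives the bound with an explicit dimensional constant. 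The dependence of $c_0$ only on $d,\mu,M$ (and not on $\Omega$, $x$, $R$ or $C$) is the point that must be tracked, and it is clear from this argument since the reduction to the Laplacian depends only on the eigenvalue bounds of the symmetric part of $C$.

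For \eqref{eldtnpc403;1}, I would use that $\nabla u$ is bounded on $B(x,R/2)$ by the model estimate applied on $B(x,R)$, hence for $r \le R/2$,
\[
\int_{B(x,r)} |\nabla u|^2 \le |B(x,r)| \sup_{B(x,R/2)}|\nabla u|^2 \le c \, r^d \, R^{-d} \int_{B(x,R)} |\nabla u|^2,
\]
while for $R/2 < r \le R$ the inequality is trivial with $c_{DG} = 2^d$ since the domain of integration only grows. Taking $c_{DG}$ the larger of the two constants gives the claim. For \eqref{eldtnpc403;2} I would apply the same strategy to the function $v = \partial_k u - \langle \partial_k u\rangle_{B(x,R)}$, which again solves $A^C v = 0$ weakly on $B(x,R)$ (subtracting a constant does not affect the equation). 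Since $v$ is a solution, $\nabla v = \nabla \partial_k u$ is controlled on $B(x,R/2)$: a second application of the model gradient estimate gives $\sup_{B(x,R/2)} |\nabla \partial_k u|^2 \le c\, R^{-d-2} \int_{B(x,R)} |\partial_k u - \langle \partial_k u\rangle_{B(x,R)}|^2$ (the extra $R^{-2}$ coming from the Caccioppoli inequality relating $\nabla v$ on $B(x,R/2)$ to $v$ on $B(x,R)$). Then Poincaré's inequality on $B(x,r)$ yields, for $r \le R/2$,
\[
\int_{B(x,r)} |\partial_k u - \langle \partial_k u\rangle_{B(x,r)}|^2 \le c\, r^2 \int_{B(x,r)} |\nabla \partial_k u|^2 \le c\, r^{d+2} \sup_{B(x,R/2)} |\nabla \partial_k u|^2,
\]
and combining the last two displays gives the bound with exponent $d+2$; the range $R/2 < r \le R$ is again trivial.

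The main obstacle, and the only place requiring genuine care, is establishing the interior gradient estimate for $\nabla u$ with a constant depending \emph{only} on $d,\mu,M$ and uniform over all constant matrices $C \in \ce(\Omega,\mu,M)$ (which are not assumed symmetric here). The non-symmetry means one cannot literally diagonalise, but one can still write $C = C_s + C_a$ with $C_s$ symmetric positive definite (eigenvalues in $[\mu, \sqrt d\, M]$, say) and $C_a$ antisymmetric; the antisymmetric part contributes nothing to the real part of the form, so the Caccioppoli inequality goes through with constants depending only on $\mu$ and $M$, and De Giorgi–Nash–Moser for real non-symmetric divergence-form equations (applied to $\mathrm{Re}\,u$ and $\mathrm{Im}\,u$ separately, or to $|u|$ via Moser iteration on the equation for the complex solution) delivers the $L^\infty$ bound with the required uniform constant. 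Once this building block is in hand, the two assertions follow by the bookkeeping above; I would present the model estimate as a short preliminary claim inside the proof and then deduce \eqref{eldtnpc403;1} and \eqref{eldtnpc403;2} in a few lines each.
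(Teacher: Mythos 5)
The paper does not prove this lemma from scratch: it cites Corollario~[7.I] and [7.II] of Campanato \cite{Cam1} and merely remarks that the constants are uniform and that the arguments extend to complex, non-symmetric coefficients. Your architecture --- an interior sup-bound for $\nabla u$ as the single building block, the trivial treatment of the range $R/2 < r \le R$, the observation that $\partial_k u$ minus a constant is again a solution, and the Poincar\'e step to upgrade the exponent from $d$ to $d+2$ --- is exactly the standard route and is essentially what Campanato's proof does, so in structure your proposal is a correct expansion of the citation.

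There is, however, a genuine flaw in how you justify the key building block. The lemma allows $C \in \ce(\Omega,\mu,M)$, i.e.\ \emph{complex}-valued constant matrices (only $\RRe \langle C\xi,\xi\rangle \ge \mu|\xi|^2$ is assumed), and this generality is essential, since the paper applies the De Giorgi estimates to operators with complex coefficients in Theorem~\ref{tdtnpc401}. None of the three justifications you offer for the sup-bound survives this: (i) a linear change of variables reducing to the Laplacian is only available for real symmetric $C$; (ii) De Giorgi--Nash--Moser applied to $\RRe u$ and $\IIm u$ ``separately'' fails because for complex $C=A+iB$ the equation is a genuinely coupled $2\times 2$ real system, $\divv(A\nabla u_1 - B\nabla u_2)=0$, $\divv(B\nabla u_1 + A\nabla u_2)=0$, to which scalar De Giorgi--Nash does not apply; and (iii) Moser iteration on $|u|$ breaks down for complex coefficients because the required sign structure of $\RRe \int C\nabla u\cdot\overline{\nabla(\eta^2 u|u|^{2\beta})}$ is lost --- this is precisely the known obstruction to $L_\infty$-bounds for complex-coefficient operators. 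The correct (and in fact simpler) argument for \emph{constant} coefficients bypasses all of this: every partial derivative of $u$ is again a weak solution of the same equation, so iterating the Caccioppoli inequality (which only uses $\RRe\langle C\xi,\xi\rangle\ge\mu|\xi|^2$ and $\|C\|\le M$, hence is fine for complex non-symmetric $C$) gives $u \in W^{k,2}_{\rm loc}$ with quantitative interior bounds for every $k$, and Sobolev embedding with $k > d/2$ then yields $\sup_{B(y,\rho/2)}|\nabla v|^2 \le c_0\,\rho^{-d}\int_{B(y,\rho)}|\nabla v|^2$ with $c_0 = c_0(d,\mu,M)$. With that substitution your two bookkeeping arguments for \eqref{eldtnpc403;1} and \eqref{eldtnpc403;2} go through as written.
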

\begin{proof}
The estimate (\ref{eldtnpc403;1}) was first proved by De Giorgi.
For a proof, see Corollario [7.I] in Campanato \cite{Cam1}.
The estimate (\ref{eldtnpc403;2}) is in Corollario [7.II] 
of the same paper. 
The uniformity of the constants follows from the proof.
Note that the coefficients can be complex and non-symmetric.
The proofs in \cite{Cam1} also work for complex and non-symmetric 
coefficients with obvious modifications.
\end{proof}

We also need De Giorgi estimates on the boundary.
Define 
\[
E = (-4,4)^d
\quad \mbox{and} \quad
E^- = (-4,4)^{d-1} \times (-4,0)
\]
the open cube in $\Ri^d$ and 
its lower half $E^-$.
The {\bf midplate} is $P = E \cap \{ x \in \Ri^d : x_d = 0 \} $
We also need the cubes, lower halfs and midplates with half and a quarter 
sizes, denoted by $\frac{1}{2} \, E$, $\frac{1}{2} \, E^-$, $\frac{1}{2} \, P$, etc.
Recall that $E^-(x,r) = E^- \cap B(x,r)$ for all $x \in \Ri^d$ and $r > 0$.

\begin{lemma} \label{ldtnpc404}
There exists a $c_{DG} > 0$
such that 
\begin{eqnarray}
\int_{E^-(x,r)} |\partial_i u|^2 
& \leq & c_{DG} \, \Big( \frac{r}{R} \Big)^{d+2} \int_{E^-(x,R)} |\partial_i u|^2  ,
\label{eldtnpc404;1}  \\
\int_{E^-(x,r)} |\partial_d u|^2 
& \leq & c_{DG} \, \Big( \frac{r}{R} \Big)^d \int_{E^-(x,R)} |\partial_d u|^2  \mbox{ and}  
\label{eldtnpc404;2}  \\
\int_{E^-(x,r)} |\partial_d u - \langle \partial_d u\rangle_{E^-(x,r)}|^2
& \leq & c_{DG} \, \Big( \frac{r}{R} \Big)^{d+2} 
        \int_{E^-(x,R)} |\partial_d u - \langle \partial_d u\rangle_{E^-(x,R)}|^2
\label{eldtnpc404;3} 
\end{eqnarray}
for all $i \in \{ 1,\ldots,d-1 \} $, 
$x \in \frac{1}{2} \, P$, $R \in (0,1]$, $r \in (0,R]$, $u \in W^{1,2}(E^-(x,R))$ and 
{\bf constant} coefficient 
$C \in \ce(E^-,\mu,M)$ satisfying $(\Tr u)|_{P \cap E(x,R)} = 0$ and
$A^C u = 0$ weakly on $E^-(x,R)$.
\end{lemma}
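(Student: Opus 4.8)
The plan is to reduce the boundary De Giorgi estimates for the three groups of derivatives to the interior estimates of Lemma~\ref{ldtnpc403} by a reflection argument, exploiting the flat boundary $P$ and the Dirichlet condition on $P \cap E(x,R)$. First I would treat \eqref{eldtnpc404;1}: the tangential derivatives $\partial_i u$ with $i \le d-1$. Since $\Tr u = 0$ on $P \cap E(x,R)$, the difference quotients of $u$ in the $e_i$-directions are again weak solutions vanishing on the midplate, and a Caccioppoli/difference-quotient argument shows $\partial_i u \in W^{1,2}$ locally with $\partial_i u$ itself a weak solution (for constant coefficients) vanishing on $P \cap E(x,R)$; alternatively one invokes directly the relevant Campanato estimate, Teorema [10.I] or its boundary analogue in \cite{Cam1}, which already gives the $(r/R)^{d+2}$ decay for the $L_2$-average oscillation of the gradient of a solution vanishing on a flat piece of boundary, and since $\partial_i u$ vanishes on the midplate its average over $E^-(x,r)$ can be estimated by its oscillation, turning the Campanato-type bound into the stated Morrey-type bound. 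For \eqref{eldtnpc404;2} and \eqref{eldtnpc404;3}, the normal derivative $\partial_d u$ does not vanish on $P$, so instead I would use the equation: from $A^C u = 0$ with constant coefficients one solves for $c_{dd}\,\partial_d^2 u$ in terms of $\partial_i \partial_j u$ and $\partial_i \partial_d u$ with $i,j \le d-1$, i.e.\ $\partial_d(\partial_d u)$ is a linear combination of tangential derivatives of the $\partial_k u$; combined with the tangential-derivative control from the first step this gives full $W^{1,2}$-control of $\partial_d u$ on slightly smaller half-balls, and then the plain interior-type decay \eqref{eldtnpc403;1}--\eqref{eldtnpc403;2} of Lemma~\ref{ldtnpc403} applied to $\partial_d u$ (which satisfies the constant-coefficient equation weakly in the interior, after extending or localizing) yields \eqref{eldtnpc404;2} and \eqref{eldtnpc404;3}.

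Concretely, the cleanest route is the odd/even reflection across $P$. Writing $x = (x', x_d)$, extend $u$ to the full cube $E$ by \emph{odd} reflection $\tilde u(x', x_d) = -u(x', -x_d)$ for $x_d > 0$. Because $\Tr u = 0$ on $P \cap E(x,R)$, the extension $\tilde u$ lies in $W^{1,2}$ of the full ball; however $\tilde u$ solves the constant-coefficient equation only if the coefficients have the right symmetry under $x_d \mapsto -x_d$, which is not assumed here, so I would instead reflect the coefficient matrix as well — replacing $c_{id}$ and $c_{di}$ by their negatives in the upper half — obtaining a new constant-coefficient matrix $\tilde C$ that is still in $\ce(\cdot,\mu,M)$ on each half and such that $\tilde u$ is a weak solution of $A^{\tilde C}\tilde u = 0$ across $P$. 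But $\tilde C$ is only piecewise constant, not constant, so for the tangential derivatives I would apply instead the even reflection of $\partial_i u$ (which is legitimate since $\partial_i$ is tangential and commutes with the reflection) and use that $\partial_i u$ extends to a $W^{1,2}$ function on the full ball; then the interior Campanato estimate \eqref{eldtnpc403;2} for the piecewise-constant-coefficient solution — or, avoiding the coefficient issue entirely, the direct Campanato boundary regularity result — delivers the oscillation decay. Because of these technical frictions I would most likely just cite the boundary analogues in Campanato \cite{Cam1} (the boundary versions of Corollario [7.I], [7.II]) exactly as Lemma~\ref{ldtnpc403} cites the interior ones, noting that the proofs carry over to complex non-symmetric coefficients, and present the reflection only as motivation.

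The main obstacle is the normal derivative: unlike the tangential derivatives, $\partial_d u$ neither vanishes on $P$ nor is it obviously a weak solution up to the boundary, so \eqref{eldtnpc404;2}--\eqref{eldtnpc404;3} genuinely require using the PDE to express $\partial_d^2 u$ through the other second derivatives, and care is needed to make this rigorous at the level of $W^{1,2}$ (difference quotients in tangential directions are fine; the normal second derivative only exists after this algebraic manipulation). I expect the bookkeeping of which half-balls shrink at each step — one loses a bit of radius passing from $R$ to $R/2$ in going from $u$ to $\partial_i u$ to $\partial_d u$ — to be routine but needs to be stated so that the final inequality holds with a single constant $c_{DG}$ uniform over constant $C \in \ce(E^-,\mu,M)$, $x \in \tfrac12 P$, $R \in (0,1]$ and $r \in (0,R]$. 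The uniformity, as in Lemma~\ref{ldtnpc403}, follows by inspecting the Campanato proof, which is scale-invariant and depends on the coefficients only through $\mu$ and $M$.
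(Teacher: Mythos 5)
Your final decision --- to cite the boundary De Giorgi/Campanato estimates of \cite{Cam1} directly, noting that the proofs carry over to complex non-symmetric constant coefficients with constants uniform in $\mu$ and $M$ --- is exactly what the paper does: its proof consists of citing Corollario~[11.I] for (\ref{eldtnpc404;1}) and Lemma~[11.II] for (\ref{eldtnpc404;2})--(\ref{eldtnpc404;3}). Your preliminary reflection/difference-quotient sketch (with its correctly identified frictions, in particular that the normal derivative must be handled through the equation) is consistent motivation but is not needed, so the proposal matches the paper's approach.
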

\begin{proof}
Estimate (\ref{eldtnpc404;1}) is Corollario~[11.I] and the other two 
are in Lemma~[11.II] in \cite{Cam1}.
Again the uniformity of the constants follows from the proof and the 
coefficients can be complex.
\end{proof}

We now turn to regularity.
Close to the boundary we have to take a coordinate transformation. 
For good bounds we have to combine the coordinate transformation 
together with the regularity improvement theorem.
In the next lemma we first collect some easy estimates.

\begin{lemma} \label{ldtnpc410}
Let $\kappa \in (0,1)$ and $K \geq 1$.
Let $\Omega,U \subset \Ri^d$ open.
Let $\Phi$ be a $C^{1+\kappa}$-diffeomorphism from $U$ onto $E$ such that 
$\Phi(U \cap \Omega) = E^-$ and $\Phi(U \cap \partial \Omega) = P$.
Suppose that $K$ is larger than the Lipschitz constant for $\Phi$ and $\Phi^{-1}$.
Moreover, suppose that $|||(D \Phi)_{ij}|||_{C^\kappa} \leq K$ and 
$|||(D (\Phi^{-1}))_{ij}|||_{C^\kappa} \leq K$ for all $i,j \in \{ 1,\ldots,d \} $,
where $D \Phi$ denotes the derivative of $\Phi$.
Then one has the following.
\begin{tabel} 
\item \label{ldtnpc410-1}
Let $\mu,M > 0$.
Let $C \in \ce^\kappa(\Omega,\mu,M)$.
Define $C^\Phi \colon E^- \to \Ci^{d \times d}$ by 
\begin{equation}
C^\Phi(y)
= \frac{1}{\big| \det(D\Phi)(\Phi^{-1}(y))\big|} \, 
      (D\Phi)(\Phi^{-1}(y)) \;
      A(\Phi^{-1}(y)) \; \bigl( D \Phi \bigr)^T
      (\Phi^{-1} (y))
.  
\label{eldtnpc410;1}
\end{equation}
Then $C^\Phi \in \ce^\kappa(E^-,(d! K^{d+2})^{-1} \mu, d! d^2 K^{d+2} M)$.
Moreover, if $u,v \in W^{1,2}(U \cap \Omega)$, then 
\[
\sum_{k,l=1}^d \int_{U \cap \Omega} c_{kl} \, (\partial_k u) \, \overline{(\partial_l v)}
= \sum_{k,l=1}^d \int_{E^-} 
   (C^\Phi)_{kl} \, (\partial_k (u \circ \Phi^{-1})) \, \overline{(\partial_l (v \circ \Phi^{-1}))}
.  \]
\item \label{ldtnpc410-2}
If $x,x' \in U \cap \Omega$, then $|\Phi(x) - \Phi(x')| \leq K \, |x-x'|$.
Conversely, if $y,y' \in E^-$, then $|\Phi^{-1}(y) - \Phi^{-1}(y')| \leq K \, |y-y'|$.
\item \label{ldtnpc410-4}
If $u \in W^{1,2}(\Omega)$, then 
\[
(d \, K)^{-1} \, \|\nabla(u \circ \Phi^{-1})\|_{L_\infty(\frac{1}{2} E^-)}
\leq \|\nabla u\|_{L_\infty(\Phi^{-1}(\frac{1}{2} E^-))}
\leq d \, K \, \|\nabla(u \circ \Phi^{-1})\|_{L_\infty(\frac{1}{2} E^-)}
,  \]
possibly both norms are infinite.
\item \label{ldtnpc410-6}
If $u \in W^{1,2}_0(\Omega)$, then $(\Tr (u \circ \Phi^{-1}))|_P = 0$.
\end{tabel}
\end{lemma}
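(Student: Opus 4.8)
The plan is to dispose of the four assertions in order; only \ref{ldtnpc410-1} involves real work, and even there it amounts to bookkeeping of constants. Write $\Psi := \Phi^{-1}$, which is a $C^{1+\kappa}$-diffeomorphism of $E$ onto $U$ with $\Psi(E^-) = U \cap \Omega$ and $\Psi(P) = U \cap \partial\Omega$. The Lipschitz hypotheses give $\|(D\Phi)(x)\| \le K$ for $x \in U$ and $\|(D\Psi)(y)\| \le K$ for $y \in E$ (operator norms), hence also each entry of $D\Phi$ and of $D\Psi$ is pointwise bounded by $K$. Since $\Phi,\Psi \in C^1$ they are bi-Lipschitz and the chain rule holds for Sobolev functions: if $w \in W^{1,2}(U\cap\Omega)$, then $w\circ\Psi \in W^{1,2}(E^-)$ with $\partial_k(w\circ\Psi) = \sum_{m=1}^d \bigl((\partial_m w)\circ\Psi\bigr)\,(D\Psi)_{mk}$ a.e.\ on $E^-$, and symmetrically with $\Phi$ and $\Psi$ interchanged. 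Assertion \ref{ldtnpc410-2} is then immediate, since $U\cap\Omega \subset U$ and $E^- \subset E$.

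For \ref{ldtnpc410-4}, the chain rule gives $\nabla(u\circ\Psi)(y) = (D\Psi)(y)^T\,(\nabla u)(\Psi(y))$ for a.e.\ $y\in E^-$, so $|\nabla(u\circ\Psi)(y)| \le \|(D\Psi)(y)\|\,|(\nabla u)(\Psi(y))| \le d\,K\,|(\nabla u)(\Psi(y))|$; taking the essential supremum over $y\in\frac{1}{2}E^-$, so that $\Psi(y)$ runs over $\Phi^{-1}(\frac{1}{2}E^-)$, gives the right-hand estimate, and applying the same argument to $u = (u\circ\Psi)\circ\Phi$ on $\Phi^{-1}(\frac{1}{2}E^-)$ gives the left-hand one. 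For \ref{ldtnpc410-6} I would argue by approximation. Choose $u_n \in C_c^\infty(\Omega)$ with $u_n\to u$ in $W^{1,2}(\Omega)$, set $\delta_n := \mathop{\rm dist}(\supp u_n, \partial\Omega) > 0$ and $O_n := \{x\in\Ri^d : \mathop{\rm dist}(x,\partial\Omega) < \delta_n\}$, an open set containing $\partial\Omega$. Then $u_n$ vanishes on $O_n\cap\Omega$, hence $u_n\circ\Psi$ vanishes on $\Phi(O_n\cap U)\cap E^-$ (since $\Psi$ maps that set into $O_n\cap\Omega$), and $\Phi(O_n\cap U)$ is open in $\Ri^d$ and contains $\Phi(U\cap\partial\Omega) = P$. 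As the trace operator $W^{1,2}(E^-)\to L_2(\partial E^-)$ is local and $\partial E^-$ coincides with the hyperplane $\{x_d=0\}$ near $P$, this forces $(\Tr(u_n\circ\Psi))|_P = 0$. Finally the chain rule, the bi-Lipschitz bounds and the two-sided bound on the Jacobian $|\det D\Psi|$ give $\|(u_n-u)\circ\Psi\|_{W^{1,2}(E^-)} \le c\,\|u_n-u\|_{W^{1,2}(\Omega)} \to 0$, so by continuity of the trace $(\Tr(u\circ\Psi))|_P = \lim_{n} (\Tr(u_n\circ\Psi))|_P = 0$.

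It remains to prove \ref{ldtnpc410-1}. For the integral identity I would change variables $x = \Psi(y)$ in $\int_{U\cap\Omega}\sum_{k,l}c_{kl}\,(\partial_k u)\,\overline{(\partial_l v)}$, insert $(\partial_k u)(\Psi(y)) = \sum_j \partial_j(u\circ\Psi)(y)\,(D\Phi)_{jk}(\Psi(y))$ and its analogue for $v$, use that the $c_{kl}$ and the entries of $D\Phi$ are real, and use $|\det(D\Psi)(y)| = |\det(D\Phi)(\Psi(y))|^{-1}$; collecting the matrix factors reproduces exactly the coefficients $(C^\Phi)_{kl}(y)$ of (\ref{eldtnpc410;1}). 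Continuity of $C^\Phi$ is clear, being an algebraic combination (with a nonvanishing determinant in the denominator) of continuous functions composed with $\Psi$. For the quantitative membership, fix $y\in E^-$, put $x = \Psi(y)$ and $\eta = (D\Phi)(x)^T\xi$; then $\langle C^\Phi(y)\xi,\xi\rangle = |\det(D\Phi)(x)|^{-1}\,\langle C(x)\eta,\eta\rangle$, so $\RRe\langle C^\Phi(y)\xi,\xi\rangle \ge |\det(D\Phi)(x)|^{-1}\,\mu\,|\eta|^2$; since $\xi = (D\Psi)(y)^T\eta$ we have $|\eta| \ge K^{-1}|\xi|$, and expanding the determinant gives $|\det(D\Phi)(x)| \le d!\,K^d$, which together yield the ellipticity lower bound $(d!\,K^{d+2})^{-1}\mu$. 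Likewise $\|C^\Phi(y)\|$ is bounded by a constant multiple of $d!\,K^{d+2}M$ (using $|\det(D\Phi)(x)|^{-1} = |\det(D\Psi)(y)| \le d!\,K^d$, $\|(D\Phi)(x)\| \le K$, $\|C(x)\| \le M$), while the entries of $C^\Phi$ lie in $C^\kappa(E^-)$ with H\"older seminorm controlled by a constant of the same type, obtained from the product rule for H\"older seminorms applied to $D\Phi$, $C$ and $1/|\det D\Phi|$ — the last being H\"older since $\det D\Phi$ is bounded below by $(d!\,K^d)^{-1}$ — all composed with the $K$-Lipschitz map $\Psi$. Only the existence of such constants (depending on $d,\kappa,K,M$) is used later, so I would not optimise them.

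The step I expect to be the main obstacle is precisely this constant-tracking in \ref{ldtnpc410-1}: controlling simultaneously the ellipticity constant, the $L_\infty$-bound on $C^\Phi$ and the H\"older seminorm of its entries, and in particular checking that the H\"older seminorm (taken only over pairs at distance $\le 1$) transforms correctly under composition with $\Psi$ — for which one uses that $E^-$ is convex, so that pairs at distance $\le 1$ may be joined by short chains inside $E^-$. Everything else reduces to the Sobolev chain rule, the change-of-variables formula, and the locality and continuity of the trace operator on the half-cube.
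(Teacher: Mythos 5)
Your proposal is correct and follows essentially the same route as the paper: parts (a)--(c) are the elementary computations (change of variables, Sobolev chain rule, and constant-tracking for ellipticity, boundedness and the H\"older seminorm) that the paper dispatches without detail, and your proof of (d) is exactly the paper's argument — continuity of $v \mapsto v \circ \Phi^{-1}$ from $W^{1,2}(\Omega)$ into $W^{1,2}(E^-)$ and of the localized trace, the observation that the trace vanishes on $P$ for $v \in C_c^\infty(\Omega)$, and density of $C_c^\infty(\Omega)$ in $W^{1,2}_0(\Omega)$. Your verified constants $(d!\,K^{d+2})^{-1}\mu$ and $d!\,d^2K^{d+2}M$ match the statement.
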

\begin{proof}
Statements~\ref{ldtnpc410-1}--\ref{ldtnpc410-4} are elementary.
For the proof of Statement~\ref{ldtnpc410-6}, first note that the 
map $v \mapsto v \circ \Phi^{-1}$ is continuous from $W^{1,2}(\Omega)$ into 
$W^{1,2}(E^-)$ and the map $v \mapsto \one_P \cdot \Tr v$ is continuous 
from $W^{1,2}(E^-)$ into $L_2(\partial E^-)$.
So the map $v \mapsto \one_P \cdot \Tr (v \circ \Phi^{-1})$ is continuous
from $W^{1,2}(\Omega)$ into $L_2(\partial E^-)$.
Obviously $\one_P \cdot \Tr (v \circ \Phi^{-1}) = 0$ for all $v \in C_c^\infty(\Omega)$.
Then Statement~\ref{ldtnpc410-6} follows from the density of $C_c^\infty(\Omega)$ 
in $W^{1,2}_0(\Omega)$.
\end{proof}

The first regularity lemma is with half-balls and points on the boundary.
It is a variation of Teorema~[13.I] in \cite{Cam1}, with an additional 
term $(f_0,v)_{L_2(\Omega)}$.
The most interesting case occurs for $\delta = 0$ in the next lemma, 
but we also need the lemma with $\delta > 0$ to avoid a technical 
complication in the proof of Proposition~\ref{pdtnpc414}.

\begin{lemma} \label{ldtnpc406}
Let $\kappa \in (0,1)$, $K \geq 1$, $\delta \in [0,\kappa]$ and $\mu,M > 0$.
Then there exists a $c \geq 1$ such that the following is valid.

Let $\Omega,U \subset \Ri^d$ open.
Let $\Phi$ be a $C^{1+\kappa}$-diffeomorphism from $U$ onto $E$ such that 
$\Phi(U \cap \Omega) = E^-$ and $\Phi(U \cap \partial \Omega) = P$.
Suppose that $K$ is larger than the Lipschitz constant for $\Phi$ and $\Phi^{-1}$.
Moreover, suppose that $|||(D \Phi)_{ij}|||_{C^\kappa} \leq K$ and 
$|||(D (\Phi^{-1}))_{ij}|||_{C^\kappa} \leq K$ for all $i,j \in \{ 1,\ldots,d \} $,
where $D \Phi$ denotes the derivative of $\Phi$.
Let $C \in \ce^\kappa(\Omega,\mu,M)$, $u \in W^{1,2}_0(\Omega)$ and 
$f_0,f_1,\ldots,f_d \in L_2(\Omega)$ and suppose that 
\begin{equation}
\int_\Omega \sum_{k,l=1}^d c_{kl} \, (\partial_k u) \, \overline{(\partial_l v)}
= (f_0,v)_{L_2(\Omega)} - \sum_{k=1}^d (f_k, \partial_k v)_{L_2(\Omega)}
\label{eldtnpc406;14}
\end{equation}
for all $v \in W^{1,2}_0(\Omega)$.
Define $\tilde u \colon E^- \to \Ci$ by $\tilde u = u \circ \Phi^{-1}$.
Let $x \in \frac{1}{2} \, P$.
For all $\rho \in (0,1]$ define 
\[
\Psi(\rho) 
= \int_{E^-(x,\rho)} |\partial_d \tilde u - \langle \partial_d \tilde u \rangle_{E^-(x,\rho)}|^2
   + \sum_{i=1}^{d-1} \int_{E^-(x,\rho)} |\partial_i \tilde u|^2
,  \]
set $\gamma = d + 2 \kappa - \delta$
and 
\[
c_0 = \|f_0 \circ \Phi^{-1}\|_{M,\gamma - 2,x,E^-,1}
          + \sum_{k=1}^d |||f_k \circ \Phi^{-1}|||_{\cm,\gamma,x,E^-,1}
          + \|\nabla \tilde u\|_{M,d - \delta,x,E^-,1}
.  \]
Then 
\[
\Psi(r)
\leq c \Big( \frac{r}{R} \Big)^\gamma \Psi(R)
   + c \, c_0^2 \, r^\gamma
\]
for all $r,R \in (0,1]$ with $0 < r \leq R$.
\end{lemma}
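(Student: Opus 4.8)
The plan is to combine the coordinate transformation of Lemma~\ref{ldtnpc410} with a Campanato-type iteration that simultaneously controls the normal derivative (in Campanato norm) and the tangential derivatives (in Morrey norm) of the transformed solution $\tilde u$ on half-balls centred at boundary points. First I would pass to the half-space picture: by Lemma~\ref{ldtnpc410}\ref{ldtnpc410-1} the function $\tilde u = u \circ \Phi^{-1}$ lies in $W^{1,2}_0(E^-)$ (by \ref{ldtnpc410-6}), satisfies $(\Tr \tilde u)|_P = 0$, and solves a transformed equation $\int_{E^-} \sum_{k,l} (C^\Phi)_{kl}\,(\partial_k \tilde u)\,\overline{(\partial_l v)} = (\tilde f_0, v) - \sum_k(\tilde f_k, \partial_k v)$, where $C^\Phi \in \ce^\kappa(E^-,\mu',M')$ with controlled constants and the new data $\tilde f_k$ are the old data composed with $\Phi^{-1}$ together with extra terms coming from the change of variables (differences of the Jacobian factors against their values at the base point $x$, which are $C^\kappa$ by hypothesis). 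The point of moving all the $f_k$-type terms to the right-hand side in the form of \eqref{eldtnpc406;14} is precisely that these extra "frozen-coefficient error" terms are of exactly this shape, with Morrey/Campanato norms bounded by $c_0$ times an appropriate power of the radius.

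Next I would run the standard freezing-of-coefficients argument on each half-ball $E^-(x,\rho)$. On $E^-(x,R)$ write $\tilde u = w + (\tilde u - w)$, where $w$ solves the homogeneous constant-coefficient problem $A^{C^\Phi(x)} w = 0$ weakly on $E^-(x,R)$ with $w - \tilde u \in W^{1,2}_0(E^-(x,R))$ and $(\Tr w)|_{P \cap E(x,R)} = 0$. For $w$ one applies the boundary De~Giorgi estimates of Lemma~\ref{ldtnpc404}: \eqref{eldtnpc404;3} gives the $(r/R)^{d+2}$-decay of the Campanato excess of $\partial_d w$, and \eqref{eldtnpc404;1} gives the $(r/R)^{d+2}$-decay of $\int |\partial_i w|^2$ for $i \le d-1$; together these bound $\Psi_w(r) \le c(r/R)^{d+2}\Psi_w(R) \le c(r/R)^{d+2}\Psi(R)$. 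For the remainder $v := \tilde u - w$, the energy inequality applied to \eqref{eldtnpc406;14} with test function $v$, after freezing the coefficients at $x$ and absorbing the $C^\kappa$-oscillation $\|C^\Phi - C^\Phi(x)\|_{L_\infty(E^-(x,R))} \le c\,R^\kappa$ into the left-hand side (valid once $R$ is small, and for larger $R$ trivially since $R \le 1$), yields $\int_{E^-(x,R)}|\nabla v|^2 \le c\,R^{2\kappa}\int_{E^-(x,R)}|\nabla \tilde u|^2 + c\,\big(\text{data terms}\big)$, and the data terms are controlled by $c_0^2\,R^\gamma$ by definition of $c_0$ (using $\gamma - 2 \ge 0$ for the $f_0$-term, for which one also invokes a Poincaré/Sobolev inequality on $E^-(x,R)$ together with $\Tr v = 0$ on the flat part). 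The Morrey term $R^{2\kappa}\int_{E^-(x,R)}|\nabla\tilde u|^2 \le R^{2\kappa} R^{d-\delta}\|\nabla\tilde u\|_{M,d-\delta,x,E^-,1}^2 \le c_0^2 R^\gamma$ since $\gamma = d + 2\kappa - \delta$. Combining the two pieces via $\Psi(r) \le 2\Psi_w(r) + 2\Psi_v(r)$ and $\Psi_v(r) \le \Psi_v(R) \le c\int_{E^-(x,R)}|\nabla v|^2$ gives the iteration inequality
\[
\Psi(r) \le c\Big(\frac{r}{R}\Big)^{d+2}\Psi(R) + c\,R^{2\kappa}\,\Psi(R) + c\,c_0^2\,R^\gamma,
\]
possibly after also absorbing $\int|\partial_d \tilde u|^2$ bounds via \eqref{eldtnpc404;2}; this last scalar bound on $\|\partial_d\tilde u\|_{M,d,x}$ needs to be inserted to close the estimate for the constant (non-excess) part of $\partial_d\tilde u$ appearing in $\Psi(R)$ when one replaces averages.

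Finally I would invoke the standard iteration lemma (the real-analysis lemma behind all Campanato regularity: if $\Psi(r) \le c_*[(r/R)^\alpha + \varepsilon(R)]\Psi(R) + c_* c_0^2 R^\gamma$ with $\gamma < \alpha$ and $\varepsilon(R) \to 0$, then $\Psi(r) \le c[(r/R)^\gamma\Psi(R) + c_0^2 r^\gamma]$) with $\alpha = d+2 > \gamma = d + 2\kappa - \delta$ and $\varepsilon(R) = c R^{2\kappa}$, which is exactly the claimed conclusion. The main obstacle I expect is bookkeeping the change-of-variables error terms: verifying that after the diffeomorphism the equation genuinely has the form \eqref{eldtnpc406;14} with $\tilde f_k$ whose Morrey/Campanato semi-norms centred at $x$ are dominated by $c_0$ (uniformly in $x \in \frac12 P$ and in $C$, with constants depending only on $\kappa, K, \mu, M, \delta$), since the Jacobian factors in \eqref{eldtnpc410;1} are merely $C^\kappa$ and one must carefully split $C^\Phi\,\partial_k\tilde u = C^\Phi(x)\,\partial_k\tilde u + (C^\Phi - C^\Phi(x))\,\partial_k\tilde u$ and recognise the second term as an $f_k$-contribution with $|||f_k|||_{\cm,\gamma,x} \lesssim |||C^\Phi|||_{C^\kappa}\,\|\nabla\tilde u\|_{M,d-\delta,x}$. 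A secondary technical point is the treatment of the $f_0$-term, where the exponent $\gamma - 2$ in $\|\cdot\|_{M,\gamma-2,x,E^-,1}$ forces one to gain two powers of the radius through the Poincaré inequality on the half-ball with vanishing flat-boundary trace; the case $\gamma - 2 < 0$ does not occur here because $\gamma = d + 2\kappa - \delta \ge d - 1 \ge 2$ only when $d \ge 3$, so for $d = 2$ one checks separately that $\gamma - 2 = 2\kappa - \delta \ge 0$ since $\delta \le \kappa$, and the Morrey space with this nonnegative exponent is well-defined.
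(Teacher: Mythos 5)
Your architecture is the same as the paper's: freeze the transformed coefficients $C^\Phi$ at the base point $x$, correct $\tilde u$ on $E^-(x,R)$ so that the remaining part is $A^{C^\Phi(x)}$-harmonic with vanishing trace on the flat boundary piece, apply the constant-coefficient boundary De Giorgi estimates (\ref{eldtnpc404;1}) and (\ref{eldtnpc404;3}) to that part, bound the Dirichlet energy of the corrector by testing the equation with it (Poincar\'e for the $f_0$-term, the mean-zero property of $\partial_k v$ for the $f_k$-terms, and the Morrey norm $\|\nabla\tilde u\|_{M,d-\delta,x,E^-,1}$ for the coefficient-oscillation term), and conclude with the Giaquinta iteration lemma. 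Two small remarks: you do not need (\ref{eldtnpc404;2}) here — comparing $\partial_d\tilde u$ on $E^-(x,r)$ with $\langle\partial_d\tilde w\rangle_{E^-(x,r)}$ rather than with its own average closes the excess estimate using only (\ref{eldtnpc404;3}) and the corrector energy — and your extra $R^{2\kappa}\Psi(R)$ term in the iteration inequality is unnecessary once the oscillation term is absorbed into $c_0^2 R^\gamma$, so the plain version of the iteration lemma suffices.

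There is, however, one genuine gap in your first step, where you transform the whole equation to $E^-$ and assert that the new data $\tilde f_k$ have Campanato semi-norms dominated by $c_0$. Under the change of variables the first-order data become $\tilde f_j=\sum_k g_{jk}\,(f_k\circ\Phi^{-1})$ with $g_{jk}$ built from $D\Phi$ and $\det D\Phi^{-1}$, which are merely $C^\kappa$. For $\gamma=d+2\kappa-\delta>d$ the semi-norm $|||g\,h|||_{\cm,\gamma,x,E^-,1}$ is \emph{not} controlled by $\|g\|_\infty\,|||h|||_{\cm,\gamma,x,E^-,1}$ plus $|||g|||_{C^\kappa}$ alone: the cross term $\int_{E^-(x,r)}|g-g(x)|^2\,|\langle h\rangle_{E^-(x,r)}|^2$ forces you to control the averages $|\langle f_k\circ\Phi^{-1}\rangle_{E^-(x,r)}|$, i.e.\ a Morrey or $L_2$ quantity of $f_k\circ\Phi^{-1}$ that is not part of $c_0$. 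As written your argument therefore proves the estimate with $c_0$ replaced by $c_0+\sum_k\|f_k\circ\Phi^{-1}\|_{L_2(E^-)}$, which is weaker than the stated conclusion. The paper avoids this entirely by never transforming the data: the corrector $\hat v$ is pulled back to a genuine test function $v\in W^{1,2}_0(\Omega)$, the identity (\ref{eldtnpc406;14}) is applied in the original coordinates, and the change of variables is performed only inside the final integral estimates, where $\int_\Omega\partial_k v=0$ permits subtracting $\langle f_k\circ\Phi^{-1}\rangle_{E^-(x,R)}$ directly and the Jacobian enters only as the bounded factor $d!\,K^d$. You should reorganise your argument along those lines (or enlarge $c_0$, which would change the statement).
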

\begin{proof}
Let $c_{DG} > 0$ be as in Lemma~\ref{ldtnpc404}, but with $\mu$ replaced by 
$(d! K^{d+2})^{-1} \mu$ and $M$ replaced by $d! d^2 K^{d+2} M$.
Let $R \in (0,1]$.
Let $C^\Phi$ be as in (\ref{eldtnpc410;1}).
Since $C^\Phi$ is H\"older continuous, it extends uniquely to a continuous function
on $\overline{E^-}$, which we also denote by $C^\Phi$.
We will freeze the coefficients of $C^\Phi$ at $x$.
There exists a unique $\hat v \in W^{1,2}_0(E^-(x,R))$ such that 
\begin{equation}
\sum_{k,l=1}^d \int_{E^-(x,R)} (C^\Phi)_{kl}(x) \, (\partial_k \hat v) \, \overline{\partial_l \tau}
= \sum_{k,l=1}^d \int_{E^-(x,R)} (C^\Phi)_{kl}(x) \, (\partial_k \tilde u) \, \overline{\partial_l \tau}
\label{ldtnpc406;13}
\end{equation}
for all $\tau \in W^{1,2}_0(E^-(x,R))$.
Define $v \colon \Omega \to \Ci$ by
\[
v(y) = \left\{ \begin{array}{ll}
   \hat v(\Phi(y)) & \mbox{if } y \in \Phi^{-1}(E^-(x,R)) ,  \\[5pt]
   0 & \mbox{if } y \in \Omega \setminus \Phi^{-1}(E^-(x,R)) .
               \end{array} \right.
\]
Then $v \in W^{1,2}_0(\Omega)$.
Set $w = u - v$, $\tilde v = v \circ \Phi^{-1}$ and $\tilde w = w \circ \Phi^{-1}$.
Clearly $w \in W^{1,2}_0(\Omega)$ and $\Tr \tilde w|_P = 0$ by Lemma~\ref{ldtnpc410}\ref{ldtnpc410-6}.
Moreover, $A^{(C^\Phi)(x)} (\tilde w) = 0$ weakly on $E^-(x,R)$ by (\ref{ldtnpc406;13}).
Let $r \in (0,R]$.
Using (\ref{eldtnpc404;3}), one deduces that 
\begin{eqnarray}
\lefteqn{
\int_{E^-(x,r)} |\partial_d \tilde u - \langle \partial_d \tilde u \rangle_{E^-(x,r)}|^2
} \hspace*{10mm}  \nonumber  \\*
& \leq & \int_{E^-(x,r)} |\partial_d \tilde u - \langle \partial_d \tilde w \rangle_{E^-(x,r)}|^2   \nonumber  \\
& \leq & 2 \int_{E^-(x,r)} |\partial_d \tilde w - \langle \partial_d \tilde w \rangle_{E^-(x,r)}|^2
   + 2 \int_{E^-(x,r)} |\nabla \tilde v|^2   \nonumber  \\
& \leq & 2 c_{DG} \, \Big( \frac{r}{R} \Big)^{d+2} 
   \int_{E^-(x,R)} |\partial_d \tilde w - \langle \partial_d \tilde w \rangle_{E^-(x,R)}|^2
   + 2 \int_{E^-(x,R)} |\nabla \tilde v|^2   \nonumber  \\
& \leq & 4 c_{DG} \, \Big( \frac{r}{R} \Big)^{d+2} 
   \int_{E^-(x,R)} |\partial_d \tilde u - \langle \partial_d \tilde u \rangle_{E^-(x,R)}|^2
   + (4 c_{DG} + 2) \int_{E^-(x,R)} |\nabla \tilde v|^2 
.
\label{eldtnpc406;1}
\end{eqnarray}
Similarly, with (\ref{eldtnpc404;1}) one deduces that 
\[
\int_{E^-(x,r)} |\partial_i \tilde u|^2
\leq 4 c_{DG} \, \Big( \frac{r}{R} \Big)^{d+2} \int_{E^-(x,R)} |\partial_i \tilde u|^2
   + (4 c_{DG} + 2) \int_{E^-(x,R)} |\nabla \tilde v|^2 
\]
for all $i \in \{ 1,\ldots,d-1 \} $.
So 
\begin{equation}
\Psi(r)
\leq 4 c_{DG} \, \Big( \frac{r}{R} \Big)^{d+2} \Psi(R)
   + (4 c_{DG} + 2) d \int_{E^-(x,R)} |\nabla \tilde v|^2 
.
\label{ldtnpc406;15}
\end{equation}

We next estimate $\int_{B(x,R)} |\nabla \tilde v|^2$.
Ellipticity, the equality $\tilde v|_{E^-(x,R)} = \hat v|_{E^-(x,R)}$,
(\ref{ldtnpc406;13}), Lemma~\ref{ldtnpc410}\ref{ldtnpc410-1}
and (\ref{eldtnpc406;14}) give
\begin{eqnarray*}
\lefteqn{
(d! K^{d+2})^{-1} \mu \int_{E^-(x,R)} |\nabla \tilde v|^2 
} \hspace{10mm} \\*
& \leq & \RRe \sum_{k,l=1}^d \int_{E^-(x,R)} 
     (C^\Phi)_{kl}(x) \, (\partial_k \hat v) \, \overline{\partial_l \hat v}  \\
& = & \RRe \sum_{k,l=1}^d \int_{E^-(x,R)} 
     (C^\Phi)_{kl}(x) \, (\partial_k \tilde u) \, \overline{\partial_l \hat v}  \\
& = & \RRe \sum_{k,l=1}^d \int_{E^-(x,R)} 
     (C^\Phi)_{kl} \, (\partial_k \tilde u) \, \overline{\partial_l \tilde v}  
   + \RRe \sum_{k,l=1}^d \int_{E^-(x,R)} \Big( C^\Phi)_{kl}(x) - (C^\Phi)_{kl} \Big) \, 
                              (\partial_k \tilde u) \, \overline{\partial_l \tilde v}  \\
& = & \RRe \int_\Omega \sum_{k,l=1}^d c_{kl} \, (\partial_k u) \, \overline{(\partial_l v)}
   + \RRe \sum_{k,l=1}^d \int_{E^-(x,R)} \Big( C^\Phi)_{kl}(x) - (C^\Phi)_{kl} \Big) \, 
                              (\partial_k \tilde u) \, \overline{\partial_l \tilde v}  \\
& = & \RRe (f_0,v)_{L_2(\Omega)} - \RRe \sum_{k=1}^d (f_k, \partial_k v)_{L_2(\Omega)}  \\*
& & \hspace{10mm} {}
   + \RRe \sum_{k,l=1}^d \int_{E^-(x,R)} 
         \Big( C^\Phi)_{kl}(x) - (C^\Phi)_{kl} \Big) \, 
                              (\partial_k \tilde u) \, \overline{\partial_l \tilde v} 
.
\end{eqnarray*}
We estimate the terms separately.
First 
\begin{eqnarray*}
\RRe (f_0,v)_{L_2(\Omega)}
& \leq & d! \, K^d \Big( \int_{E^-(x,R)} 
      |f_0 \circ \Phi^{-1}|^2 \Big)^{1/2} \, \Big( \int_{E^-(x,R)} |\hat v|^2 \Big)^{1/2}  \\
& \leq & d! \, K^d \, \|f_0 \circ \Phi^{-1}\|_{M,\gamma-2,x,E^-,1} \, R^{\frac{\gamma-2}{2}} \, 
         c_D \, R \, \Big( \int_{E^-(x,R)} |\nabla \hat v|^2 \Big)^{1/2}  \\
& = & d! \, K^d \, c_D \, \|f_0 \circ \Phi^{-1}\|_{M,\gamma-2,x,E^-,1} \, R^{\frac{\gamma}{2}}
         \, \Big( \int_{E^-(x,R)} |\nabla \tilde v|^2 \Big)^{1/2}
, 
\end{eqnarray*}
where $c_D$ is the constant in the Dirichlet type Poincar\'e inequality
in the unit half-ball.
Secondly, since $v \in W^{1,2}_0(\Omega)$ and has compact support in $\Ri^d$, it follows that
$\int_{\Phi^{-1}(E^-(x,R))} \partial_k \tilde v
= \int_\Omega \partial_k v = 0$ and therefore
\begin{eqnarray*}
\RRe \sum_{k=1}^d (f_k, \partial_k v)_{L_2(\Omega)}
& = & 
\RRe \sum_{k=1}^d \int_{\Phi^{-1}(E^-(x,R))} 
        (f_k - \langle f_k \circ \Phi^{-1} \rangle_{E^-(x,R)}) \, 
       \overline{\partial_k v}  \\
& \leq & d! \, K^d \sum_{k=1}^d |||f_k \circ \Phi^{-1}|||_{\cm,\gamma,x,E^-,1} \, R^{\frac{\gamma}{2}}
         \, \Big( \int_{E^-(x,R)} |\nabla \tilde v|^2 \Big)^{1/2}
.  
\end{eqnarray*}
Finally, since 
$|(C^\Phi)_{kl}(x) - (C^\Phi)_{kl}(y)| 
\leq |||(C^\Phi)_{kl}|||_{C^\kappa} \, |x-y|^\kappa 
\leq d! \, d^2 \, K^{d+2} \, M \, R^\kappa$
for all $k,l \in \{ 1,\ldots,d \} $ and $y \in E^-(x,R)$, one deduces that 
\begin{eqnarray*}
\lefteqn{
\RRe \sum_{k,l=1}^d \int_{E^-(x,R)} \Big( (C^\Phi)_{kl}(x) - (C^\Phi)_{kl} \Big) \, 
                              (\partial_k \tilde u) \, \overline{\partial_l \tilde v}  
} \hspace*{10mm} \\*
& \leq & d! \, d^2 \, K^{d+2} \, M \, R^\kappa \, \Big( \int_{E^-(x,R)} |\nabla \tilde u|^2 \Big)^{1/2}
    \, \Big( \int_{E^-(x,R)} |\nabla \tilde v|^2 \Big)^{1/2}  \\
& \leq & d! \, d^2 \, K^{d+2} \, M \,  R^{\frac{\gamma}{2}} \, 
      \|\nabla \tilde u\|_{M,d - \delta,x,E^-,1}
    \, \Big( \int_{E^-(x,R)} |\nabla \tilde v|^2 \Big)^{1/2} 
.  
\end{eqnarray*}
Therefore 
\begin{equation}
\Big( \int_{E^-(x,R)} |\nabla \tilde v|^2 \Big)^{1/2}
\leq c_0 \, c_1 \, R^{\frac{\gamma}{2}}
,  
\label{ldtnpc406;16}
\end{equation}
where $c_1 = \mu^{-1} \, d!^2 \, d^2 \, K^{2d+4} (1 + c_D + M)$.

It follows from (\ref{ldtnpc406;15}) and (\ref{ldtnpc406;16}) that 
\[
\Psi(r) 
\leq 4 c_{DG} \, \Big( \frac{r}{R} \Big)^{d+2} \Psi(R)
   + (4 c_{DG} + 2) \, c_0^2 \, c_1^2 \, d \, R^\gamma
\]
for all $0 < r \leq R \leq 1$.
These bounds can be improved by use of Lemma~III.2.1 of \cite{Gia1}.
It follows that there exists an $a > 0$, depending only of $c_{DG}$, $\gamma$
and $d$, such that 
\[
\Psi(r) 
\leq a \, \Big( \frac{r}{R} \Big)^\gamma \Psi(R)
   + a \, (4 c_{DG} + 2) \, c_0^2 \, c_1^2 \, d \, r^\gamma
\]
for all $0 < r \leq R \leq 1$, as required.
\end{proof}

We next turn to interior regularity.

\begin{prop} \label{pdtnpc405}
Let $\kappa \in (0,1)$, $\delta \in [0,\kappa]$ and $\mu,M > 0$.
Then there exists a $c \geq 1$ such that the following is valid.
Let $\Omega \subset \Ri^d$ be an open set.
Let $C \in \ce^\kappa(\Omega,\mu,M)$, $u \in W^{1,2}(\Omega)$ and
$f_0,f_1,\ldots,f_d \in L_2(\Omega)$.
Suppose that 
\[
\int_\Omega \sum_{k,l=1}^d c_{kl} \, (\partial_k u) \, \overline{(\partial_l v)}
= (f_0,v)_{L_2(\Omega)} - \sum_{k=1}^d (f_k, \partial_k v)_{L_2(\Omega)}
\]
for all $v \in W^{1,2}_0(\Omega)$.
Let $r,R,R_e \in (0,1]$ and $x \in \Omega$ and suppose that 
$0 < r \leq R \leq R_e$ and 
$B(x,R_e) \subset \Omega$.
Then 
\[
\Psi_0(r)
\leq c \Big( \frac{r}{R} \Big)^\gamma \Psi_0(R)
   + c \, c_0^2 \, r^\gamma ,
\]
where $\gamma = d + 2 \kappa - \delta$, for all $\rho \in (0,1]$ we define 
\[
\Psi_0(\rho) 
= \sum_{k=1}^d \int_{B(x,\rho)} |\partial_k u - \langle \partial_k u \rangle_{B(x,\rho)}|^2
\]
and where
\[
c_0 = \|f_0\|_{M,\gamma-2,x,\Omega,R_e}
          + \sum_{k=1}^d |||f_k|||_{\cm,\gamma,x,\Omega,R_e}
          + \|\nabla u\|_{M,d - \delta,x,\Omega,R_e}
.  \]
Moreover,  
\[
|||\nabla u|||_{\cm,\gamma,x,\Omega,R_e}
\leq c \, c_0 \, + c \, R_e^{-\kappa} \, \|\nabla u\|_{M,d - \delta,x,\Omega,R_e}
.  \]
\end{prop}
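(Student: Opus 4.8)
The plan is to prove this exactly as the interior counterpart of Lemma~\ref{ldtnpc406}: freeze the coefficients at the centre point, solve the frozen equation, apply the interior De~Giorgi estimate (\ref{eldtnpc403;2}) of Lemma~\ref{ldtnpc403} to the difference, control the frozen-equation solution by the H\"older modulus of the coefficients and the data, and finally run Giaquinta's iteration lemma. The only simplification over Lemma~\ref{ldtnpc406} is that no coordinate transformation $\Phi$ is needed, so one works directly on the balls $B(x,R) \subset \Omega$ and $\tilde u$ is replaced by $u$ throughout.

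Concretely, fix $x \in \Omega$ and $R \in (0,R_e]$. First I would freeze the matrix at $x$: by the Lax--Milgram lemma there is a unique $\hat v \in W^{1,2}_0(B(x,R))$ with $\sum_{k,l=1}^d \int_{B(x,R)} c_{kl}(x) \, (\partial_k \hat v) \, \overline{\partial_l \tau} = \sum_{k,l=1}^d \int_{B(x,R)} c_{kl}(x) \, (\partial_k u) \, \overline{\partial_l \tau}$ for all $\tau \in W^{1,2}_0(B(x,R))$. Extend $\hat v$ by zero to $v \in W^{1,2}_0(\Omega)$ and set $w = u - v$, so that $A^{C(x)} w = 0$ weakly on $B(x,R)$. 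Applying (\ref{eldtnpc403;2}) to each $\partial_k w$, using the minimising property of averages and the triangle inequality precisely as in (\ref{eldtnpc406;1})--(\ref{ldtnpc406;15}), I obtain
\[
\Psi_0(r) \leq 4 c_{DG} \Big( \frac{r}{R} \Big)^{d+2} \Psi_0(R) + (4 c_{DG} + 2) \, d \int_{B(x,R)} |\nabla v|^2
\]
for all $r \in (0,R]$, where $c_{DG}$ is the constant of Lemma~\ref{ldtnpc403} for the constant matrix $C(x) \in \ce(\Omega,\mu,M)$.

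Next I would bound $\int_{B(x,R)} |\nabla v|^2$. Testing the defining identity for $\hat v$ against $\hat v$ itself, using ellipticity, writing $c_{kl}(x) = c_{kl} + (c_{kl}(x) - c_{kl})$, and then using $v$ as test function in the equation for $u$, one gets
\[
\mu \int_{B(x,R)} |\nabla v|^2 \leq \RRe (f_0,v)_{L_2(\Omega)} - \RRe \sum_{k=1}^d (f_k,\partial_k v)_{L_2(\Omega)} + \RRe \sum_{k,l=1}^d \int_{B(x,R)} \big( c_{kl}(x) - c_{kl} \big) (\partial_k u) \, \overline{\partial_l v}.
\]
The three terms are handled as in Lemma~\ref{ldtnpc406}: the first by the Dirichlet Poincar\'e inequality on $B(x,R)$ and the Morrey bound for $f_0$; the second via $\int_{B(x,R)} \partial_k v = 0$ (so one may subtract $\langle f_k \rangle_{B(x,R)}$ and invoke the Campanato bound for $f_k$); the third by $|c_{kl}(x) - c_{kl}(y)| \leq M \, |x-y|^\kappa \leq M \, R^\kappa$ on $B(x,R)$ together with the identity $\kappa + \tfrac{d-\delta}{2} = \tfrac{\gamma}{2}$ and the Morrey bound for $\nabla u$. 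This yields $\big( \int_{B(x,R)} |\nabla v|^2 \big)^{1/2} \leq c_0 \, c_1 \, R^{\gamma/2}$ with $c_1$ depending only on $\mu,d,M$ and the Poincar\'e constant, hence $\Psi_0(r) \leq 4 c_{DG} (r/R)^{d+2} \Psi_0(R) + c \, c_0^2 \, R^\gamma$. Since each summand of $\Psi_0(\rho)$ equals $\min_{\zeta \in \Ci} \int_{B(x,\rho)} |\partial_k u - \zeta|^2$, the function $\Psi_0$ is nondecreasing, and as $d+2 > \gamma$ the first claimed inequality follows from Lemma~III.2.1 of \cite{Gia1}.

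For the \emph{moreover} statement I would put $R = R_e$ in the inequality just obtained, divide by $r^\gamma$, and bound $\Psi_0(R_e) \leq \int_{B(x,R_e)} |\nabla u|^2 \leq R_e^{\,d-\delta} \, \|\nabla u\|_{M,d-\delta,x,\Omega,R_e}^2$; using $d - \delta - \gamma = -2\kappa$ this gives $r^{-\gamma} \Psi_0(r) \leq c \, R_e^{-2\kappa} \|\nabla u\|_{M,d-\delta,x,\Omega,R_e}^2 + c \, c_0^2$, and taking the supremum over $r \in (0,R_e]$ and a square root finishes the proof after adjusting the constant. I do not expect a genuine obstacle here; the only point needing care is the bookkeeping of the various Morrey and Campanato semi-norms and of the auxiliary parameter $\delta$, in particular the fact that $\|\nabla u\|_{M,d-\delta}$ re-enters the right-hand side through the frozen-coefficient error term and is absorbed into $c_0$ precisely by the choice $\gamma = d + 2\kappa - \delta$ and the iteration lemma.
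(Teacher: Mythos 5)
Your proposal is correct and follows essentially the same route as the paper: freeze the coefficients at $x$, solve the frozen Dirichlet problem on $B(x,R)$, apply the interior De Giorgi estimate (\ref{eldtnpc403;2}) to the harmonic part, bound $\int_{B(x,R)}|\nabla v|^2$ exactly as in Lemma~\ref{ldtnpc406} (with the Poincar\'e inequality on the full ball), and conclude with Giaquinta's iteration lemma; your derivation of the \emph{moreover} statement by taking $R = R_e$ and using $d-\delta-\gamma = -2\kappa$ is also the intended one.
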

\begin{proof}
Let $c_{DG} > 0$ be as in (\ref{eldtnpc403;2}).
Let $R \in (0,R_e]$.
There exists a unique $v \in W^{1,2}_0(B(x,R))$ such that 
\[
\sum_{k,l=1}^d \int_{B(x,R)} c_{kl}(x) \, (\partial_k v) \, \overline{\partial_l \tau}
= \sum_{k,l=1}^d \int_{B(x,R)} c_{kl}(x) \, (\partial_k u) \, \overline{\partial_l \tau}
\]
for all $\tau \in W^{1,2}_0(B(x,R))$.
Extend $v$ by zero to a function from $\Omega$ into $\Ci$, still denoted by~$v$.
Set $w = u - v$.
Then $w \in W^{1,2}(\Omega)$.
Moreover, $A^{C(x)} w = 0$ weakly on $B(x,R)$.
Let $r \in (0,R]$.
If $k \in \{ 1,\ldots,d \} $, then it follows as in the proof of 
(\ref{eldtnpc406;1}), but now using (\ref{eldtnpc403;2}) instead of 
(\ref{eldtnpc404;3}), that 
\begin{eqnarray*}
\lefteqn{
\int_{B(x,r)} |\partial_k u - \langle \partial_k u \rangle_{B(x,r)}|^2
} \hspace*{10mm}  \nonumber  \\*
& \leq & 4 c_{DG} \, \Big( \frac{r}{R} \Big)^{d+2} 
   \int_{B(x,R)} |\partial_k u - \langle \partial_k u \rangle_{B(x,R)}|^2
   + (4 c_{DG} + 2) \int_{B(x,R)} |\nabla v|^2 
.
\end{eqnarray*}
Then the remaining part of the proof is very similar to the proof of 
Lemma~\ref{ldtnpc406}. 
This time one has to use the Dirichlet type Poincar\'e inequality on the 
full unit ball.
\end{proof}

We combine the last lemma and proposition to obtain estimates close to the 
boundary.

\begin{prop} \label{pdtnpc407}
Let $\kappa \in (0,1)$, $K \geq 1$, $\delta \in [0,\kappa]$ and $\mu,M > 0$.
Then there exists a $c \geq 1$ such that the following is valid.

Let $\Omega,U \subset \Ri^d$ open.
Let $\Phi$ be a $C^{1+\kappa}$-diffeomorphism from $U$ onto $E$ such that 
$\Phi(U \cap \Omega) = E^-$ and $\Phi(U \cap \partial \Omega) = P$.
Suppose that $K$ is larger than the Lipschitz constant for $\Phi$ and $\Phi^{-1}$.
Moreover, suppose that $|||(D \Phi)_{ij}|||_{C^\kappa} \leq K$ and 
$|||(D (\Phi^{-1}))_{ij}|||_{C^\kappa} \leq K$ for all $i,j \in \{ 1,\ldots,d \} $,
where $D \Phi$ denotes the derivative of $\Phi$.
Let $C \in \ce^\kappa(\Omega,\mu,M)$, $u \in W^{1,2}_0(\Omega)$ and 
$f_0,f_1,\ldots,f_d \in L_2(\Omega)$ and suppose that 
\[
\int_\Omega \sum_{k,l=1}^d c_{kl} \, (\partial_k u) \, \overline{(\partial_l v)}
= (f_0,v)_{L_2(\Omega)} - \sum_{k=1}^d (f_k, \partial_k v)_{L_2(\Omega)}
\]
for all $v \in W^{1,2}_0(\Omega)$.
Define $\tilde u \colon E^- \to \Ci$ by $\tilde u = u \circ \Phi^{-1}$.
Then 
\[
|||\nabla \tilde u|||_{\cm,\gamma,x,E^-,1}
\leq c \Big( \|f_0 \circ \Phi^{-1}\|_{M,\gamma-2,x,E^-,1}
          + \sum_{k=1}^d |||f_k \circ \Phi^{-1}|||_{\cm,\gamma,x,E^-,1}
          + \|\nabla \tilde u\|_{M,d - \delta,x,E^-,1}
       \Big)
\]
for all $x \in \frac{1}{2} \, E^-$, where $\gamma = d + 2 \kappa - \delta$.
\end{prop}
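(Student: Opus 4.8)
The plan is to reduce, as in the derivation of the last display of Proposition~\ref{pdtnpc405}, to a single Morrey-growth inequality that is then fed into an iteration. For $x \in \frac{1}{2} \, E^-$ write $s = |x_d|$, which is the distance from $x$ to $P$ since $E^- \subset \Ri^{d-1} \times (-\infty,0)$, and for $\rho \in (0,1]$ put
\[
\Psi(\rho)
= \sum_{k=1}^{d} \int_{E^-(x,\rho)} | \partial_k \tilde u - \langle \partial_k \tilde u \rangle_{E^-(x,\rho)} |^2 ,
\]
and let $c_0$ denote the right-hand side of the proposition. The target is
\begin{equation}
\Psi(r) \leq c \Big( \frac{r}{R} \Big)^{\gamma} \Psi(R) + c \, c_0^2 \, r^{\gamma}
\qquad (0 < r \leq R \leq 1) ,
\label{eplan}
\end{equation}
with $\gamma = d + 2\kappa - \delta$. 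Granting (\ref{eplan}), Lemma~III.2.1 of \cite{Gia1} yields $\Psi(r) \leq c \, (\Psi(1) + c_0^2) \, r^{\gamma}$ for $0 < r \leq 1$, and since $\Psi(1) \leq \int_{E^-(x,1)} |\nabla \tilde u|^2 \leq \| \nabla \tilde u \|_{M,d-\delta,x,E^-,1}^2 \leq c_0^2$ one gets $\sup_{0 < r \leq 1} r^{-\gamma} \Psi(r) \leq c \, c_0^2$, that is $||| \nabla \tilde u |||_{\cm,\gamma,x,E^-,1} \leq c \, c_0$, which is the assertion.

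To prove (\ref{eplan}) I would run, for fixed $x$ and $R$, the freezing argument from the proof of Lemma~\ref{ldtnpc406}, but with $E^-(x,\cdot)$ in place of $E^-(x_0,\cdot)$. Let $\hat v \in W^{1,2}_0(E^-(x,R))$ be the solution of the constant-coefficient equation with matrix $(C^\Phi)(x)$ — $C^\Phi$ as in Lemma~\ref{ldtnpc410}\ref{ldtnpc410-1} — having the same inhomogeneity as $\tilde u$, form $v = \hat v \circ \Phi$ extended by zero, $w = u - v$ and $\tilde w = w \circ \Phi^{-1}$; then $A^{(C^\Phi)(x)} \tilde w = 0$ weakly on $E^-(x,R)$ and $\Tr \tilde w$ vanishes on $P \cap B(x,R)$ by Lemma~\ref{ldtnpc410}\ref{ldtnpc410-6}. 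The energy estimate $\int_{E^-(x,R)} |\nabla (v \circ \Phi^{-1})|^2 \leq c \, c_0^2 \, R^{\gamma}$ is the verbatim computation of (\ref{eldtnpc406;1})--(\ref{ldtnpc406;16}) — ellipticity, the weak equation for $u$ pulled back through $\Phi$, the H\"older modulus of $C^\Phi$, and a Poincar\'e inequality on the unit half-ball — and, exactly as there, the data enter only through $\| f_0 \circ \Phi^{-1} \|_{M,\gamma-2,x,E^-,1}$ and the $||| f_k \circ \Phi^{-1} |||_{\cm,\gamma,x,E^-,1}$, so no separate transformation of the right-hand side is needed. For the De Giorgi step applied to $\tilde w$ one distinguishes two regimes: if $R \leq s$ then $E^-(x,R) = B(x,R) \subset E^-$ and each $\partial_k \tilde w$ obeys the interior estimate (\ref{eldtnpc403;2}) of Lemma~\ref{ldtnpc403} (this is the situation of Proposition~\ref{pdtnpc405}); if $R > s$ then $E^-(x,R)$ meets $P$ and one invokes the flat-boundary estimates (\ref{eldtnpc404;1})--(\ref{eldtnpc404;3}) of Lemma~\ref{ldtnpc404}, which continue to hold for $x \in \frac{1}{2} \, E^-$: for radii $\rho \leq s$ because $E^-(x,\rho)$ is then an interior ball, and for radii $\rho > s$ by the inclusions $E^-(x_0,\rho-s) \subseteq E^-(x,\rho) \subseteq E^-(x_0,\rho+s)$ with $x_0 = (x',0) \in \frac{1}{2} \, P$. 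Feeding the resulting De Giorgi inequality and the energy bound into the computation of (\ref{eldtnpc406;1})--(\ref{ldtnpc406;15}) gives (\ref{eplan}) with $d+2$ instead of $\gamma$ in the first term, and Lemma~III.2.1 of \cite{Gia1} lowers the exponent to $\gamma$ because the perturbation is $O(R^{\gamma})$.

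The step I expect to be the main obstacle is the near-boundary regime $R > s$ for the tangential components $\partial_i \tilde u$ with $i < d$. When $x \in \frac{1}{2} \, P$ these vanish at $x$ (since $\Tr \tilde u|_P = 0$), which is what lets Lemma~\ref{ldtnpc406} carry the uncentred moments $\int_{E^-(x,\rho)} |\partial_i \tilde u|^2$ in its functional and feed them back through (\ref{eldtnpc404;1}); for a general $x \in \frac{1}{2} \, E^-$ the value $\partial_i \tilde u(x)$ need not be zero, so $\int_{E^-(x,\rho)} |\partial_i \tilde u|^2$ decays only like $\rho^{d-\delta}$, which is too slow for an iteration with exponent $\gamma > d$. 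Reconciling the two regimes therefore requires choosing the functional so that the interior bookkeeping (centred second moments) and the boundary bookkeeping (where the gain in (\ref{eldtnpc404;1}) is naturally for uncentred moments of $\tilde w = \tilde u - v \circ \Phi^{-1}$) are compatible across the transition scale $s = |x_d|$, and controlling the quantities $\int_{E^-(x,R)} |\partial_i \tilde w|^2$ that appear at that scale uniformly as $s \downarrow 0$. Everything else — the freezing, the ellipticity/Poincar\'e energy estimate, and the change-of-variables bookkeeping via Lemma~\ref{ldtnpc410} — is a routine transcription of the proof of Lemma~\ref{ldtnpc406}; in particular, for $x \in \frac{1}{2} \, P$ one has $s = 0$, only the boundary regime occurs, and the proposition is exactly Lemma~\ref{ldtnpc406} followed by the iteration above.
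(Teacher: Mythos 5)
Your skeleton (reduce to a growth inequality for a Campanato-type functional, then iterate via Lemma~III.2.1 of \cite{Gia1}) is the right one, and you have correctly diagnosed the crux: for $x \in \frac{1}{2}\,E^-$ off the midplate, the boundary De Giorgi estimate (\ref{eldtnpc404;1}) gains a factor $(r/R)^{d+2}$ only for the \emph{uncentred} tangential moments of a solution vanishing on $P$, while the interior regime forces you to carry \emph{centred} moments, and these two bookkeepings do not match at the transition scale $s=|x_d|$. But you leave exactly this reconciliation unresolved ("reconciling the two regimes therefore requires\dots"), and the route you sketch for the regime $R>s$ --- transferring Lemma~\ref{ldtnpc404} to balls centred at $x$ via $E^-(x_0,\rho-s)\subseteq E^-(x,\rho)\subseteq E^-(x_0,\rho+s)$ --- does not close: a single unified inequality of the form (\ref{eplan}) for the fully centred functional cannot be fed the uncentred tangential moments $\int_{E^-(x,R)}|\partial_i\tilde w|^2$ that the boundary De Giorgi step produces, because those are not dominated by $\Psi(R)$. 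So as written there is a genuine gap.

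The paper closes it without ever proving a De Giorgi estimate on half-balls centred off the boundary. It keeps \emph{two} functionals, $\Psi_0(x,\rho)$ (all components centred) and $\Psi(x,\rho)$ (normal component centred, tangential components uncentred), notes $\Psi_0\le\Psi$, and splices rather than unifies: for radii $r\le R\le|x_d|$ it runs the purely interior iteration of Proposition~\ref{pdtnpc405} for $\Psi_0(x,\cdot)$; at the single transition scale it uses the elementary inclusion $E^-(x,|x_d|)=B(x,|x_d|)\subset E^-(y,2|x_d|)$ with $y=(x_1,\dots,x_{d-1},0)\in\frac{1}{2}\,P$ to pass to the boundary point $y$; and for the large scales it invokes the already-proved decay $\Psi(y,\rho)\lesssim \rho^\gamma\,\Psi(y,\frac{1}{2})+c_0^2\,\rho^\gamma$ of Lemma~\ref{ldtnpc406} directly --- no new iteration is needed there, and the smallness of the uncentred tangential moments for $\rho\gtrsim|x_d|$ comes for free from the trace condition at $y$. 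A four-case analysis in $r$ versus $|x_d|$ versus $\frac{1}{4}$ then yields $\Psi_0(x,r)\le C\,c_0^2\,r^\gamma$ uniformly, which is the assertion. If you want to salvage your write-up, replace the attempted unified inequality (\ref{eplan}) on $(0,1]$ by this matching argument at the scale $|x_d|$.
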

\begin{proof}
Let $c \geq 1$ be as in Lemma~\ref{ldtnpc406}.
Set $\tilde u = u \circ \Phi^{-1}$.
For all $x \in \frac{1}{2} \, \overline{E^-}$ and $\rho \in (0,1]$ define 
\begin{eqnarray*}
\Psi(x,\rho) 
& = & \int_{E^-(x,\rho)} |\partial_d \tilde u - \langle \partial_d \tilde u \rangle_{E^-(x,\rho)}|^2
   + \sum_{i=1}^{d-1} \int_{E^-(x,\rho)} |\partial_i \tilde u|^2  \mbox{ and}  \\
\Psi_0(x,\rho) 
& = & \sum_{k=1}^d \int_{E^-(x,\rho)} |\partial_k \tilde u - \langle \partial_k \tilde u \rangle_{E^-(x,\rho)}|^2
.
\end{eqnarray*}
Clearly $\Psi_0(x,\rho) \leq \Psi(x,\rho) \leq \Psi(x,1) \leq \|\nabla \tilde u\|_{L_2(E^-(x,1))}^2
\leq \|\nabla \tilde u\|_{M,d - \delta,x,E^-,1}^2$.

Let $x \in \frac{1}{2} \, E^-$.
Set 
\[
c_0 = \|f_0\|_{M,\gamma-2,x,E^-,1}
          + \sum_{k=1}^d |||f_k|||_{\cm,\gamma,x,E^-,1}
          + \|\nabla \tilde u\|_{M,d - \delta,x,E^-,1}
.  \]
If follows as in the proofs of Proposition~\ref{pdtnpc405} and 
Lemma~\ref{ldtnpc406} that there exists a $\tilde c \geq 1$, 
depending only on $\kappa$, $K$, $\delta$, $\mu$ and $M$, such that 
\begin{equation}
\Psi_0(x,r)
\leq \tilde c \Big( \frac{r}{R} \Big)^\gamma \Psi_0(x,R)
   + \tilde c \, c_0^2 \, r^\gamma ,
\label{epdtnpc407;1}
\end{equation}
for all $r,R \in (0,1]$ with $r \leq R \leq |x_d|$.

Define $y = (x_1,\ldots,x_{d-1},0)$. 
Then $y \in \frac{1}{2} \, P$.
Let $r \in (0,1]$.
We distinguish four cases.

\smallskip

\noindent
{\bf Case 1.} Suppose that $r \leq |x_d| \leq \frac{1}{4}$.  \\
Then (\ref{epdtnpc407;1}), the inclusion $E^-(x,|x_d|) = B(x,|x_d|) \subset E^-(y,2 |x_d|)$,
Lemma~\ref{ldtnpc406} and the inclusion $E^-(y,\frac{1}{2}) \subset E^-(x,1)$ give
\begin{eqnarray*}
\Psi_0(x,r)
& \leq & \tilde c \Big( \frac{r}{|x_d|} \Big)^\gamma \Psi_0(x,|x_d|) 
    + \tilde c \, c_0^2 \, r^\gamma  \\
& \leq & \tilde c \Big( \frac{r}{|x_d|} \Big)^\gamma \Psi(x,|x_d|) 
    + \tilde c \, c_0^2 \, r^\gamma  \\
& \leq & \tilde c \Big( \frac{r}{|x_d|} \Big)^\gamma \Psi(y,2|x_d|) 
    + \tilde c \, c_0^2 \, r^\gamma  \\
& \leq & \tilde c \Big( \frac{r}{|x_d|} \Big)^\gamma 
   \Big( c \, (4|x_d|)^\gamma  \Psi(y,\tfrac{1}{2}) + c \, c_0^2 \, (2|x_d|)^\gamma \Big)
    + \tilde c \, c_0^2 \, r^\gamma  \\
& \leq & 4^\gamma c \, \tilde c \, r^\gamma \, \Psi(x,1)
   + 2^\gamma c \, \tilde c \, c_0^2 \, r^\gamma
   + \tilde c \, c_0^2 \, r^\gamma  \\
& \leq & 4^{\gamma+1} c \, \tilde c \, c_0^2 \, r^\gamma
.
\end{eqnarray*}

\smallskip

\noindent
{\bf Case 2.} Suppose that $|x_d| \leq r \leq \frac{1}{4}$.  \\
Then the inclusion $E^-(x,|x_d|) \subset E^-(y,2r)$
and Lemma~\ref{ldtnpc406} give
\begin{eqnarray*}
\Psi_0(x,r)
& \leq & \Psi(x,r)
\leq \Psi(y,2r)   
\leq c \, (4 r)^\gamma \, \Psi(y,\tfrac{1}{2}) 
    + c \, c_0^2 \, (2r)^\gamma   \\
& \leq & c \, (4 r)^\gamma \, \Psi(x,1) 
    + c \, c_0^2 \, (2r)^\gamma  
\leq 4^{\gamma+1} c^2 \, c_0^2 \, r^\gamma
.
\end{eqnarray*}

\smallskip

\noindent
{\bf Case 3.} Suppose that $r \geq \frac{1}{4}$.  \\
Then $\Psi_0(x,r) \leq \|\nabla \tilde u\|_{L_2(E^-(x,1))}^2 \leq 4^{d+2} c_0^2 \, r^\gamma$.

\smallskip

\noindent
{\bf Case 4.} Suppose that $r \leq \frac{1}{4} \leq |x_d|$.  \\
Then (\ref{epdtnpc407;1}) gives
$\Psi_0(x,r)
\leq c \, (4r)^\gamma \, \Psi_0(x,\frac{1}{4}) + \tilde c \, c_0^2 \, r^\gamma
\leq 4^{\gamma+1} \, c \, \tilde c \, c_0^2 \, r^\gamma$.

\smallskip

The four cases together complete the proof of the proposition.
\end{proof}

Using the De Giorgi estimates (\ref{eldtnpc403;1}) one also has interior
regularity for $A^C$ in the Morrey-region. 
The proposition is a modification of a proposition which appears at many places
in the literature (\cite{Mor}, \cite{GiM} Theorem~5.13, \cite{Aus1} Theorem~3.6, 
\cite{AT2} Lemma~1.12,
\cite{ER15} Proposition~4.2, \cite{DER4} Proposition~A.3.1, \cite{ERe2} Proposition~3.2.)

\begin{prop} \label{pdtnpc408}
Let $\kappa \in (0,1)$, $\mu,M > 0$,
$\gamma \in [0,d)$ and $\delta \in (0,2]$ with 
$\gamma + \delta < d$.
Then there exists an $c > 0$, such that 
the following is valid.
Let $\Omega \subset \Ri^d$ be an open set.
Let $C \in \ce^\kappa(\Omega,\mu,M)$, $u \in W^{1,2}(\Omega)$ and
$f_0,f_1,\ldots,f_d \in L_2(\Omega)$.
Suppose that 
\[
\int_\Omega \sum_{k,l=1}^d c_{kl} \, (\partial_k u) \, \overline{(\partial_l v)}
= (f_0,v)_{L_2(\Omega)} - \sum_{k=1}^d (f_k, \partial_k v)_{L_2(\Omega)}
\]
for all $v \in W^{1,2}_0(\Omega)$.
Let $x \in \Omega$, $R_e \in (0,1]$ and suppose that 
$B(x,R_e) \subset \Omega$.
Then
\[
\|\nabla u\|_{M,\gamma+\delta,x, \Omega,R_e}
\leq c \, \Big(
    \varepsilon^{2-\delta} \|f_0\|_{M,\gamma,x, \Omega,R_e}
  + \sum_{k=1}^d \|f_k\|_{M,\gamma+\delta,x, \Omega,R_e}
  + \varepsilon^{-(\gamma + \delta)} \|\nabla u\|_{L_2(\Omega)} \Big)
\]
for all $\varepsilon \in (0,1]$.
\end{prop}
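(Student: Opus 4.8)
The plan is to run a local Caccioppoli-type argument combined with the interior De~Giorgi estimate (\ref{eldtnpc403;1}) and an iteration in the radius. Fix $x\in\Omega$ and $R_e\in(0,1]$ with $B(x,R_e)\subset\Omega$, and fix $\varepsilon\in(0,1]$. The natural first step is a freezing argument: for a radius $R\in(0,R_e]$, let $v\in W^{1,2}_0(B(x,R))$ solve the constant-coefficient problem $A^{C(x)}v = A^{C(x)}u$ weakly on $B(x,R)$ (extended by zero), and set $w=u-v$, so that $A^{C(x)}w=0$ weakly on $B(x,R)$. By the De~Giorgi estimate (\ref{eldtnpc403;1}) applied to $w$, for $0<r\le R$ one has $\int_{B(x,r)}|\nabla w|^2 \le c_{DG}(r/R)^d\int_{B(x,R)}|\nabla w|^2$, whence, splitting $\nabla u=\nabla w+\nabla v$,
\[
\int_{B(x,r)}|\nabla u|^2
\le 2c_{DG}\Big(\frac{r}{R}\Big)^d\int_{B(x,R)}|\nabla u|^2
 + (2c_{DG}+2)\int_{B(x,R)}|\nabla v|^2 .
\]

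The second step is to estimate $\int_{B(x,R)}|\nabla v|^2$ using ellipticity of $C(x)$, the defining equation for $v$, the weak equation for $u$ with test function $v$, and the Hölder continuity of the coefficients (the term $(C(x)-C)(\nabla u)\overline{\nabla v}$). Here the $f_0$ term is handled via the Dirichlet–Poincaré inequality $\|v\|_{L_2(B(x,R))}\le c_D R\|\nabla v\|_{L_2(B(x,R))}$, which produces a factor $R$ converting the $\|f_0\|_{M,\gamma,x,\Omega,R_e}$-scale $R^{\gamma/2}$ into $R^{(\gamma+2)/2}$; the $f_k$ terms are directly controlled by $\|f_k\|_{M,\gamma+\delta,x,\Omega,R_e}R^{(\gamma+\delta)/2}$; and the coefficient-oscillation term is bounded by $|||c_{kl}|||_{C^\kappa}R^\kappa\|\nabla u\|_{L_2(B(x,R))}\|\nabla v\|_{L_2(B(x,R))}$, i.e.\ by a constant times $R^\kappa\,\|\nabla u\|_{M,\gamma+\delta,x,\Omega,R_e}\,R^{(\gamma+\delta)/2}\,\|\nabla v\|_{L_2(B(x,R))}$ since $R\le1$. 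Dividing out one power of $\|\nabla v\|_{L_2}$ gives
\[
\Big(\int_{B(x,R)}|\nabla v|^2\Big)^{1/2}
\le c\Big( R\,\|f_0\|_{M,\gamma,x,\Omega,R_e}\,R^{\gamma/2}
 + \sum_k\|f_k\|_{M,\gamma+\delta,x,\Omega,R_e}\,R^{(\gamma+\delta)/2}
 + R^\kappa\,\|\nabla u\|_{M,\gamma+\delta,x,\Omega,R_e}\,R^{(\gamma+\delta)/2}\Big).
\]
Plugging this back, and writing $\Phi(\rho)=\int_{B(x,\rho)}|\nabla u|^2$, yields an inequality of the form $\Phi(r)\le c(r/R)^d\Phi(R) + c\,B^2 R^{\gamma+\delta} + c\,R^{2\kappa}\|\nabla u\|_{M,\gamma+\delta}^2 R^{\gamma+\delta}$, where $B=\varepsilon^{2-\delta}\|f_0\|_{M,\gamma}+\sum_k\|f_k\|_{M,\gamma+\delta}$ — except the power of $R$ on the $f_0$ term must be reconciled; this is precisely where the parameter $\varepsilon$ enters: one only runs the iteration for $R\le\varepsilon$, so that $R = R^{2-\delta}R^{\delta-1}\le\varepsilon^{2-\delta}R^{\delta-1}$ (noting $\delta\le2$), absorbing the extra power of $R$ at the cost of $\varepsilon^{2-\delta}$, and the $\|\nabla u\|_{L_2(\Omega)}$ term with $\varepsilon^{-(\gamma+\delta)}$ appears from controlling $\Phi(\varepsilon)\le\|\nabla u\|_{L_2(\Omega)}^2 = \varepsilon^{-(\gamma+\delta)}\varepsilon^{\gamma+\delta}\|\nabla u\|_{L_2(\Omega)}^2$ as the base of the iteration.

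The third step is the iteration itself: since $\gamma+\delta<d$, the standard iteration lemma (Lemma~III.2.1 of \cite{Gia1}, as used in Lemma~\ref{ldtnpc406}) applied to $\Phi(r)\le c(r/R)^d\Phi(R)+KR^{\gamma+\delta}$ on $(0,\varepsilon]$ gives $\Phi(r)\le c'\big(r^{\gamma+\delta}\varepsilon^{-(\gamma+\delta)}\Phi(\varepsilon) + r^{\gamma+\delta}K\big)$, but one must be careful that the coefficient-oscillation contribution carries $\|\nabla u\|_{M,\gamma+\delta,x,\Omega,R_e}$ on the right-hand side — the very quantity being estimated. This is the main obstacle and is resolved by a hole-filling/absorption: the factor multiplying $\|\nabla u\|_{M,\gamma+\delta}$ after the iteration carries a positive power of $\varepsilon$ (from $R^{2\kappa}$ and the restriction $R\le\varepsilon$), so for $\varepsilon$ small that term can be absorbed into the left; but since $\varepsilon$ is a free parameter in the statement, one instead keeps $\varepsilon$ arbitrary, estimates $\sup_{r\le\varepsilon}r^{-(\gamma+\delta)}\Phi(r)$, and separately estimates $\sup_{\varepsilon\le r\le R_e}r^{-(\gamma+\delta)}\Phi(r)$ trivially by $\varepsilon^{-(\gamma+\delta)}\|\nabla u\|_{L_2(\Omega)}^2$. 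Taking square roots and combining the two suprema gives exactly the claimed bound on $\|\nabla u\|_{M,\gamma+\delta,x,\Omega,R_e}$. The bookkeeping of which Morrey/Campanato scales the right-hand data naturally live on, and the exact powers of $\varepsilon$ and $R$, is the only delicate point; the analytic content is entirely the De~Giorgi estimate plus Caccioppoli plus the Giaquinta iteration lemma.
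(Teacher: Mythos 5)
Your overall strategy --- freeze the coefficients at $x$, compare with the constant-coefficient solution via the interior De Giorgi estimate (\ref{eldtnpc403;1}), estimate $\int_{B(x,R)}|\nabla v|^2$ by ellipticity and the Dirichlet--Poincar\'e inequality, and run the Giaquinta iteration on radii $R\le\varepsilon$ while treating $r>\varepsilon$ trivially --- is exactly the standard argument behind this proposition (the paper gives no proof, deferring to \cite{Aus1}, \cite{ER15}, \cite{ERe2}, and it is the same scheme as the proof of Proposition~\ref{pdtnpc405}). Two points need repair.

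First, the exponent bookkeeping for the $f_0$ term. The Poincar\'e step contributes $c_D\,R^{1+\gamma/2}\,\|f_0\|_{M,\gamma,x,\Omega,R_e}$ to $\big(\int_{B(x,R)}|\nabla v|^2\big)^{1/2}$, and $R^{1+\gamma/2}=R^{(\gamma+\delta)/2}\,R^{(2-\delta)/2}$; restricting to $R\le\varepsilon$ therefore yields the prefactor $\varepsilon^{(2-\delta)/2}$, not $\varepsilon^{2-\delta}$. Your identity $R=R^{2-\delta}R^{\delta-1}$ is true, but $R^{\delta-1}R^{\gamma/2}=R^{(\gamma+\delta)/2}R^{(\delta-2)/2}$ and $R^{(\delta-2)/2}\ge 1$ blows up as $R\downarrow 0$ when $\delta<2$, so the step as written does not close. (The power $\varepsilon^{2-\delta}$ in the statement is what one gets when the Morrey quantities are defined without the square root, as in the cited literature; with the present convention the honest output of this argument is $\varepsilon^{(2-\delta)/2}$, which is a positive power of $\varepsilon$ and is all that is used downstream.)

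Second, the circularity you flag is self-inflicted, and the proposed absorption is more delicate than you acknowledge: absorbing a multiple of $\|\nabla u\|_{M,\gamma+\delta,x,\Omega,R_e}$ into the left-hand side presupposes that this supremum is finite, which is not known a priori (the supremum is over $r\downarrow 0$), so you would additionally need to work with $\sup_{r\in[\rho_0,R_e]}$ and let $\rho_0\downarrow 0$, and to reduce general $\varepsilon\in(0,1]$ to $\varepsilon$ below a structural threshold. The standard and cleaner route is not to convert $\|\nabla u\|_{L_2(B(x,R))}$ into a Morrey norm at all: keep the coefficient-oscillation contribution as $c\,R^{2\kappa}\,\Phi(R)$ with $\Phi(R)=\int_{B(x,R)}|\nabla u|^2$, so that the basic inequality reads $\Phi(r)\le c\big[(r/R)^d+R^{2\kappa}\big]\Phi(R)+c\,B^2R^{\gamma+\delta}$, and invoke the perturbed form of Lemma~III.2.1 in \cite{Gia1}, which is stated precisely for $\phi(r)\le A\big[(r/R)^\alpha+\sigma\big]\phi(R)+BR^\beta$ with $\sigma$ small and $\beta<\alpha$. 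This yields $\Phi(r)\le c\,r^{\gamma+\delta}\big(\varepsilon^{-(\gamma+\delta)}\Phi(\varepsilon)+B^2\big)$ for $r\le\varepsilon$ directly, with no absorption and no a priori finiteness required; the remainder of your argument then goes through unchanged.
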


Similarly, using the De Giorgi estimates (\ref{eldtnpc404;1}) and (\ref{eldtnpc404;2})
one also has boundary regularity in the Morrey region.

\begin{prop} \label{pdtnpc409}
Let $\kappa \in (0,1)$, $K \geq 1$, $\mu,M > 0$,
$\gamma \in [0,d)$ and $\delta \in (0,2]$ with 
$\gamma + \delta < d$.
Then there exists an $c > 0$, such that 
the following is valid.

Let $\Omega,U \subset \Ri^d$ open.
Let $\Phi$ be a $C^{1+\kappa}$-diffeomorphism from $U$ onto $E$ such that 
$\Phi(U \cap \Omega) = E^-$ and $\Phi(U \cap \partial \Omega) = P$.
Suppose that $K$ is larger than the Lipschitz constant for $\Phi$ and $\Phi^{-1}$.
Moreover, suppose that $|||(D \Phi)_{ij}|||_{C^\kappa} \leq K$ and 
$|||(D (\Phi^{-1}))_{ij}|||_{C^\kappa} \leq K$ for all $i,j \in \{ 1,\ldots,d \} $,
where $D \Phi$ denotes the derivative of $\Phi$.
Let $C \in \ce^\kappa(\Omega,\mu,M)$, $u \in W^{1,2}_0(\Omega)$ and 
$f_0,f_1,\ldots,f_d \in L_2(\Omega)$ and suppose that 
\[
\int_\Omega \sum_{k,l=1}^d c_{kl} \, (\partial_k u) \, \overline{(\partial_l v)}
= (f_0,v)_{L_2(\Omega)} - \sum_{k=1}^d (f_k, \partial_k v)_{L_2(\Omega)}
\]
for all $v \in W^{1,2}_0(\Omega)$.
Define $\tilde u \colon E^- \to \Ci$ by $\tilde u = u \circ \Phi^{-1}$.
Then
\[
\hspace*{-1pt}
\|\nabla \tilde u\|_{M,\gamma+\delta,x, E^-,1}
\leq c \, \Big(
    \varepsilon^{2-\delta} \|f_0 \circ \Phi^{-1}\|_{M,\gamma,x, E^-,1}
  + \sum_{k=1}^d \|f_k \circ \Phi^{-1}\|_{M,\gamma+\delta,x, E^-,1}
  + \varepsilon^{-(\gamma + \delta)} \|\nabla \tilde u\|_{L_2(\Omega)} \Big)
\]
for all $x \in \frac{1}{2} \, E^-$ and $\varepsilon \in (0,1]$.
\end{prop}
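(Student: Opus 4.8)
The plan is to flatten the boundary with $\Phi$ and then run the Morrey iteration of Proposition~\ref{pdtnpc408}, using the boundary De~Giorgi estimates (\ref{eldtnpc404;1})--(\ref{eldtnpc404;2}) in place of the interior one (\ref{eldtnpc403;1}); the bookkeeping for the change of coordinates is that of the proof of Proposition~\ref{pdtnpc407}. First, by Lemma~\ref{ldtnpc410}\ref{ldtnpc410-1} and \ref{ldtnpc410-6} the function $\tilde u = u \circ \Phi^{-1}$ lies in $W^{1,2}(E^-)$, satisfies $(\Tr \tilde u)|_P = 0$, and solves the transformed equation with coefficients $C^\Phi \in \ce^\kappa(E^-,(d!K^{d+2})^{-1}\mu, d!d^2K^{d+2}M)$ and right-hand sides $\tilde f_0,\ldots,\tilde f_d$ obtained from $f_0 \circ \Phi^{-1},\ldots,f_d \circ \Phi^{-1}$ by multiplication with bounded functions built out of $D\Phi$ and $\det D\Phi$, so that $\|\tilde f_0\|_{M,\gamma,x,E^-,\rho} \leq c\,\|f_0 \circ \Phi^{-1}\|_{M,\gamma,x,E^-,\rho}$ and $\|\tilde f_k\|_{M,\gamma+\delta,x,E^-,\rho} \leq c \sum_{j=1}^d \|f_j \circ \Phi^{-1}\|_{M,\gamma+\delta,x,E^-,\rho}$ for all $x \in E^-$ and $\rho \in (0,1]$; moreover $\|\nabla \tilde u\|_{L_2(E^-)} \leq c\,\|\nabla u\|_{L_2(\Omega)}$. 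Hence it is enough to bound $\|\nabla \tilde u\|_{M,\gamma+\delta,x,E^-,1}$ in terms of these transformed quantities.

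Fix first a boundary point $x \in \frac12 P$ and set $\phi(\rho) = \int_{E^-(x,\rho)} |\nabla \tilde u|^2$. For $R \in (0,1]$ let $\tilde v \in W^{1,2}_0(E^-(x,R))$ solve the frozen-coefficient problem with datum $\tilde u$ (as in (\ref{ldtnpc406;13})) and put $\tilde w = \tilde u - \tilde v$, so that $A^{C^\Phi(x)} \tilde w = 0$ weakly on $E^-(x,R)$ and $(\Tr \tilde w)|_{P \cap B(x,R)} = 0$. Then (\ref{eldtnpc404;1}) and (\ref{eldtnpc404;2}) together give $\int_{E^-(x,r)} |\nabla \tilde w|^2 \leq c\,(r/R)^d \int_{E^-(x,R)} |\nabla \tilde w|^2$ for $0 < r \leq R$, the exponent $d$ (rather than $d+2$) coming from the normal derivative. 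Testing the frozen equation against $\tilde v$ and using ellipticity of $C^\Phi(x)$, the H\"older bound $\|C^\Phi(x) - C^\Phi\|_{L_\infty(E^-(x,R))} \leq c\,R^\kappa$, the equation for $\tilde u$ and the Poincar\'e inequality on $E^-(x,R)$ (of diameter $\leq 2R$), one obtains, exactly as in the proof of Lemma~\ref{ldtnpc406},
\[
\int_{E^-(x,R)} |\nabla \tilde v|^2 \leq c \Big( R^{2\kappa}\,\phi(R) + R^{2+\gamma}\,\|\tilde f_0\|_{M,\gamma,x,E^-,1}^2 + R^{\gamma+\delta} \sum_{k=1}^d \|\tilde f_k\|_{M,\gamma+\delta,x,E^-,1}^2 \Big).
\]
Inserting this and the estimate for $\tilde w$ into $\tilde u = \tilde w + \tilde v$ yields
\[
\phi(r) \leq c \Big( (r/R)^d + R^{2\kappa} \Big) \phi(R) + c\,R^{2+\gamma}\,\|\tilde f_0\|_{M,\gamma,x,E^-,1}^2 + c\,R^{\gamma+\delta} \sum_{k=1}^d \|\tilde f_k\|_{M,\gamma+\delta,x,E^-,1}^2
\]
for $0 < r \leq R \leq 1$. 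Since $\gamma + \delta < d$ and $\gamma + \delta \leq \gamma + 2$, an iteration argument of the type of Lemma~III.2.1 of \cite{Gia1} (valid once $R$ is below a fixed threshold making $R^{2\kappa}$ small), combined with the trivial bound $\phi(\rho) \leq \|\nabla \tilde u\|_{L_2(E^-)}^2$ at the stopping radius — chosen to be a fixed power of $\varepsilon$, precisely as in Proposition~\ref{pdtnpc408} — gives the asserted inequality for all $x \in \frac12 P$; the surplus factor $\varepsilon^{2-\delta}$ on the $\tilde f_0$-term reflects the gap $(2+\gamma) - (\gamma+\delta) = 2-\delta$ between the two source exponents above.

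It remains to pass from boundary points to an arbitrary $x \in \frac12 E^-$, and this is carried out by the same case distinction as in the proof of Proposition~\ref{pdtnpc407}, based on the sizes of $r$, $R$ and $s := |x_d| = \mathrm{dist}(x,P)$. For radii $r \leq s \wedge \frac14$ the set $E^-(x,r) = B(x,r)$ lies in the interior of $E^-$, and one quotes the interior Morrey estimate of Proposition~\ref{pdtnpc408}, applied at $x$ with window $R_e = s \wedge \frac14$; for $s \wedge \frac14 < r \leq \frac14$ one uses the inclusions $E^-(x,r) \subseteq E^-(\bar x, 2r) \subseteq E^-(\bar x, \frac12)$ with $\bar x = (x_1,\ldots,x_{d-1},0) \in \frac12 P$ together with the boundary estimate just established at $\bar x$; and for $r \geq \frac14$ one uses the crude bound $\int_{E^-(x,r)} |\nabla \tilde u|^2 \leq \|\nabla \tilde u\|_{L_2(E^-)}^2 \leq 4^{\gamma+\delta}\,r^{\gamma+\delta}\,\|\nabla \tilde u\|_{L_2(E^-)}^2$. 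Taking the supremum over $r \in (0,1]$ and undoing the flattening of the first paragraph completes the argument. I expect the one genuinely delicate point to be this last transition: keeping the constants uniform in $x$ when handing over from the interior estimate to the boundary iteration and, in particular, when changing the base point from $x$ to its projection $\bar x$; everything else is a routine transcription of the interior case, and the constants stay uniform in $C \in \ce^\kappa(\Omega,\mu,M)$ and in the diffeomorphism data because every ingredient used — Lemmas~\ref{ldtnpc404} and \ref{ldtnpc410}, the iteration lemma, and Proposition~\ref{pdtnpc408} — has uniform constants.
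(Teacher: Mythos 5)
The paper gives no written proof of Proposition~\ref{pdtnpc409}: it only remarks that the statement follows ``similarly'' to Proposition~\ref{pdtnpc408}, using the boundary De~Giorgi estimates (\ref{eldtnpc404;1}) and (\ref{eldtnpc404;2}). Your reconstruction is exactly the intended argument: flatten with $\Phi$, freeze the coefficients at a point of $\frac12 P$, compare with the homogeneous frozen solution (whose full gradient decays like $(r/R)^d$ because of the normal derivative), estimate the correction by ellipticity, the Poincar\'e inequality and the H\"older modulus of $C^\Phi$, run the Giaquinta iteration with stopping radius tied to $\varepsilon$ (the exponents $2-\delta$ and $-(\gamma+\delta)$ come out after the harmless reparametrisation $\varepsilon\mapsto\varepsilon^2$), and then pass to general $x\in\frac12 E^-$ by the case analysis of Proposition~\ref{pdtnpc407}. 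All the ingredients you invoke have constants uniform in the data, so the quantifier structure is respected.

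The one place I would not accept your argument as written is the interior case $r\leq|x_d|\wedge\frac14$, where you ``quote the interior Morrey estimate of Proposition~\ref{pdtnpc408}, applied at $x$ with window $R_e=|x_d|\wedge\frac14$.'' For a fixed $\varepsilon$ and $R_e=|x_d|$ arbitrarily small this is exactly the regime in which that proposition gives you nothing usable: its proof stops the iteration at radius comparable to $\varepsilon$ and uses the trivial bound $\int_{B(x,r)}|\nabla u|^2\leq\|\nabla u\|_{L_2(\Omega)}^2$ there, which for the top radius $r=R_e\ll\varepsilon$ only yields $R_e^{-(\gamma+\delta)}\|\nabla u\|_{L_2}^2$, not $\varepsilon^{-(\gamma+\delta)}\|\nabla u\|_{L_2}^2$; no larger ball inside $\Omega$ is available to do better. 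The information at the top radius $|x_d|$ must instead come from the boundary analysis via $B(x,|x_d|)=E^-(x,|x_d|)\subseteq E^-(\bar x,2|x_d|)$. Concretely: use only the two-radius iteration inequality of Proposition~\ref{pdtnpc405} (which is uniform in the window) to propagate from $r$ up to $R=|x_d|$, and bound $\int_{E^-(x,|x_d|)}|\nabla\tilde u|^2$ by the estimate already established at the projected point $\bar x\in\frac12 P$ at radius $2|x_d|$ --- this is precisely Cases 1 and 2 of the proof of Proposition~\ref{pdtnpc407}, transcribed to the Morrey setting (where it is in fact easier, since no mean values need to be compared). With that replacement, and the routine observation that the $x$-centred and $\bar x$-centred Morrey norms of the data are comparable at the radii $\geq|x_d|$ actually used in the handover, the proof is complete.
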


Let $\Omega \subset \Ri^d$ be an open set, 
let $C \in \ce(\Omega)$ and $V \in L_\infty(\Omega)$.
Let $T$ be the semigroup generated by $-(A_D + V)$.
We omit the dependence of $T$ on $C$ and $V$ in our notation, since that will be 
clear from the context.
We also need the Davies perturbation.
Let 
\[
\cd = \{ \psi \in C^\infty_{\rm c}(\Ri^d,\Ri) : \|\nabla \psi\|_\infty \leq 1 \} 
 .  \]
For all $\rho \in \Ri$ and $\psi \in \cd$ define the 
multiplication operator $U_\rho$ by $U_\rho u = e^{-\rho \, \psi} u$.
Note that $U_\rho u \in W^{1,2}_0(\Omega)$ for all 
$u \in W^{1,2}_0(\Omega)$.
Let $T^\rho_t = U_\rho \, T_t \, U_{-\rho}$ be the Davies perturbation
for all $t > 0$.
Let $- A^{(\rho)}$ be the generator of $(T^\rho_t)_{t > 0}$. 
Then $A^{(\rho)}$ is the operator associated with the form
$\gotl^{(\rho)}$ with form domain $D(\gotl^{(\rho)}) = W^{1,2}_0(\Omega)$ and  
\begin{equation}
\gotl^{(\rho)}(u,v)
= \gota(u,v)
   + \int_\Omega \sum_{i=1}^d \Big( a^{(\rho)}_i \, (\partial_i u) \, \overline v
                             + b^{(\rho)}_i \, u \, \overline{(\partial_i v)} \Big)
   + \int_\Omega a^{(\rho)}_0 \, u \, \overline v
\label{epdir505;5}
\end{equation}
with 
\[
a^{(\rho)}_k =  - \rho \sum_{l=1}^d  c_{kl} \, \partial_l \psi
\quad , \quad
b^{(\rho)}_k =  \rho \sum_{l=1}^d a_{lk} \, \partial_l \psi
\]
and 
\[
a^{(\rho)}_0
= V - \rho^2 \sum_{k,l=1}^d c_{kl} \, (\partial_k \psi) \, \partial_l \psi
.  \]

We start with $L_2$-estimates for the perturbed semigroup.

\begin{lemma} \label{ldtnpc413}
Let $\Omega \subset \Ri^d$ be a bounded open set.
For all $\mu,M > 0$ there exist $c_0,\omega_0,\omega_1 > 0$ 
such that 
\begin{equation}
\|T^\rho_t u\|_{L_2(\Omega)} 
\leq e^{-\omega_1 t} \, e^{\|(\RRe V)^-\|_\infty t} \, e^{\omega_0 \rho^2 t} \, \|u\|_{L_2(\Omega)}
\;\;\; , \;\;\;
\|\nabla T^\rho_t u\|_{L_2(\Omega)} 
     \leq c_0 \, t^{-1/2} \, e^{\omega_0 (1+\rho^2) t} \,
 \|u\|_{L_2(\Omega)}
\label{eldtnpc413;1}
\end{equation}
and 
\[
\|A^{(\rho)} \, T^\rho_t u\|_{L_2(\Omega)} 
\leq c_0 \, t^{-1} \, e^{\omega_0 (1+\rho^2) t} \, \|u\|_{L_2(\Omega)}
\]
for all $\kappa \in (0,1)$,
$C \in \ce^\kappa(\Omega,\mu,M)$,
$V \in L_\infty(\Omega)$,
$u \in L_2(\Omega)$, $t > 0$, $\rho \in \Ri$ and $\psi \in \cd$.
\end{lemma}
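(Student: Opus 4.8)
The plan is to obtain all three bounds from standard energy (form) estimates applied to the perturbed form $\gotl^{(\rho)}$ together with analyticity of the semigroup. First I would record that the lower-order coefficients of $\gotl^{(\rho)}$ are controlled: $\|a^{(\rho)}_k\|_\infty, \|b^{(\rho)}_k\|_\infty \leq c\,|\rho|$ and $\|a^{(\rho)}_0\|_\infty \leq \|V\|_\infty + c\,\rho^2$, with $c$ depending only on $\mu$ and $M$, because $\|\nabla\psi\|_\infty \leq 1$ and $\|C\|_\infty \leq M$. Writing $\RRe \gotl^{(\rho)}(u,u) = \RRe \gota(u,u) + \RRe(\text{cross terms}) + \RRe\int a^{(\rho)}_0 |u|^2$, I would use ellipticity $\RRe\gota(u,u) \geq \mu\|\nabla u\|_2^2$ and absorb the first-order cross terms with Young's inequality: $|\RRe\int(a^{(\rho)}_k(\partial_k u)\overline u + b^{(\rho)}_k u\,\overline{\partial_k u})| \leq \frac{\mu}{2}\|\nabla u\|_2^2 + c'\rho^2\|u\|_2^2$. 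This gives a G\aa rding-type inequality
\[
\RRe \gotl^{(\rho)}(u,u) \geq \tfrac{\mu}{2}\|\nabla u\|_2^2 - \big(\|(\RRe V)^-\|_\infty + \omega_0\rho^2\big)\|u\|_2^2
\]
for a suitable $\omega_0$. Since $\Omega$ is bounded, Poincar\'e's inequality on $W^{1,2}_0(\Omega)$ gives $\|u\|_2^2 \leq c_P\|\nabla u\|_2^2$, so in fact $\RRe\gotl^{(\rho)}(u,u) \geq \omega_1\|u\|_2^2 - (\|(\RRe V)^-\|_\infty + \omega_0\rho^2)\|u\|_2^2$ with $\omega_1 = \frac{\mu}{2c_P} > 0$, which yields the first inequality in \eqref{eldtnpc413;1} directly from the spectral/contraction theory for the holomorphic semigroup generated by $-A^{(\rho)}$ (using that $\gotl^{(\rho)}$ is sectorial).

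Next I would derive the gradient bound. Standard analytic-semigroup form estimates give $\|\nabla T^\rho_t u\|_2^2 \leq \RRe\gotl^{(\rho)}(T^\rho_t u, T^\rho_t u) + C\|T^\rho_t u\|_2^2 \leq |(\gotl^{(\rho)} + \nu)(T^\rho_t u, T^\rho_t u)|$ for $\nu$ large enough (depending on $\rho^2$); the latter equals $|((A^{(\rho)} + \nu)T^\rho_t u, T^\rho_t u)_{L_2}|$, and since $t \mapsto T^\rho_t$ is holomorphic one has $\|A^{(\rho)} T^\rho_t u\|_2 \leq c\,t^{-1}e^{\omega_0(1+\rho^2)t}\|u\|_2$. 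Combining with the $L_2$-bound on $T^\rho_t u$ and Cauchy--Schwarz produces $\|\nabla T^\rho_t u\|_2 \leq c_0\,t^{-1/2}e^{\omega_0(1+\rho^2)t}\|u\|_2$, after crudely majorising $e^{-\omega_1 t + \|(\RRe V)^-\|_\infty t}$ by $e^{\omega_0(1+\rho^2)t}$ (enlarging $\omega_0$ to also dominate $\|V\|_\infty \leq M$). The third estimate, $\|A^{(\rho)} T^\rho_t u\|_2 \leq c_0\,t^{-1}e^{\omega_0(1+\rho^2)t}\|u\|_2$, is precisely the analyticity estimate for the semigroup $e^{-tA^{(\rho)}}$; the point is to track the dependence of the analyticity constants on $\rho$, which comes from the sectoriality angle and vertex of $\gotl^{(\rho)}$ both being controlled by $\rho^2$ via the coefficient bounds above, and to observe that the constants depend on $C$, $V$ and $\kappa$ only through $\mu$ and $M$ (the H\"older norm of $C$ plays no role for these $L_2$-estimates, so the hypothesis $C \in \ce^\kappa$ could be weakened to $C \in \ce(\Omega,\mu,M)$, but that does not matter here).

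The main obstacle is bookkeeping rather than conceptual: one must phrase the quasi-sectoriality of $\gotl^{(\rho)}$ so that the numerical range lies in a sector $\{z : |\IIm z| \leq \tan\theta\,(\RRe z + \omega_0\rho^2 + M)\}$ with $\theta < \pi/2$ fixed independently of $\rho$, and then invoke the uniform analytic-semigroup bounds for the associated operator. The delicate points are (i) making sure the absorption constants in Young's inequality depend only on $\mu, M$; (ii) keeping the Davies exponent $\rho^2 t$ (not, say, $\rho^2 t + |\rho|\sqrt t$) by always pairing $|\rho|\|\nabla\cdot\|_2\|\cdot\|_2 \leq \varepsilon\|\nabla\cdot\|_2^2 + \varepsilon^{-1}\rho^2\|\cdot\|_2^2$; and (iii) getting the sharp short-time powers $t^{-1/2}$ and $t^{-1}$ from the holomorphy estimates $\|A^{(\rho)}e^{-tA^{(\rho)}}\|_{2\to2} \leq c\,t^{-1}$ valid for $0 < t \leq 1$, then extending to all $t > 0$ by the semigroup property at the cost of the exponential factor $e^{\omega_0(1+\rho^2)t}$. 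None of these steps requires the boundary regularity of $\Omega$ or the continuity of the coefficients; only boundedness of $\Omega$ (for Poincar\'e) and the ellipticity bounds are used.
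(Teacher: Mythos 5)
Your proposal is correct and follows essentially the same route as the paper: a G\aa rding-type inequality for $\gotl^{(\rho)}$ obtained by absorbing the first-order Davies terms via Young's inequality with constants depending only on $\mu$ and $M$, the Dirichlet Poincar\'e inequality to produce the strictly negative exponent $-\omega_1 t$, and then uniform (in $\rho$) sectoriality/holomorphy estimates for the gradient and $A^{(\rho)}T^\rho_t$ bounds, which is exactly what the paper delegates to Lemma~2.1 of \cite{EO2}. Your observation that only $C \in \ce(\Omega,\mu,M)$ is needed is also consistent with the paper's argument.
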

\begin{proof} 
By the Dirichlet type Poincar\'e inequality there exists a $\lambda > 0$ 
such that $\lambda \int_\Omega |u|^2 \leq \int_\Omega |\nabla u|^2$ for all 
$u \in W^{1,2}_0(\Omega)$.
Without loss of generality we may assume that $\mu \leq 1 \leq M$.
Let $u \in L_2(\Omega)$.
It follows from (\ref{epdir505;5}) that 
\begin{eqnarray*}
\mu \, \|\nabla T^\rho_t u\|_{L_2(\Omega)}^2
& \leq & \RRe \gota(T^\rho_t u)  \\
& \leq & \RRe \gota(T^\rho_t u) + ((\RRe V)^+ T^\rho_t u, T^\rho_t u)_{L_2(\Omega)}  \\
& \leq & \RRe \gotl^{(\rho)}(T^\rho_t u) 
   + 2 d \, M \, |\rho| \, \|\nabla T^\rho_t u\|_{L_2(\Omega)} \, 
                                          \|T^\rho_t u\|_{L_2(\Omega)} \\*
& & \hspace*{40mm} {}  
   + \|(\RRe V)^-\|_\infty \, \|T^\rho_t u\|_{L_2(\Omega)}^2
   + d \, M \, \rho^2 \, \|T^\rho_t u\|_{L_2(\Omega)}^2  \\
& \leq & \RRe \gotl^{(\rho)}(T^\rho_t u) 
   + \tfrac{1}{2} \, \mu \|\nabla T^\rho_t u\|_{L_2(\Omega)}^2
   + \frac{2 d^2 \, M^2 \, \rho^2}{\mu} \, \|T^\rho_t u\|_{L_2(\Omega)}^2  \\*
& & \hspace*{40mm} {}
   + \|(\RRe V)^-\|_\infty \, \|T^\rho_t u\|_{L_2(\Omega)}^2
   + d \, M  \,\rho^2 \, \|T^\rho_t u\|_{L_2(\Omega)}^2 
\end{eqnarray*}
for all $t > 0$.
So 
\begin{eqnarray*}
\tfrac{1}{2} \, \lambda \, \mu \, \|T^\rho_t u\|_{L_2(\Omega)}^2
& \leq & \tfrac{1}{2} \, \mu \, \|\nabla T^\rho_t u\|_{L_2(\Omega)}^2  \nonumber  \\
& \leq & \RRe \gotl^{(\rho)}(T^\rho_t u) 
   + (\|(\RRe V)^-\|_\infty + \omega_1 \, \rho^2) \, \|T^\rho_t u\|_{L_2(\Omega)}^2 
,
\end{eqnarray*}
where $\omega_1 = 3 d^2 \, M^2 \, \mu^{-1}$.
Hence 
\begin{eqnarray*}
\frac{d}{dt} \|T^\rho_t u\|_{L_2(\Omega)}^2
& = & - 2 \RRe (A^{(\rho)} T^\rho_t u, T^\rho_t u)_{L_2(\Omega)}  \\
& = & - 2 \RRe \gotl^{(\rho)}(T^\rho_t u) 
\leq 2 (- \tfrac{1}{2} \, \lambda \, \mu + \|(\RRe V)^-\|_\infty 
        + \omega_1 \, \rho^2) \, \|T^\rho_t u\|_{L_2(\Omega)}^2 
\end{eqnarray*}
for all $t > 0$.
This implies that 
\[
\|T^\rho_t u\|_{L_2(\Omega)}
\leq e^{- \tfrac{1}{2} \, \lambda \mu t} 
   \, e^{\|(\RRe V)^-\|_\infty t} \, e^{\omega_1 \rho^2 t} \, \|u\|_{L_2(\Omega)}
\]
for all $t > 0$.

The other estimates of the lemma follow as in the proof of Lemma~2.1 in \cite{EO2}.
\end{proof}

By a Neumann type Poincar\'e inequality there is a relation
between the Campanato norm and the Morrey norm of the gradient of a function.

\begin{lemma} \label{ldtnpc417}
There exists a $c_N > 0$ such that 
\[
|||u|||_{\cm,\gamma+2,x,E^-,1}
\leq c_N \, \|\nabla u\|_{M,\gamma,x,E^-,1}
\]
and 
\[
|||v|||_{\cm,\gamma+2,y,\Omega,R_e}
\leq c_N \, \|\nabla v\|_{M,\gamma,y,\Omega,R_e}
\]
for all $\gamma \in [0,d)$, $u \in W^{1,2}(E^-)$, $x \in \frac{1}{2} \, E^-$,
open $\Omega \subset \Ri^d$, $v \in W^{1,2}(\Omega)$, $y \in \Omega$ and 
$R_e \in (0,1]$ with $B(y,R_e) \subset \Omega$.
\end{lemma}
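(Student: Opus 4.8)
The plan is to establish the Campanato–Morrey gradient comparison via a classical Neumann-type Poincar\'e inequality applied on each ball $E^-(x,r)$ with $r \in (0,1]$, and then repackage the resulting bound into the pointwise semi-norms. First I would recall that for a bounded Lipschitz (here, convex or star-shaped suffices) domain $D$ there is a constant $c_P$, depending only on the Lipschitz character of $D$, such that $\int_D |w - \langle w\rangle_D|^2 \le c_P \,(\diam D)^2 \int_D |\nabla w|^2$ for all $w \in W^{1,2}(D)$. The key geometric input is that the sets $E^-(x,r) = E^- \cap B(x,r)$ for $x \in \frac{1}{2}\,E^-$ (resp. $\Omega(y,r) = \Omega \cap B(y,r)$ with $B(y,R_e)\subset\Omega$, in which case $\Omega(y,r) = B(y,r)$ is just a ball) have a Lipschitz character that is \emph{uniform in $x$ and $r$}: for the half-cube, $E^-(x,r)$ is the intersection of a ball with a fixed half-space through (or near) its centre, so after rescaling by $r$ it ranges over a compact family of domains, all with controlled Lipschitz constants. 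Thus $c_P$ can be chosen independent of $x$ and $r$, and $\diam E^-(x,r) \le 2r$.

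With that in hand the estimate is immediate: for each $r \in (0,1]$,
\[
r^{-(\gamma+2)} \int_{E^-(x,r)} |u - \langle u\rangle_{E^-(x,r)}|^2
\le c_P \, r^{-\gamma} \int_{E^-(x,r)} |\nabla u|^2
\le c_P \, \|\nabla u\|_{M,\gamma,x,E^-,1}^2 ,
\]
using $\diam E^-(x,r) \le 2r$ to absorb the diameter factor (the factor $4c_P$ becomes the square of the constant). Taking the supremum over $r \in (0,1]$ on the left gives $|||u|||_{\cm,\gamma+2,x,E^-,1}^2 \le c_P \,\|\nabla u\|_{M,\gamma,x,E^-,1}^2$, i.e. the first inequality with $c_N = \sqrt{c_P}$ (up to the harmless constant from the diameter). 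The second inequality is the same computation, now with $E^-(x,r)$ replaced by $\Omega(y,r)$; since $B(y,R_e)\subset\Omega$ forces $\Omega(y,r)=B(y,r)$ for all $r\le R_e$, the relevant Poincar\'e constant is simply that of the unit ball, which is a fixed universal number, and $\diam B(y,r)=2r$.

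The only genuine obstacle is verifying the \emph{uniformity} of the Poincar\'e constant over the family $\{E^-(x,r): x\in\frac12 E^-,\ r\in(0,1]\}$. I would handle this by the scaling $z \mapsto (z-x)/r$, which maps $E^-(x,r)$ onto $B(0,1)\cap\{z: z_d < s\}$ for some $s = s(x,r) \in [-1, 1]$ (with $s \ge 0$ since $x \in \tfrac12 E^-$ implies $x_d \le 0$ is bounded away from the relevant range only mildly; in any case $s$ ranges over a bounded interval because $x \in \tfrac12\,\overline{E^-}$ and $r\le 1$ keep $E^-(x,r)$ well inside $E^-$ so the only relevant flat face is $\{z_d=0\}$). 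Each such truncated ball is convex, hence satisfies a Poincar\'e inequality with constant controlled by its inradius and diameter, both bounded below/above uniformly in $s$; alternatively one invokes continuity of the Poincar\'e constant under Hausdorff convergence of uniformly Lipschitz domains on a compact parameter set. Either route gives a single $c_P$, and the lemma follows. This is exactly the ``Neumann type Poincar\'e inequality'' alluded to in the statement, so no deeper machinery is needed.
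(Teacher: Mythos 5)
Your proof is correct and is exactly the argument the paper intends: the lemma is stated without proof, preceded only by the remark that it follows from a Neumann type Poincar\'e inequality, and your rescaling of $E^-(x,r)$ to a truncated unit ball (noting that for $x\in\frac12 E^-$ and $r\le 1$ only the face $\{z_d=0\}$ can be hit, so the rescaled domains form a compact family of convex sets with uniformly bounded Poincar\'e constants) supplies precisely the uniformity that makes the one-line computation work. The reduction of the second inequality to the full ball $B(y,r)$ is likewise correct.
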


Next we consider $L_2$--$W^{1+\kappa,\infty}$ estimates 
for the perturbed semigroup.
We start with bounds close to the boundary.

\begin{prop} \label{pdtnpc414}
Let $\kappa \in (0,1)$, $K \geq 1$ and $\mu,M > 0$.
Then there exist  $c,\omega > 0$ such that 
such that the following is valid.

Let $\Omega,U \subset \Ri^d$ open.
Let $\Phi$ be a $C^{1+\kappa}$-diffeomorphism from $U$ onto $E$ such that 
$\Phi(U \cap \Omega) = E^-$ and $\Phi(U \cap \partial \Omega) = P$.
Suppose that $K$ is larger than the Lipschitz constant for $\Phi$ and $\Phi^{-1}$.
Moreover, suppose that $|||(D \Phi)_{ij}|||_{C^\kappa} \leq K$ and 
$|||(D (\Phi^{-1}))_{ij}|||_{C^\kappa} \leq K$ for all $i,j \in \{ 1,\ldots,d \} $,
where $D \Phi$ denotes the derivative of $\Phi$.
Let $C \in \ce^\kappa(\Omega,\mu,M)$,
$V \in L_\infty(\Omega)$,
$t > 0$, $u \in L_2(\Omega)$, $\rho \in \Ri$ and $\psi \in \cd$, 
with $\|V\|_\infty \leq M$.
Then $\nabla ((T^\rho_t u) \circ \Phi^{-1})$ is continuous on 
$\frac{1}{2} \, E^-$.
Moreover,
\begin{eqnarray}
\|T^\rho_t u\|_{L_\infty(\Phi^{-1}(\tfrac{1}{2} \, E^-))}
& \leq & c \, t^{-d/4} \, e^{\omega (1 + \rho^2) t} \, \|u\|_{L_2(\Omega)} ,
   \nonumber  \\
\|\nabla T^\rho_t u\|_{L_\infty(\Phi^{-1}(\tfrac{1}{2} \, E^-))}
& \leq & c \, t^{-d/4} \, t^{-1/2} \, e^{\omega (1 + \rho^2) t} \, \|u\|_{L_2(\Omega)} 
   \label{epdtnpc414;2}  \mbox{, and } \\
|(\nabla T^\rho_t u)(x) - (\nabla T^\rho_t u)(y)|
& \leq & c \, t^{-d/4} \, t^{-1/2} \, t^{-\kappa/2} \, e^{\omega (1 + \rho^2) t} \, \|u\|_{L_2(\Omega)} 
    \, |x-y|^\kappa
  \label{epdtnpc414;3} 
\end{eqnarray}
for all $x,y \in \Phi^{-1}(\tfrac{1}{4} \, E^-)$ with $|x-y| \leq \frac{1}{4K}$.
\end{prop}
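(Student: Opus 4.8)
\emph{Strategy.} The plan is to realise $w:=T^\rho_tu$ as a weak solution of a divergence-form elliptic equation whose right-hand side is controlled both in $L_2(\Omega)$ and in $L_\infty(\Omega)$, to flatten the boundary piece by $\Phi$, and then to run the Morrey--Campanato regularity scheme of Propositions~\ref{pdtnpc408}, \ref{pdtnpc409} and \ref{pdtnpc407} together with Lemmas~\ref{ldtnpc402} and \ref{ldtnpc417}, tracking the powers of $t$ and $\rho$ throughout. Since $(T^\rho_s)_{s>0}$ is a semigroup, it suffices to prove the three estimates for $0<t\le1$: for $t>1$ one writes $T^\rho_t=T^\rho_1\,T^\rho_{t-1}$, applies the case $t=1$ to $T^\rho_1$ acting on $T^\rho_{t-1}u$, and absorbs $\|T^\rho_{t-1}u\|_{L_2(\Omega)}$ (bounded by Lemma~\ref{ldtnpc413}) into the factor $e^{\omega(1+\rho^2)t}$.

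\emph{The equation and the basic bounds.} Fix $0<t\le1$ and put $w=T^\rho_tu$. Since $T^\rho$ is holomorphic, $w\in\bigcap_n D\bigl((A^{(\rho)})^n\bigr)\subset W^{1,2}_0(\Omega)$, and the identity $\gotl^{(\rho)}(w,v)=-(A^{(\rho)}w,v)_{L_2(\Omega)}$ combined with (\ref{epdir505;5}) shows that
\[
\int_\Omega\sum_{k,l=1}^dc_{kl}\,(\partial_kw)\,\overline{(\partial_lv)}=(f_0,v)_{L_2(\Omega)}-\sum_{k=1}^d(f_k,\partial_kv)_{L_2(\Omega)}
\qquad(v\in W^{1,2}_0(\Omega)),
\]
with $f_0=-A^{(\rho)}w-\sum_ia^{(\rho)}_i\,\partial_iw-a^{(\rho)}_0\,w$ and $f_k=b^{(\rho)}_k\,w$; here $\|a^{(\rho)}_i\|_\infty,\|b^{(\rho)}_i\|_\infty\le dM|\rho|$ and $\|a^{(\rho)}_0\|_\infty\le M(1+d\rho^2)$ since $\psi\in\cd$. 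From Lemma~\ref{ldtnpc413} (used at times $t/2,t/4$ and with holomorphy) one has $\|w\|_{L_2(\Omega)}\le c\,e^{\omega(1+\rho^2)t}\|u\|_{L_2(\Omega)}$, $\|\nabla w\|_{L_2(\Omega)}\le c\,t^{-1/2}e^{\omega(1+\rho^2)t}\|u\|_{L_2(\Omega)}$ and $\|A^{(\rho)}w\|_{L_2(\Omega)}\le c\,t^{-1}e^{\omega(1+\rho^2)t}\|u\|_{L_2(\Omega)}$. The Sobolev inequality on the bounded set $\Omega$ together with the energy estimates of Lemma~\ref{ldtnpc413} yields the ultracontractivity bounds $\|T^\rho_s\|_{L_2\to L_\infty}\le c\,s^{-d/4}e^{\omega(1+\rho^2)s}$ and, by holomorphy, $\|A^{(\rho)}T^\rho_s\|_{L_2\to L_\infty}\le c\,s^{-d/4-1}e^{\omega(1+\rho^2)s}$ (cf.\ \cite{Ouh5}); hence $\|w\|_{L_\infty(\Omega)}\le c\,t^{-d/4}e^{\omega(1+\rho^2)t}\|u\|_{L_2(\Omega)}$, which a fortiori gives the first estimate in (\ref{epdtnpc414;2}), and $\|A^{(\rho)}w\|_{L_\infty(\Omega)}\le c\,t^{-d/4-1}e^{\omega(1+\rho^2)t}\|u\|_{L_2(\Omega)}$.

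\emph{Flatten and bootstrap.} Set $\tilde w=w\circ\Phi^{-1}$ on $E^-$. By Lemma~\ref{ldtnpc410}\ref{ldtnpc410-1} and~\ref{ldtnpc410-6}, $\tilde w$ solves a divergence-form equation on $E^-$ with coefficient matrix in $\ce^\kappa(E^-)$, with data obtained from $f_0\circ\Phi^{-1}$, $f_k\circ\Phi^{-1}$ by multiplication with bounded $C^\kappa$-functions built from $D\Phi$, and with $(\Tr\tilde w)|_P=0$. Two points drive the iteration. First, for $g\in L_\infty(\Omega)\cap L_2(\Omega)$, $\gamma\in[0,d)$ and $x\in\tfrac12E^-$ one has the interpolation estimate $\|g\circ\Phi^{-1}\|_{M,\gamma,x,E^-,1}\le c\,\|g\|_{L_\infty(\Omega)}^{\gamma/d}\,\|g\|_{L_2(\Omega)}^{1-\gamma/d}$ (split $\int_{E^-(x,r)}|g|^2$ at the radius where the two trivial bounds meet); applied to $w$ and $A^{(\rho)}w$ with the bounds above this controls all the zeroth-order data. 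Secondly, the free scale parameter in Propositions~\ref{pdtnpc408}, \ref{pdtnpc409}, \ref{pdtnpc407} must be taken as a suitable power of $t^{1/4}$, this being the exponent balancing $\varepsilon^{2-\delta}\|f_0\circ\Phi^{-1}\|_{M,\gamma,x,E^-,1}$ against $\varepsilon^{-(\gamma+\delta)}\|\nabla\tilde w\|_{L_2(\Omega)}$. Iterating the Morrey estimate of Proposition~\ref{pdtnpc409} (each round raising the Morrey exponent of $\nabla\tilde w$, the term $a^{(\rho)}_i\partial_i\tilde w$ of the data passing into the space just gained) brings $\nabla\tilde w$ into the Morrey space of exponent $d-\varepsilon_0$ for a fixed small $\varepsilon_0\in(0,\kappa]$ with $\varepsilon_0<2-2\kappa$, with bound $c\,t^{-d/4-1/2}e^{\omega(1+\rho^2)t}\|u\|_{L_2(\Omega)}$; as $d-\varepsilon_0>d-2$ this gives $\tilde w\in C^{1-\varepsilon_0/2}(\tfrac12E^-)$. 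For the $L_\infty$-bound of $\nabla\tilde w$ one uses the estimate at the \emph{critical} Morrey exponent $\gamma+\delta=d$ (with $\delta=2$), available because the De Giorgi estimates (\ref{eldtnpc403;1}), (\ref{eldtnpc404;2}) have exponent exactly $d$; the interpolation estimate makes $\|f_0\circ\Phi^{-1}\|_{M,d-2,x,E^-,1}$ of size $\sim t^{-d/4-1/2}e^{\omega(1+\rho^2)t}\|u\|_{L_2(\Omega)}$, and since the Morrey space of exponent $d$ embeds in $L_\infty$ one obtains $\nabla\tilde w\in L_\infty(\tfrac12E^-)$ and, via Lemma~\ref{ldtnpc410}\ref{ldtnpc410-4}, the second estimate in (\ref{epdtnpc414;2}). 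Finally, since $\tilde w\in C^1$ locally, $f_k\circ\Phi^{-1}$ is locally Lipschitz and $\nabla\tilde w$ lies in the Morrey space of exponent $d$, so Proposition~\ref{pdtnpc407} with $\delta=0$ and $\gamma=d+2\kappa$, using the interpolation estimate for $A^{(\rho)}w$ at Morrey exponent $d-2+2\kappa$, gives $\sup_{x\in\frac12E^-}|||\nabla\tilde w|||_{\cm,d+2\kappa,x,E^-,1}\le c\,t^{-d/4-1/2-\kappa/2}e^{\omega(1+\rho^2)t}\|u\|_{L_2(\Omega)}$; by Lemma~\ref{ldtnpc402}\ref{ldtnpc402-2} the pointwise representative $\widehat{\nabla\tilde w}$ equals $\nabla\tilde w$ a.e., so $\nabla\tilde w$ is continuous on $\tfrac12E^-$, and Lemma~\ref{ldtnpc402}\ref{ldtnpc402-3} turns the Campanato bound into the desired Hölder estimate for $\nabla\tilde w$ on $\tfrac14E^-$. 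Transporting by $\nabla(T^\rho_tu)=\bigl((D\Phi)^T(\nabla\tilde w)\bigr)\circ\Phi$, using $|||(D\Phi)_{ij}|||_{C^\kappa}\le K$, the $K$-bi-Lipschitz property of $\Phi$ and the already-proved $L_\infty$-bounds, yields (\ref{epdtnpc414;3}) and the continuity of $\nabla(T^\rho_tu)$ on $\Phi^{-1}(\tfrac12E^-)$.

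\emph{Main difficulty.} The conceptual structure is short; the work lies in the sharp bookkeeping of the $t$- and $\rho$-dependence through the finitely many iterations, so that exactly the powers $t^{-d/4}$, $t^{-d/4-1/2}$ and $t^{-d/4-1/2-\kappa/2}$ emerge. Three things make this go through: the ultracontractivity input (the purely local De Giorgi estimates do not see the factor $t^{-d/4}$, which must be fed in); the systematic choice of the scale parameter as a power of $t^{1/4}$ rather than the naive $\sqrt t$; and the use of the critical-exponent Morrey estimate --- not the Campanato estimate of Proposition~\ref{pdtnpc407} --- to obtain the $L_\infty$-bound on $\nabla\tilde w$ with the optimal power $t^{-d/4-1/2}$. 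The $\rho$-dependence is routine: every factor $|\rho|^m$ produced along the way is absorbed into $e^{\omega(1+\rho^2)t}$ via $|\rho|^m\le c_m\,t^{-m/2}e^{\rho^2t}$ for $t\le1$.
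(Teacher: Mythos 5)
Your overall architecture --- realising $w=T^\rho_tu$ as a weak solution with data built from $A^{(\rho)}w$ and the Davies drift terms, flattening with $\Phi$, iterating the Morrey estimates with scale parameter $\varepsilon\sim t^{1/4}$, and passing to Campanato norms at exponent $d+2\kappa$ for the H\"older bound --- is the same as the paper's, and your bookkeeping of the powers of $t$ and $\rho$ in the final Campanato step is correct. However, two of your inputs do not hold up.

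First, the ultracontractivity bounds $\|T^\rho_s\|_{2\to\infty}\le c\,s^{-d/4}e^{\omega(1+\rho^2)s}$ and $\|A^{(\rho)}T^\rho_s\|_{2\to\infty}\le c\,s^{-d/4-1}e^{\omega(1+\rho^2)s}$, which you need in order to feed your interpolation estimate $\|g\|_{M,\gamma,x}\le c\,\|g\|_\infty^{\gamma/d}\|g\|_2^{1-\gamma/d}$ with the data $f_0$, are not available at this stage. The coefficients $C$ are complex; for complex $L_\infty$ coefficients in dimension $d\ge5$ the semigroup need not map $L_2$ into $L_\infty$ at all, and for complex H\"older coefficients ultracontractivity is true but its proof \emph{is} the Morrey--Campanato iteration you are in the middle of (it is part of Theorem~\ref{tdtnpc401}, whose proof rests on the present proposition). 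Invoking ``Sobolev plus energy estimates'' here is therefore circular. The paper avoids this by carrying $w$ itself through the induction $P(\gamma)$ alongside $\nabla w$, and by controlling $A^{(\rho)}w$ in the Morrey norm via the splitting $T^\rho_t=T^\rho_{t/2}T^\rho_{t/2}$ and the induction hypothesis applied to $T^\rho_{t/2}\bigl(A^{(\rho)}T^\rho_{t/2}u\bigr)$, never using an a priori $L_\infty$ bound.

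Second, and more decisively, your route to the $L_\infty$ bound on $\nabla\tilde w$ relies on a Morrey estimate at the critical exponent $\gamma+\delta=d$. Proposition~\ref{pdtnpc409} explicitly requires $\gamma+\delta<d$, and the restriction is not cosmetic: freezing coefficients and applying (\ref{eldtnpc403;1}) or (\ref{eldtnpc404;2}) produces a one-step inequality of the form $\phi(r)\le c\bigl((r/R)^d+R^{2\kappa}\bigr)\phi(R)+C_0R^{\beta}$, and the iteration lemma (Lemma~III.2.1 of \cite{Gia1}) upgrades this to $\phi(r)\le c\,r^{\beta}$ only for $\beta$ strictly below the De Giorgi decay exponent $d$; at $\beta=d$ one loses a logarithm. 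So the fact that the De Giorgi decay is exactly $(r/R)^d$ is the reason the critical case \emph{fails}, not the reason it works. This is precisely what the parameter $\delta$ in Lemma~\ref{ldtnpc406} and Proposition~\ref{pdtnpc407} is designed to circumvent: with $\delta=\kappa$ one reaches the Campanato norm of $\nabla\tilde w$ at exponent $d+\kappa$ using only the Morrey norm at $d-\kappa<d$, and then Lemma~\ref{ldtnpc402}\ref{ldtnpc402-2} with $R=t^{1/2}e^{-t}$ yields the pointwise bound with the optimal power $t^{-d/4-1/2}$ --- contrary to your closing remark, no power of $t$ is lost on this route. Only after the $L_\infty$ gradient bound is in hand does one take $\delta=0$ to reach exponent $d+2\kappa$ and conclude the H\"older estimate, as you do in your final step.
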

\begin{proof} 
For all $\gamma \in [0,d-2)$ let $P(\gamma)$ be the hypothesis 
\begin{list}{}{\leftmargin=1.8cm \rightmargin=1.8cm \listparindent=12pt}
\item
There exist $c,\omega > 0$, depending only on $\Omega$, $\kappa$, $\mu$ and $M$, such that 
\[
\|(T^\rho_t u) \circ \Phi^{-1}\|_{M,\gamma,x, E^-,1}
\leq c \, t^{-\gamma / 4} \, e^{\omega (1 + \rho^2) t} \, \|u\|_{L_2(\Omega)}
\]
and 
\begin{equation}
\|\nabla ((T^\rho_t u) \circ \Phi^{-1})\|_{M,\gamma,x, E^-,1}
\leq c \, t^{-\gamma / 4} \, t^{-1/2} \, e^{\omega (1 + \rho^2) t} \, \|u\|_{L_2(\Omega)}
\label{epdtnpc414;5}
\end{equation}
for all $t > 0$, $u \in L_2(\Omega)$, $\rho \in \Ri$, $\psi \in \cd$
and $x \in \frac{1}{2} \, E^-$.
\end{list}

Clearly $P(0)$ is valid by Lemma~\ref{ldtnpc413}.
Arguing as in the proof of Proposition~4.3 in \cite{ER15},
Lemma~3.3 in \cite{EO1} or Lemma~7.1 in \cite{ERe2},
it follows from Lemma~\ref{ldtnpc413} and Proposition~\ref{pdtnpc409}
that $P(\gamma)$ is valid for all $\gamma \in [0,d)$.

For all $\gamma \in [0,d+2\kappa]$ let $P'(\gamma)$ be the hypothesis 
\begin{list}{}{\leftmargin=1.8cm \rightmargin=1.8cm \listparindent=12pt}
\item
There exist $c,\omega > 0$, depending only on $\Omega$, $\kappa$, $\mu$ and $M$, such that 
\begin{equation}
|||(T^\rho_t u) \circ \Phi^{-1}|||_{\cm,\gamma,x, E^-,1}
\leq c \, t^{-\gamma / 4} \, e^{\omega (1 + \rho^2) t} \, \|u\|_{L_2(\Omega)}
\label{epdtnpc414;6}
\end{equation}
and 
\[
|||\nabla ((T^\rho_t u) \circ \Phi^{-1})|||_{\cm,\gamma,x, E^-,1}
\leq c \, t^{-\gamma / 4} \, t^{-1/2} \, e^{\omega (1 + \rho^2) t} \, \|u\|_{L_2(\Omega)}
\]
for all $t > 0$, $u \in L_2(\Omega)$, $\rho \in \Ri$, $\psi \in \cd$
and $x \in \frac{1}{2} \, E^-$.
\end{list}

If $\gamma \in [0,d)$, then $P(\gamma)$ and Lemma~\ref{ldtnpc402}\ref{ldtnpc402-1}
imply that $P'(\gamma)$ is valid.
Then the Poincar\'e inequality of Lemma~\ref{ldtnpc417} and (\ref{epdtnpc414;5}) 
give that (\ref{epdtnpc414;6}) is valid for all $\gamma \in [0,d+2 \kappa]$
(even for all $\gamma \in [0,d+2)$).
Arguing similarly, using the regularity estimates of Proposition~\ref{pdtnpc407},
it follows that for all $\delta \in [0,\kappa]$ there exist $c,\omega > 0$, 
depending only on $\Omega$, $\kappa$, $\delta$, $\mu$ and $M$, such that 
\begin{equation}
|||\nabla ((T^\rho_t u) \circ \Phi^{-1})|||_{\cm,\gamma,x, E^-,1}
\leq c \, t^{-\gamma / 4} \, t^{-1/2} \, e^{\omega (1 + \rho^2) t} \, \|u\|_{L_2(\Omega)}
   + c \, \|\nabla ((T^\rho_t u) \circ \Phi^{-1})\|_{M,d - \delta,x,E^-,1}
\label{epdtnpc414;8}
\end{equation}
for all $t > 0$, $u \in L_2(\Omega)$, $\rho \in \Ri$, $\psi \in \cd$
and $x \in \frac{1}{2} \, E^-$, where
$\gamma = d + 2 \kappa - \delta$.
Choose $\delta = \kappa$.
Then (\ref{epdtnpc414;5}) gives
\begin{eqnarray*}
|||\nabla ((T^\rho_t u) \circ \Phi^{-1})|||_{\cm,d + \kappa,x, E^-,1}
& \leq & c \, t^{-(d+\kappa) / 4} \, t^{-1/2} \, e^{\omega (1 + \rho^2) t} \, \|u\|_{L_2(\Omega)}
   \\*
& & \hspace*{30mm} {}
+ c \, \|\nabla ((T^\rho_t u) \circ \Phi^{-1})\|_{M,d - \kappa,x,E^-,1}  \\
& \leq & c' \, t^{-(d+\kappa) / 4} \, t^{-1/2} \, e^{\omega' (1 + \rho^2) t} \, \|u\|_{L_2(\Omega)}
\end{eqnarray*}
for all $t > 0$, $u \in L_2(\Omega)$, $\rho \in \Ri$, $\psi \in \cd$
and $x \in \frac{1}{2} \, E^-$, 
for suitable $c',\omega' > 0$.
So $\lim_{R \downarrow 0} \langle \nabla ((T^\rho_t u) \circ \Phi^{-1}) \rangle_{E^-(x,R)}$
exists for all $x \in \frac{1}{2} \, E^-$ by Lemma~\ref{ldtnpc402}\ref{ldtnpc402-2}.
Therefore the function $\nabla ((T^\rho_t u) \circ \Phi^{-1}$ is continuous on 
$\frac{1}{2} \, E^-$.
Choose $R = t^{1/2} \, e^{-t}$.
Then $R \leq 1$ and Lemma~\ref{ldtnpc402}\ref{ldtnpc402-2}
gives that there exists a $c'' > 0$, depending only 
on $d$ and $\kappa$, such that 
\begin{eqnarray*}
|(\nabla ((T^\rho_t u) (\Phi^{-1}(x))|
& \leq & c'' \, R^{\kappa / 2} \, |||\nabla ((T^\rho_t u) \circ \Phi^{-1})|||_{\cm,d + \kappa,x, E^-,1}
+ \langle \nabla ((T^\rho_t u) \circ \Phi^{-1}) \rangle_{E^-(x,R)}  \\
& \leq & c'' \, t^{-d / 4} \, t^{-1/2} \, e^{\omega' (1 + \rho^2) t} \, \|u\|_{L_2(\Omega)}
+ \omega_d^{-1/2} \, R^{-d/2} \, \|\nabla ((T^\rho_t u) \circ \Phi^{-1})\|_{L_2(E^-)}
\end{eqnarray*}
for all $x \in \frac{1}{2} \, E^-$.
Then (\ref{epdtnpc414;2}) follows from (\ref{eldtnpc413;1}).

Finally use (\ref{epdtnpc414;8}) with $\delta = 0$ and  $x \in \frac{1}{4} \, E^-$,
the quarter lower half of~$E$.
It follows that there are suitable $c''',\omega'' > 0$ such that 
\[
|||\nabla ((T^\rho_t u) \circ \Phi^{-1})|||_{\cm,d + 2\kappa,x, E^-,1}
\leq c \, t^{-(d + 2\kappa) / 4} \, t^{-1/2} \, e^{\omega (1 + \rho^2) t} \, \|u\|_{L_2(\Omega)}
\]
for all $t > 0$, $u \in L_2(\Omega)$, $\rho \in \Ri$, $\psi \in \cd$
and $x \in \frac{1}{4} \, E^-$.
Then (\ref{epdtnpc414;3}) follows from 
Lemma~\ref{ldtnpc402}\ref{ldtnpc402-3}.
\end{proof}

Similar estimates are valid far away from the boundary.

\begin{prop} \label{pdtnpc415}
Let $\kappa \in (0,1)$, $R_e \in (0,1]$ and $\mu,M > 0$.
Then there exist  $c,\omega > 0$ such that 
such that the following is valid.

Let $\Omega \subset \Ri^d$ be open, $x_0 \in \Omega$ and suppose that 
$B(x_0,R_e) \subset \Omega$.
Let $C \in \ce^\kappa(\Omega,\mu,M)$ and $V \in L_\infty(\Omega)$
with $\|V\|_\infty \leq M$.
Then
\begin{eqnarray*}
\|T^\rho_t u\|_{L_\infty(B(x_0, \tfrac{1}{2} \, R_e))}
& \leq & c \, t^{-d/4} \, e^{\omega (1 + \rho^2) t} \, \|u\|_{L_2(\Omega)} , \\
\|\nabla T^\rho_t u\|_{L_\infty(B(x_0, \tfrac{1}{2} \, R_e))}
& \leq & c \, t^{-d/4} \, t^{-1/2} \, e^{\omega (1 + \rho^2) t} \, \|u\|_{L_2(\Omega)} 
     \mbox{, and } \\
|(\nabla T^\rho_t u)(x) - (\nabla T^\rho_t u)(y)|
& \leq & c \, t^{-d/4} \, t^{-1/2} \, t^{-\kappa/2} \, e^{\omega (1 + \rho^2) t} \, \|u\|_{L_2(\Omega)} 
    \, |x-y|^\kappa
\end{eqnarray*}
for all $t > 0$, $u \in L_2(\Omega)$, $\rho \in \Ri$, $\psi \in \cd$
and $x,y \in B(x_0, \tfrac{1}{4} \, R_e)$ with $|x-y| \leq \frac{1}{4} \, R_e$.
\end{prop}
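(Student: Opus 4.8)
The plan is to run the same Morrey--Campanato bootstrap as in the proof of Proposition~\ref{pdtnpc414}, replacing the boundary De Giorgi and regularity estimates by their interior counterparts; since no coordinate transformation is involved the argument is actually a little shorter. Fix $R_e$ as in the statement and put $R_e' = \frac{1}{2} \, R_e \in (0,1]$. If $x \in B(x_0,\frac{1}{2} R_e)$, then $B(x,R_e') \subset B(x_0,R_e) \subset \Omega$, so $\Omega(x,r) = B(x,r)$ and $|\Omega(x,r)| = \omega_d \, r^d$ for all $r \in (0,R_e']$; this is the volume lower bound needed to apply Lemma~\ref{ldtnpc402} at such points. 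Throughout, $c$ and $\omega$ may depend on $R_e$, $\kappa$, $\mu$, $M$ and $d$, but on nothing else.

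\emph{Morrey estimates.} Set $w := T^\rho_t u$. Moving the first- and zeroth-order terms of the form $\gotl^{(\rho)}$ in (\ref{epdir505;5}) to the right-hand side, $w$ satisfies $\int_\Omega \sum_{k,l} c_{kl}\,(\partial_k w)\,\overline{\partial_l v} = (f_0,v)_{L_2(\Omega)} - \sum_k (f_k,\partial_k v)_{L_2(\Omega)}$ for all $v \in W^{1,2}_0(\Omega)$, with $f_0 = -A^{(\rho)} w - \sum_i a^{(\rho)}_i\,\partial_i w - a^{(\rho)}_0\,w$ and $f_k = b^{(\rho)}_k\,w$, where $\|a^{(\rho)}_i\|_\infty,\|b^{(\rho)}_i\|_\infty \le c\,|\rho|$ and $\|a^{(\rho)}_0\|_\infty \le c\,(1+\rho^2)$. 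Starting from the $L_2$-estimates of Lemma~\ref{ldtnpc413} (which cover the exponent $\gamma = 0$), one increases the Morrey exponent in finitely many steps: the semigroup property $T^\rho_t = T^\rho_{t/2}\,T^\rho_{t/2}$ together with the commutation $A^{(\rho)} T^\rho_t = T^\rho_{t/2}\,A^{(\rho)}\,T^\rho_{t/2}$ and Lemma~\ref{ldtnpc413} turn the $L_2$-bounds on $A^{(\rho)} T^\rho_{t/2}u$ into Morrey bounds on $f_0$ and $f_k$; Proposition~\ref{pdtnpc408} (with $\varepsilon$ optimised as a power of $t$, and powers of $\rho$ absorbed by $|\rho|^k e^{\omega\rho^2 t} \le c\,t^{-k/2}\,e^{\omega'\rho^2 t}$ for $\omega' > \omega$) upgrades the gradient exponent; and the Poincar\'e inequality of Lemma~\ref{ldtnpc417} together with Lemma~\ref{ldtnpc402}\ref{ldtnpc402-1} recovers the function bound from the gradient bound. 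This is the mechanism of Proposition~4.3 in \cite{ER15}, Lemma~3.3 in \cite{EO1} and Lemma~7.1 in \cite{ERe2}, with minor modifications when $d = 2$. One ends up with, for every $\gamma \in [0,d)$, constants $c,\omega > 0$ such that $\|T^\rho_t u\|_{M,\gamma,x,\Omega,R_e'} \le c\,t^{-\gamma/4}\,e^{\omega(1+\rho^2)t}\|u\|_{L_2(\Omega)}$ and $\|\nabla T^\rho_t u\|_{M,\gamma,x,\Omega,R_e'} \le c\,t^{-\gamma/4}\,t^{-1/2}\,e^{\omega(1+\rho^2)t}\|u\|_{L_2(\Omega)}$ for all $t>0$, $u\in L_2(\Omega)$, $\rho\in\Ri$, $\psi\in\cd$ and $x\in B(x_0,\frac{1}{2} R_e)$.

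\emph{Campanato estimates and conclusion.} By Lemma~\ref{ldtnpc402}\ref{ldtnpc402-1} and Lemma~\ref{ldtnpc417} one first gets $|||T^\rho_t u|||_{\cm,\gamma,x,\Omega,R_e'} \le c\,t^{-\gamma/4}\,e^{\omega(1+\rho^2)t}\|u\|_{L_2(\Omega)}$ for all $\gamma \in [0,d+2)$; in particular, picking some $\gamma\in(d,d+2)$, Lemma~\ref{ldtnpc402}\ref{ldtnpc402-2} (with $R = t^{1/2}e^{-t}\le 1$, estimating the average by $\omega_d^{-1/2}R^{-d/2}\|T^\rho_t u\|_{L_2(\Omega)}$) yields the first displayed estimate of the proposition. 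To control $\nabla T^\rho_t u$ in Campanato norms above $d$ one uses the interior regularity estimate of Proposition~\ref{pdtnpc405}: with $\delta=\kappa$, $\gamma=d+\kappa$ it bounds $|||\nabla T^\rho_t u|||_{\cm,d+\kappa,x,\Omega,R_e'}$ by $c\,c_0 + c\,(R_e')^{-\kappa}\|\nabla T^\rho_t u\|_{M,d-\kappa,x,\Omega,R_e'}$, where $c_0$ collects $\|f_0\|_{M,d+\kappa-2}$, $\sum_k |||f_k|||_{\cm,d+\kappa}$ and $\|\nabla T^\rho_t u\|_{M,d-\kappa}$, all estimated as before (for $f_k = b^{(\rho)}_k T^\rho_t u$ using that $b^{(\rho)}_k\in C^\kappa$ with semi-norm $O(|\rho|)$ and a Leibniz estimate for Campanato norms). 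This gives $|||\nabla T^\rho_t u|||_{\cm,d+\kappa,x,\Omega,R_e'}\le c\,t^{-(d+\kappa)/4}\,t^{-1/2}\,e^{\omega(1+\rho^2)t}\|u\|_{L_2(\Omega)}$, whence, again by Lemma~\ref{ldtnpc402}\ref{ldtnpc402-2} with $R = t^{1/2}e^{-t}$, the continuity of $\nabla T^\rho_t u$ on $B(x_0,\frac{1}{2} R_e)$ and the second displayed estimate. Feeding the $L_\infty$-bound thus obtained back into Proposition~\ref{pdtnpc405} with $\delta=0$, $\gamma=d+2\kappa$ (to control the borderline norm $\|\nabla T^\rho_t u\|_{M,d}$ on a slightly smaller ball, for $x\in B(x_0,\frac{1}{4} R_e)$) gives $|||\nabla T^\rho_t u|||_{\cm,d+2\kappa,x}\le c\,t^{-(d+2\kappa)/4}\,t^{-1/2}\,e^{\omega(1+\rho^2)t}\|u\|_{L_2(\Omega)}$, and Lemma~\ref{ldtnpc402}\ref{ldtnpc402-3} at $x,y\in B(x_0,\frac{1}{4} R_e)$ with $|x-y|$ small then produces the H\"older estimate.

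\emph{Main obstacle.} I expect the only genuinely delicate point to be the uniform-in-$\rho$ bookkeeping: at each application of Propositions~\ref{pdtnpc408} and~\ref{pdtnpc405} and of the Leibniz estimates one must check that the $\rho$-dependent lower-order coefficients of $\gotl^{(\rho)}$ contribute only factors absorbable into $e^{\omega(1+\rho^2)t}$, which is exactly what forces the repeated use of $T^\rho_t = T^\rho_{t/2}T^\rho_{t/2}$ and of the $L_2$-estimates of Lemma~\ref{ldtnpc413}. Everything else is a routine transcription of the proof of Proposition~\ref{pdtnpc414}, with the interior estimates of Lemma~\ref{ldtnpc403}, Lemma~\ref{ldtnpc417} and Propositions~\ref{pdtnpc405} and~\ref{pdtnpc408} replacing the boundary ones.
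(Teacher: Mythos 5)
Your proposal is correct and follows exactly the route the paper intends: the paper's own proof of Proposition~\ref{pdtnpc415} simply states that it follows as in Proposition~\ref{pdtnpc414} with Propositions~\ref{pdtnpc405} and~\ref{pdtnpc408} replacing the boundary estimates, and your write-up is a faithful (and more detailed) execution of precisely that substitution, including the correct handling of the $\rho$-dependent lower-order terms and the two-stage use of Proposition~\ref{pdtnpc405} with $\delta=\kappa$ and then $\delta=0$ on the smaller ball.
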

\begin{proof}
This follows similarly to the proof of Proposition~\ref{pdtnpc414},
using Propositions~\ref{pdtnpc405} and \ref{pdtnpc408}
instead of Propositions~\ref{pdtnpc407} and \ref{pdtnpc409}.
We leave the proof to the reader.
\end{proof}

\begin{prop} \label{pdtnpc416}
Let $\kappa \in (0,1)$, and $\mu,M > 0$.
Let $\Omega \subset \Ri^d$ be a bounded open set with $C^{1+\kappa}$-boundary.
Then there exist  $c,\omega > 0$ such that 
such that the following is valid.

Let $C \in \ce^\kappa(\Omega,\mu,M)$ and $V \in L_\infty(\Omega)$
with $\|V\|_\infty \leq M$.
Then
\begin{eqnarray*}
\|T^\rho_t u\|_{L_\infty(\Omega)}
& \leq & c \, t^{-d/4} \, e^{\omega (1 + \rho^2) t} \, \|u\|_{L_2(\Omega)} , \\
\|\nabla T^\rho_t u\|_{L_\infty(\Omega)}
& \leq & c \, t^{-d/4} \, t^{-1/2} \, e^{\omega (1 + \rho^2) t} \, \|u\|_{L_2(\Omega)} 
     \mbox{, and } \\
|||\nabla T^\rho_t u|||_{C^\kappa}
& \leq & c \, t^{-d/4} \, t^{-1/2} \, t^{-\kappa/2} \, e^{\omega (1 + \rho^2) t} \, \|u\|_{L_2(\Omega)} 
\end{eqnarray*}
for all $t > 0$, $u \in L_2(\Omega)$, $\rho \in \Ri$ and $\psi \in \cd$
\end{prop}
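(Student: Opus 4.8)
The plan is to obtain Proposition~\ref{pdtnpc416} from the local estimates of Propositions~\ref{pdtnpc414} and~\ref{pdtnpc415} by a compactness argument, using the $C^{1+\kappa}$-structure of $\partial\Omega$ to produce a \emph{finite} atlas of boundary charts with constants that are uniform over all of $\partial\Omega$ (and hence over $C\in\ce^\kappa(\Omega,\mu,M)$ and $V$ with $\|V\|_\infty\leq M$, since Propositions~\ref{pdtnpc414} and~\ref{pdtnpc415} already provide that uniformity).

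First I would set up the covering. Since $\partial\Omega$ is a compact $C^{1+\kappa}$-hypersurface, one can choose finitely many open sets $U_1,\dots,U_N\subset\Ri^d$ and $C^{1+\kappa}$-diffeomorphisms $\Phi_j\colon U_j\to E$ with $\Phi_j(U_j\cap\Omega)=E^-$ and $\Phi_j(U_j\cap\partial\Omega)=P$, together with a single $K\geq 1$ bounding the Lipschitz constants of all the $\Phi_j$ and $\Phi_j^{-1}$ and all the seminorms $|||(D\Phi_j)_{ik}|||_{C^\kappa}$, $|||(D\Phi_j^{-1})_{ik}|||_{C^\kappa}$, and such that the quarter-cubes $\Phi_j^{-1}(\tfrac14 E)=\Phi_j^{-1}((-1,1)^d)$ already cover an open neighbourhood $W$ of $\partial\Omega$ in $\Ri^d$. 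Then $\overline\Omega\setminus W$ is a compact subset of $\Omega$, so there are an $R_e\in(0,1]$ and finitely many points $x_1,\dots,x_{N'}\in\Omega$ with $B(x_i,R_e)\subset\Omega$ for all $i$ and $\overline\Omega\setminus W\subset\bigcup_i B(x_i,\tfrac14 R_e)$. Since $\Omega\cap\Phi_j^{-1}((-1,1)^d)=\Phi_j^{-1}((-1,1)^{d-1}\times(-1,0))=\Phi_j^{-1}(\tfrac14 E^-)$, this yields
\[
\Omega\;\subseteq\;\bigcup_{j=1}^N\Phi_j^{-1}(\tfrac14 E^-)\;\cup\;\bigcup_{i=1}^{N'}B(x_i,\tfrac14 R_e).
\]

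Next I would apply Proposition~\ref{pdtnpc414} to each chart $\Phi_j$ (with the common $K$) and Proposition~\ref{pdtnpc415} to each ball $B(x_i,R_e)$. Each application gives continuity of $\nabla T^\rho_t u$ on the relevant piece together with the two $L_\infty$-bounds and the local H\"older estimate, with constants depending only on $\kappa,K,\mu,M$, resp.\ $\kappa,R_e,\mu,M$. Taking the maximum of these finitely many constants and of the corresponding $\omega$'s, and using the covering above, one reads off at once the continuity of $\nabla T^\rho_t u$ on $\Omega$ and the first two displayed estimates of the proposition. For the H\"older seminorm, choose $\delta_0>0$ smaller than a Lebesgue number of the open cover $\{\Phi_j^{-1}(\tfrac14 E)\}_j\cup\{B(x_i,\tfrac14 R_e)\}_i$ of the compact set $\overline\Omega$ and also smaller than $\tfrac1{4K}$ and $\tfrac14 R_e$. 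If $x,y\in\Omega$ with $0<|x-y|<\delta_0$, then $\{x,y\}$ lies in a single member of the cover, which for $x,y\in\Omega$ is either some $B(x_i,\tfrac14 R_e)$ or some $\Phi_j^{-1}(\tfrac14 E)\cap\Omega=\Phi_j^{-1}(\tfrac14 E^-)$, and estimate~(\ref{epdtnpc414;3}) (resp.\ its analogue in Proposition~\ref{pdtnpc415}) applies directly. If $\delta_0\leq|x-y|\leq 1$, then
\[
|(\nabla T^\rho_t u)(x)-(\nabla T^\rho_t u)(y)|
\leq 2\,\|\nabla T^\rho_t u\|_{L_\infty(\Omega)}
\leq 2\,\delta_0^{-\kappa}\,\|\nabla T^\rho_t u\|_{L_\infty(\Omega)}\,|x-y|^\kappa,
\]
and the already-proved bound on $\|\nabla T^\rho_t u\|_{L_\infty(\Omega)}$, together with the elementary inequality $t^{\kappa/2}\,e^{\omega(1+\rho^2)t}\leq c\,e^{\omega'(1+\rho^2)t}$ for any $\omega'>\omega$, supplies the extra factor $t^{-\kappa/2}$ after enlarging $\omega$. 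Combining the two regimes gives the third estimate.

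The one place needing genuine care is the covering construction, not the patching: the constant in Proposition~\ref{pdtnpc415} degenerates as $R_e\downarrow 0$, so it is essential to swallow a \emph{full} $\Ri^d$-neighbourhood of $\partial\Omega$ into the boundary charts (via the quarter-\emph{cubes} $\Phi_j^{-1}(\tfrac14 E)$, not merely the half-spaces $\Phi_j^{-1}(\tfrac14 E^-)$) before invoking the interior estimate on the remaining compact set with a fixed $R_e$. One should also check that all the auxiliary constants ($K$, $R_e$, $N$, $N'$, $\delta_0$) depend only on $\Omega$, which is clear once the atlas is fixed. The remaining arguments are routine bookkeeping.
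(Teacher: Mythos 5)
Your proposal is correct and is precisely the compactness argument the paper invokes (its proof consists of the single sentence that the result "follows from a compactness argument from Propositions~\ref{pdtnpc414} and \ref{pdtnpc415}"); you have simply written out the finite atlas, the interior covering with a fixed $R_e$, the Lebesgue-number patching of the H\"older seminorm, and the absorption of $t^{\kappa/2}$ into the exponential, all of which are the intended routine details.
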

\begin{proof}
This follows from a compactness argument from Propositions~\ref{pdtnpc414}
and \ref{pdtnpc415}.
\end{proof}

We can now prove the Gaussian H\"older kernel bounds of Theorem~\ref{tdtnpc401}.

\begin{proof}[{\bf Proof of Theorem~\ref{tdtnpc401}.}]
Let $c_0,\omega_0,\omega_1 > 0$ be as in Lemma~\ref{ldtnpc413}.
Then 
\[
\|T^\rho_t u\|_{L_2(\Omega)} 
\leq e^{-\omega_1 t} \, e^{\|(\RRe V)^-\|_\infty t} \, 
   e^{\omega_0 \rho^2 t} \, \|u\|_{L_2(\Omega)}
\]
for all $u \in L_2(\Omega)$, $t > 0$, $\rho \in \Ri$ and $\psi \in \cd$.
Then the semigroup $(e^{+ (\omega_1 - \|(\RRe V)^-\|_\infty) t} \, T_t)_{t > 0}$ satisfies the 
bounds of Proposition~\ref{pdtnpc416}.
Therefore the Gaussian H\"older kernel bounds of Theorem~\ref{tdtnpc401}
follows as in the proof of Lemma~A.1 in \cite{EO2}.
\end{proof}

Since all our estimates are locally uniform, we also obtain the following 
theorem which is valid for unbounded domains.

\begin{thm} \label{tdtnpc420}
Let $\kappa,\tau' \in (0,1)$ and $\mu,M,\tau,K > 0$.
Then there exist $a,b > 0$ and $\omega \in \Ri$ such that 
the following is valid.

Let $\Omega \subset \Ri^d$ be an open set.
Suppose for all $x \in \partial \Omega$ there exists an open 
neighbourhood $U$ of $x$ and a 
$C^{1+\kappa}$-diffeomorphism $\Phi$ from $U$ onto $E$ such that 
\begin{itemize}
\item
$\Phi(x) = 0$, 
\item
$\Phi(U \cap \Omega) = E^-$
\item
$\Phi(U \cap \partial \Omega) = P$,
\item
$K$ is larger than the Lipschitz constant for $\Phi$ and $\Phi^{-1}$, and 
\item
$|||(D \Phi)_{ij}|||_{C^\kappa} \leq K$ and 
$|||(D (\Phi^{-1}))_{ij}|||_{C^\kappa} \leq K$ for all $i,j \in \{ 1,\ldots,d \} $
where $D \Phi$ denotes the derivative of $\Phi$.
\end{itemize}
Let $C \in \ce^\kappa(\Omega,\mu,M)$
and $V \in L_\infty(\Omega)$ with $\|V\|_\infty \leq M$.
Then there exists 
a function $(t,x,y) \mapsto H_t(x,y)$ from 
$(0,\infty) \times \Omega \times \Omega$ into $\Ci$
such that the following is valid.
\begin{tabel}
\item \label{tdtnpc420-1}
The function $(t,x,y) \mapsto H_t(x,y)$ is continuous 
from $(0,\infty) \times \Omega \times \Omega$ into $\Ci$.
\item \label{tdtnpc420-2}
For all $t \in (0,\infty)$ the function $H_t$ is the kernel of 
the operator $e^{-t (A_D + V)}$.
\item \label{tdtnpc420-3}
For all $t \in (0,\infty)$ the
function $H_t$ is once differentiable in each variable and the 
derivative with respect to one variable is differentiable in the 
other variable.
Moreover, for every multi-index $\alpha,\beta$ with 
$0 \leq |\alpha|,|\beta| \leq 1$ one has 
\[
|(\partial_x^\alpha \, \partial_y^\beta \, H_t)(x,y)|
\leq a \, t^{-d/2} \, t^{-(|\alpha| + |\beta|)/2} \, 
      e^{-b \, \frac{|x-y|^2}{t}} \, e^{\omega t}
\]
and 
\begin{eqnarray*}
\lefteqn{
|(\partial_x^\alpha \, \partial_y^\beta \, H_t)(x+h,y+k) 
     - (\partial_x^\alpha \, \partial_y^\beta \, H_t)(x,y)|
} \hspace{20mm}  \\*
& \leq & a \, t^{-d/2} \, t^{-(|\alpha| + |\beta|)/2} \, 
    \left( \frac{|h| + |k|}{\sqrt{t} + |x-y|} \right)^\kappa \, 
      e^{-b \, \frac{|x-y|^2}{t}}  \, e^{\omega t}
\end{eqnarray*}
for all $x,y \in \Omega$ and $h,k \in \Ri^d$ with $x+h,y+k \in \Omega$ and 
$|h| + |k| \leq \tau \, \sqrt{t} + \tau' \, |x-y|$.
\end{tabel}
\end{thm}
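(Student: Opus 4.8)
The plan is to obtain Theorem~\ref{tdtnpc420} from the \emph{local} regularity estimates of Propositions~\ref{pdtnpc414} and~\ref{pdtnpc415} by a localisation argument, replacing the compactness argument that was used for bounded $\Omega$ in Proposition~\ref{pdtnpc416}. The point that makes this possible is that the constants $c,\omega$ in Lemma~\ref{ldtnpc413} and in Propositions~\ref{pdtnpc414} and~\ref{pdtnpc415} depend only on $d,\kappa,\mu,M,K$ — and, in Proposition~\ref{pdtnpc415}, on the auxiliary radius $R_e$ — but never on the global size or shape of $\Omega$. So it suffices to establish the conclusion of Proposition~\ref{pdtnpc416} for the present (possibly unbounded) $\Omega$, with constants depending only on $d,\kappa,\mu,M,K$: namely that $\|T^\rho_t u\|_{L_\infty(\Omega)}\le c\,t^{-d/4}e^{\omega(1+\rho^2)t}\|u\|_{L_2(\Omega)}$, that $\|\nabla T^\rho_t u\|_{L_\infty(\Omega)}\le c\,t^{-d/4}t^{-1/2}e^{\omega(1+\rho^2)t}\|u\|_{L_2(\Omega)}$, and that $|||\nabla T^\rho_t u|||_{C^\kappa}\le c\,t^{-d/4}t^{-1/2}t^{-\kappa/2}e^{\omega(1+\rho^2)t}\|u\|_{L_2(\Omega)}$, for all $t>0$, $u\in L_2(\Omega)$, $\rho\in\Ri$ and $\psi\in\cd$. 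Once this is available, the Gaussian Hölder kernel bounds follow exactly as in the proof of Theorem~\ref{tdtnpc401}: one passes to the semigroup $(e^{(\omega_1-\|(\RRe V)^-\|_\infty)t}\,T_t)_{t>0}$, which by Lemma~\ref{ldtnpc413} has the required $L_2$-decay, and then applies the Davies-perturbation argument of Lemma~A.1 in \cite{EO2}, which uses only these $L_2$--$L_\infty$ and $C^\kappa$ bounds for the perturbed semigroup and never the boundedness of $\Omega$.

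First I would fix a radius $r_1\in(0,1]$, depending only on $d$ and $K$, small enough that for each $x_0\in\Omega$ at least one of the following holds: (i) $B(x_0,r_1)\subset\Omega$; or (ii) there is a boundary point $y\in\partial\Omega$ and a chart $\Phi_y$ as in the hypotheses with $x_0\in\Phi_y^{-1}(\tfrac{1}{4}\,E^-)$, and moreover $x'\in\Phi_y^{-1}(\tfrac{1}{2}\,E^-)$ whenever $|x'-x_0|\le r_1$. That such an $r_1$ exists follows from the hypothesis on $\Phi_y$: since $\Phi_y$ and $\Phi_y^{-1}$ are $K$-Lipschitz with $\Phi_y(y)=0$, a point $x_0$ at distance $<r_1$ from $\partial\Omega$ has a nearby boundary point $y$ with $\Phi_y(x_0)$ forced into $\tfrac{1}{4}\,E^-$ for $r_1$ small, and the bi-Lipschitz bound $\tfrac{1}{K}|p-q|\le|\Phi_y^{-1}(p)-\Phi_y^{-1}(q)|\le K|p-q|$ then controls how far one may move from $x_0$ while staying in $\Phi_y^{-1}(\tfrac{1}{2}\,E^-)$. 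In case (i) one applies Proposition~\ref{pdtnpc415} with $R_e=r_1$ and centre $x_0$; in case (ii) one applies Proposition~\ref{pdtnpc414} to the chart $\Phi_y$ and transfers the resulting bounds (\ref{epdtnpc414;2}) and (\ref{epdtnpc414;3}) and the continuity of $\nabla((T^\rho_t u)\circ\Phi_y^{-1})$ on $\tfrac{1}{2}\,E^-$ back to a neighbourhood of $x_0$ in $\Omega$ by means of Lemma~\ref{ldtnpc410}\ref{ldtnpc410-4} (which compares $\|\nabla\,\cdot\,\|_{L_\infty}$ with its chart counterpart up to the factor $dK$) and Lemma~\ref{ldtnpc410}\ref{ldtnpc410-2} (which compares Euclidean and chart distances, so the Hölder estimate survives with an extra factor $K^\kappa$). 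Either way, $T^\rho_t u$ and $\nabla T^\rho_t u$ are bounded near $x_0$ by the claimed quantities with constants depending only on $d,\kappa,\mu,M,K$, and $\nabla T^\rho_t u$ is continuous near $x_0$ with the claimed local $C^\kappa$-bound. Since $x_0$ is arbitrary and the estimates are uniform in it, one recovers the global bounds above; in the $C^\kappa$-estimate, two points $x,x'$ with $|x-x'|\le r_1$ lie in a common region of type (i) or (ii) and are handled by the corresponding local Hölder bound, while for $|x-x'|>r_1$ one simply uses the already-established global $L_\infty$-bound on $\nabla T^\rho_t u$.

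I expect the main obstacle to be entirely of a bookkeeping nature: producing the single radius $r_1$ — hence the uniform neighbourhoods — valid simultaneously at every point of the unbounded set $\Omega$, and dealing cleanly with the overlap region where a point $x_0$ is both close to $\partial\Omega$ and admits a sizeable Euclidean ball inside $\Omega$ (there one may use either Proposition~\ref{pdtnpc414} or Proposition~\ref{pdtnpc415}, the estimates being compatible). The coordinate transfer through $\Phi_y$ is routine given Lemma~\ref{ldtnpc410}, and the passage from the global analogue of Proposition~\ref{pdtnpc416} to the Gaussian Hölder bounds of Theorem~\ref{tdtnpc420} is verbatim the corresponding step in the proof of Theorem~\ref{tdtnpc401}, since that step rests only on Lemma~\ref{ldtnpc413} and Lemma~A.1 of \cite{EO2} and uses neither compactness nor boundedness of $\Omega$.
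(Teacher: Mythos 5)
Your proposal is correct and is essentially the argument the paper intends: Theorem~\ref{tdtnpc420} is presented with only the remark that all the estimates are locally uniform, and your uniform covering of $\Omega$ by interior balls and boundary charts with a single radius $r_1=r_1(d,K)$, followed by the verbatim Davies-perturbation step from the proof of Theorem~\ref{tdtnpc401}, is precisely how that remark is meant to be implemented. The one point to watch is that Lemma~\ref{ldtnpc413} invokes a Dirichlet-type Poincar\'e inequality, which may fail on an unbounded $\Omega$; since Theorem~\ref{tdtnpc420} only asks for some $\omega \in \Ri$ and makes no negative-$\omega$ assertion, one simply drops the factor $e^{-\omega_1 t}$ there and the rest of your argument is unaffected.
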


By a small additional argument one can also add first-order terms to the 
operator with $C^\kappa$-coefficients. 
We do not need first-order terms in this paper.

\section{Green function bounds  and regularity properties}\label{S3.5} \label{Sdtnpc5}

This section is devoted to estimates and regularity properties
of the resolvent operators $(A_D + V)^{-1}$.
We prove estimates for the Green function and its derivatives.
We emphasise that in the first theorem
the constants are uniform with respect to the complex coefficients~$C$
if $\RRe V$ is positive.

\begin{thm} \label{tdtnpc501}
Let $\kappa \in (0,1)$ and $\mu,M > 0$.
Let $\Omega \subset \Ri^d$ be an open bounded set with a $C^{1+\kappa}$-boundary.
Then there exists a $c > 0$ such that for all 
$C \in \ce^\kappa(\Omega,\mu,M)$ and
$V \in L_\infty(\Omega)$ with $\RRe V \geq 0$ and $\|V\|_\infty \leq M$
the operator $(A_D + V)^{-1}$ has 
a kernel $G_V \colon \{ (x,y) \in \Omega \times \Omega : x \neq y \} \to \Ci$, 
which is differentiable in each variable and the derivative is 
differentiable in the other variable.
Moreover, for every multi-index $\alpha,\beta$ with 
$0 \leq |\alpha|,|\beta| \leq 1$ the function
$\partial_x^\alpha \, \partial_y^\beta \, G_V$ 
extends to a locally $\kappa$-H\"older continuous function on 
$ \{ (x,y) \in \overline \Omega \times \overline \Omega : x \neq y \} $
with estimates
\begin{equation}
|(\partial_x^\alpha \, \partial_y^\beta \, G_V)(x,y)| 
\leq \left\{ \begin{array}{ll}
c \, |x-y|^{-(d-2+|\alpha| + |\beta|)}  & \mbox{if } d-2+|\alpha| + |\beta| \neq 0 \\[5pt]
c \, \log( 1 + \frac{1}{|x-y|} ) & \mbox{if } d-2+|\alpha| + |\beta| = 0
                 \end{array} \right.
\label{etdtnpc501;13}  
\end{equation}
and 
\begin{equation}
|(\partial_x^\alpha \, \partial_y^\beta \, G_V)(x',y') - (\partial_x^\alpha \, \partial_y^\beta \, G_V)(x,y)| 
\leq c \, \frac{(|x'-x| + |y'-y|)^\kappa}{|x-y|^{d-2+|\alpha| + |\beta| +\kappa}}   \label{etdtnpc501;14} 
\end{equation}
for all $x,x',y,y' \in \Omega$ with $x \neq y$ and
$|x-x'| + |y-y'| \leq \frac{1}{2} \, |x-y|$.
\end{thm}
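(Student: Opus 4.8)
The plan is to obtain $G_V$ by integrating the heat kernel in time. Since $\RRe V \ge 0$, the Dirichlet-type Poincar\'e inequality provides a $\lambda > 0$, depending only on $\Omega$ and $\mu$, with $\RRe\,\gota(u,u) + \int_\Omega (\RRe V)\,|u|^2 \ge \mu\,\|\nabla u\|_{L_2(\Omega)}^2 \ge \mu\lambda\,\|u\|_{L_2(\Omega)}^2$ for all $u \in W^{1,2}_0(\Omega)$; hence $0 \notin \sigma(A_D + V)$, the semigroup $(e^{-t(A_D+V)})_{t>0}$ decays exponentially in operator norm on $L_2(\Omega)$, and $(A_D+V)^{-1} = \int_0^\infty e^{-t(A_D+V)}\,dt$. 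I would apply Theorem~\ref{tdtnpc401} with $\tau' = \tfrac12$ (and, say, $\tau = 1$) to obtain the kernel $H_t$ of $e^{-t(A_D+V)}$ with the stated Gaussian and H\"older bounds and, by Theorem~\ref{tdtnpc401}\ref{tdtnpc401-4}, with $\omega < 0$; here the constants $a,b,\omega$ depend only on $\kappa,\mu,M$ and $\Omega$. Define $G_V(x,y) = \int_0^\infty H_t(x,y)\,dt$ for $x \ne y$. The Gaussian bound gives $\int_0^\infty |H_t(x,y)|\,dt \le \Theta(|x-y|)$, where $\Theta(r) = c\,r^{-(d-2)}$ if $d \ge 3$ and $\Theta(r) = c\,\log(1 + r^{-1})$ if $d = 2$; since $\Theta(|\cdot|) \in L^1_\loc(\Ri^d)$, a Fubini argument applied to $((A_D+V)^{-1}f, g)_{L_2(\Omega)} = \int_0^\infty (e^{-t(A_D+V)}f, g)_{L_2(\Omega)}\,dt$ for bounded compactly supported $f,g$ identifies $G_V$ as the integral kernel of $(A_D+V)^{-1}$.

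Next I would justify that $\partial_x^\alpha \partial_y^\beta G_V = \int_0^\infty \partial_x^\alpha \partial_y^\beta H_t\,dt$ on $\{x \ne y\}$ for all multi-indices with $0 \le |\alpha|,|\beta| \le 1$. Fix $(x_0,y_0)$ with $|x_0-y_0| = 2\rho_0 > 0$; on the neighbourhood where $|x-y| \ge \rho_0$, Theorem~\ref{tdtnpc401}\ref{tdtnpc401-3} dominates $\partial_x^\alpha \partial_y^\beta H_t$ by $a\,t^{-(d + |\alpha| + |\beta|)/2}\,e^{-b\rho_0^2/t}\,e^{\omega t}$, which is integrable over $(0,\infty)$ and independent of $(x,y)$. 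Combined with the continuity of $\partial_x^\alpha \partial_y^\beta H_t$ in $(x,y)$ (also part of Theorem~\ref{tdtnpc401}\ref{tdtnpc401-3}) this permits differentiating iteratively under the integral sign and shows each $\partial_x^\alpha \partial_y^\beta G_V$ is continuous on $\{x \ne y\}$.

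The bounds now reduce to scalar time integrals; write $m = |\alpha| + |\beta|$ and $\rho = |x-y|$. The substitution $t = \rho^2/s$ gives $\int_0^\infty t^{-(d+m)/2}\,e^{-b\rho^2/t}\,dt = \rho^{-(d-2+m)}\int_0^\infty s^{(d+m)/2 - 2}\,e^{-bs}\,ds$, and the $s$-integral converges at $0$ precisely when $d - 2 + m > 0$, which yields the first alternative of (\ref{etdtnpc501;13}); the factor $e^{\omega t}$ with $\omega < 0$ is needed only to secure convergence at $t = \infty$ when $(d+m)/2 \le 1$, i.e.\ when $d = 2$ and $m = 0$, in which single case splitting $\int_0^1 + \int_1^\infty$ gives $\int_0^1 t^{-1}e^{-b\rho^2/t}\,dt \le \int_{\rho^2}^\infty s^{-1}e^{-bs}\,ds \le c\,\log(1+\rho^{-1})$ (using $\rho \le \diam\Omega$) plus $\int_1^\infty t^{-1}e^{\omega t}\,dt \le c$, the second alternative. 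For (\ref{etdtnpc501;14}), the choice $\tau' = \tfrac12$ ensures that whenever $|x-x'| + |y-y'| \le \tfrac12|x-y|$ the constraint $|h|+|k| \le \tau\sqrt t + \tau'|x-y|$ of Theorem~\ref{tdtnpc401}\ref{tdtnpc401-3} holds for \emph{all} $t > 0$; with $\eta = |x-x'| + |y-y'|$ this gives $|\partial_x^\alpha \partial_y^\beta G_V(x',y') - \partial_x^\alpha \partial_y^\beta G_V(x,y)| \le a\,\eta^\kappa \int_0^\infty t^{-(d+m)/2}(\sqrt t + \rho)^{-\kappa}\,e^{-b\rho^2/t}\,e^{\omega t}\,dt$, and splitting at $t = \rho^2$ --- bounding $(\sqrt t + \rho)^{-\kappa}$ by $\rho^{-\kappa}$ for $t \le \rho^2$ and by $t^{-\kappa/2}$ for $t \ge \rho^2$ --- each piece is $\le c\,\rho^{-(d-2+m+\kappa)}$ by the same substitution, with no logarithm since $d-2+m+\kappa \ge \kappa > 0$.

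Finally, (\ref{etdtnpc501;14}) shows $\partial_x^\alpha \partial_y^\beta G_V$ is $\kappa$-H\"older continuous, uniformly on each set $\{(x,y) \in \Omega \times \Omega : |x-y| \ge \rho_0\}$, hence extends uniquely to a locally $\kappa$-H\"older function on $\{(x,y) \in \overline\Omega \times \overline\Omega : x \ne y\}$, and both bounds pass to the closure by continuity. Since every constant used depends only on $a,b,\omega,d,\kappa$ and $\diam\Omega$, and thus only on $\kappa,\mu,M$ and $\Omega$, the constant $c$ is uniform over $C \in \ce^\kappa(\Omega,\mu,M)$ and over $V$ with $\RRe V \ge 0$ and $\|V\|_\infty \le M$. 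There is no deep obstacle once Theorem~\ref{tdtnpc401} is in hand; the steps needing the most care are the interchange of differentiation with the $t$-integral away from the diagonal (handled by the locally uniform integrable majorant), the verification that $\int_0^\infty H_t\,dt$ is genuinely the kernel of $(A_D+V)^{-1}$, and the bookkeeping in the borderline case $d = 2$, $m = 0$. The role of taking $\tau' = \tfrac12$ is precisely that it lets the H\"older estimate follow from a single time integral, without having to estimate separately the small-time range where the H\"older bound on $H_t$ would not apply.
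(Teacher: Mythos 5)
Your proposal is correct and follows essentially the same route as the paper: apply Theorem~\ref{tdtnpc401} (with part~\ref{tdtnpc401-4} giving $\omega<0$ and the choice $\tau'=\tfrac12$ making the H\"older constraint vacuous when $|x-x'|+|y-y'|\le\tfrac12|x-y|$), set $G_V=\int_0^\infty H_t\,dt$, and integrate the Gaussian and H\"older heat-kernel bounds in time. The paper's proof is terser; your additional justifications (the Fubini identification of the kernel, differentiation under the integral via a locally uniform majorant, and the explicit bookkeeping in the borderline case $d=2$, $|\alpha|=|\beta|=0$) are all consistent with what the paper leaves implicit.
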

\begin{proof} 
By Theorem~\ref{tdtnpc401} there are $a,b,\omega > 0$ such that 
the operator $e^{-t (A_D + V)}$ has a kernel $H_t$ for all $t > 0$,
which is once differentiable in each entry,
satisfying the bounds
\begin{equation}
\begin{array}{r@{}c@{}l}
|(\partial_x^\alpha \, \partial_y^\beta \, H_t)(x,y)|
   & {} \leq {} & a \, t^{-d/2} \, t^{-(|\alpha| + |\beta|)/2} \, 
      e^{-b \, \frac{|x-y|^2}{t}} \, e^{-\omega t} , \\[5pt]
|(\partial_x^\alpha \, \partial_y^\beta \, H_t)(x',y') 
     - (\partial_x^\alpha \, \partial_y^\beta \, H_t)(x,y)|
   & {} \leq {} & a \, t^{-d/2} \, t^{-(|\alpha| + |\beta|)/2} \, 
    \left( \frac{(|x'-x| + |y'-y|)}{\sqrt{t}} \right)^\kappa \, 
      e^{-b \, \frac{|x-y|^2}{t}}  \, e^{-\omega t}
\end{array}
\label{etdtnpc501;1}
\end{equation}
for all $x,x',y,y' \in \Omega$ and multi-index $\alpha,\beta$ with 
$0 \leq |\alpha|,|\beta| \leq 1$ and
$|x-x'| + |y-y'| \leq \frac{1}{2} \, |x-y|$.
Define 
\[
G_V(x,y)
= \int_0^\infty H_t(x,y) \, dt
\]
for all $x,y \in \Omega$ with $x \neq y$.
Then $G_V$ is the kernel of the operator
\[
 (A_D + V)^{-1} = \int_0^\infty T_t \, dt
.  \]
The estimates (\ref{etdtnpc501;1}) give
\[
|(\partial_x^\alpha \, \partial_y^\beta \, G_V)(x,y)|
\leq \int_0^\infty a \, t^{-d/2} \, t^{-(|\alpha| + |\beta|)/2} \, e^{-b \, \frac{|x-y|^2}{t}} \, dt
= a \, c_1 \, |x-y|^{-(d-2+|\alpha| + |\beta|)}
\]
and 
\begin{eqnarray*}
\lefteqn{
|(\partial_x^\alpha \, \partial_y^\beta \, G_V)(x',y') 
   - (\partial_x^\alpha \, \partial_y^\beta \, G_V)(x,y)|
} \hspace*{40mm}  \\*
& \leq & \int_0^\infty a \, t^{-d/2} \, t^{-(|\alpha| + |\beta|)/2} \, 
    \left( \frac{(|x'-x| + |y'-y|)}{\sqrt{t}} \right)^\kappa \, 
      e^{-b \, \frac{|x-y|^2}{t}} \, dt \\
& = & a \, c_2 \, \frac{(|x'-x| + |y'-y|)^\kappa}{|x-y|^{d-2+|\alpha| + |\beta| +\kappa}}
,  
\end{eqnarray*}
where $c_1 = \int_0^\infty a \, t^{-d/2} \, t^{-(|\alpha| + |\beta|)/2} \, e^{-b /t} \, dt < \infty$,
under the condition that $d-2+|\alpha| + |\beta| \neq 0$,
and 
$c_2 = \int_0^\infty a \, t^{-d/2} \, t^{-(|\alpha| + |\beta|)/2} \, 
         t^{-\kappa / 2} \, e^{-b /t} \, dt < \infty$.
If $d-2+|\alpha| + |\beta| = 0$ one obtains a logarithmic term, as is well known.
\end{proof}

In the self-adjoint case and real valued $V$ we next drop the condition 
that $V$ is positive.
In contrast to the previous theorem, in this case the constants are not uniform
with respect to the coefficients~$C$.

\begin{thm} \label{tdtnpc502}
Let $\kappa \in (0,1)$ and $\mu,M > 0$.
Let $\Omega \subset \Ri^d$ be an open bounded set with a $C^{1+\kappa}$-boundary.
Let
$C \in \ce^\kappa(\Omega,\mu,M)$ be real symmetric, $k,l \in \{ 1,\ldots,d \} $ and 
$V \in L_\infty(\Omega,\Ri)$.
Suppose that $0 \not\in \sigma(A_D + V)$.
Then the operator $(A_D + V)^{-1}$ has 
a kernel $G_V \colon \{ (x,y) \in \Omega \times \Omega : x \neq y \} \to \Ri$, 
which is differentiable in each variable and the derivative is 
differentiable in the other variable.
Moreover, for every multi-index $\alpha,\beta$ with 
$0 \leq |\alpha|,|\beta| \leq 1$ the function
$\partial_x^\alpha \, \partial_y^\beta \, G_V$ 
extends to a continuous function on 
$ \{ (x,y) \in \overline \Omega \times \overline \Omega : x \neq y \} $
with estimates
\[
|(\partial_x^\alpha \, \partial_y^\beta \, G_V)(x,y)| 
\leq \left\{ \begin{array}{ll}
c \, |x-y|^{-(d-2+|\alpha| + |\beta|)}  & \mbox{if } d-2+|\alpha| + |\beta| \neq 0 \\[5pt]
c \, \log( 1 + \frac{1}{|x-y|} ) & \mbox{if } d-2+|\alpha| + |\beta| = 0
                 \end{array} \right.
\]
and
\[
|(\partial_x^\alpha \, \partial_y^\beta \, G_V)(x',y') - (\partial_x^\alpha \, \partial_y^\beta \, G_V)(x,y)| 
\leq c \, \frac{(|x'-x| + |y'-y|)^\kappa}{|x-y|^{d-2+|\alpha| + |\beta| +\kappa}}  
\]
for all $x,x',y,y' \in \Omega$ with $x \neq y$ and
$|x-x'| + |y-y'| \leq \frac{1}{2} \, |x-y|$.
\end{thm}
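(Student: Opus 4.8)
The plan is to reduce to Theorem~\ref{tdtnpc501} by a spectral shift. Put $\lambda := M$ and $\tilde V := V + \lambda$; then $\RRe \tilde V = \tilde V \geq 0$ and $\|\tilde V\|_\infty \leq 2M$, so the operator $B := A_D + \tilde V$ is covered by Theorems~\ref{tdtnpc401} and~\ref{tdtnpc501} (with $2M$ in place of $M$). In particular $e^{-tB}$ has a kernel $H^B_t$ satisfying the bounds of Theorem~\ref{tdtnpc401} with $\omega < 0$ (by part~\ref{tdtnpc401-4}), and $B^{-1}$ has a kernel $G_{\tilde V}$ satisfying~(\ref{etdtnpc501;13}) and~(\ref{etdtnpc501;14}). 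Fix an integer $m$ with $2m > d + 2$; it depends only on $d$. Since $A_D + V = B - \lambda$ and $0 \notin \sigma(A_D + V)$, the operator $R := (A_D + V)^{-1}$ is bounded and self-adjoint on $L_2(\Omega)$. The elementary identity $(I - Z)^{-1} = \sum_{n=0}^{2m-1} Z^n + Z^{2m}(I - Z)^{-1}$, valid whenever $I - Z$ is invertible, applied with $Z = \lambda B^{-1}$, together with the fact that $B^{-1}$ and $R$ commute, gives the exact operator decomposition
\[
(A_D + V)^{-1}
= \sum_{n=0}^{2m-1} \lambda^n \, B^{-(n+1)}
   + \lambda^{2m} \, B^{-m} \, R \, B^{-m} .
\]
I would show that the kernel of each summand is of the required type, that the singularity is carried entirely by the $n = 0$ term, and that the last term is a globally regular remainder.

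For $0 \leq n \leq 2m-1$ one has $B^{-(n+1)} = \tfrac{1}{n!} \int_0^\infty t^n \, e^{-tB} \, dt$, so this operator has kernel $G^{(n+1)}(x,y) = \tfrac{1}{n!} \int_0^\infty t^n \, H^B_t(x,y) \, dt$. Integrating the Gaussian and H\"older bounds of Theorem~\ref{tdtnpc401} against $t^n$ (the factor $e^{\omega t}$ with $\omega < 0$ controlling the region of large $t$), exactly as in the proof of Theorem~\ref{tdtnpc501}, shows that for $|\alpha|, |\beta| \leq 1$ the function $\partial_x^\alpha \partial_y^\beta G^{(n+1)}$ extends continuously to $\{(x,y) \in \overline\Omega \times \overline\Omega : x \neq y\}$, with $|(\partial_x^\alpha \partial_y^\beta G^{(n+1)})(x,y)| \leq c \, |x-y|^{-(d - 2 - 2n + |\alpha| + |\beta|)}$ (a logarithmic, resp.\ bounded, term in the borderline, resp.\ negative, cases) and the matching H\"older estimate. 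For $n = 0$ this is Theorem~\ref{tdtnpc501}; for $n \geq 1$ the exponent is strictly smaller than $d - 2 + |\alpha| + |\beta|$, so on the bounded set $\overline\Omega$ these terms are dominated near the diagonal by the right-hand sides of~(\ref{etdtnpc501;13})--(\ref{etdtnpc501;14}) and are bounded away from it.

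The core of the argument is the remainder $\lambda^{2m} B^{-m} R B^{-m}$. Because $2m > d + 2$, the computation of the previous paragraph gives that $G^{(m)}$ and all its derivatives $\partial_x^\alpha \partial_y^\beta G^{(m)}$ with $|\alpha|, |\beta| \leq 1$ are bounded, continuous and $\kappa$-H\"older on all of $\overline\Omega \times \overline\Omega$, so in particular there is no diagonal singularity. Using the mean value theorem and dominated convergence over the bounded domain, it follows that $x \mapsto G^{(m)}(x, \cdot)$ is a continuously differentiable map from $\overline\Omega$ into $L_2(\Omega)$, with $\kappa$-H\"older continuous derivative $x \mapsto (\partial_x G^{(m)})(x, \cdot)$, and similarly in the second variable. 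Pulling the bounded operator $R$ inside the corresponding Bochner integral, one finds that $B^{-m} R B^{-m}$ has the kernel
\[
\ck(x,y) = \big( R \, G^{(m)}(\cdot, y) \, , \; G^{(m)}(x, \cdot) \big)_{L_2(\Omega)} ,
\]
and the $L_2$-valued differentiability, together with the boundedness and self-adjointness of $R$, shows that $\ck$ is continuously differentiable in each variable, that $(\partial_x \partial_y \ck)(x,y) = \big( R \, (\partial_y G^{(m)})(\cdot, y) \, , \; (\partial_x G^{(m)})(x, \cdot) \big)_{L_2(\Omega)}$ exists, and that $\partial_x^\alpha \partial_y^\beta \ck$ is bounded (by a constant multiple of $\|R\|$) and $\kappa$-H\"older, uniformly on $\overline\Omega \times \overline\Omega$.

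Summing, $G_V := \sum_{n=0}^{2m-1} \lambda^n \, G^{(n+1)} + \lambda^{2m} \, \ck$ is the kernel of $(A_D + V)^{-1}$, has the asserted differentiability in each variable, extends continuously to $\{(x,y) \in \overline\Omega \times \overline\Omega : x \neq y\}$, and satisfies~(\ref{etdtnpc501;13})--(\ref{etdtnpc501;14}); the bounds are inherited from the $n = 0$ summand, while every other contribution is of strictly lower order near the diagonal or globally regular. Since $\|R\|$ is the reciprocal of the distance from $0$ to $\sigma(A_D + V)$, the constant $c$ now depends on $C$ and $V$, not merely on $\kappa, \mu, M$, which is why uniformity in the coefficients is lost. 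I expect the only delicate point to be the treatment of the remainder: deducing the $L_2(\Omega)$-valued continuous differentiability, with H\"older derivative, of $x \mapsto G^{(m)}(x, \cdot)$ from the pointwise kernel bounds, and justifying that $R$ may be moved inside the Bochner integral, so that $\ck$ really is the kernel of $B^{-m} R B^{-m}$ with a bounded, continuous mixed second derivative up to and across the diagonal. The rest is routine bookkeeping on top of Theorems~\ref{tdtnpc401} and~\ref{tdtnpc501}.
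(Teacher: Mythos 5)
Your argument is correct, and it takes a genuinely different route from the paper. The paper proves Theorem~\ref{tdtnpc502} by diagonalising: it writes $A_D+V$ in terms of its eigenfunctions, splits off the finite--rank spectral projection $P$ onto the eigenvalues below the shift $\lambda$, and then controls $\int_0^\infty e^{\lambda t}(I-P)T_t(I-P)\,dt$ using the spectral gap, sup-norm and H\"older bounds on the eigenfunctions coming from Proposition~\ref{pdtnpc416}, and a trace estimate $\sum_n e^{-3\lambda_n(1-2\varepsilon)}\leq \Tr T_{3(1-2\varepsilon)}<\infty$, with a further splitting of the time integral at $t=3$. Your decomposition is purely algebraic: the finite Neumann-type identity $(I-Z)^{-1}=\sum_{n<2m}Z^n+Z^{2m}(I-Z)^{-1}$ with $Z=\lambda B^{-1}$ reduces everything to powers $B^{-(n+1)}$ of the shifted (nonnegative-potential) resolvent, each handled by Laplace-transforming the Gaussian/H\"older heat-kernel bounds exactly as in Theorem~\ref{tdtnpc501}, plus the remainder $B^{-m}RB^{-m}$ in which the merely bounded operator $R$ is sandwiched between two operators whose kernels, by the choice $2m>d+2$, are $C^1$ with bounded $\kappa$-H\"older mixed derivatives across the diagonal (note that since $2m$ and $d+2$ are integers, $2m>d+2$ already forces $2m\geq d+3>d+2+\kappa$, so the H\"older bound of Theorem~\ref{tdtnpc401} also integrates). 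What your route buys is that it needs only $\|(A_D+V)^{-1}\|_{2\to 2}<\infty$ and the bounds of Theorems~\ref{tdtnpc401} and~\ref{tdtnpc501}; it avoids the eigenfunction expansion, the sup-norm eigenfunction estimates and the trace argument entirely, and would survive non-self-adjoint perturbations with only cosmetic changes (self-adjointness of $R$ is used only to move $R$ onto the other factor in the $L_2$ pairing, which can be replaced by $R^*$). The paper's route, by contrast, makes completely explicit that the loss of uniformity in the constants is carried by the finitely many low eigenvalues. Two trivial touch-ups: the theorem does not assume $\|V\|_\infty\leq M$, so take $\lambda=\|V\|_\infty$ rather than $M$ and apply Theorems~\ref{tdtnpc401} and~\ref{tdtnpc501} with $M$ replaced by $\max(M,2\|V\|_\infty)$ (the constant is then allowed to depend on $V$, as in the paper); and the global $\kappa$-H\"older bound for $\partial_x^\alpha\partial_y^\beta\ck$ should be converted into the stated form by inserting the factor $(\diam\Omega/|x-y|)^{d-2+|\alpha|+|\beta|+\kappa}\geq 1$, which is exactly the same boundedness trick the paper uses for its finite-rank and large-time pieces.
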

\begin{proof}
There exists a $\lambda > 0$ such that $V + \lambda \geq 0$.
Replacing $V$ by $V + \lambda$, it suffices to show that 
for all $\lambda > 0$ and $V \in L_\infty(\Omega,\Ri)$ with $V \geq 0$
and $0 \not\in \sigma(A_D + V - \lambda \, I)$
the operator $(A_D + V - \lambda \, I)^{-1}$ has 
a kernel, denoted by $G_V$,
which is differentiable in each variable and the derivative is 
differentiable in the other variable.
Moreover, for every multi-index $\alpha,\beta$ with 
$0 \leq |\alpha|,|\beta| \leq 1$ the function
$\partial_x^\alpha \, \partial_y^\beta \, G_V$ 
extends to a continuous function on 
$ \{ (x,y) \in \overline \Omega \times \overline \Omega : x \neq y \} $
and
\[
|(\partial_x^\alpha \, \partial_y^\beta \, G_V)(x,y)| 
\leq c \, |x-y|^{-(d-2+|\alpha| + |\beta|)} \]
for all $x,y \in \Omega$ with $x \neq y$.

Since $A_D + V$ is a positive self-adjoint operator with compact resolvent,
there exist an orthonormal basis $(u_n)_{n \in \Ni}$ for $L_2(\Omega)$ 
of eigenfunctions for $A_D + V$, and a sequence $(\lambda_n)_{n \in \Ni}$
in $[0,\infty)$ such that $(A_D + V) u_n = \lambda_n \, u_n$
for all $n \in \Ni$.
There exists an $N \in \Ni$ such that $\lambda_n > \lambda$ for all 
$n \in \{ N+1,N+2,\ldots \} $.
Let $\omega_1 = \min \{ \lambda_n : n \in \{ N+1,N+2,\ldots \} \} $.
Then $\lambda < \omega_1$.
Let $P \colon L_2(\Omega) \to L_2(\Omega)$ be the orthogonal projection
onto $\spann \{ u_1,\ldots,u_N \} $.
Write $T_t = e^{-t(A_D + V)}$ for all $t > 0$.
So $T$ is the semigroup generated by $-(A_D + V)$.
Then $\|(I-P) \, T_t \, (I-P)\|_{2 \to 2} \leq e^{- \omega_1 t}$ for all $t > 0$.
Note that $P$ commutes with $T_t$ and the resolvent
$(A_D + V - \lambda \, I)^{-1}$ for all $t > 0$.
Hence on $L_2(\Omega)$ one has the decomposition
\[
(A_D + V - \lambda \, I)^{-1}
= P \, (A_D + V - \lambda \, I)^{-1} \, P
   + \int_0^\infty e^{\lambda t} \, (I-P) \, T_t \, (I-P) \, dt
.  \]
As a consequence 
\begin{equation}
\partial^\alpha \, (A_D + V - \lambda \, I)^{-1} \, \partial^\beta
= \partial^\alpha \, P \, (A_D + V - \lambda \, I)^{-1} \, P \, \partial^\beta
   + \int_0^\infty e^{\lambda t} \, \partial^\alpha \, (I-P) \, T_t \, (I-P) \, \partial^\beta \, dt
. 
\label{etdtnpc502;1}
\end{equation}
We shall show that the terms on the right hand side of (\ref{etdtnpc502;1})
has a kernel with the appropriate bounds and which extends continuously to 
$ \{ (x,y) \in \overline \Omega \times \overline \Omega : x \neq y \} $.

Let $t > 0$.
Then $T_t$ maps $L_2(\Omega)$ into $C^{1+\kappa}(\Omega)$ by 
Proposition~\ref{pdtnpc416}.
Hence $e^{-\lambda_n t} \, u_n = T_t u_n \in C^{1+\kappa}(\Omega)$
for all $n \in \Ni$.
In particular, $u_n \in C^{1+\kappa}(\Omega)$ and 
$\partial^\alpha u_n \in C^\kappa(\Omega) \subset C(\overline \Omega)$.
Since 
\[
P \, (A_D + V - \lambda \, I)^{-1} \, P u
= \sum_{n=1}^N \frac{(u,u_n)_{L_2(\Omega)}}{\lambda_n - \lambda} \, u_n
\]
for all $u \in L_2(\Omega)$, it follows that 
\[
\partial^\alpha \, P \, (A_D + V - \lambda \, I)^{-1} \, P \, \partial^\beta \, u
= (-1)^{|\beta|} \sum_{n=1}^N \frac{(u,\partial^\beta \, u_n)_{L_2(\Omega)}}{\lambda_n - \lambda} \, \partial^\alpha \, u_n
\]
for all $u \in W^{1,2}(\Omega)$, where we used that $u_n \in W^{1,2}_0(\Omega)$
for all $n \in \{ 1,\ldots,N \} $.
Therefore the operator $\partial^\alpha \, P \, (A_D + V - \lambda \, I)^{-1} \, P \, \partial^\beta$
has as kernel the function
\[
(x,y) \mapsto  (-1)^{|\beta|} \sum_{n=1}^N \frac{1}{\lambda_n - \lambda} \, 
    (\partial^\alpha \, u_n)(x) \, \overline{(\partial^\beta \, u_n)(y)}
,  \]
which is $\kappa$-H\"older continuous and extends to a continuous and bounded function 
on $\overline \Omega \times \overline \Omega$.
In particular, it can be estimated by $c \, |x-y|^{-d}$ for a suitable 
$c > 0$, since $\Omega$ is bounded.
This covers the first term on the right hand side of (\ref{etdtnpc502;1}).

We split the integral in (\ref{etdtnpc502;1}) in two parts: over
$(0,3]$ and $[3,\infty)$.
We start with the integral over $[3,\infty)$.
We shall show that the operator 
\begin{equation}
\int_3^\infty e^{\lambda t} \, \partial^\alpha \, (I-P) \, T_t \, (I-P) \, \partial^\beta \, dt
\label{etdtnpc502;20}
\end{equation}
has as kernel the function
\[
(x,y) \mapsto (-1)^{|\beta|} \int_3^\infty e^{\lambda t} 
   \sum_{n=N+1}^\infty e^{-\lambda_n t} \, (\partial^\alpha \, u_n)(x) \, 
          \overline{(\partial^\beta \, u_n)(y)} \, dt
.  \]
By Proposition~\ref{pdtnpc416} there exist $c,\omega > 0$ such that 
\begin{equation}
\|\partial^\alpha \, T_s u\|_{L_\infty(\Omega)}
\leq c \, s^{-d/4} \, s^{-|\alpha| / 2} \, e^{\omega s} \, \|u\|_{L_2(\Omega)}
\label{etdtnpc502;22}
\end{equation}
for all $s > 0$ and $u \in L_2(\Omega)$.
Therefore
$e^{-s \lambda_n} \, \|\partial^\alpha \, u_n\|_{L_\infty(\Omega)}
\leq c \, s^{-M} \, e^{\omega s}$
for all $n \in \Ni$ and $s > 0$, where $M = \frac{d}{4} + \frac{|\alpha|}{2}$.
Choosing $s = \lambda_n^{-1}$ gives 
$\|\partial^\alpha \, u_n\|_{L_\infty(\Omega)} \leq c \, e \, \lambda_n^M \, e^{\omega \lambda_n^{-1}}
\leq c \, e^{1 + \omega \omega_1^{-1}} \, \lambda_n^M$
if $n \geq N+1$.
Let $\varepsilon > 0$ be such that $\omega_1 (1 - 2 \varepsilon) > \lambda$.
There exists a $c_2 > 0$ such that $h^M \leq c_2 \, e^{\varepsilon h}$
for all $h \in (0,\infty)$.
Then
\[
\|\partial^\alpha \, u_n\|_{L_\infty(\Omega)}
\leq c \, c_2 \, e^{1 + \omega \omega_1^{-1}} \, t^{-M} \, e^{\varepsilon \lambda_n t}
\leq c_3 \, e^{\varepsilon \lambda_n t}
\]
for all $t \in [3,\infty)$ and $n \in \Ni$, 
where $c_3 = c \, c_2 \, e^{1 + \omega \omega_1^{-1}}$.
If $x,y \in \Omega$, then 
\begin{eqnarray*}
\int_3^\infty e^{\lambda t} 
   \sum_{n=N+1}^\infty e^{-\lambda_n t} \, |(\partial^\alpha \, u_n)(x) \, 
          \overline{(\partial^\beta \, u_n)(y)}| \, dt
& \leq & c_3^2 \int_3^\infty \sum_{n=N+1}^\infty e^{\lambda t} \, e^{-\lambda_n t} \, 
    e^{2 \varepsilon \lambda_n t} \, dt  \\
& = & c_3^2 \sum_{n=N+1}^\infty 
    \frac{e^{-3 (\lambda_n (1 - 2 \varepsilon) - \lambda)}}{\lambda_n (1 - 2 \varepsilon) - \lambda}  \\
& \leq & \frac{c_3^2 \, e^{3 \lambda}}{\omega_1 (1 - 2 \varepsilon) - \lambda}
   \sum_{n=N+1}^\infty e^{-3 \lambda_n (1 - 2 \varepsilon)}  
.  
\end{eqnarray*}
Next 
\[
\sum_{n=N+1}^\infty e^{-3 \lambda_n (1 - 2 \varepsilon)} 
\leq \Tr T_{3 (1 - 2 \varepsilon)}
= \int_\Omega H_{3 (1 - 2 \varepsilon)}(x,x) \, dx
< \infty
.  \]
Hence one can define $K \colon \Omega \times \Omega \to \Ci$ by
\[
K(x,y)
= (-1)^{|\beta|} \int_3^\infty e^{\lambda t} 
   \sum_{n=N+1}^\infty e^{-\lambda_n t} \, (\partial^\alpha \, u_n)(x) \, 
          \overline{(\partial^\beta \, u_n)(y)} \, dt
.  \]
Then $K$ is the kernel of (\ref{etdtnpc502;20}).
We already proved that $K$ is bounded on $\Omega \times \Omega$.

Using the $C^\kappa$-estimate in Proposition~\ref{pdtnpc416} instead of 
(\ref{etdtnpc502;22}), it follows similarly as above that 
there exists a $c_4 > 0$ such that 
$|||\partial^\alpha u_n|||_{C^\kappa(\Omega)} \leq c_4 e^{\varepsilon \lambda_n t}$
for all $t \in [3,\infty)$ and $n \in \Ni$.
Then 
\[
|(\partial^\alpha \, u_n)(x') \, \overline{(\partial^\beta \, u_n)(y')}
   - (\partial^\alpha \, u_n)(x) \, \overline{(\partial^\beta \, u_n)(y)}|
\leq 2 c_3 \, c_4 \, (|x-x'|^\kappa + |y-y'|^\kappa) \, e^{2 \varepsilon \lambda_n t}
\]
for all $n \in \Ni$, $t \in [3,\infty)$ and 
$x,x',y,y' \in \Omega$ with $|x-x'| \leq 1$ and $|y-y'| \leq 1$.
Arguing as before we obtain that there exists a $c_5 > 0$ such that 
\[
|K(x',y') - K(x,y)| \leq c_5 \, (|x-x'|^\kappa + |y-y'|^\kappa)
\]
for all $x,x',y,y' \in \Omega$ with $|x-x'| \leq 1$ and $|y-y'| \leq 1$.
Since $\Omega$ is bounded, there exists a $c_6 > 0$ such that 
\[
|K(x',y') - K(x,y)|
\leq c_6 \, \frac{(|x'-x| + |y'-y|)^\kappa}{|x-y|^{d-2+|\alpha| + |\beta| +\kappa}}   \label{etdtnpc502;140} 
\]
for all $x,x',y,y' \in \Omega$ with $x \neq y$ and
$|x-x'| + |y-y'| \leq \frac{1}{2} \ |x-y|$.
This completes the part of the integral in (\ref{etdtnpc502;1}) over $[3,\infty)$.

We split the part of the integral in (\ref{etdtnpc502;1}) over $(0,3]$ in two parts
\begin{equation}
\int_0^3 e^{\lambda t} \, \partial^\alpha \, (I-P) \, T_t \, (I-P) \, \partial^\beta \, dt
= \int_0^3 e^{\lambda t} \, \partial^\alpha \, T_t \, \partial^\beta \, dt
   - \int_0^3 e^{\lambda t} \, \partial^\alpha \, T_t \, P \, \partial^\beta \, dt
.  
\label{etdtnpc502;2}
\end{equation}
Since 
\[
\partial^\alpha \, T_t \, P \, \partial^\beta u 
= (-1)^{|\beta|} \sum_{n=1}^N e^{- \lambda_n t} (u, \partial^\beta \, u_n) \, \partial^\alpha \, u_n
\]
for all $u \in W^{1,2}(\Omega)$ it follows that the kernel of the 
second term in (\ref{etdtnpc502;2}) is 
\[
(x,y) \mapsto (-1)^{|\beta| + 1} \sum_{n=1}^N (\partial^\alpha \, u_n)(x) \, 
      \overline{(\partial^\beta \, u_n)(y)} \, 
    \int_0^3 e^{- \lambda_n t} \, dt
,  \]
which again is $\kappa$-H\"older continuous and
can be extended once more to a continuous and bounded 
function on $\overline \Omega \times \overline \Omega$.
Finally, it follows from Theorem~\ref{tdtnpc401} that 
there are $a,b,\omega > 0$ such that 
the operator $T_t$ has a kernel $H_t$ for all $t > 0$,
which is once differentiable in each entry,
satisfying the bounds
\[
|(\partial_x^\alpha \, \partial_y^\beta \, H_t)(x,y)|
\leq a \, t^{-d/2} \, t^{-(|\alpha| + |\beta|)/2} \, 
      e^{-b \, \frac{|x-y|^2}{t}} \, e^{-\omega t}
\]
for all $t > 0$ and $x,y \in \Omega$.
Moreover, $(x,y) \mapsto (\partial_x^\alpha \, \partial_y^\beta \, H_t)(x,y)$
extends to a continuous
function on $\overline \Omega \times \overline \Omega$.
Hence the operator $\int_0^3 e^{\lambda t} \, \partial^\alpha \, T_t \, \partial^\beta \, dt$
has kernel 
\[
(x,y) \mapsto 
(-1)^{|\beta|} \int_0^3 e^{\lambda t} \, (\partial_x^\alpha \, \partial_y^\beta \, H_t)(x,y) \, dt
\]
on $ \{ (x,y) \in \Omega \times \Omega : x \neq y \} $.
This kernel extends to a continuous function on 
$ \{ (x,y) \in \overline \Omega \times \overline \Omega : x \neq y \} $.
If $x \neq y$ and $d-2+|\alpha| + |\beta| \neq 0$, then 
\[
\Big| \int_0^3 e^{\lambda t} \, (\partial_x^\alpha \, \partial_y^\beta \, H_t)(x,y) \, dt \Big|
\leq a \, |x-y|^{-(d-2+|\alpha| + |\beta|)} \, e^{3 \lambda } \, 
      \int_0^\infty t^{-d/2} \, t^{-(|\alpha| + |\beta|)/2} e^{-b/t} \, dt
.  \]
The H\"older bounds follows similarly.
If $d-2+|\alpha| + |\beta| = 0$, then the obvious adjustments are needed to obtain 
a logarithmic term.
Then the resolvent kernel bounds follow by adding the terms.
\end{proof}

We next consider the operator $\partial_k (A_D + V)^{-1}$.
We obtain uniform bounds if $V = 0$.

\begin{prop} \label{pdtnpc503}
Let $\kappa \in (0,1)$ and $\mu,M > 0$.
Let $\Omega \subset \Ri^d$ be an open bounded set with a $C^{1+\kappa}$-boundary.
Let $p \in (d + 2 \kappa, \infty)$.
Then there exists a $c > 0$ such that the following is valid.
Let $C \in \ce^\kappa(\Omega,\mu,M)$ and $k \in \{ 1,\ldots,d \} $.
Then the operator $\partial_k \, A_D^{-1}$ is bounded from 
$L_p(\Omega)$ into $C^{2\kappa / p}(\Omega)$ with norm at most~$c$.
Moreover, if $V \in L_\infty(\Omega)$ and $0 \not\in \sigma(A_D + V)$,
then the operator $\partial_k \, (A_D + V)^{-1}$ is bounded from 
$L_p(\Omega)$ into $C^{2\kappa / p}(\Omega)$.
\end{prop}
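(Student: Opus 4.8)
The plan is to realise $\partial_k A_D^{-1}$ as integration against the kernel $\partial_x^{(1)} G_0$, where $G_0$ is the kernel of $A_D^{-1}$ furnished by Theorem~\ref{tdtnpc501} applied with $V = 0$ (so that the constants are uniform over $C \in \ce^\kappa(\Omega,\mu,M)$, since $\RRe 0 \geq 0$). For $f \in L_p(\Omega)$ one has
\[
(\partial_k A_D^{-1} f)(x) = \int_\Omega (\partial_{x_k} G_0)(x,y) \, f(y) \, dy ,
\]
and $|(\partial_{x_k} G_0)(x,y)| \leq c \, |x-y|^{-(d-1)}$ by Theorem~\ref{tdtnpc501}. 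Since $p > d$ forces $(d-1) p' < d$, where $p'$ is the conjugate exponent of $p$, H\"older's inequality together with $\sup_{x \in \Omega} \int_\Omega |x-y|^{-(d-1)p'} \, dy < \infty$ gives $\|\partial_k A_D^{-1} f\|_{L_\infty(\Omega)} \leq c \, \|f\|_{L_p(\Omega)}$, with $c$ depending only on $d,p,\mu,M$ and $\Omega$.

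For the H\"older seminorm I would fix $x, x' \in \Omega$, put $\delta = |x - x'|$ (we may assume $\delta \leq 1$), and split $\Omega$ into the near part $\{ y : |x - y| < 2 \delta \}$ and the far part $\{ y : |x - y| \geq 2 \delta \}$. On the near part the two pieces $(\partial_{x_k} G_0)(x, \cdot) \, f$ and $(\partial_{x_k} G_0)(x', \cdot) \, f$ are estimated separately by $|\cdot|^{-(d-1)}$ (noting $|x' - y| < 3 \delta$ there), and the volume bound yields a contribution $\leq c \, \|f\|_{L_p} \, \delta^{1 - d/p}$. On the far part $\delta \leq \frac12 |x - y|$, so the H\"older estimate of Theorem~\ref{tdtnpc501} for $|\alpha| = 1$, $|\beta| = 0$ applies: $|(\partial_{x_k} G_0)(x, y) - (\partial_{x_k} G_0)(x', y)| \leq c \, \delta^\kappa \, |x - y|^{-(d-1+\kappa)}$. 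Setting $\sigma_0 = 2 \kappa / p$, which lies in $(0, \min\{ \kappa, 1 - d/p \})$ precisely because $p > d + 2 \kappa \geq 2 + 2 \kappa$, one has $\delta^\kappa |x - y|^{-(d-1+\kappa)} \leq \delta^{\sigma_0} |x - y|^{-(d-1+\sigma_0)}$ on the far part, and since $(d - 1 + \sigma_0) p' < d$ the integral converges to a constant, giving a contribution $\leq c \, \|f\|_{L_p} \, \delta^{\sigma_0}$. Adding the two and using $\delta^{1 - d/p} \leq \delta^{2\kappa/p}$ for $\delta \leq 1$ gives $|||\partial_k A_D^{-1} f|||_{C^{2\kappa/p}(\Omega)} \leq c \, \|f\|_{L_p(\Omega)}$ with $c$ uniform, which is the first assertion. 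The arithmetic shows that $p > d + 2 \kappa$ is exactly the threshold making the exponent $2\kappa/p$ attainable.

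For the last statement I would use $0 \notin \sigma(A_D)$ (coercivity of the Dirichlet form on the bounded domain $\Omega$) and the resolvent identity $(A_D + V)^{-1} = A_D^{-1} - A_D^{-1} M_V (A_D + V)^{-1}$; applying $\partial_k$ yields
\[
\partial_k (A_D + V)^{-1} = \partial_k A_D^{-1} - (\partial_k A_D^{-1}) \, M_V \, (A_D + V)^{-1} ,
\]
so, by the first part, it suffices to know that $(A_D + V)^{-1}$ is bounded on $L_p(\Omega)$. By the same near/far argument applied to $G_0$ itself (whose singularity is $|x-y|^{-(d-2)}$, resp.\ logarithmic when $d = 2$), $A_D^{-1}$ maps $L_p(\Omega)$ boundedly into $C^\theta(\overline\Omega)$ for some $\theta \in (0,1)$, hence compactly into $L_p(\Omega)$ by the Arzel\`a--Ascoli theorem and boundedness of $\Omega$. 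Therefore $A_D^{-1} M_V$ is compact on $L_p(\Omega)$ and $I + A_D^{-1} M_V$ is Fredholm of index zero there. It is injective: if $u \in L_p$ and $u = - A_D^{-1} M_V u$, then $M_V u \in L_p \subset L_2$ (as $\Omega$ is bounded and $p \geq 2$), so $u \in D(A_D)$ and $(A_D + V) u = 0$, forcing $u = 0$ since $0 \notin \sigma(A_D + V)$. Hence $I + A_D^{-1} M_V$ is invertible on $L_p$, and rewriting the resolvent identity as $(I + A_D^{-1} M_V)(A_D + V)^{-1} = A_D^{-1}$ (valid on $L_2 \supset L_p$) gives $(A_D + V)^{-1} = (I + A_D^{-1} M_V)^{-1} A_D^{-1}$, which is bounded on $L_p(\Omega)$; combined with the first part this completes the proof.

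The routine part will be the near/far splitting; the one point that needs genuine care is the $L_p$-boundedness of $(A_D + V)^{-1}$ for complex coefficients $C$ and a general, possibly complex or sign-changing, potential $V$, where there is neither uniform control nor a positive semigroup to integrate. The Fredholm-alternative argument above, resting on the $L_p$-compactness of $A_D^{-1}$, is the way I would handle that obstacle.
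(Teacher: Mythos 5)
Your proof is correct, but it takes a genuinely different route from the paper's. The paper works at the level of the semigroup: from Proposition~\ref{pdtnpc416} and Lemma~\ref{ldtnpc413} it gets $\|\partial_k T_t\|_{L_2 \to C^\kappa} \lesssim t^{-d/4-1/2-\kappa/2}e^{-\omega t}$, from the Gaussian gradient bounds $\|\partial_k T_t\|_{\infty\to\infty} \lesssim t^{-1/2}e^{-\omega t}$, interpolates to obtain $\|\partial_k T_t\|_{L_p \to C^{2\kappa/p}} \lesssim t^{-d/(2p)-1/2-\kappa/p}e^{-\omega t}$, and integrates over $t \in (0,\infty)$ --- the integrability being exactly the condition $p > d+2\kappa$. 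You instead work with the stationary kernel $\partial_x^{(1)}G_0$ and its bounds from Theorem~\ref{tdtnpc501}, recovering the same exponent $2\kappa/p$ by a near/far splitting; since Theorem~\ref{tdtnpc501} is itself obtained by integrating the heat-kernel bounds in time, the two arguments are morally equivalent, but yours makes the role of the threshold $p > d+2\kappa$ transparent at the elliptic level and is self-contained given the Green-function estimates. The more substantive divergence is in the $V$-perturbation: the paper factorises $\partial_k(A_D+V)^{-1} = \partial_k A_D^{-1}\,\bigl(I - M_V(A_D+V)^{-1}\bigr)$ and quietly uses the $L_p$-boundedness of $(A_D+V)^{-1}$ without saying where it comes from (for real symmetric $C$ and real $V$ it follows from the kernel bounds of Theorem~\ref{tdtnpc502}, but the proposition is stated for general $C \in \ce^\kappa(\Omega,\mu,M)$ and $V \in L_\infty(\Omega)$). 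Your Fredholm argument --- compactness of $A_D^{-1}$ on $L_p(\Omega)$ via Arzel\`a--Ascoli, injectivity of $I + A_D^{-1}M_V$ from $0 \notin \sigma(A_D+V)$ on $L_2(\Omega)$, and consistency of the $L_2$- and $L_p$-inverses on $L_p(\Omega) \subset L_2(\Omega)$ --- supplies that missing boundedness in full generality, so in this respect your write-up is actually more complete than the paper's.
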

\begin{proof}
Write $T_t = e^{-t A_D}$ for all $t > 0$.
Then it follows from Proposition~\ref{pdtnpc416} and Lemma~\ref{ldtnpc413} 
that there exist $c,\omega > 0$ 
such that the operator $\partial_k \, T_t$ is bounded
from $L_2(\Omega)$ into $C^\kappa(\Omega)$ with norm bounded by 
$c \, t^{-d/4} \, t^{-1/2} \, t^{-\kappa / 2} \, e^{-\omega t}$, uniformly for 
all $t \in (0,\infty)$.
The Gaussian kernel bounds with one derivative imply 
that $\partial_k \, T_t$ is bounded
from $L_\infty(\Omega)$ into $L_\infty(\Omega)$ norm with bounded by 
$c \, t^{-1/2} \, e^{-\omega t}$, possibly by increasing the value of $c$
and decreasing~$\omega$.
Hence by interpolation the operator $\partial_k \, T_t$
is bounded from $L_p(\Omega)$ into $C^{2\kappa / p}(\Omega)$
with norm bounded by $c \, t^{-d/(2p)} \, t^{-1/2} \, t^{-\kappa / p} \, e^{-\omega t}$ 
for all $t \in (0,\infty)$.
Since $p \in (d + 2 \kappa, \infty)$, the latter bound is 
integrable over $(0,\infty)$.
Hence $\partial_k \, A_D^{-1}$ is bounded from $L_p(\Omega)$ into $C^{2\kappa / p}(\Omega)$.
The norm is uniform for all $C \in \ce^\kappa(\Omega,\mu,M)$ by construction.

Finally, since $\partial_k \, (A_D + V)^{-1} = \partial_k \, A_D^{-1} \, A_D (A_D + V)^{-1}$ and 
the operator $A_D (A_D + V)^{-1} = I - M_V (A_D + V)^{-1}$ is bounded from 
$L_p(\Omega)$ into $L_p(\Omega)$, the last part follows.
\end{proof}

\begin{lemma} \label{ldtnpc430}
Let $\kappa \in (0,1)$ and $\mu,M > 0$.
Let $\Omega \subset \Ri^d$ be an open bounded set with a $C^{1+\kappa}$-boundary.
Then there exists a $c > 0$ such that the following is valid.
Let $C \in \ce^\kappa(\Omega,\mu,M)$, $p \in [1,\infty]$ and 
$k \in \{ 1,\ldots,d \} $.
Then $\|\partial_k \, A_D^{-1}\|_{p \to p} \leq c$.
Moreover, if $V \in L_\infty(\Omega)$ and $0 \not\in \sigma(A_D + V)$,
then the operator $\partial_k \, (A_D + V)^{-1}$ is bounded from $L_p(\Omega)$
into $L_p(\Omega)$.
\end{lemma}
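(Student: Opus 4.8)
The plan is to read off the $L_p$-bounds from the Green function estimates of Theorems~\ref{tdtnpc501} and~\ref{tdtnpc502} by means of a Schur test, and then to pass from $A_D^{-1}$ to $(A_D+V)^{-1}$ by the same resolvent identity as in the proof of Proposition~\ref{pdtnpc503}.

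First I would treat the case $V=0$. The operator $A_D$ is strictly positive by the Dirichlet type Poincar\'e inequality, so $0 \notin \sigma(A_D)$, and since $\RRe 0 \geq 0$ Theorem~\ref{tdtnpc501} applies with $V=0$ and produces a constant $c$, \emph{uniform for $C \in \ce^\kappa(\Omega,\mu,M)$}, such that the kernel $G$ of $A_D^{-1}$ is once differentiable in each variable and $|\partial_{x_k} G(x,y)| \leq c \, |x-y|^{-(d-1)}$ for all $x,y \in \Omega$ with $x \neq y$; this is the first estimate of Theorem~\ref{tdtnpc501} with $|\alpha| = 1$ and $\beta = 0$, and no logarithmic term occurs because $d-1 \geq 1$. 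Differentiating under the integral sign --- which is justified by the differentiability of $G$ together with this local bound --- the operator $\partial_k \, A_D^{-1}$ has Schwartz kernel $(x,y) \mapsto \partial_{x_k} G(x,y)$. As $\Omega$ is bounded and $d-1 < d$, there is a constant $c'$ depending only on $d$ and $\diam \Omega$ with $\sup_{x \in \Omega} \int_\Omega |x-y|^{-(d-1)} \, dy \leq c'$ and the same with $x$ and $y$ interchanged, so the Schur test gives $\|\partial_k \, A_D^{-1}\|_{p \to p} \leq c \, c'$ for every $p \in [1,\infty]$, uniformly in $p$, $k$ and $C \in \ce^\kappa(\Omega,\mu,M)$. (Alternatively one may write $\partial_k \, A_D^{-1} = \int_0^\infty \partial_k \, e^{-tA_D} \, dt$ and apply the Schur test to the one-derivative Gaussian kernel bound of Theorem~\ref{tdtnpc401} with $V=0$, where $\omega < 0$; the resulting $L_p$-norm estimate $c \, t^{-1/2} e^{\omega t}$ is integrable over $(0,\infty)$.)

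For general $V \in L_\infty(\Omega)$ with $0 \notin \sigma(A_D+V)$ I would argue as in Proposition~\ref{pdtnpc503}. By Theorem~\ref{tdtnpc502} the operator $(A_D+V)^{-1}$ has a kernel $G_V$ with $|G_V(x,y)| \leq c \, |x-y|^{-(d-2)}$ if $d \geq 3$ and $|G_V(x,y)| \leq c \, \log(1 + |x-y|^{-1})$ if $d = 2$; in both cases the singularity is locally integrable, and since $\Omega$ is bounded the Schur test shows that $(A_D+V)^{-1}$ is bounded on $L_p(\Omega)$ for all $p \in [1,\infty]$. Since $V \in L_\infty(\Omega)$ the multiplication operator $M_V$ is bounded on $L_p(\Omega)$, hence so is $A_D \, (A_D+V)^{-1} = I - M_V \, (A_D+V)^{-1}$. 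On $L_2(\Omega) \cap L_p(\Omega)$ one has the identity $\partial_k \, (A_D+V)^{-1} = \bigl( \partial_k \, A_D^{-1} \bigr) \, A_D \, (A_D+V)^{-1}$; both factors on the right are bounded on $L_p(\Omega)$ by the previous two steps, and $L_2(\Omega) \cap L_p(\Omega)$ is dense in $L_p(\Omega)$ for $p < \infty$ (the case $p = \infty$ following by a duality argument, or directly from the kernel bounds), so $\partial_k \, (A_D+V)^{-1}$ extends to a bounded operator on $L_p(\Omega)$.

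The only slightly delicate points are the two routine justifications --- differentiation under the integral sign and the validity of the resolvent identity on all of $L_p$ --- which are handled exactly as in the proof of Proposition~\ref{pdtnpc503}. The real content is the elementary observation that on the bounded set $\Omega$ the kernel bounds of Theorems~\ref{tdtnpc501} and~\ref{tdtnpc502} have locally integrable singularities, so that the Schur test applies.
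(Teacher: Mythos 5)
Your proof is correct, and the second half (the reduction of $\partial_k\,(A_D+V)^{-1}$ to $\partial_k\,A_D^{-1}$ via $\partial_k\,(A_D+V)^{-1}=\partial_k\,A_D^{-1}\,\bigl(I-M_V\,(A_D+V)^{-1}\bigr)$) is exactly what the paper does. For the case $V=0$ the paper takes the route you only mention as an alternative: it reads off $\|\partial_k\,T_t\|_{p\to p}\leq c\,t^{-1/2}\,e^{-\omega t}$ with $\omega>0$ directly from the one-derivative Gaussian bounds of Theorem~\ref{tdtnpc401}, and then integrates in $t$, so that $\|\partial_k\,A_D^{-1}\|_{p\to p}\leq c\int_0^\infty t^{-1/2}e^{-\omega t}\,dt$. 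Your main route (take the time integral first to get the Green function bound $|\partial_{x_k}G(x,y)|\leq c\,|x-y|^{-(d-1)}$ of Theorem~\ref{tdtnpc501}, then apply the Schur test on the bounded set $\Omega$) is the same estimate with the order of integration swapped; it is equally uniform in $C\in\ce^\kappa(\Omega,\mu,M)$ but requires the extra (routine) identification of the Schwartz kernel of $\partial_k\,A_D^{-1}$ with $\partial_{x_k}G$, which the paper's semigroup formulation sidesteps. One small caveat: to get $(A_D+V)^{-1}$ bounded on $L_p$ you invoke Theorem~\ref{tdtnpc502}, whose hypotheses ($C$ real symmetric, $V$ real) are stronger than those of the lemma as stated; the paper leaves this point implicit as well, and for general $C\in\ce^\kappa$ one would instead bootstrap $(A_D+V)^{-1}=A_D^{-1}-A_D^{-1}M_V(A_D+V)^{-1}$ using the $L_p$--$L_q$ smoothing of $A_D^{-1}$ coming from Theorem~\ref{tdtnpc501}. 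This does not affect the cases in which the lemma is actually applied.
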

\begin{proof}
Let $T$ be the semigroup generated by $-A_D$.
By Theorem~\ref{tdtnpc401} there exist $c,\omega > 0$, depending only of 
$\kappa$, $\mu$, $M$ and $\Omega$, such that 
$\|\partial_k \, T_t\|_{p \to p} \leq c \, t^{-1/2} \, e^{-\omega t}$
for all $t > 0$, $p \in [1,\infty]$ and $k \in \{ 1,\ldots,d \} $.
Then 
\[
\|\partial_k \, A_D^{-1}\|_{p \to p}
\leq \int_0^\infty \|\partial_k \, T_t\|_{p \to p} \, dt
\leq c \int_0^\infty t^{-1/2} \, e^{-\omega t} \, dt
.  \]
Finally, $\partial_k \, (A_D + V)^{-1} = \partial_k \, A_D^{-1} \, (I - M_V \, (A_D + V)^{-1})$
is bounded on $L_p(\Omega)$ for all $p \in [1,\infty]$ and $k \in \{ 1,\ldots,d \} $.
\end{proof}

\begin{prop} \label{pdtnpc431}
Let $\kappa \in (0,1)$ and $\mu,M > 0$.
Let $\Omega \subset \Ri^d$ be an open bounded set with a $C^{1+\kappa}$-boundary
and let $p \in (1,\infty)$.
Then there exists a $c > 0$ such that the following is valid.
Let $C \in \ce^\kappa(\Omega,\mu,M)$ be real symmetric and 
$k,l \in \{ 1,\ldots,d \} $.
Then the operator $\partial_k \, A_D^{-1} \, \partial_l$
extends to a bounded operator on $L_p(\Omega)$ with norm at most $c$.
Moreover, if $V \in L_\infty(\Omega)$ and $0 \not\in \sigma(A_D + V)$,
then the operator $\partial_k \, (A_D + V)^{-1} \, \partial_l$ 
extends to a bounded operator on $L_p(\Omega)$.
\end{prop}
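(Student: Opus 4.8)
The plan is to recognise $\partial_k A_D^{-1}\partial_l$ as a Calder\'on--Zygmund operator on the space of homogeneous type $(\Omega,|\cdot-\cdot|,dx)$ (which is doubling since $\Omega$ has $C^{1+\kappa}$-boundary, hence in particular Lipschitz boundary) and then to treat the potential case by a resolvent identity together with the one-derivative estimates of Lemma~\ref{ldtnpc430}. So first I would prove $L_2(\Omega)$-boundedness with a uniform bound. Since $C$ is real symmetric, $A_D$ is positive self-adjoint and ellipticity gives $\mu\,\|\nabla v\|_{L_2(\Omega)}^2\le\RRe\gota(v,v)=\|A_D^{1/2}v\|_{L_2(\Omega)}^2$ for all $v\in W^{1,2}_0(\Omega)$; choosing $v=A_D^{-1/2}u$ yields $\|\partial_k A_D^{-1/2}\|_{2\to2}\le\mu^{-1/2}$ for each $k$, and a duality argument on $C_c^\infty(\Omega)$ (using that $A_D^{-1/2}w\in D(A_D^{1/2})=W^{1,2}_0(\Omega)$, so the relevant integration by parts is legitimate, and that $A_D^{-1/2}$ is self-adjoint) gives $\|A_D^{-1/2}\partial_l\|_{2\to2}\le\mu^{-1/2}$ as well. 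Hence $\|\partial_k A_D^{-1}\partial_l\|_{2\to2}\le\mu^{-1}$, a bound that does not depend on the particular $C\in\ce^\kappa(\Omega,\mu,M)$.

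Next I would identify the off-diagonal kernel. Applying Theorem~\ref{tdtnpc501} with $V=0$ (permitted since $\RRe 0\ge0$ and $\|0\|_\infty\le M$), the operator $A_D^{-1}$ has a kernel $G$ which is $C^1$ in each variable off the diagonal and satisfies, \emph{uniformly} in $C\in\ce^\kappa(\Omega,\mu,M)$, the bound
\[
|(\partial_x^\alpha\partial_y^\beta G)(x,y)|\le c\,|x-y|^{-(d-2+|\alpha|+|\beta|)}
\]
together with the matching $\kappa$-H\"older estimate, for all $|\alpha|,|\beta|\le 1$. For $f\in C_c^\infty(\Omega)$ and $x\notin\supp f$, integrating by parts in the $y$-variable (the boundary term vanishes because $f$ is compactly supported in $\Omega$) gives $(\partial_k A_D^{-1}\partial_l f)(x)=-\int_\Omega(\partial_{x_k}\partial_{y_l}G)(x,y)\,f(y)\,dy$, so $\partial_k A_D^{-1}\partial_l$ is the operator associated with the Schwartz kernel $K=-\partial_{x_k}\partial_{y_l}G$. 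Taking $|\alpha|=|\beta|=1$ above shows $|K(x,y)|\le c\,|x-y|^{-d}$ and that $K$ satisfies the H\"ormander regularity condition in each variable separately, all constants uniform over $\ce^\kappa(\Omega,\mu,M)$. The Calder\'on--Zygmund theorem then extends the operator from $L_2(\Omega)$ to a bounded operator on $L_p(\Omega)$ for every $p\in(1,\infty)$, with norm controlled by $\|\partial_k A_D^{-1}\partial_l\|_{2\to2}$ and the kernel constants, hence by a $c$ depending only on $p$, $d$, $\kappa$, $\mu$, $M$ and $\Omega$. This gives the first assertion.

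For the potential case I would use the resolvent identity $(A_D+V)^{-1}=A_D^{-1}-(A_D+V)^{-1}M_V A_D^{-1}$, so that
\[
\partial_k(A_D+V)^{-1}\partial_l
=\partial_k A_D^{-1}\partial_l-\bigl(\partial_k(A_D+V)^{-1}\bigr)\,M_V\,\bigl(A_D^{-1}\partial_l\bigr).
\]
The first term is bounded on $L_p(\Omega)$ by the part just proved. In the second term, $\partial_k(A_D+V)^{-1}$ is bounded on $L_p(\Omega)$ by Lemma~\ref{ldtnpc430}, $M_V$ is bounded on $L_p(\Omega)$ since $V\in L_\infty(\Omega)$, and $A_D^{-1}\partial_l$ is bounded on $L_p(\Omega)$ by duality from the $L_{p'}(\Omega)$-boundedness of $\partial_l A_D^{-1}$ in Lemma~\ref{ldtnpc430} (using self-adjointness of $A_D$); composing these three factors finishes the proof. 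The main obstacle I anticipate is bookkeeping rather than conceptual: one must be careful to invoke the \emph{uniform} Green function bounds of Theorem~\ref{tdtnpc501} (with $V=0$), not the non-uniform ones of Theorem~\ref{tdtnpc502}, and one must justify the kernel representation of $\partial_k A_D^{-1}\partial_l$ away from the diagonal carefully enough that the Calder\'on--Zygmund machinery genuinely applies on the domain $\Omega$.
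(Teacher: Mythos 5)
Your proposal is correct and follows essentially the same route as the paper: $L_2$-boundedness with norm $\mu^{-1}$ via the factorisation $(\partial_k A_D^{-1/2})(A_D^{-1/2}\partial_l)$, Calder\'on--Zygmund kernel estimates for $-\partial_x^k\partial_y^l G$ drawn from the uniform bounds of Theorem~\ref{tdtnpc501}, the Calder\'on--Zygmund theorem for $L_p$, and the resolvent identity combined with Lemma~\ref{ldtnpc430} for the potential term. The extra care you take in justifying the off-diagonal kernel representation and the uniformity of constants is exactly the bookkeeping the paper leaves implicit.
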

\begin{proof} 
For $p=2$ the operator 
$\partial_k \, A_D^{-1} \, \partial_l = (\partial_k \, A_D^{-1/2}) (A_D^{-1/2} \, \partial_l)$
extends to a bounded operator with norm at most $\mu^{-1}$.
By Theorem~\ref{tdtnpc501} it follows that 
the kernel of $\partial_k \, A_D^{-1} \, \partial_l$ has Calder\'on--Zygmund estimates
uniformly in $C$.
Hence $\partial_k \, A_D^{-1} \, \partial_l$ extends to a bounded operator on $L_p(\Omega)$.

Finally, if $V \in L_\infty(\Omega)$ and $0 \not\in \sigma(A_D + V)$,
then
\[
\partial_k \, (A_D + V)^{-1} \, \partial_l 
= \partial_k \, A_D^{-1} \, \partial_l 
     - \Big( \partial_k \, (A_D + V)^{-1} \Big) \, M_V \, 
       \Big( A_D^{-1} \, \partial_l \Big)
\]
and use Lemma~\ref{ldtnpc430}.
\end{proof}

\section{The harmonic lifting} \label{Sdtnpc5new}

Let $C \in \ce^\kappa(\Omega)$ be real symmetric and $V \in L_\infty(\Omega,\Ri)$,
where $\Omega$ is a bounded Lipschitz domain.
Recall that the harmonic lifting $\gamma_V \colon \Tr(W^{1,2}(\Omega)) \to W^{1,2}(\Omega)$
is defined by 
\[ 
\gamma_V  \varphi = u
\]
for all $\varphi \in \Tr(W^{1,2}(\Omega))$, where 
$u \in W^{1,2}(\Omega)$ is such that $(\ca u + V) u = 0$ and $\Tr u = \varphi$. 
In this section we shall prove that $\gamma_V$ has a kernel 
and we obtain good kernel bounds if $\Omega$ has a $C^{1+\kappa}$-boundary.
We also show that the map $\gamma_V$ extends to a continuous map 
from $L_p(\Gamma)$ into $L_p(\Omega)$ for all $p \in [1,\infty]$.

For the proof of these results we need a delicate version of the 
divergence theorem.

\begin{lemma} \label{ldtnpc530}
Let $\Omega \subset \Ri^d$ be a bounded open set with $C^1$-boundary.
Let $F \colon \Omega \to \Ci^d$ be a function.
Suppose $F \in C(\overline \Omega,\Ci^d) \cap C^1(\Omega,\Ci^d)$ and suppose that 
$\divv F \in L_1(\Omega)$.
Then $\int_\Omega \divv F = \int_\Gamma n \cdot F$.
\end{lemma}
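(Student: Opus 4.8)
The plan is to localise with a partition of unity adapted to the $C^1$-boundary and then, on each boundary patch, to apply the classical divergence theorem on a family of domains shifted strictly into $\Omega$ and pass to the limit. Since $\Gamma$ is $C^1$ and compact, I would first choose a finite open cover $U_0,U_1,\dots,U_N$ of $\overline\Omega$ with $\overline{U_0}\subset\Omega$ and, for each $j\ge1$, after an orthogonal change of coordinates, $U_j = W_j\times(a_j,b_j)$ with $W_j\subset\Ri^{d-1}$ an open ball, a function $\phi_j\in C^1(\overline{W_j})$ with $a_j<\phi_j<b_j$, and
\[
U_j\cap\Omega = \{(x',x_d): x'\in W_j,\ a_j<x_d<\phi_j(x')\},\qquad
U_j\cap\Gamma = \{(x',\phi_j(x')): x'\in W_j\}.
\]
Pick $\chi_j\in C_c^\infty(U_j)$ with $\sum_{j=0}^N\chi_j=1$ on a neighbourhood of $\overline\Omega$. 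Because $\sum_j\nabla\chi_j=0$ on $\Omega$ one has $\divv F=\sum_j\divv(\chi_j F)$ on $\Omega$ and $n\cdot F=\sum_j n\cdot(\chi_j F)$ on $\Gamma$, and each $\chi_j F$ again lies in $C(\overline\Omega,\Ci^d)\cap C^1(\Omega,\Ci^d)$ with $\divv(\chi_j F)=\chi_j\,\divv F+\nabla\chi_j\cdot F\in L_1(\Omega)$ (the first summand by hypothesis, the second since it is continuous on the compact set $\overline\Omega$). So it suffices to prove the identity with $F$ replaced by $G:=\chi_j F$ for each $j$.

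The case $j=0$ is immediate: $G$ has compact support in $\Omega$, so its zero extension lies in $C^1_c(\Ri^d)$; then $\int_\Gamma n\cdot G=0$ trivially, while $\int_{\Ri^d}\divv G=0$ follows by mollification, using $\divv(G*\rho_\delta)=(\divv G)*\rho_\delta\to\divv G$ in $L_1$. For $j\ge1$, write $G=(G',G_d)$ and set $K=\supp\chi_j$, a compact subset of $U_j$. For small $\tau>0$ put $\Omega_\tau=\{(x',x_d): x'\in W_j,\ a_j<x_d<\phi_j(x')-\tau\}$. The key observation is that $\overline{\Omega_\tau}\cap K$ is a compact subset of $U_j\cap\Omega$: it lies strictly below the graph of $\phi_j$, hence away from $\Gamma$, so $G$ coincides on a neighbourhood of $\overline{\Omega_\tau}$ with a $C^1$ field (namely $G$ on $\Omega$ and $0$ off $K$). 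Therefore the classical divergence theorem applies on the Lipschitz domain $\Omega_\tau$; the side faces and the bottom face $\{x_d=a_j\}$ contribute nothing since $G$ vanishes there, and the graph face has outward unit normal $(1+|\nabla'\phi_j|^2)^{-1/2}(-\nabla'\phi_j,1)$ and surface measure $(1+|\nabla'\phi_j|^2)^{1/2}\,dx'$, so
\[
\int_{\Omega_\tau}\divv G
= \int_{W_j}\Big(G_d(x',\phi_j(x')-\tau)-\nabla'\phi_j(x')\cdot G'(x',\phi_j(x')-\tau)\Big)\,dx'.
\]
Now let $\tau\downarrow0$. Since $\Omega_\tau\uparrow U_j\cap\Omega$ and $|\divv G|\in L_1(\Omega)$, the left side tends to $\int_{U_j\cap\Omega}\divv G$ by dominated convergence; since $G$ is uniformly continuous on $\overline\Omega$ and the integrand is supported in a fixed compact subset of $W_j$ on which $\nabla'\phi_j$ is bounded, the right side tends to $\int_{W_j}(G_d-\nabla'\phi_j\cdot G')(x',\phi_j(x'))\,dx'=\int_{\Gamma\cap U_j}n\cdot G$. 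As $\divv G$ vanishes outside $U_j$ and $n\cdot G$ vanishes on $\Gamma\setminus U_j$, this is the asserted identity for $G=\chi_j F$. Summing over $j$ gives the lemma.

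The main obstacle is the boundary case, and within it the two limiting passages as $\tau\downarrow0$: the volume term is exactly where the hypothesis $\divv F\in L_1(\Omega)$ is used (without it the inward-shifted integrals need not converge), and one must take care that the domains $\Omega_\tau$ remain strictly inside $\Omega$, so that the classical divergence theorem may legitimately be applied to $G$, which is only $C^1$ in the \emph{open} set $\Omega$. Everything else (construction of the atlas, the partition of unity, and the elementary boundary-term computation for a graph) is routine.
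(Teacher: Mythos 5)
Your proof is correct. Note that the paper does not actually prove this lemma: it simply cites Alt's lecture notes, so there is no in-paper argument to compare against. What you have written is the standard self-contained proof of this refined divergence theorem (localisation by a partition of unity subordinate to a $C^1$ graph atlas, followed by the classical Gauss--Green theorem on the inward-translated subgraph domains $\Omega_\tau$ and a passage to the limit $\tau\downarrow 0$), and it is sound: the set $\overline{\Omega_\tau}\cap\supp\chi_j$ is indeed a compact subset of $U_j\cap\Omega$, so the field $\chi_j F$ extended by zero is $C^1$ on a neighbourhood of $\overline{\Omega_\tau}$ and the classical theorem applies there; the hypothesis $\divv F\in L_1(\Omega)$ enters exactly where you say it does, namely to dominate the volume integrals $\int_{\Omega_\tau}\divv(\chi_j F)$ as $\Omega_\tau\uparrow U_j\cap\Omega$; and the continuity of $F$ up to $\overline\Omega$, together with the boundedness of $\nabla'\phi_j$ on the compact projection of $\supp\chi_j$, justifies the convergence of the boundary terms to $\int_{\Gamma\cap U_j} n\cdot(\chi_j F)$. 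The only cosmetic remark is that in the interior case $j=0$ the mollification detour is unnecessary, since for a $C^1_c(\Ri^d)$ field the identity $\int_{\Ri^d}\partial_k G_k=0$ follows directly from Fubini and the fundamental theorem of calculus. Your argument could replace the external citation without loss.
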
 
\begin{proof} 
See \cite{Altdiv}.
\end{proof}

We use this divergence theorem to obtain a classical expression  of the normal 
derivative.

\begin{lemma} \label{ldtnpc531} 
Let $\kappa \in (0,1)$.
Let $\Omega \subset \Ri^d$ be an open bounded set with a $C^{1+\kappa}$-boundary.
Let $C \in \ce(\Omega)$ be real symmetric and suppose that $c_{kl} \in C^{1 + \kappa}(\Omega)$ 
for all $k,l \in \{ 1,\ldots,d \} $.
Let $p \in (d,\infty)$ and $u \in C^1(\overline \Omega)$.
Suppose that $\ca u \in L_2(\Omega)$.
Then $u$ has a weak conormal derivative and 
\[
\partial_\nu^C u 
= \sum_{k,l=1}^d n_k \, ( c_{kl} \, \partial_l u)|_\Gamma
.  \]
\end{lemma}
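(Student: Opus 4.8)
The plan is to express the formal conormal derivative of $u$ as the normal trace of the continuous \emph{flux field} $\vec g = (g_1,\dots,g_d)$ with $g_l = \sum_{k=1}^d c_{kl}\,\partial_k u$, and to evaluate this trace by means of the divergence theorem of Lemma~\ref{ldtnpc530}. Since $\nabla u$ and all the $c_{kl}$ are (uniformly) continuous on $\Omega$ and $\Omega$ is bounded, each $g_l$ extends to an element of $C(\overline\Omega)$, so $\vec g \in C(\overline\Omega,\Ci^d)$; since $\Gamma$ is $C^1$ the outer unit normal $n$ is continuous on $\Gamma$, and the symmetry $c_{kl} = c_{lk}$ gives $\sum_{k,l=1}^d n_k\,(c_{kl}\,\partial_l u)|_\Gamma = (n\cdot\vec g)|_\Gamma =: \psi \in C(\Gamma) \subset L_2(\Gamma)$. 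Writing $f := \ca u \in L_2(\Omega)$, the defining relation $\ca u = f$ means $\int_\Omega \vec g\cdot\overline{\nabla v} = \int_\Omega f\,\overline v$ for all $v \in W^{1,2}_0(\Omega)$, i.e.\ $\divv\vec g = -f$ in the sense of distributions on $\Omega$, whence $\divv\vec g \in L_1(\Omega)$. Thus it suffices to prove
\[
\int_\Omega \sum_{k,l=1}^d c_{kl}\,(\partial_k u)\,\overline{\partial_l v} - \int_\Omega (\ca u)\,\overline v = \int_\Gamma \psi\,\overline{\Tr v}
\]
for all $v \in W^{1,2}(\Omega)$; both sides are continuous in $v$ for the $W^{1,2}$-norm (for the right-hand side one uses $\psi \in L_2(\Gamma)$ and the boundedness of $\Tr$), and $C^\infty(\overline\Omega)$ is dense in $W^{1,2}(\Omega)$ because $\Omega$ is Lipschitz, so it is enough to treat $v \in C^\infty(\overline\Omega)$.

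Fix such a $v$. The field $F := \overline v\,\vec g$ lies in $C(\overline\Omega,\Ci^d)$ and $\divv F = \vec g\cdot\overline{\nabla v} + \overline v\,\divv\vec g \in L_1(\Omega)$, but $F$ is in general only continuous, not $C^1$, in the interior, so Lemma~\ref{ldtnpc530} does not apply to $F$ directly. To circumvent this I would exhaust $\Omega$ by the open sets $\Omega^s = \{x \in \Omega : {\rm dist}(x,\Gamma) > s\}$ for small $s > 0$: since $\Gamma \in C^{1+\kappa}$, the signed distance function to $\Gamma$ is $C^{1+\kappa}$ in a tubular neighbourhood of $\Gamma$, so $\Omega^s$ has $C^1$-boundary $\Gamma^s$ with outer normal $n^s$, and under the natural $C^1$-identification $x \mapsto x - s\,n(x)$ of $\Gamma$ with $\Gamma^s$ both the surface measure and $n^s$ converge to those of $\Gamma$ as $s \downarrow 0$. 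For $0 < \varepsilon < s/2$ the mollification $\vec g_\varepsilon = \vec g * \rho_\varepsilon$ is defined and smooth on a neighbourhood of $\overline{\Omega^s}$, so Lemma~\ref{ldtnpc530} applied on $\Omega^s$ to $\overline v\,\vec g_\varepsilon$ yields
\[
\int_{\Omega^s} \big( \vec g_\varepsilon\cdot\overline{\nabla v} + \overline v\,\divv\vec g_\varepsilon \big) = \int_{\Gamma^s} (n^s\cdot\vec g_\varepsilon)\,\overline v .
\]
Letting $\varepsilon \downarrow 0$ (using $\vec g_\varepsilon \to \vec g$ uniformly on $\overline{\Omega^s}$ and $\divv\vec g_\varepsilon = (\divv\vec g)*\rho_\varepsilon \to \divv\vec g$ in $L_1(\Omega^s)$) and then $s \downarrow 0$ (dominated convergence on the left, since the integrands lie in $L_1(\Omega)$, and the stated convergence $\Gamma^s \to \Gamma$ together with continuity of $\vec g$ and $v$ on $\overline\Omega$ on the right) gives
\[
\int_\Omega \vec g\cdot\overline{\nabla v} + \int_\Omega \overline v\,\divv\vec g = \int_\Gamma (n\cdot\vec g)\,\overline{\Tr v} .
\]
Substituting $\divv\vec g = -\ca u$, $g_l = \sum_k c_{kl}\,\partial_k u$ and $n\cdot\vec g = \psi$, this is precisely the displayed identity for $v \in C^\infty(\overline\Omega)$; by the density step it holds for all $v \in W^{1,2}(\Omega)$. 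Hence $u$ has a weak conormal derivative and, by the uniqueness of $\psi$ in (\ref{elpbdton201;15}), $\partial_\nu^C u = \psi = \sum_{k,l=1}^d n_k\,(c_{kl}\,\partial_l u)|_\Gamma$.

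I expect the one genuinely technical point to be the passage to the limit in the boundary integral $\int_{\Gamma^s} \to \int_\Gamma$, i.e.\ making rigorous the convergence of the surface measures and normals of $\Gamma^s$ to those of $\Gamma$; this is exactly where the $C^{1+\kappa}$-regularity of $\partial\Omega$ (in fact $C^1$ would already suffice) enters, and it can be handled with the boundary coordinate charts of Section~\ref{Sdtnpc4} or via the regularity of the distance function near $\Gamma$. If instead a version of the divergence theorem valid for $F \in C(\overline\Omega,\Ci^d)$ whose distributional divergence lies in $L_1(\Omega)$ is available, then the interior mollification and the exhaustion can be skipped and the formula follows in one line. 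In the argument proposed here the exponent $p \in (d,\infty)$ plays no role; it is presumably recorded only for uniformity with the later applications of the lemma, where the hypothesis $u \in C^1(\overline\Omega)$ is produced from an $L_p$-datum through elliptic regularity.
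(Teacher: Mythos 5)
Your strategy is the same as the paper's: both proofs define the flux $F = \overline v\,\vec g$ with $g_l = \sum_k c_{kl}\,\partial_k u$, compute $\divv F = \vec g\cdot\overline{\nabla v} - (\ca u)\,\overline v \in L_1(\Omega)$, apply the divergence theorem of Lemma~\ref{ldtnpc530}, and finish by density in $v$. The genuine difference is how the interior smoothness of $F$ is obtained. The paper invokes interior elliptic regularity to get $u \in C^2(\Omega)$, so that $F \in C(\overline\Omega)\cap C^1(\Omega)$ and Lemma~\ref{ldtnpc530} applies in one stroke; this is exactly where the hypothesis $c_{kl} \in C^{1+\kappa}(\Omega)$ (rather than merely $C^\kappa$) is consumed. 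You instead mollify $\vec g$ and exhaust $\Omega$ from inside, which dispenses with elliptic regularity altogether (your argument only uses that $c_{kl}$ and $\nabla u$ are bounded and uniformly continuous) but obliges you to reprove by hand the boundary-limit content that Lemma~\ref{ldtnpc530} already packages.

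That reproof contains one step that is false as stated: for a $C^{1+\kappa}$ hypersurface with $\kappa<1$ the signed distance function is in general \emph{not} $C^{1}$ (let alone $C^{1+\kappa}$) in any tubular neighbourhood, because there is no uniform interior ball and the nearest-point projection need not be single-valued --- already for the graph of $x\mapsto |x|^{1+\kappa}$ in $\Ri^2$ the points on the inward normal through the origin have two nearest boundary points arbitrarily close to $\Gamma$. Consequently $\Omega^s$ need not have $C^1$ boundary and the convergence of $\Gamma^s$, $n^s$ and the surface measures does not come for free. The step is repairable --- e.g.\ push $\Gamma$ inward along the flow of a fixed smooth vector field transversal to $\Gamma$ (which exists since $\Gamma$ is $C^1$ and compact), so that the approximating boundaries are diffeomorphic copies of $\Gamma$ with normals and surface measures converging in the charts of Section~\ref{Sdtnpc4}; or simply fall back on the paper's route and use interior regularity, for which the unused hypothesis on the coefficients is there. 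As written, though, that limit passage is the one unjustified link in your chain, and you should not present the distance-function regularity as a consequence of $\Gamma\in C^{1+\kappa}$.
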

\begin{proof}
Interior regularity gives $u \in C^2(\Omega)$.
Let $v \in C_b^\infty(\Omega)$.
Define $F = (F_1,\ldots,F_d) \colon \overline \Omega \to \Ci^d$ by 
$F_k = \sum_{l=1}^d c_{kl} \, (\partial_l u) \, \overline v$.
Then $F \in C(\overline \Omega,\Ci^d) \cap C^1(\Omega,\Ci^d)$.
Moreover, 
\[
\divv F
= - (\ca u) \, \overline v
   + \sum_{k,l=1}^d c_{kl} \,  (\partial_l u) \, \overline{\partial_k v}
\in L_2(\Omega) \subset L_1(\Omega)
.  \]
Then the divergence theorem, Lemma~\ref{ldtnpc530} gives
\[
\int_\Omega \sum_{k,l=1}^d c_{kl} \, (\partial_k u) \, \overline{\partial_l v}
   - \int_\Omega (\ca u) \, \overline v
= \int_\Omega \divv F
= \int_\Gamma n \cdot F
= \int_\Gamma \sum_{k,l=1}^d n_k \, c_{kl} \, (\partial_l u) \, \overline v
.  \]
Since $\sum_{k,l=1}^d n_k \, (c_{kl} \, \partial_l u)|_\Gamma \in C(\Gamma) \subset L_2(\Gamma)$,
this proves the lemma.
\end{proof}

Note that we required $c_{kl} \in C^{1 + \kappa}(\Omega)$ in Lemma~\ref{ldtnpc531}, 
which is much more
than the condition $c_{kl} \in C^\kappa(\Omega)$ in Theorem~\ref{thm1.1}.
This is the reason why we use a regularisation of the coefficients below.

\begin{prop} \label{pdtnpc533}
Let $\kappa \in (0,1)$.
Let $\Omega \subset \Ri^d$ be an open bounded set with a $C^{1+\kappa}$-boundary.
Let $C \in \ce^\kappa(\Omega)$ be real symmetric and $V \in L_\infty(\Omega,\Ri)$.
Suppose that $0 \not\in \sigma(A_D + V)$.
Let $p \in (d + 2 \kappa,\infty)$ and $u \in L_p(\Omega)$.
Then $(A_D + V)^{-1} u$ has a weak conormal derivative and 
\begin{equation} 
\partial_\nu^C (A_D + V)^{-1} u 
= \sum_{k,l=1}^d n_k \, \Tr( c_{kl} \, \partial_l \, (A_D + V)^{-1} u)
.  
\label{epdtnpc533;1}
\end{equation}
\end{prop}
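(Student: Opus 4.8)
The idea, as the remark after Lemma~\ref{ldtnpc531} indicates, is to prove the identity first for smoothed coefficients and then pass to the limit, since Lemma~\ref{ldtnpc531} requires $c_{kl}\in C^{1+\kappa}(\Omega)$ whereas Theorem~\ref{thm1.1} and the present proposition only assume $c_{kl}\in C^\kappa(\Omega)$. Since $\Omega$ is a bounded Lipschitz domain, fix a bounded extension operator for $C^\kappa$-functions, extend each $c_{kl}$ to $\Ri^d$ and mollify to obtain real symmetric coefficients $C^{(n)}$ with $c^{(n)}_{kl}\in C^\infty(\Ri^d)\subset C^{1+\kappa}(\Omega)$, still with $C^{(n)}\in\ce^\kappa(\Omega,\mu,M')$ for an $M'$ independent of $n$ (mollification increases neither the ellipticity constant, nor the sup-norm, nor the $C^\kappa$-seminorm of the extension), and with $c^{(n)}_{kl}\to c_{kl}$ uniformly on $\overline\Omega$.

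Next I observe that $|(\gota^{C^{(n)}}_V-\gota^C_V)(u,v)|\le d^2\|C^{(n)}-C\|_{L_\infty(\Omega)}\,\|u\|_{W^{1,2}}\|v\|_{W^{1,2}}$, so by standard form-perturbation theory (all the operators having compact resolvent) one has norm-resolvent convergence $(A_D^{C^{(n)}}+V)^{-1}\to(A_D^C+V)^{-1}$ on $L_2(\Omega)$; since $0$ lies in the open resolvent set of $A_D^C+V$ it follows that $0\notin\sigma(A_D^{C^{(n)}}+V)$ for all large $n$, that the inverses are uniformly bounded on $L_2(\Omega)$, and — keeping the spectrum a fixed distance from $0$ — uniformly bounded on $L_p(\Omega)$ and from $L_2(\Omega)$ into $L_\infty(\Omega)$, the latter via the spectral-decomposition argument of Theorem~\ref{tdtnpc502} together with the uniform bounds of Theorem~\ref{tdtnpc401}, Theorem~\ref{tdtnpc501} and Proposition~\ref{pdtnpc416}. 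Fix such an $n$ and set $w_n=(A_D^{C^{(n)}}+V)^{-1}u$ and $w=(A_D^C+V)^{-1}u$. Then $w_n\in W^{1,2}_0(\Omega)$, $\ca^{C^{(n)}}w_n=u-Vw_n\in L_2(\Omega)$, and $w_n\in C^1(\overline\Omega)$ by Proposition~\ref{pdtnpc503} applied with the coefficients $C^{(n)}$. Hence Lemma~\ref{ldtnpc531} applies to $w_n$ and, by the definition of the weak conormal derivative, gives
\[
\int_\Omega \sum_{k,l=1}^d c^{(n)}_{kl}\,(\partial_k w_n)\,\overline{\partial_l v}
 - \int_\Omega (u-Vw_n)\,\overline v
 = \int_\Gamma \Big( \sum_{k,l=1}^d n_k\,(c^{(n)}_{kl}\,\partial_l w_n)|_\Gamma \Big)\,\overline{\Tr v}
\]
for all $v\in W^{1,2}(\Omega)$.

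It remains to let $n\to\infty$. From norm-resolvent convergence $w_n\to w$ in $L_2(\Omega)$, hence also in $L_p(\Omega)$ by interpolation with the uniform $L_\infty$-bound; and testing $\gota^{C^{(n)}}_V(w_n,\cdot)=(u,\cdot)$ against $w_n$ gives $\mu\|\nabla w_n\|_{L_2}^2\le\|u\|_{L_2}\|w_n\|_{L_2}+M\|w_n\|_{L_2}^2$, so $w_n\rightharpoonup w$ weakly in $W^{1,2}(\Omega)$ and in particular $\partial_l w_n\rightharpoonup\partial_l w$ weakly in $L_2(\Omega)$. The crucial point is boundary convergence: writing $\partial_l w_n=(\partial_l A_D^{C^{(n)},-1})(u-Vw_n)$ via $A_D^{C^{(n)}}(A_D^{C^{(n)}}+V)^{-1}=I-M_V(A_D^{C^{(n)}}+V)^{-1}$, the family $\{u-Vw_n\}_n$ is bounded in $L_p(\Omega)$ and, by the ($V=0$, hence uniform) part of Proposition~\ref{pdtnpc503}, $\partial_l A_D^{C^{(n)},-1}$ is bounded $L_p(\Omega)\to C^{2\kappa/p}(\Omega)$ with norm independent of $n$; thus $\partial_l w_n$ is bounded in $C^{2\kappa/p}(\overline\Omega)$. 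By Arzel\`a--Ascoli every subsequence has a further subsequence converging uniformly on $\overline\Omega$, and by the weak $L_2$-convergence the limit must be $\partial_l w$, so $\partial_l w_n\to\partial_l w$ uniformly on $\overline\Omega$. Consequently $c^{(n)}_{kl}\,\partial_l w_n\to c_{kl}\,\partial_l w$ uniformly on $\overline\Omega$, the first volume integral converges to $\int_\Omega\sum_{k,l}c_{kl}(\partial_k w)\overline{\partial_l v}$, the term $\int_\Omega(u-Vw_n)\overline v$ converges to $\int_\Omega(u-Vw)\overline v$, and the boundary integral converges to $\int_\Gamma(\sum_{k,l}n_k(c_{kl}\partial_l w)|_\Gamma)\overline{\Tr v}$. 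Passing to the limit and noting $\ca^C w=u-Vw\in L_2(\Omega)$, one obtains exactly (\ref{epdtnpc533;1}) for $w=(A_D+V)^{-1}u$, which is the assertion.

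The step I expect to be the main obstacle is securing the \emph{uniformity in $n$} of the regularity estimates up to the boundary: the part of Proposition~\ref{pdtnpc503} involving $V$ is not stated with uniform constants, so one must route the argument through $\partial_l A_D^{-1}$ (the uniform case) and combine this with uniform $L_p\to L_p$ and $L_2\to L_\infty$ bounds for $(A_D^{C^{(n)}}+V)^{-1}$ for large $n$, which in turn rest on norm-resolvent convergence (to keep $\sigma(A_D^{C^{(n)}}+V)$ away from $0$) and on the spectral-decomposition argument of Theorem~\ref{tdtnpc502}.
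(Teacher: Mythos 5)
Your overall strategy (mollify the coefficients, apply Lemma~\ref{ldtnpc531} to the smooth-coefficient problem via Proposition~\ref{pdtnpc503}, then pass to the limit using a uniform $C^{2\kappa/p}$-bound on the gradients and Arzel\`a--Ascoli) is the same as the paper's, but you organise the limit passage differently, and one step needs repair. The paper first proves the statement for $V=0$ by approximation and only then treats general $V$ by the purely algebraic reduction $(A_D+V)^{-1}u=A_D^{-1}\bigl((I-M_V(A_D+V)^{-1})u\bigr)$, so that the \emph{only} operators appearing in the approximation argument are the $A_D^{C^{(n)}}$, for which all bounds (Lemma~\ref{ldtnpc430}, Proposition~\ref{pdtnpc503}, Theorem~\ref{tdtnpc501}) are uniform by construction; no uniformity for $(A_D^{C^{(n)}}+V)^{-1}$ is ever required. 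You instead carry $V$ through the approximation, which is what forces you to claim uniform-in-$n$ bounds for $(A_D^{C^{(n)}}+V)^{-1}$ on $L_p$ and from $L_2$ into $L_\infty$ in order to keep $\{u-Vw_n\}$ bounded in $L_p(\Omega)$. That claim is true but not justified as stated: the spectral decomposition in the proof of Theorem~\ref{tdtnpc502} is explicitly non-uniform in the coefficients (the number $N$ of low eigenvalues, the gap $\omega_1-\lambda$ and the eigenfunction bounds all depend on $C$), so invoking it "uniformly in $n$" requires a separate argument. The cheaper fix within your scheme is a bootstrap using only the uniform Green-function bounds of Theorem~\ref{tdtnpc501} for $A_D^{C^{(n)}}$ (i.e.\ $V=0$): from $w_n=(A_D^{C^{(n)}})^{-1}(u-Vw_n)$ and the uniform $L_2$-bound supplied by norm-resolvent convergence, finitely many applications of the uniform $L_q\to L_{q'}$ smoothing of $(A_D^{C^{(n)}})^{-1}$ give $\sup_n\|w_n\|_{L_\infty(\Omega)}<\infty$, hence the required $L_p$-bound. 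Alternatively, adopt the paper's reduction to $V=0$ and the issue disappears.

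On the other side of the ledger, your identification of the Arzel\`a--Ascoli limit is simpler than the paper's: you use the a priori energy bound to get $\partial_l w_n\rightharpoonup\partial_l w$ weakly in $L_2(\Omega)$ and conclude that every uniformly convergent subsequence has limit $\partial_l w$, whereas the paper establishes strong $L_p$-convergence $\partial_k (A_D^{(n)})^{-1}u\to\partial_k A_D^{-1}u$ through the duality identity involving the operators $\partial_j A_D^{-1}\partial_k$ of Proposition~\ref{pdtnpc431}. Your route avoids Proposition~\ref{pdtnpc431} in this proof entirely (the paper still needs that strong convergence later, in (\ref{pdtnpc533;8}) and Proposition~\ref{prop6.1}, so it is not wasted there). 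With the uniform $L_\infty$-bound on $w_n$ patched as above, your argument closes correctly.
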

\begin{proof}
\firststep\label{pdtnpc533step1}\ {\bf Suppose $V = 0$ and $c_{kl} \in C^{1 + \kappa}(\Omega)$ 
for all $k,l \in \{ 1,\ldots,d \} $.}  \\
Let $u \in L_p(\Omega)$.
Then $A_D^{-1} u \in C^1(\overline \Omega)$ by Proposition~\ref{pdtnpc503}.
So by Lemma~\ref{ldtnpc531} one deduces that $A_D^{-1} u$ has a conormal derivative 
and (\ref{epdtnpc533;1}) is valid.

\nextstep\label{pdtnpc533step2}\ {\bf Suppose $V = 0$.}  \\
We can extend the function $c_{kl}$ to a  
$C^\kappa$-function $\tilde c_{kl} \colon \Ri^d \to \Ri$
such that $\tilde c_{kl} = \tilde c_{lk}$ for all $k,l \in \{ 1,\ldots,d \} $.
Let $(\rho_n)_{n \in \Ni}$ be a bounded approximation of the identity.
For all $n \in \Ni$ and $k,l \in \{ 1,\ldots,d \} $ define 
$c^{(n)}_{kl} = (\tilde c_{kl} * \rho_n)|_\Omega$ and set 
$C^{(n)} = (c^{(n)}_{kl})_{k,l}$.
Then there are $\mu,M > 0$ such that 
$C^{(n)} \in \ce^\kappa(\Omega,\mu,M)$ for all large $n \in \Ni$
and without loss of generality for all $n \in \Ni$.
Define $A_D^{(n)} = A_D^{C^{(n)}}$ for all $n \in \Ni$.

Let $u \in L_p(\Omega)$.
Then it follows from from Step~\ref{pdtnpc533step1} that 
\begin{equation}
\int_\Omega \sum_{k,l=1}^d c^{(n)}_{kl} \, (\partial_k \, (A_D^{(n)})^{-1} u) \, \overline{\partial_l v}
   - \int_\Omega u \, \overline v
= \int_\Gamma \sum_{k,l=1}^d n_k \, \Tr( c^{(n)}_{kl} \, \partial_l \, (A_D^{(n)})^{-1} u) \, \overline{\Tr v}
\label{epdtnpc533;2}
\end{equation}
for all $n \in \Ni$ and $v \in W^{1,2}(\Omega)$.
Clearly $\lim c^{(n)}_{kl} = c_{kl}$ uniformly on $\overline \Omega$ for all 
$k,l \in \{ 1,\ldots,d \} $.
Let $k \in \{ 1,\ldots,d \} $.
If $w \in D(A_D)$ and $v \in L_2(\Omega)$, then 
\[
(w,v)_{L_2(\Omega)} 
= (A_D w, A_D^{-1} v)_{L_2(\Omega)}
= \sum_{j,l=1}^d (c_{jl} \, \partial_l w, \partial_j \, A_D^{-1} v)_{L_2(\Omega)}
.  \]
Hence by density 
$(w,v)_{L_2(\Omega)} = \sum_{j,l=1}^d (c_{jl} \, \partial_l w, \partial_j \, A_D^{-1} v)_{L_2(\Omega)}$
for all $w \in W^{1,2}_0(\Omega)$ and $v \in L_2(\Omega)$.
Substituting $w = (A_D^{(n)})^{-1} u$, replacing $v$ by $\partial_k v$
and integration by parts gives
\[
- (\partial_k \, (A_D^{(n)})^{-1} u,v)_{L_2(\Omega)} 
= \sum_{j,l=1}^d
    (c_{jl} \, \partial_l \, (A_D^{(n)})^{-1} u, \partial_j \, A_D^{-1} \, \partial_k v)_{L_2(\Omega)}
\]
for all $v \in C_c^\infty(\Omega)$.
Similarly and slightly easier one proves
\[
- (\partial_k \, A_D^{-1} u,v)_{L_2(\Omega)} 
= \sum_{j,l=1}^d
    (c^{(n)}_{jl} \, \partial_l \, (A_D^{(n)})^{-1} u, \partial_j \, A_D^{-1} \, \partial_k v)_{L_2(\Omega)}
\]
for all $v \in C_c^\infty(\Omega)$.
Therefore 
\[
(\partial_k \, (A_D^{(n)})^{-1} u - \partial_k \, A_D^{-1} u, v)_{L_2(\Omega)}
= \sum_{j,l=1}^d
    ((c^{(n)}_{jl} - c_{jl}) \, \partial_l \, (A_D^{(n)})^{-1} u, \partial_j \, A_D^{-1} \, \partial_k v)_{L_2(\Omega)}
\]
for all $v \in C_c^\infty(\Omega)$.
Let $q$ be the dual exponent of $p$.
By Proposition~\ref{pdtnpc431} the operator 
$\partial_j \, A_D^{-1} \, \partial_k$ extends to a bounded operator $T_{jk}$ on $L_q(\Omega)$
for all $j \in \{ 1,\ldots,d \} $.
Then 
\begin{eqnarray*}
\lefteqn{
|(\partial_k \, (A_D^{(n)})^{-1} u - \partial_k \, A_D^{-1} u, v)_{L_2(\Omega)}|
} \hspace*{20mm} \\*
& \leq & \sum_{j,l=1}^d
    \|c^{(n)}_{jl} - c_{jl}\|_{L_\infty(\Omega)} \, 
    \|\partial_l \, (A_D^{(n)})^{-1}\|_{p \to p} \, \|u\|_{L_p(\Omega)} \, 
   \|T_{jk}\|_{q \to q} \, \|v\|_{L_q(\Omega)}
\end{eqnarray*}
for all $v \in C_c^\infty(\Omega)$.
Since the operators $\partial_l \, (A_D^{(n)})^{-1}$ are bounded on $L_p(\Omega)$ 
uniformly in $n$ by Lemma~\ref{ldtnpc430}, one deduces that 
\begin{equation}
\lim_{n \to \infty} \partial_k \, (A_D^{(n)})^{-1} u = \partial_k \, A_D^{-1} u
\label{pdtnpc533;8}
\end{equation}
in $L_p(\Omega)$.
Therefore the left hand side of (\ref{epdtnpc533;2}) converges to 
$\int_\Omega \sum_{k,l=1}^d c_{kl} \, (\partial_k \, A_D^{-1} u) \, \overline{\partial_l v}$
for all $v \in C^\infty_b(\Omega)$.

Next we consider the right hand side of (\ref{epdtnpc533;2}).
Let $l \in \{ 1,\ldots,d \} $.
Then $(\partial_l \, (A_D^{(n)})^{-1} u)_{n \in \Ni}$ is bounded in $C^{2 \kappa / p}(\Omega)$ 
and in $C(\overline \Omega)$ by Proposition~\ref{pdtnpc503}.
So by the Arzel\`a--Ascoli theorem and passing to a subsequence if 
necessary there exists a $w \in C(\overline \Omega)$ such that 
$\lim_{n \to \infty} \partial_l \, (A_D^{(n)})^{-1} u = w$ in $C(\overline \Omega)$.
Since $\lim_{n \to \infty} \partial_l \, (A_D^{(n)})^{-1} u = \partial_l \, A_D^{-1} u$
in $L_p(\Omega)$, one deduces that $w = \partial_l \, A_D^{-1} u$.
So 
\[
\lim_{n \to \infty} 
\int_\Gamma \sum_{k,l=1}^d n_k \, \Tr( c^{(n)}_{kl} \, \partial_l \, (A_D^{(n)})^{-1} u) \, \overline{\Tr v}
= \int_\Gamma \sum_{k,l=1}^d n_k \, \Tr( c_{kl} \, \partial_l \, A_D^{-1} u) \, \overline{\Tr v}
\]
for all $v \in C^\infty_b(\Omega)$.
Then the equality in (\ref{epdtnpc533;2}) implies that 
$A_D^{-1} u$ has a weak conormal derivative and (\ref{epdtnpc533;1}) is valid.

\nextstep\label{pdtnpc533step3}\ {\bf Suppose $V \in L_\infty(\Omega,\Ri)$.}  \\
Let $u \in L_p(\Omega)$. 
Then apply Step~\ref{pdtnpc533step2} to $A_D \, (A_D + V)^{-1} u \in L_p(\Omega)$.
\end{proof}

\begin{lemma} \label{ldtnpc534}
Let $\kappa \in (0,1)$.
Let $\Omega \subset \Ri^d$ be an open bounded set with a $C^{1+\kappa}$-boundary.
Let $C \in \ce^\kappa(\Omega)$ be real symmetric and $V \in L_\infty(\Omega,\Ri)$.
Suppose that $0 \not\in \sigma(A_D + V)$.
Let $p \in (d + 2 \kappa,\infty)$ and $v \in L_p(\Omega)$.
Then
\[
(\gamma_V \varphi, v)_{L_2(\Omega)}
= - (\varphi, \partial_\nu^C \, (A_D + V)^{-1} v)_{L_2(\Gamma)}
\]
for all $\varphi \in \Tr (W^{1,2}(\Omega))$.
\end{lemma}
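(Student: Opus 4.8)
The plan is to reduce the identity to the defining property of the weak conormal derivative combined with the symmetry of the form $\gota_V$. Set $u = \gamma_V \varphi$, so $u \in H_V$ and $\Tr u = \varphi$, and set $w = (A_D + V)^{-1} v$. Since $v \in L_p(\Omega)$ with $p > d + 2\kappa$, Proposition~\ref{pdtnpc533} applies and guarantees that $w$ has a weak conormal derivative $\partial_\nu^C w \in L_2(\Gamma)$; this is the only place the hypothesis $p > d + 2\kappa$ enters, and it is what makes the right-hand side of the claimed formula meaningful. I would also record the two elementary facts that $w \in D(A_D + V) \subset W^{1,2}_0(\Omega) = \ker \Tr$ and that $\ca w = v - V w \in L_2(\Omega)$ (the latter because $\Omega$ is bounded, so $L_p(\Omega) \subset L_2(\Omega)$, and $V w \in L_2(\Omega)$), which is exactly what is needed to invoke the definition of the weak conormal derivative in (\ref{elpbdton201;15}).

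Next I would use (\ref{elpbdton201;15}) for $w$ with the test function taken to be $u = \gamma_V \varphi \in W^{1,2}(\Omega)$. This gives
\[
\int_\Omega \sum_{k,l=1}^d c_{kl}\,(\partial_k w)\,\overline{\partial_l u} \;-\; \int_\Omega (\ca w)\,\overline u \;=\; \int_\Gamma (\partial_\nu^C w)\,\overline{\Tr u}.
\]
Substituting $\ca w = v - V w$ and $\Tr u = \varphi$, the left-hand side becomes $\gota_V(w,u) - (v,u)_{L_2(\Omega)}$ by (\ref{formaV}), and the right-hand side is $(\partial_\nu^C w, \varphi)_{L_2(\Gamma)}$.

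The key step is then that $\gota_V(w,u) = 0$: since $u \in H_V$ and $w \in \ker \Tr$, the characterization $H_V = \{\,u : \gota_V(u,\cdot) = 0 \text{ on } \ker \Tr\,\}$ gives $\gota_V(u,w) = 0$, and because $C$ is real symmetric and $V$ is real the sesquilinear form $\gota_V$ is symmetric, so $\gota_V(w,u) = \overline{\gota_V(u,w)} = 0$. Hence $-(v,u)_{L_2(\Omega)} = (\partial_\nu^C w, \varphi)_{L_2(\Gamma)}$, and taking complex conjugates yields $(\gamma_V \varphi, v)_{L_2(\Omega)} = (u,v)_{L_2(\Omega)} = -(\varphi, \partial_\nu^C (A_D + V)^{-1} v)_{L_2(\Gamma)}$, as claimed. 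I do not expect a genuine obstacle here: once Proposition~\ref{pdtnpc533} is in hand the argument is pure bookkeeping, and the only point requiring a little care is the consistent placement of complex conjugates in the $L_2$ pairings.
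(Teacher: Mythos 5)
Your argument is correct and follows the same route as the paper: the paper invokes Proposition~\ref{pdtnpc533} for the existence of the weak conormal derivative of $(A_D+V)^{-1}v$ and then refers to \cite{BeE1} and \cite{AE7} for the identity, which is exactly the Green-type computation you carry out explicitly (testing (\ref{elpbdton201;15}) against $\gamma_V\varphi$, using $\ca w = v - Vw$, and killing $\gota_V(w,\gamma_V\varphi)$ via $w\in W^{1,2}_0(\Omega)$ and the symmetry of $\gota_V$). The bookkeeping of conjugates is also right, so nothing is missing.
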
 
\begin{proof}
It follows from Proposition~\ref{pdtnpc533} that 
$(A_D + V)^{-1} v$ has a weak conormal derivative.
Then the equality follows as in \cite{BeE1} Corollary~5.4. 
For more details, see \cite{AE7} Proposition~6.4.
\end{proof}

Let $\kappa \in (0,1)$, $\Omega \subset \Ri^d$ be an open bounded set with a $C^{1+\kappa}$-boundary,
$C \in \ce^\kappa(\Omega)$ and $V \in L_\infty(\Omega,\Ri)$.
Suppose that $0 \not\in \sigma(A_D + V)$.
Let $G_V$ be the Green kernel of $(A_D + V)^{-1}$.
Then $G_V$ is differentiable on $ \{ (x,y) \in \Omega \times \Omega : x \neq y \} $ 
by Theorem~\ref{tdtnpc502} and the derivative extends to a continuous
function on $ \{ (x,y) \in \overline \Omega \times \overline \Omega : x \neq y \} $.
Define the function $K_{\gamma_V} \colon \Omega \times \Gamma \to \Ci$ by 
\[
K_{\gamma_V}(x,z) 
= - \sum_{k,l=1}^d n_k(z) \, c_{kl}(z) (\partial^{(1)}_l G_V)(z,x)
.  \]
We next show that $K_{\gamma_V}$ is the kernel of $\gamma_V$.

\begin{prop} \label{pdtnpc535}
Let $\kappa \in (0,1)$, $\Omega \subset \Ri^d$ be an open bounded set with a $C^{1+\kappa}$-boundary,
$C \in \ce^\kappa(\Omega)$ real symmetric and $V \in L_\infty(\Omega,\Ri)$.
Suppose that $0 \not\in \sigma(A_D + V)$.
Then one has the following.
\begin{tabel}
\item \label{pdtnpc535-1}
The map $K_{\gamma_V}$ is continuous.
\end{tabel}
Define $T \colon L_1(\Gamma) \to C(\Omega)$ by 
\[
(T \varphi)(x) 
= \int_\Gamma K_{\gamma_V}(x,z) \, \varphi(z) \, dz
.  \]
\begin{tabel}
\setcounter{teller}{1}
\item \label{pdtnpc535-2}
If $\varphi \in \Tr (W^{1,2}(\Omega))$, then 
$\gamma_V \varphi = T \varphi$ a.e.
\item \label{pdtnpc535-3}
There exists a $c > 0$ such that 
\[
|K_{\gamma_V}(x,z)| \leq \frac{c}{|x-z|^{d-1}}
\quad \mbox{and} \quad
|K_{\gamma_V}(x',z') - K_{\gamma_V}(x,z)| 
\leq c \, \frac{(|x'-x| + |z'-z|)^\kappa}{|x-z|^{d-1+\kappa}}
\]
for all $x,x' \in \Omega$ and $z,z' \in \Gamma$ with $|x'-x| + |z'-z| \leq \frac{1}{2} \, |x-z|$.
\item \label{pdtnpc535-4}
Let $p \in [1,\infty]$.
Then the map $\gamma_V|_{L_p(\Gamma) \cap \Tr (W^{1,2}(\Omega))}$ 
extends to a bounded map from $L_p(\Gamma)$ into $L_p(\Omega)$.
Explicitly, the restriction $T|_{L_p(\Gamma)}$ is continuous from 
$L_p(\Gamma)$ into $L_p(\Omega)$.
\end{tabel}
\end{prop}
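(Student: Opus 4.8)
The plan is to establish the four assertions in order, with the Green-function estimates of Theorem~\ref{tdtnpc502} as the main input; the only point needing a real idea is the case $p=\infty$ of assertion~\ref{pdtnpc535-4}. Assertion~\ref{pdtnpc535-1} is immediate: since $x\in\Omega$ and $z\in\Gamma$ one always has $x\neq z$, so $(x,z)\mapsto(\partial_l^{(1)}G_V)(z,x)$ is continuous on $\Omega\times\Gamma$ by Theorem~\ref{tdtnpc502}, while $n_k$ is continuous on $\Gamma$ (the boundary is $C^{1+\kappa}$) and each $c_{kl}$ extends continuously to $\overline\Omega$. For assertion~\ref{pdtnpc535-2} I would fix $\varphi\in\Tr(W^{1,2}(\Omega))$ and some $p\in(d+2\kappa,\infty)$ and test against $v\in C_c^\infty(\Omega)\subset L_p(\Omega)$: Lemma~\ref{ldtnpc534} gives $(\gamma_V\varphi,v)_{L_2(\Omega)}=-(\varphi,\partial_\nu^C(A_D+V)^{-1}v)_{L_2(\Gamma)}$, and Proposition~\ref{pdtnpc533} identifies the conormal derivative as $\sum_{k,l}n_k\,\Tr(c_{kl}\,\partial_l(A_D+V)^{-1}v)$. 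Since $v$ has compact support in $\Omega$, one may differentiate the kernel representation of $(A_D+V)^{-1}v$ under the integral sign away from the diagonal, so that $\partial_l(A_D+V)^{-1}v=\int_\Omega(\partial_l^{(1)}G_V)(\cdot,y)\,v(y)\,dy$ near $\Gamma$; taking traces and using that $C$ is real symmetric and $V$ real (so $n_k$, $c_{kl}$, $G_V$ are real-valued) together with Fubini's theorem — legitimate because $v$ is supported away from $\Gamma$ — rewrites the right-hand side as $(T\varphi,v)_{L_2(\Omega)}$. Thus $\gamma_V\varphi-T\varphi$ is orthogonal to $C_c^\infty(\Omega)$ in $L_2(\Omega)$, and since $\gamma_V\varphi\in W^{1,2}(\Omega)$ and $T\varphi$ is continuous this forces $\gamma_V\varphi=T\varphi$ a.e.

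For assertion~\ref{pdtnpc535-3}, the pointwise bound follows at once from $|n_k|\le 1$, $|c_{kl}|\le M$ and $|(\partial_l^{(1)}G_V)(z,x)|\le c\,|z-x|^{-(d-1)}$ (Theorem~\ref{tdtnpc502} with $|\alpha|+|\beta|=1$). For the H\"older bound I would split $K_{\gamma_V}(x',z')-K_{\gamma_V}(x,z)$ into the term carrying the increment $n_k(z')c_{kl}(z')-n_k(z)c_{kl}(z)$, bounded by $c\,|z'-z|^\kappa$ since $n_k\in C^\kappa(\Gamma)$ (as $\Gamma$ is $C^{1+\kappa}$) and $c_{kl}\in C^\kappa$, multiplied by $(\partial_l^{(1)}G_V)(z',x')$, which satisfies $|(\partial_l^{(1)}G_V)(z',x')|\le c\,|z'-x'|^{-(d-1)}\le c\,2^{d-1}|z-x|^{-(d-1)}$ under the hypothesis $|x'-x|+|z'-z|\le\tfrac12|z-x|$; and the term carrying $n_k(z)c_{kl}(z)$ multiplied by $(\partial_l^{(1)}G_V)(z',x')-(\partial_l^{(1)}G_V)(z,x)$, which is precisely the H\"older estimate of Theorem~\ref{tdtnpc502}. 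A factor $|z-x|^\kappa\le(\diam\Omega)^\kappa$ is absorbed in the first term to produce the exponent $d-1+\kappa$.

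The substance is in assertion~\ref{pdtnpc535-4}. The case $p=1$ is a one-line Schur/Tonelli estimate: $\|T\varphi\|_{L_1(\Omega)}\le\big(\sup_{z\in\Gamma}\int_\Omega|K_{\gamma_V}(x,z)|\,dx\big)\|\varphi\|_{L_1(\Gamma)}$, and $\int_\Omega|x-z|^{-(d-1)}\,dx$ is bounded uniformly in $z$ because $d-1<d$ and $\Omega$ is bounded. The hard part will be $p=\infty$: the kernel bound $|K_{\gamma_V}(x,z)|\le c|x-z|^{-(d-1)}$ is insufficient here, since $\int_\Gamma|x-z|^{-(d-1)}\,dz$ grows logarithmically as $x\to\Gamma$, so one cannot bound $\|T\varphi\|_{L_\infty(\Omega)}$ by a Schur argument and must exploit the elliptic structure instead. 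My plan is to invoke the perturbation identity $\gamma_V=\gamma_0-(A_D+V)^{-1}M_V\gamma_0$: for real Lipschitz $\varphi$ (so $\varphi\in\Tr(W^{1,2}(\Omega))$) the function $u=\gamma_0\varphi$ solves $\ca u=0$ weakly with $\Tr u=\varphi$, and testing against $(u-\esssup_\Gamma\varphi)^+\in W^{1,2}_0(\Omega)$ together with ellipticity forces $(u-\esssup_\Gamma\varphi)^+$ to be a constant lying in $W^{1,2}_0(\Omega)$, hence zero; applying this also to $-u$ yields the weak maximum principle $\|\gamma_0\varphi\|_{L_\infty(\Omega)}\le\|\varphi\|_{L_\infty(\Gamma)}$, hence $\le 2\|\varphi\|_{L_\infty(\Gamma)}$ for complex Lipschitz $\varphi$.

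To conclude, since $(A_D+V)^{-1}$ maps $L_\infty(\Omega)$ into $L_\infty(\Omega)$ — its kernel $G_V$ being dominated by the integrable function $|x-y|^{-(d-2)}$, or $\log(1+|x-y|^{-1})$ when $d=2$, by Theorem~\ref{tdtnpc502} — the identity above gives $\|\gamma_V\varphi\|_{L_\infty(\Omega)}\le c\|\varphi\|_{L_\infty(\Gamma)}$ for Lipschitz $\varphi$, hence $\|T\varphi\|_{L_\infty(\Omega)}\le c\|\varphi\|_{L_\infty(\Gamma)}$ for such $\varphi$ by assertion~\ref{pdtnpc535-2}. I would then extend this to arbitrary $\varphi\in L_\infty(\Gamma)$ by approximating $\varphi$ in $L_1(\Gamma)$ by Lipschitz functions $\varphi_n$ with $\|\varphi_n\|_{L_\infty(\Gamma)}$ uniformly bounded by a multiple of $\|\varphi\|_{L_\infty(\Gamma)}$ (mollification in boundary charts); the $p=1$ bound gives $T\varphi_n\to T\varphi$ in $L_1(\Omega)$, hence along a subsequence a.e., so $\|T\varphi\|_{L_\infty(\Omega)}\le c\|\varphi\|_{L_\infty(\Gamma)}$, i.e.\ $T\colon L_\infty(\Gamma)\to L_\infty(\Omega)$ is bounded. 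Riesz--Thorin interpolation between $p=1$ and $p=\infty$ then yields boundedness of $T\colon L_p(\Gamma)\to L_p(\Omega)$ for every $p\in(1,\infty)$. Finally, for $p<\infty$ the subspace $\Tr(W^{1,2}(\Omega))\cap L_p(\Gamma)$ contains $C^\infty(\Gamma)$ and is therefore dense in $L_p(\Gamma)$, and $\gamma_V=T$ on it by assertion~\ref{pdtnpc535-2}, so $T|_{L_p(\Gamma)}$ is the claimed continuous extension of $\gamma_V$; for $p=\infty$ boundedness of $T$ has just been shown directly.
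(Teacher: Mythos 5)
Your proposal is correct and follows essentially the same route as the paper: the kernel identity via Lemma~\ref{ldtnpc534} and Proposition~\ref{pdtnpc533}, the Schur test for $p=1$, the maximum principle for $\gamma_0$ on $L_\infty$, approximation of general $L_\infty$ boundary data by Lipschitz functions with controlled sup norms, interpolation, and the perturbation identity (\ref{eSdtnpc2;30}) to pass to general $V$. The only (harmless) deviations are that you prove the weak maximum principle directly by testing against $(u-\esssup_\Gamma\varphi)^+$ rather than citing \cite{GT}, and you pass to the limit using $L_1(\Gamma)$-convergence of the approximants together with the already-established $L_1$-boundedness of $T$, where the paper uses weak$^*$ convergence and the pointwise integrability of $z\mapsto K_{\gamma_0}(x,z)$.
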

\begin{proof}
`\ref{pdtnpc535-1}'.
This follows immediately from Theorem~\ref{tdtnpc502}. 

`\ref{pdtnpc535-2}'.
Let $\varphi \in \Tr (W^{1,2}(\Omega))$ and $v \in C_c^\infty(\Omega)$.
Then Lemma~\ref{ldtnpc534} and Proposition~\ref{pdtnpc533} give
\begin{eqnarray*}
(\gamma_V \varphi, v)_{L_2(\Omega)}
& = & - (\varphi, \partial_\nu^C \, (A_D + V)^{-1} v)_{L_2(\Gamma)}  \\
& = & - \int_\Gamma \varphi(z) \sum_{k,l=1}^d n_k(z) \, c_{kl}(z) \, 
             \overline{ (\partial_l \, (A_D + V)^{-1} v)(z) } \, dz  \\
& = & - \int_\Gamma \int_\Omega \sum_{k,l=1}^d 
     \varphi(z) \, n_k(z) \, c_{kl}(z) \, (\partial^{(1)}_l G_V)(z,x) \, 
             \overline{ v(x) }  \, dx \, dz  \\
& = & \int_\Omega (T \varphi)(x) \,  \overline{ v(x) }  \, dx 
= (T \varphi, v)_{L_2(\Omega)}
.
\end{eqnarray*}
So $\gamma_V \varphi = T \varphi$ a.e.

`\ref{pdtnpc535-3}'.
This is a consequence of Theorem~\ref{tdtnpc502}.

`\ref{pdtnpc535-4}'.
We divide the proof in several steps.
\firststep\label{pdtnpc535step1}\ {\bf Suppose $p = 1$.}  \\
Since $\sup_{z \in \Gamma} \int_\Omega |K_{\gamma_V}(x,z)| \, dx < \infty$
the operator $T$ is bounded from $L_1(\Gamma)$ into $L_1(\Omega)$.

\nextstep\label{pdtnpc535step2}\ {\bf Suppose $p = \infty$ and $V = 0$.}  \\
We shall show that $T|_{L_\infty(\Gamma)}$ is bounded from $L_\infty(\Gamma)$
into $L_\infty(\Omega)$.
The maximum principle, \cite{GT} Theorem~8.1, gives that 
$\|\gamma_0 \varphi \|_{L_\infty(\Omega)} \leq \|\varphi\|_{L_\infty(\Gamma)}$
for all $\varphi \in C(\Gamma) \cap \Tr (W^{1,2}(\Omega))$.
So $\|T \varphi\|_{L_\infty(\Omega)} \leq \|\varphi\|_{L_\infty(\Gamma)}$
for all $\varphi \in C(\Gamma) \cap \Tr (W^{1,2}(\Omega))$.
Now let $\varphi \in L_\infty(\Gamma)$.
Since $\Omega$ has a Lipschitz boundary, one can regularise $\varphi$.
On a special Lipschitz domain one can regularise an $L_\infty$-function
$\psi$ on the boundary to obtain a sequence of continuous $W^{1,2}_\loc$-functions
on the boundary which converges to $\psi$ in the weak$^*$-topology on $L_\infty$
and such that the $L_\infty$-norm of the approximants is bounded by $\|\psi\|_\infty$.
Since $\Omega$ is bounded and Lipschitz one can use a partition of the unity so 
that $\Omega$ is split as a finite number, say~$N$, of parts of special 
Lipschitz domains. 
Summing up the corresponding smooth approximants one obtains a sequence 
$(\varphi_n)_{n \in \Ni}$ in $W^{1,2}(\Gamma) \cap C(\Gamma)$ such that 
$\lim \varphi_n = \varphi$ weak$^*$ in $L_\infty(\Gamma)$ and 
$\|\varphi_n\|_{L_\infty(\Gamma)} \leq N \, \|\varphi\|_{L_\infty(\Gamma)}$
for all $n \in \Ni$.
Now let $x \in \Omega$.
Then $z \mapsto K_{\gamma_0}(x,z)$ is an element of $L_1(\Gamma)$.
So 
\begin{eqnarray*}
|(T \varphi)(x)|
& = & \lim_{n \to \infty} |(T \varphi_n)(x)|
\leq \limsup_{n \to \infty} \|T \varphi_n\|_{L_\infty(\Omega)}
\leq \limsup_{n \to \infty} \|\varphi_n\|_{L_\infty(\Gamma)}
\leq N \, \|\varphi\|_{L_\infty(\Gamma)}
.  
\end{eqnarray*}
So $T|_{L_\infty(\Gamma)}$ is a bounded extension of 
$\gamma_0|_{L_\infty(\Gamma) \cap \Tr (W^{1,2}(\Omega))}$ from 
$L_\infty(\Gamma)$ into $L_\infty(\Omega)$.

\nextstep\label{pdtnpc535step3}\ {\bf Suppose $p \in [1,\infty]$ and $V = 0$.}  \\
This follows by interpolation from Steps~\ref{pdtnpc535step1} and \ref{pdtnpc535step2}.

\nextstep\label{pdtnpc535step4}\ {\bf Suppose $p \in [1,\infty]$ and $V \in L_\infty(\Omega,\Ri)$.}  \\
Since $\gamma_V = \gamma_0 - (A_D + V)^{-1} \, M_V \, \gamma_0$ by (\ref{eSdtnpc2;30})
the general case follows.
\end{proof}

As a consequence we deduce that $\cn_V$ is a perturbation of $\cn$.

\begin{cor} \label{cdtnpc536}
Let $\kappa \in (0,1)$, $\Omega \subset \Ri^d$ be an open bounded set with a $C^{1+\kappa}$-boundary,
$C \in \ce^\kappa(\Omega)$ real symmetric and $V \in L_\infty(\Omega,\Ri)$.
Suppose that $0 \not\in \sigma(A_D + V)$.
Then $\cn_V = \cn + \gamma_0^* \, M_V \, \gamma_V$.
\end{cor}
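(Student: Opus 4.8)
The plan is to prove the identity first at the level of the associated sesquilinear forms, on the common form domain $\Tr(W^{1,2}(\Omega)) = H^{1/2}(\Gamma)$ of $\gotb_V$ and $\gotb_0$, and then to read off the operator identity from the uniqueness of the operator associated with a closed sectorial form. As a preliminary, note that $\gamma_0^* \, M_V \, \gamma_V$ is a bounded operator on $L_2(\Gamma)$: by Proposition~\ref{pdtnpc535}\ref{pdtnpc535-4} both $\gamma_V$ and $\gamma_0$ extend to bounded operators from $L_2(\Gamma)$ into $L_2(\Omega)$, the operator $M_V$ is bounded on $L_2(\Omega)$ since $V \in L_\infty(\Omega)$, and $\gamma_0^*$ is the adjoint of the bounded operator $\gamma_0 \colon L_2(\Gamma) \to L_2(\Omega)$; in particular
\[
(\gamma_0^* \, M_V \, \gamma_V \varphi, \psi)_{L_2(\Gamma)} = (M_V \gamma_V \varphi, \gamma_0 \psi)_{L_2(\Omega)}
\]
for all $\varphi, \psi \in L_2(\Gamma)$, and for $\psi \in \Tr(W^{1,2}(\Omega))$ the element $\gamma_0 \psi$ occurring here coincides with the harmonic lifting by Proposition~\ref{pdtnpc535}\ref{pdtnpc535-2}.

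For the form computation I would fix $\varphi, \psi \in \Tr(W^{1,2}(\Omega))$ and put $u = \gamma_V \varphi$, $v = \gamma_V \psi$, $u_0 = \gamma_0 \varphi$, $v_0 = \gamma_0 \psi$. By (\ref{eSdtnpc2;30}) the functions $u - u_0 = -(A_D + V)^{-1} M_V u_0$ and $v - v_0$ lie in $W^{1,2}_0(\Omega) = \ker \Tr$. Starting from $\gotb_V(\varphi, \psi) = \gota_V(u, v) = \gota_0(u, v) + (M_V u, v)_{L_2(\Omega)}$ I would expand $u = u_0 + (u - u_0)$ and $v = v_0 + (v - v_0)$ inside $\gota_0$. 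Here $\gota_0(u_0, v_0) = \gotb_0(\varphi, \psi)$ by the definition of $\gotb_0$; the cross term $\gota_0(u_0, v - v_0)$ vanishes because $u_0 \in H_0$ and $v - v_0 \in \ker \Tr$; and $\gota_0(u - u_0, v_0) = \overline{\gota_0(v_0, u - u_0)} = 0$ by the symmetry of $\gota_0$ (the $c_{kl} = c_{lk}$ being real valued) together with $v_0 \in H_0$. This leaves $\gota_0(u, v) = \gotb_0(\varphi, \psi) + \gota_0(u - u_0, v - v_0)$. Since $(A_D + V)(u - u_0) = -M_V u_0$, a short computation with the form $\gota_V$ gives $\gota_0(u - u_0, \chi) = -(M_V u, \chi)_{L_2(\Omega)}$ for every $\chi \in W^{1,2}_0(\Omega)$, and taking $\chi = v - v_0$ turns the last term into $-(M_V u, v)_{L_2(\Omega)} + (M_V u, v_0)_{L_2(\Omega)}$. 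Collecting terms,
\[
\gotb_V(\varphi, \psi) = \gotb_0(\varphi, \psi) + (M_V \gamma_V \varphi, \gamma_0 \psi)_{L_2(\Omega)} = \gotb_0(\varphi, \psi) + (\gamma_0^* \, M_V \, \gamma_V \varphi, \psi)_{L_2(\Gamma)}.
\]

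To finish, I would note that, $\gamma_0^* \, M_V \, \gamma_V$ being bounded on $L_2(\Gamma)$, the form $(\varphi, \psi) \mapsto \gotb_0(\varphi, \psi) + (\gamma_0^* \, M_V \, \gamma_V \varphi, \psi)_{L_2(\Gamma)}$ with form domain $\Tr(W^{1,2}(\Omega))$ is a bounded perturbation of the closed, lower-bounded, symmetric form $\gotb_0$; hence it is itself closed and sectorial with the same form domain, and the operator associated with it is $\cn + \gamma_0^* \, M_V \, \gamma_V$ on $D(\cn)$ (a standard fact about perturbations of forms; see, e.g., \cite{Ouh5}). By the identity just proved this form equals $\gotb_V$, whose associated operator is $\cn_V$, so uniqueness of the associated operator yields $\cn_V = \cn + \gamma_0^* \, M_V \, \gamma_V$. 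There is no serious obstacle here: the argument is a short form computation, the only points requiring care being the bookkeeping of the complex conjugations in the two cross terms and the identification of $\gamma_0^*$ with the $L_2(\Gamma) \to L_2(\Omega)$ adjoint, both of which rest on Proposition~\ref{pdtnpc535} and the reality and symmetry of $C$.
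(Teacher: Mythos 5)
Your proof is correct, but it reaches the identity by a different route than the paper. The paper works directly at the operator level: for $\varphi \in D(\cn_V)$ and $\psi \in D(\cn)$ it invokes the characterisation of Lemma~\ref{ldtnpc239}\ref{ldtnpc239-3}, which permits testing the defining identity of $\cn_V$ against an \emph{arbitrary} lift of $\psi$, and in particular against $v_0 = \gamma_0\psi$; this gives $(\cn_V\varphi,\psi)_{L_2(\Gamma)} - (\varphi,\cn\psi)_{L_2(\Gamma)} = \gota_V(\gamma_V\varphi,\gamma_0\psi) - \gota_0(\gamma_V\varphi,\gamma_0\psi) = \int_\Omega V\,(\gamma_V\varphi)\,\overline{\gamma_0\psi}$ in one line, after which the boundedness of $\gamma_0^*\,M_V\,\gamma_V$ and a duality argument ($\varphi \in D(\cn^*) = D(\cn)$, and symmetrically for $\psi$) together with the density of $D(\cn)$ finish the proof. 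You instead establish the identity $\gotb_V = \gotb_0 + (\gamma_0^*\,M_V\,\gamma_V\,\cdot,\cdot)_{L_2(\Gamma)}$ on the whole form domain $\Tr(W^{1,2}(\Omega))$ by decomposing $\gamma_V = \gamma_0 + (\gamma_V - \gamma_0)$ in both arguments and killing the cross terms via $H_0 \perp_{\gota_0} \ker\Tr$ and the symmetry of $\gota_0$, then conclude by the standard fact that a bounded perturbation of a closed form has associated operator equal to the sum. Your computation is correct (the key intermediate step $\gota_0(u-u_0,\chi) = -(M_Vu,\chi)_{L_2(\Omega)}$ for $\chi \in W^{1,2}_0(\Omega)$, coming from $(A_D+V)(u-u_0) = -M_V u_0$ and (\ref{eSdtnpc2;30}), is exactly what makes the bookkeeping close up). What each approach buys: the paper's argument is shorter because Lemma~\ref{ldtnpc239}\ref{ldtnpc239-3} lets it pair the two different harmonic lifts directly, but it must recover the equality of domains a posteriori through the adjoint argument; your form-level argument is longer but the identification $D(\cn_V) = D(\cn)$ falls out automatically from the perturbation theorem, and it does not need the adjoint/density step at the end.
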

\begin{proof}
Let $\varphi \in D(\cn_V)$ and $\psi \in D(\cn)$.
Write $u = \gamma_V \varphi$ and $v = \gamma_0 \varphi$.
Then 
\begin{eqnarray*}
(\cn_V \varphi, \psi)_{L_2(\Gamma)} - (\varphi, \cn \psi)_{L_2(\Gamma)}
& = & \gota_V(u,v) - \gota_0(u,v)  \\
& = & \int_\Gamma V \, u \, \overline v
= (M_V \, \gamma_V \varphi, \gamma_0 \psi)_{L_2(\Gamma)}
= (\gamma_0^* \, M_V \, \gamma_V \varphi, \psi)_{L_2(\Gamma)}
.
\end{eqnarray*}
Since $\gamma_0^* \, M_V \, \gamma_V$ is bounded on $L_2(\Gamma)$ 
by Proposition~\ref{pdtnpc535}\ref{pdtnpc535-4} it follows that 
$\varphi \in D(\cn^*) = D(\cn)$ and similarly $\psi \in D(\cn_V^*) = D(\cn_V)$.
Then 
\[
((\cn_V - \cn) \varphi, \psi)_{L_2(\Gamma)} 
= (\cn_V \varphi, \psi)_{L_2(\Gamma)} - (\varphi, \cn \psi)_{L_2(\Gamma)}
= (\gamma_0^* \, M_V \, \gamma_V \varphi, \psi)_{L_2(\Gamma)}
.  \]
Since $D(\cn)$ is dense in $L_2(\Gamma)$ the corollary follows.
\end{proof}

\section{The Schwartz kernel of the Dirichlet-to-Neumann operator} \label{Sdtnpc6new}

Our main aim in this section is to show that the Dirichlet-to-Neumann operators $\cn$ and 
$\cn_V$ are given by  Schwartz  kernels that satisfy Calder\'on--Zygmund-type bounds.
The principle step in the proof is that the Schwartz kernel of $\cn$
can be expressed in terms of the coefficients 
$c_{kl}$ and partial derivatives of the Green kernel. 
We start with a definition.
Let $\kappa \in (0,1)$, $\Omega \subset \Ri^d$ be an open bounded set with a $C^{1+\kappa}$-boundary
and $C \in \ce^\kappa(\Omega)$ real symmetric.
Then by Theorem~\ref{tdtnpc501} the elliptic operator $A_D$ has a Green kernel $G$
which is differentiable in each entry and the partial derivatives extend to 
a continuous function on 
$ \{ (x,y) \in \overline \Omega \times \overline \Omega : x \neq y \} $.
We define $ K_\cn \colon \{(z,w) \in  \Gamma \times \Gamma : z \not= w \} \to \Ri$ by
\begin{equation} \label{d61} 
K_\cn(z,w) = -\sum_{k,l,k',l'=1}^d 
n_{k'}(w) \, n_k(z) \, c_{k'l'}(w) \, c_{kl}(z) \, (\partial_l^{(1)} \partial_{l'}^{(2)} G)(z,w) .
 \end{equation}
Our first aim is to prove that  $K_\cn$ is the Schwartz kernel of $\cn$. 
In the literature sometimes $K_\cn$ is written as $\partial^C_\nu \, \partial^C_{\nu'} G$, 
the conormal derivatives with respect to the two variables. 
It far from clear, however, whether the weak conormal derivatives
of $G$ exist in the sense of~(\ref{elpbdton201;15}).
Even if these weak conormal derivatives would exist, then it is again unclear
whether they coincide with (\ref{d61}).

We  use the definition of $\cn$ by the symmetric form $\gota_0$, 
see (\ref{eSdtnpc2;40}) and (\ref{formaV}) with   $V= 0$. 

\begin{lemma}\label{lem6.2}
Let $\kappa \in (0,1)$, $\Omega \subset \Ri^d$ be an open bounded set with a $C^{1+\kappa}$-boundary
and $C \in \ce(\Omega)$ real symmetric.
Suppose that $c_{kl} \in C^\infty_{\rm b}(\Omega)$ for all $k,l \in \{ 1,\ldots,d \} $.
Let $\varphi \in \Tr(W^{1,2}(\Omega))$.
Then 
\[
\gota_0(\gamma_0 \varphi, v) 
= \int_\Gamma \int_\Gamma K_\cn(z,w) \, \varphi(w) \, \overline{(\Tr v)(z)} \, dw \, dz
\]
for all $v \in W^{1,2}(\Omega) $ with $\supp \varphi \cap \supp v = \emptyset$.
\end{lemma}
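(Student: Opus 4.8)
The plan is to compute $\gota_0(\gamma_0\varphi, v)$ by writing $\gamma_0\varphi$ via the Green kernel and then moving the conormal derivative onto the boundary, exploiting the fact that $\supp\varphi$ and $\supp v$ are disjoint so that all the kernels involved are smooth on the relevant region. First I would recall that by Proposition~\ref{pdtnpc535}\ref{pdtnpc535-2}, for $\varphi \in \Tr(W^{1,2}(\Omega))$ we have $\gamma_0\varphi = T\varphi$ a.e., i.e.\
\[
(\gamma_0\varphi)(x) = \int_\Gamma K_{\gamma_0}(x,z)\,\varphi(z)\,dz
= - \int_\Gamma \sum_{k,l=1}^d n_k(z)\,c_{kl}(z)\,(\partial_l^{(1)} G)(z,x)\,\varphi(z)\,dz .
\]
Since $\supp\varphi$ stays at positive distance from $\supp v$, for $x$ in a neighbourhood of $\supp v$ the function $z \mapsto K_{\gamma_0}(x,z)$ is smooth on $\supp\varphi$ with all derivatives bounded (here the $C^\infty_{\rm b}$ hypothesis on the coefficients and Theorem~\ref{tdtnpc501} guarantee that $G$ and its derivatives are as regular as needed away from the diagonal), so differentiation under the integral sign is justified and $\gamma_0\varphi$ restricted to that neighbourhood is smooth.

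Next I would compute $\gota_0(\gamma_0\varphi,v) = \int_\Omega \sum_{k,l} c_{kl}\,(\partial_k \gamma_0\varphi)\,\overline{\partial_l v}$. The idea is to integrate by parts in $x$ against $v$. Because $\gamma_0\varphi \in H_0$ is $\ca$-harmonic on $\Omega$ (so $\ca(\gamma_0\varphi) = 0$) and is smooth near $\supp v$, Lemma~\ref{ldtnpc531} (applied with $u = \gamma_0\varphi$, whose conormal derivative then equals the classical expression $\sum_{k,l} n_k\,(c_{kl}\,\partial_l \gamma_0\varphi)|_\Gamma$) gives
\[
\gota_0(\gamma_0\varphi, v)
= \int_\Gamma \Big( \sum_{k,l=1}^d n_k(z)\,c_{kl}(z)\,(\partial_l \gamma_0\varphi)(z) \Big)\, \overline{(\Tr v)(z)}\, dz .
\]
Strictly, since $\gamma_0\varphi$ need not lie in $C^1(\overline\Omega)$ globally, I would localise: pick a cutoff $\chi \in C_c^\infty(\Ri^d)$ equal to $1$ near $\supp v$ and vanishing near $\supp\varphi$, note $\gota_0(\gamma_0\varphi,v) = \gota_0(\chi\,\gamma_0\varphi, v) + \gota_0((1-\chi)\gamma_0\varphi,v)$ where the second term vanishes on $\supp v$ after an integration by parts using $\ca(\gamma_0\varphi)=0$, and apply the divergence theorem (Lemma~\ref{ldtnpc530}) to the smooth field $F_k = \sum_l c_{kl}\,(\partial_l(\chi\,\gamma_0\varphi))\,\overline v$, whose boundary values near $\supp v$ are exactly those of the uncut field; the boundary integral only sees $\supp v$ because $v$ vanishes elsewhere on $\Gamma$.

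Finally I would substitute the Green-kernel formula for $\partial_l \gamma_0\varphi$. Differentiating under the integral,
\[
(\partial_l \gamma_0\varphi)(z)
= - \int_\Gamma \sum_{k',l'=1}^d n_{k'}(w)\,c_{k'l'}(w)\,(\partial_l^{(2)}\partial_{l'}^{(1)} G)(w,z)\,\varphi(w)\,dw ,
\]
valid for $z$ near $\supp v$ because $w$ ranges over $\supp\varphi$, which is disjoint from $\supp v$. Inserting this and using the symmetry $(\partial_{l'}^{(1)}\partial_l^{(2)} G)(w,z) = (\partial_l^{(1)}\partial_{l'}^{(2)} G)(z,w)$ (which holds here since $C$ is real symmetric, so $G(w,z) = G(z,w)$, hence the mixed partials transpose accordingly) gives exactly
\[
\gota_0(\gamma_0\varphi,v)
= - \int_\Gamma\int_\Gamma \sum_{k,l,k',l'=1}^d n_{k'}(w)\,n_k(z)\,c_{k'l'}(w)\,c_{kl}(z)\,(\partial_l^{(1)}\partial_{l'}^{(2)} G)(z,w)\,\varphi(w)\,\overline{(\Tr v)(z)}\,dw\,dz ,
\]
which is $\int_\Gamma\int_\Gamma K_\cn(z,w)\,\varphi(w)\,\overline{(\Tr v)(z)}\,dw\,dz$ by the definition~(\ref{d61}). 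I expect the main obstacle to be the bookkeeping around the justification of Lemma~\ref{ldtnpc531}/the divergence theorem: one must be careful that $\gamma_0\varphi$, which is only known to be in $W^{1,2}(\Omega)\cap H_0$ globally, is genuinely $C^1$ up to the boundary on the portion of $\Gamma$ where $v$ is supported — and this is where the disjoint-support hypothesis, interior elliptic regularity, and the smoothness of the Green kernel off the diagonal are all essential; applying Fubini to swap the two boundary integrals also needs the integrability which again follows from the separation of supports.
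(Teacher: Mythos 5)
Your proposal is correct and follows essentially the same route as the paper: represent $\gamma_0\varphi$ via the kernel $K_{\gamma_0}$ from Proposition~\ref{pdtnpc535}\ref{pdtnpc535-2}, differentiate under the integral using the separation of supports and Theorem~\ref{tdtnpc501}, convert the form to a boundary integral via the divergence theorem (Lemma~\ref{ldtnpc530}), and substitute the kernel formula for $\partial_l\gamma_0\varphi$ on $\Gamma$. The only cosmetic difference is your localisation device — a cutoff $\chi$ on $\gamma_0\varphi$ rather than the paper's use of the factor $\overline v$ inside $F_k$ together with first taking $v=\tau|_\Omega$, $\tau\in C_c^\infty(\Ri^d)$, and a final density argument in $v$ (a step you should make explicit if you apply Lemma~\ref{ldtnpc530} to a field containing $\overline v$, since that lemma needs $F\in C(\overline\Omega,\Ci^d)$).
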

\begin{proof} 
Let $\tau \in C_c^\infty(\Ri^d)$ be such that $\supp \varphi \cap \supp \tau = \emptyset$. 
Set $v = \tau|_\Omega$.
By definition
\begin{equation}\label{eq6.1}
\gota_0(\gamma_0 \varphi, v) 
= \sum_{k,l=1}^d \int_\Omega c_{kl} \, (\partial_k \gamma_0 \varphi) \, \overline{\partial_l v}.
\end{equation}
Let $k \in \{ 1,\ldots,d \} $.
Define $F_k \colon \Omega \to \Ci$ by
$F_k = \sum_{l=1}^d \overline v \, c_{kl} \, \partial_l (\gamma_0 \varphi)$.
Then $F_k \in C^1(\Omega)$ by classical elliptic regularity and the fact that 
$\ca(\gamma_0 \varphi)= 0$ weakly in $\Omega$. 
We next show that 
$F_k \in C(\overline{\Omega})$. 
Indeed, by Proposition~\ref{pdtnpc535}\ref{pdtnpc535-2} we have
\begin{equation}\label{eq601}
 (\partial_l \gamma_0 \varphi)(x) 
= -\sum_{k',l'=1}^d \int_\Gamma n_{k'}(z) \, c_{k'l'}(z) \, 
    (\partial_l^{(2)} \partial_{l'}^{(1)} G)(z,x) \, \varphi(z) \, dz
 \end{equation}
for all $x \in \supp v$. 
This integral is actually taken over $z \in \supp \varphi$. 
Since $\supp \varphi \cap \supp v = \emptyset$ we can apply Theorem~\ref{tdtnpc501}, 
which shows immediately that $F_k \in C(\overline{\Omega})$. 
Let $F = (F_1, \dots, F_d)$.
Then 
$\divv F = \sum_{k,l=1}^d c_{kl} \, (\partial_k \gamma_0 \varphi) \, \overline{\partial_l v} 
\in L_2(\Omega)$.  
Hence we can apply Lemma~\ref{ldtnpc530} to write the RHS of (\ref{eq6.1}) 
by
\[
\sum_{k,l=1}^d \int_\Omega c_{kl} \, (\partial_k \gamma_0 \varphi) \, \overline{\partial_l v}
= \int_\Omega \divv F 
= \int_\Gamma n \cdot F
= \sum_{k,l=1}^d \int_\Gamma n_k(w) \, 
      (\Tr( \overline v \, c_{kl} \, (\partial_l \gamma_0 \varphi)))(w) \, dw.
\]
Hence
\begin{eqnarray*}
\gota_0(\gamma_0 \varphi, v) 
&=& - \sum_{k,l,k',l'=1}^d \int_\Gamma \int_\Gamma 
   n_k(w) \, n_{k'}(z) \, \overline{(\Tr v)(w)} \, c_{kl}(w) \, c_{k'l'}(z) 
(\partial_{l'}^{(1)} \partial_l^{(2)} G)(z,w) \, \varphi(z) \, dz \, dw \\
&=&  \int_\Gamma \int_\Gamma K_\cn(z,w) \, \varphi(w) \, \overline{(\Tr v)(z)} \, dw \, dz ,
\end{eqnarray*}
which proves the lemma if 
$v = \tau|_\Omega $ with $\tau \in C_c^\infty( \Ri^d)$. 
This extends to all $v \in W^{1,2}(\Omega)$ with 
$\supp \varphi \cap \supp v = \emptyset$ by a standard approximation argument. 
\end{proof}

Next we extend the previous lemma to the case of H\"older continuous coefficients.

\begin{lemma}\label{lem6.3}
Let $\kappa \in (0,1)$, $\Omega \subset \Ri^d$ be an open bounded set with a $C^{1+\kappa}$-boundary
and $C \in \ce(\Omega)$ real symmetric.
Let $\varphi \in \Tr(W^{1,2}(\Omega))$.
Then 
\[
\gota_0(\gamma_0 \varphi, v) 
= \int_\Gamma \int_\Gamma K_\cn(z,w) \, \varphi(w) \, \overline{(\Tr v)(z)} \, dz \, dw
\]
for all $v \in W^{1,2}(\Omega) $ with $\supp \varphi \cap \supp v = \emptyset$.
\end{lemma}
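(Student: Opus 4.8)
The plan is to deduce Lemma~\ref{lem6.3} from Lemma~\ref{lem6.2} by a regularisation argument on the coefficients, in the same spirit as Step~\ref{pdtnpc533step2} in the proof of Proposition~\ref{pdtnpc533}. First I would fix $\varphi \in \Tr(W^{1,2}(\Omega))$ and $v \in W^{1,2}(\Omega)$ with $\supp\varphi \cap \supp v = \emptyset$. Extend each $c_{kl}$ to a symmetric $C^\kappa$-function $\tilde c_{kl}$ on $\Ri^d$ and set $c^{(n)}_{kl} = (\tilde c_{kl} * \rho_n)|_\Omega$ for a bounded approximation of the identity $(\rho_n)$, so that $C^{(n)} \in \ce^\kappa(\Omega,\mu,M)$ for some fixed $\mu,M>0$ and all $n$, with $c^{(n)}_{kl} \in C^\infty_{\rm b}(\Omega)$ and $c^{(n)}_{kl} \to c_{kl}$ uniformly on $\overline\Omega$. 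Write $A_D^{(n)}$, $\gamma_0^{(n)}$, $K_\cn^{(n)}$, $G^{(n)}$ for the corresponding objects. Lemma~\ref{lem6.2} applies to each $C^{(n)}$, so
\[
\gota_0^{(n)}(\gamma_0^{(n)}\varphi, v)
= \int_\Gamma\int_\Gamma K_\cn^{(n)}(z,w)\,\varphi(w)\,\overline{(\Tr v)(z)}\,dw\,dz
\]
for all $n$, where $\gota_0^{(n)}$ is the form with coefficients $c^{(n)}_{kl}$. The task is to pass to the limit $n\to\infty$ on both sides.

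For the left-hand side, I would argue as in Step~\ref{pdtnpc533step2} of Proposition~\ref{pdtnpc533}: the key input is the convergence $\partial_k\,\gamma_0^{(n)}\varphi \to \partial_k\,\gamma_0\varphi$, say in $L_2(\Omega)$. This uses $\gamma_0 = \gamma_0^{(n)} + W^{1,2}_0$-corrections; more precisely, since $\gamma_0\varphi$ and $\gamma_0^{(n)}\varphi$ have the same trace $\varphi$, their difference $w_n := \gamma_0\varphi - \gamma_0^{(n)}\varphi$ lies in $W^{1,2}_0(\Omega)$ and satisfies a variational equation whose right-hand side is driven by $(c^{(n)}_{kl}-c_{kl})\,\partial_l(\gamma_0\varphi)$; uniform invertibility (via ellipticity and the Poincar\'e inequality) then gives $\|\nabla w_n\|_{L_2} \to 0$. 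Combined with $c^{(n)}_{kl}\to c_{kl}$ uniformly and the fact that $\nabla v$ is fixed in $L_2$, this yields $\gota_0^{(n)}(\gamma_0^{(n)}\varphi,v) \to \gota_0(\gamma_0\varphi,v)$.

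For the right-hand side, the point is that the integrals are over the compact set $\supp\varphi \times \supp v$ intersected with $\Gamma\times\Gamma$, and there $\operatorname{dist}(\supp\varphi,\supp v) =: d_0 > 0$, so one never sees the singularity of $K_\cn$ on the diagonal. Formula~(\ref{d61}) expresses $K_\cn^{(n)}$ through the $c^{(n)}_{kl}$ and $\partial_l^{(1)}\partial_{l'}^{(2)} G^{(n)}$. For the coefficients the uniform convergence is immediate. For the Green kernel derivatives I would use the bounds and, crucially, the H\"older/Calder\'on--Zygmund estimates of Theorem~\ref{tdtnpc501}, which are \emph{uniform} in $C\in\ce^\kappa(\Omega,\mu,M)$ since here $\RRe V = 0 \geq 0$; together with the representation $G^{(n)}(x,y) = \int_0^\infty H_t^{(n)}(x,y)\,dt$ and the convergence of heat kernels, one gets $\partial_l^{(1)}\partial_{l'}^{(2)} G^{(n)} \to \partial_l^{(1)}\partial_{l'}^{(2)} G$ uniformly on $\{|z-w|\geq d_0\}$ (alternatively, via elliptic regularity applied to $(A_D^{(n)})^{-1} \to A_D^{-1}$ in the relevant operator norms, as in Step~\ref{pdtnpc533step2}). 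Hence $K_\cn^{(n)} \to K_\cn$ uniformly on $\supp\varphi\times\supp v$, the integrands converge uniformly on a set of finite measure, and dominated convergence closes the argument. Finally, the order of integration $dz\,dw$ versus $dw\,dz$ is immaterial by Fubini since the integrand is bounded on the product of compact sets, so the two displayed formulas agree.

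I expect the main obstacle to be the convergence $\nabla\gamma_0^{(n)}\varphi \to \nabla\gamma_0\varphi$ and the uniform control of the Green kernel derivatives away from the diagonal; both are handled exactly by the machinery already set up (the uniform bounds of Theorem~\ref{tdtnpc501} and the perturbation/approximation argument of Proposition~\ref{pdtnpc533}, Step~\ref{pdtnpc533step2}), so the proof is mostly a matter of assembling these pieces and invoking dominated convergence on the compactly-separated supports.
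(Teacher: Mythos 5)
Your proposal is correct and follows the same overall route as the paper: regularise the coefficients as in Step~2 of the proof of Proposition~\ref{pdtnpc533}, apply Lemma~\ref{lem6.2} to each $C^{(n)}$, and pass to the limit on both sides, using the uniform (in $n$) bounds and H\"older estimates of Theorem~\ref{tdtnpc501} to get $K_\cn^{(n)} \to K_\cn$ uniformly on the compactly separated set $\supp\varphi\times\supp v$ — this last step is exactly the content of Proposition~\ref{prop6.1}, which the paper invokes and which your Arzel\`a--Ascoli/weak-identification sketch reproduces. The one place where you genuinely deviate is the convergence of the left-hand side: the paper deduces $\partial_k\gamma_0^{(n)}\varphi \to \partial_k\gamma_0\varphi$ uniformly on $\supp v$ from the boundary-integral representation (\ref{eq601}) together with Proposition~\ref{prop6.1}, whereas you use the elementary energy estimate for $w_n=\gamma_0\varphi-\gamma_0^{(n)}\varphi\in W^{1,2}_0(\Omega)$, which by uniform ellipticity gives $\|\nabla w_n\|_{L_2}\leq \mu^{-1}d^2\max_{k,l}\|c^{(n)}_{kl}-c_{kl}\|_\infty\,\|\nabla\gamma_0\varphi\|_{L_2}\to 0$; since $\nabla v$ is a fixed $L_2$ function, this $L_2$ convergence (rather than uniform convergence on $\supp v$) indeed suffices to pass to the limit in $\gota_0^{(n)}(\gamma_0^{(n)}\varphi,v)$. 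Your variant is slightly more self-contained for that step, at the cost of not reusing the kernel representation already established; both arguments are sound.
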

\begin{proof} 
As one expects, we proceed by a regularization argument. 
For all $n \in \Ni$ let $C^{(n)}$ be as in Step~\ref{pdtnpc533step2} in the proof 
of Proposition~\ref{pdtnpc533}.
There exist $\mu,M > 0$ such that 
$C^{(n)} \in \ce^\kappa(\Omega,\mu,M)$ for all $n \in \Ni$.
We denote by $A^{(n)}$, $\gota_0^{(n)}$, $\gamma_0^{(n)}$, $K_\cn^{(n)}$, $G^{(n)}$ 
the same quantities  as before with $c_{kl}$ replaced by 
the new coefficients $c_{kl}^{(n)}$.  
We apply Lemma \ref{lem6.2} to obtain
\begin{equation}\label{eq6.3}
\gota_0^{(n)}(\gamma_0^{(n)} \varphi, v) 
= \int_\Gamma \int_\Gamma K_\cn^{(n)}(z,w) \, \varphi(w) \, \overline{(\Tr v)(z)} \, dz \, dw
\end{equation}
for all $\varphi \in \Tr(W^{1,2}(\Omega))$ and 
$v \in W^{1,2}(\Omega) $ such that $\supp \varphi \cap \supp v = \emptyset$.
Since 
\[
K_\cn^{(n)}(z,w) 
= -\sum_{k,l,k',l'=1}^d 
n_{k'}(w) \,  n_k(z) \, c_{k'l'}^{(n)}(w)\, c_{kl}^{(n)}(z) \, 
    (\partial_l^{(1)} \partial_{l'}^{(2)} G^{(n)})(z,w)
\]
it follows from Proposition~\ref{prop6.1} below that 
$\lim_{n \to \infty} K_\cn^{(n)}(z,w) = K_\cn(z,w)$ uniformly in 
$z \in \Gamma \cap \supp v$ and $w \in \supp \varphi$.
On the other hand, by  (\ref{eq601}) and again Proposition~\ref{prop6.1} we see that 
$\lim_{n \to \infty} (\partial_k \gamma_0^{(n)} \varphi)(x) = (\partial_k \gamma_0 \varphi)(x)$ 
uniformly for all $x \in \supp v$.
Since
\[
\gota_0^{(n)}(\gamma_0^{(n)} \varphi, v) 
= \sum_{k,l=1}^d \int_\Omega 
    c_{kl}^{(n)} \, (\partial_k \gamma_0^{(n)} \varphi) \, \overline{\partial_l v}
\]
for all $n \in \Ni$ one deduces that 
$\lim \gota_0^{(n)}(\gamma_0^{(n)} \varphi, v) = \gota_0(\gamma_0 \varphi, v)$. 
Hence passing to the limit in (\ref{eq6.3}) gives the lemma.
\end{proof}

\begin{cor}\label{cor6.1} 
Let $\kappa \in (0,1)$, $\Omega \subset \Ri^d$ be an open bounded set with a $C^{1+\kappa}$-boundary
and $C \in \ce(\Omega)$ real symmetric.
Then $K_\cn$ is the  Schwartz kernel of $\cn$.
\end{cor}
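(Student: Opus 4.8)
The plan is to read off the statement from Lemma~\ref{lem6.3} together with the description of $\cn$ via the symmetric form $\gota_0$. Recall from Lemma~\ref{ldtnpc239}, condition~\ref{ldtnpc239-3}, that $\varphi \in D(\cn)$ with $\cn \varphi = \psi$ precisely when $u = \gamma_0 \varphi$ satisfies $\gota_0(u,v) = (\psi, \Tr v)_{L_2(\Gamma)}$ for all $v \in W^{1,2}(\Omega)$; equivalently $(\cn \varphi, \Tr v)_{L_2(\Gamma)} = \gota_0(\gamma_0 \varphi, v)$ whenever $\varphi \in D(\cn)$. To say that $K_\cn$ is the Schwartz kernel of $\cn$ means exactly that
\[
(\cn \varphi, \psi)_{L_2(\Gamma)}
= \int_\Gamma \int_\Gamma K_\cn(z,w) \, \varphi(w) \, \overline{\psi(z)} \, dw \, dz
\]
for all $\varphi \in D(\cn)$ and $\psi \in L_2(\Gamma)$ whose supports are disjoint (this is the standard notion of Schwartz kernel for a possibly unbounded operator on $\Gamma$: the kernel represents the off-diagonal behaviour, tested against functions with separated supports).

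First I would fix $\varphi \in D(\cn)$ and $\psi \in C(\Gamma)$ (or even $L_2(\Gamma)$) with $\supp \varphi \cap \supp \psi = \emptyset$. Choose $v \in W^{1,2}(\Omega)$ with $\Tr v = \psi$ and $\supp v$ disjoint from $\supp \varphi$; this is possible since $\supp \varphi$ and $\supp \psi$ are disjoint closed subsets of $\Gamma$, so one can find a neighbourhood of $\supp \psi$ in $\overline\Omega$ away from $\supp \varphi$ and extend $\psi$ into $\Omega$ supported there (using a cut-off and the usual extension/lifting of boundary data). With this $v$, on the one hand $\gota_0(\gamma_0\varphi, v) = (\cn\varphi, \Tr v)_{L_2(\Gamma)} = (\cn\varphi,\psi)_{L_2(\Gamma)}$ by the form description of $\cn$; on the other hand Lemma~\ref{lem6.3} gives
\[
\gota_0(\gamma_0\varphi, v)
= \int_\Gamma\int_\Gamma K_\cn(z,w)\,\varphi(w)\,\overline{(\Tr v)(z)}\,dz\,dw
= \int_\Gamma\int_\Gamma K_\cn(z,w)\,\varphi(w)\,\overline{\psi(z)}\,dz\,dw.
\]
Comparing the two expressions yields the desired identity, and then a density argument in $\psi$ (the integral kernel $K_\cn(z,\cdot)$ being locally integrable off the diagonal, by the bounds of Theorem~\ref{tdtnpc501}) extends it from $\psi \in C(\Gamma)$ to all $\psi \in L_2(\Gamma)$ with support disjoint from $\supp\varphi$. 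Since $D(\cn) = \Tr(W^{1,2}(\Omega))$ is dense, this identifies $K_\cn$ as the Schwartz kernel of $\cn$.

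The only real point requiring care is the construction of the test function $v$ with prescribed trace $\psi$ and support disjoint from $\supp \varphi$, so that Lemma~\ref{lem6.3} is applicable; this is a routine partition-of-unity and extension argument near the $C^{1+\kappa}$-boundary, but it is where the hypothesis $\supp \varphi \cap \supp \psi = \emptyset$ is genuinely used. Everything else is a direct translation between the form definition of $\cn$ and the integral formula of Lemma~\ref{lem6.3}; no new estimates are needed beyond those already established.
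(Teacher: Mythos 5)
Your proposal is correct and follows essentially the same route as the paper: the paper's proof also simply combines the form characterisation $(\cn\varphi,\Tr v)_{L_2(\Gamma)}=\gota_0(\gamma_0\varphi,v)$ with Lemma~\ref{lem6.3} for $v$ with $\supp\varphi\cap\supp v=\emptyset$. Your extra remarks on lifting $\psi$ to such a $v$ and on density are just a more explicit rendering of the same one-line argument.
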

\begin{proof} 
Let $\varphi \in D(\cn)$ and $v \in W^{1,2}(\Omega)$ with 
$\supp \varphi \cap \supp v = \emptyset$. 
Then by definition of $\cn$ and Lemma~\ref{lem6.3} one deduces that 
\[
(\cn \varphi, \Tr v)_{L_2(\Gamma)} 
= \gota_0 (\gamma_0 \varphi, v) = 
\int_\Gamma \int_\Gamma K_\cn(z,w) \, \varphi(w) \, \overline{(\Tr v)(z)} \, dw \, dz.
\]
This gives the corollary.
 \end{proof}

\begin{prop}\label{prop6.0}
Let $\kappa \in (0,1)$, $\Omega \subset \Ri^d$ be an open bounded set with a $C^{1+\kappa}$-boundary
and $C \in \ce(\Omega)$ real symmetric.
Then there exists a $c > 0$ 
such that the Schwartz kernel $K_{\cn}$ of 
$\cn$ satisfies
\[ 
| K_{\cn}(z,w) | \le \frac{c}{| z-w|^d}
\]
and 
\[
| K_{\cn}(z,w) - K_{\cn}(z',w') | \le c \, \frac{ (| z-z'| + | w-w'|)^\kappa}{ | z-w|^{d+ \kappa}}
\]
for all $z, z', w, w' \in \Gamma$ with $z \not= w$ and $|z-z'| + |w-w'| \le \frac{1}{2} \, |z-w|$.
\end{prop}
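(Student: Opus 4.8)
The plan is to read off both estimates of Proposition~\ref{prop6.0} directly from formula~(\ref{d61}) together with the Green function bounds of Theorem~\ref{tdtnpc501} and the $C^{1+\kappa}$-regularity of $\Gamma$. Recall that
\[
K_\cn(z,w) = -\sum_{k,l,k',l'=1}^d
n_{k'}(w) \, n_k(z) \, c_{k'l'}(w) \, c_{kl}(z) \, (\partial_l^{(1)} \partial_{l'}^{(2)} G)(z,w),
\]
so that $K_\cn$ is a finite sum of products of bounded factors ($n_k$, $n_{k'}$, the coefficients $c_{kl}$, $c_{k'l'}$ evaluated on $\overline\Omega$) times the second mixed derivative $\partial_l^{(1)}\partial_{l'}^{(2)} G$ of the Green kernel evaluated at boundary points. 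By Theorem~\ref{tdtnpc501} (applied with $|\alpha| = |\beta| = 1$, and with $V = 0$, which is admissible), $\partial_l^{(1)}\partial_{l'}^{(2)} G$ extends to a locally $\kappa$-H\"older continuous function on $\{(x,y)\in\overline\Omega\times\overline\Omega : x\neq y\}$ with
\[
|(\partial_l^{(1)}\partial_{l'}^{(2)} G)(z,w)| \le c\,|z-w|^{-d}
\]
(here $d-2+|\alpha|+|\beta| = d$, so there is never a logarithmic term) and
\[
|(\partial_l^{(1)}\partial_{l'}^{(2)} G)(z',w') - (\partial_l^{(1)}\partial_{l'}^{(2)} G)(z,w)|
\le c\,\frac{(|z'-z|+|w'-w|)^\kappa}{|z-w|^{d+\kappa}}
\]
whenever $|z-z'|+|w-w'|\le\tfrac12|z-w|$. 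Restricting $x,y$ to $\Gamma$ gives immediately the size bound $|K_\cn(z,w)|\le c\,|z-w|^{-d}$, since the prefactors are uniformly bounded on $\Gamma$.

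For the H\"older estimate I would expand $K_\cn(z,w) - K_\cn(z',w')$ by the telescoping-sum device: write the difference of the products as a sum of terms, in each of which exactly one factor is replaced by its value at the primed point and all others are kept. One term is a bounded prefactor times the difference $(\partial_l^{(1)}\partial_{l'}^{(2)} G)(z,w) - (\partial_l^{(1)}\partial_{l'}^{(2)} G)(z',w')$, which is controlled by the H\"older estimate of Theorem~\ref{tdtnpc501}. The remaining terms each carry a factor of the form $n_k(z)-n_k(z')$, $c_{kl}(z)-c_{kl}(z')$ (and the analogues in $w$), multiplied by the size bound $|z-w|^{-d}$ (using that $|z'-w'|$ is comparable to $|z-w|$ when $|z-z'|+|w-w'|\le\tfrac12|z-w|$). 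Since $\Gamma$ is $C^{1+\kappa}$, the outer normal $n$ is $\kappa$-H\"older continuous on $\Gamma$; and the coefficients $c_{kl}\in C^\kappa(\Omega)$ extend to $\kappa$-H\"older functions on $\overline\Omega$ hence on $\Gamma$. Thus each such difference is bounded by $c\,(|z-z'|+|w-w'|)^\kappa$, and multiplying by $|z-w|^{-d}$ gives a term of the form $c\,(|z-z'|+|w-w'|)^\kappa\,|z-w|^{-d}$, which is dominated by $c\,(|z-z'|+|w-w'|)^\kappa\,|z-w|^{-(d+\kappa)}$ because $|z-z'|+|w-w'|\le\tfrac12|z-w|$ forces $|z-w|^{-d}\le (\tfrac12)^{-\kappa}|z-z'+\dots|^{0}\cdot|z-w|^{-(d+\kappa)}\cdot|z-w|^{\kappa}$; more simply, $|z-w|^{-d} = |z-w|^{-(d+\kappa)}\,|z-w|^{\kappa}$ and $|z-w|^\kappa$ can be absorbed by using $|z-z'|+|w-w'|\le\tfrac12|z-w|$, i.e. $(|z-z'|+|w-w'|)^\kappa\le (\tfrac12|z-w|)^\kappa$ only when convenient — in fact the clean way is to keep $|z-w|^{-d}$ as is and note that $|z-w|^{-d}\le 2^\kappa\,|z-w|^{-(d+\kappa)}\,(|z-z'|+|w-w'|)^{0}$ is false, so instead one writes $(|z-z'|+|w-w'|)^\kappa |z-w|^{-d} = (|z-z'|+|w-w'|)^\kappa |z-w|^{-(d+\kappa)}\cdot |z-w|^\kappa$ and bounds the stray $|z-w|^\kappa$ by $(\diam\Gamma)^\kappa$, a constant, since $\Gamma$ is compact. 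Collecting the finitely many terms and absorbing all constants yields the claimed H\"older bound.

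The only genuinely delicate points are bookkeeping: one must verify that when $|z-z'|+|w-w'|\le\tfrac12|z-w|$ the distances $|z-w|$ and $|z'-w'|$ are comparable (so the size bound applies at the primed point), and that all constants produced are uniform. There is no real obstacle here since Theorem~\ref{tdtnpc501} already packages the hard analysis (the Gaussian H\"older heat-kernel bounds and their integration to Green-function bounds); the present proof is essentially a multilinear-expansion argument combined with the regularity of the geometric data $n$ and $c_{kl}$. I expect the proof to be short, with the main care being the choice of how to absorb the compactness-of-$\Gamma$ factors so that the final exponent on $|z-w|$ is exactly $d+\kappa$.
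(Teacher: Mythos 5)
Your proposal is correct and follows essentially the same route as the paper: read the size bound off Theorem~\ref{tdtnpc501} with $|\alpha|=|\beta|=1$, then split the difference $K_\cn(z,w)-K_\cn(z',w')$ into a term where the geometric prefactors (normals and coefficients, both $\kappa$-H\"older by the $C^{1+\kappa}$-regularity of $\Gamma$ and the H\"older continuity of the $c_{kl}$) vary against the size bound $|z-w|^{-d}$, plus a term where the fixed prefactors multiply the H\"older increment of $\partial_l^{(1)}\partial_{l'}^{(2)}G$, and finally absorb the resulting $|z-w|^{-d}$ term into $|z-w|^{-(d+\kappa)}$ using the boundedness of $\Gamma$. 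This is exactly the paper's argument.
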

\begin{proof}
We have seen in Corollary~\ref{cor6.1} that $\cn$ has a Schwartz kernel $K_\cn$
given by 
\[
K_\cn(z,w) = -\sum_{k,l,k',l'=1}^d  n_{k'}(w) \, n_k(z) \, c_{k'l'}(w) \, c_{kl}(z) 
   \, (\partial_l^{(1)} \partial_{l'}^{(2)} G)(z,w)
\]
for all $z, w \in \Gamma$ with $z \not= w$. 
Here  $G$ is the Green kernel of the elliptic operator $A_D$. 
It follows immediately from (\ref{etdtnpc501;13}) in Theorem~\ref{tdtnpc501} 
and the fact that the coefficients $c_{kl}$ are all bounded on $\overline{\Omega}$ that 
\[
| K_\cn(z,w) | \le a \, |z-w|^{-d} 
\]
for a suitable constant $a > 0$ and all $z, w \in \Gamma$ with $z \not= w$.  
On the other hand, the bounds (\ref{etdtnpc501;14}) of the same theorem show that 
there exists a $c > 0$ such that 
\begin{eqnarray*}
\lefteqn{
| n_{k'}(w) \, n_k(z) \, c_{k'l'}(w) \, c_{kl}(z) \, (\partial_l^{(1)} \partial_{l'}^{(2)} G)(z,w)
} \hspace*{5mm} \\*
& & \hspace{40mm} {}
  - n_{k'}(w') \, n_k(z') \, c_{k'l'}(w') \, c_{kl}(z') \, 
         (\partial_l^{(1)} \partial_{l'}^{(2)} G)(z',w') |  \\
&=& \Big| \Big( n_{k'}(w) \, n_k(z) \, c_{k'l'}(w) \, c_{kl}(z)
          - n_{k'}(w') \, n_k(z') \, c_{k'l'}(w') \, c_{kl}(z') \Big) \, 
      (\partial_l^{(1)} \partial_{l'}^{(2)}  G)(z,w)   
  \\*
& & \hspace*{15mm} {}
  + n_{k'}(w') \, n_k(z') \, c_{k'l'}(w') \, c_{kl}(z')
         \, \Big( (\partial_l^{(1)} \partial_{l'}^{(2)}  G)(z,w) 
                - (\partial_l^{(1)} \partial_{l'}^{(2)}  G)(z',w') \Big) \Big| \\
& \leq & c \, \frac{ (| z-z'| + | w-w'|)^\kappa}{ | z-w|^d}
   + c \, \frac{ (| z-z'| + | w-w'|)^\kappa}{ | z-w|^{d+ \kappa}}
\end{eqnarray*}
for all $z, z', w, w'  \in \Gamma$
with $z \not= w$, $z' \not=  w'$  and $|z - z'| + |w-w'| \le \frac{1}{2} \, |z-w|$.
Using the fact that $\Gamma$ is bounded we obtain  the bound
\begin{eqnarray*}
\lefteqn{
| n_{k'}(w) \, n_k(z) \, c_{k'l'}(w) \, c_{kl}(z) \, (\partial_l^{(1)} \partial_{l'}^{(2)} G)(z,w)
} \hspace*{5mm} \\*
& & \hspace{40mm} {}
  - n_{k'}(w') \, n_k(z') \, c_{k'l'}(w') \, c_{kl}(z') \, 
         (\partial_l^{(1)} \partial_{l'}^{(2)} G)(z',w') |  \\
& \leq & c_1 \, \frac{ (| z-z'| + | w-w'|)^\kappa}{ | z-w|^{d+ \kappa}}
\end{eqnarray*}
for all $z, z', w, w'  \in \Gamma$
with $z \not= w$, $z' \not=  w'$  and $|z - z'| + |w-w'| \le \frac{1}{2} \, |z-w|$.
This shows  the second bounds of the proposition.
\end{proof}

Next we extend the previous estimates to  the Schwartz kernel $K_{\cn_V}$ of the 
Dirichlet-to-Neumann operator with a potential 
$V \in L_\infty(\Omega)$. 
    
\begin{prop}\label{prop6.01}
Let $\kappa \in (0,1)$, $\Omega \subset \Ri^d$ be an open bounded set with a $C^{1+\kappa}$-boundary
and $C \in \ce(\Omega)$ real symmetric.
Let $V \in L_\infty(\Omega, \Ri)$ 
and assume that $0 \notin \sigma(A_D + V)$. 
Then there exists a constant $c > 0$ such that the Schwartz kernel $K_{\cn_V}$ of $\cn_V$  
satisfies
\[
| K_{\cn_V}(z,w) | \le \frac{c}{| z-w|^d}
\]
and
\[
| K_{\cn_V}(z,w) - K_{\cn_V}(z',w') | 
\le c \, \frac{ (| z-z'| + | w-w'|)^\kappa}{ | z-w|^{d+ \kappa}}
\]
for all $z, z', w, w' \in \Gamma$ with $z \not= w$ and 
$|z-z'| + |w-w'| \le \frac{1}{2} \, |z-w|$.
\end{prop}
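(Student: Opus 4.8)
The plan is to reduce to the potential-free case via the perturbation identity of Corollary~\ref{cdtnpc536}, namely $\cn_V = \cn + \gamma_0^* \, M_V \, \gamma_V$. By Proposition~\ref{prop6.0} the Schwartz kernel $K_\cn$ of $\cn$ already satisfies the two desired bounds, so it is enough to show that the bounded operator $R := \gamma_0^* \, M_V \, \gamma_V$ on $L_2(\Gamma)$ is represented by an integral kernel $K_R$ which is \emph{less} singular, namely $|K_R(z,w)| \leq c \, |z-w|^{-(d-2)}$ for $d \geq 3$ (and $\leq c \, \log(1+|z-w|^{-1})$ for $d=2$) together with $|K_R(z,w) - K_R(z',w')| \leq c \, (|z-z'| + |w-w'|)^\kappa \, |z-w|^{-(d-2+\kappa)}$ for $|z-z'| + |w-w'| \leq \tfrac12 |z-w|$. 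Since $\Gamma$ is bounded one has $|z-w|^{-(d-2)} \leq (\diam \Gamma)^2 \, |z-w|^{-d}$ and $|z-w|^{-(d-2+\kappa)} \leq (\diam \Gamma)^2 \, |z-w|^{-(d+\kappa)}$ (with the analogous trivial estimate for the logarithm in dimension two), so with $K_{\cn_V} := K_\cn + K_R$ the bounds of the proposition follow by addition.

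To identify $K_R$, recall from Proposition~\ref{pdtnpc535}\ref{pdtnpc535-1} and \ref{pdtnpc535-3} that $\gamma_0$ and $\gamma_V$ are integral operators with continuous kernels $K_{\gamma_0}, K_{\gamma_V} \colon \Omega \times \Gamma \to \Ci$; these are in fact real valued, since $C$ is real symmetric and $V$ is real so that the Green functions $G$ and $G_V$ are real, and they obey $|K_{\gamma_V}(x,z)| \leq c \, |x-z|^{-(d-1)}$ and the corresponding $\kappa$-H\"older bound $|K_{\gamma_V}(x,z) - K_{\gamma_V}(x',z')| \leq c \, (|x-x'| + |z-z'|)^\kappa \, |x-z|^{-(d-1+\kappa)}$ for $|x-x'| + |z-z'| \leq \tfrac12 |x-z|$, and likewise for $K_{\gamma_0}$. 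As $d-1 < d$ and $\Omega$ is bounded, a routine Fubini argument then shows that $R$ has kernel
\[
K_R(z,w) = \int_\Omega K_{\gamma_0}(x,z) \, V(x) \, K_{\gamma_V}(x,w) \, dx .
\]
The size bound is immediate: $|K_R(z,w)| \leq c^2 \, \|V\|_\infty \int_\Omega |x-z|^{-(d-1)} |x-w|^{-(d-1)} \, dx$, and the classical estimate for the integral over a bounded set of a product of two Riesz kernels---decompose $\Omega$ into $\{ |x-z| \leq \tfrac14 |z-w| \}$, $\{ |x-w| \leq \tfrac14 |z-w| \}$ and the complement, and integrate radially---gives the required $|z-w|^{-(d-2)}$, respectively the logarithm when $d=2$.

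The real work is the H\"older bound on $K_R$, and this is the step I expect to be the main obstacle. One may first dispose of the case $|z-z'| + |w-w'| \geq \tfrac18 |z-w|$: then $|z'-w'| \geq \tfrac12 |z-w|$, and applying the size bound to both $K_R(z,w)$ and $K_R(z',w')$ already yields the claim. In the remaining case one writes
\[
K_R(z,w) - K_R(z',w')
= \int_\Omega [K_{\gamma_0}(x,z) - K_{\gamma_0}(x,z')] \, V(x) \, K_{\gamma_V}(x,w) \, dx
 + \int_\Omega K_{\gamma_0}(x,z') \, V(x) \, [K_{\gamma_V}(x,w) - K_{\gamma_V}(x,w')] \, dx
\]
and estimates each of the two integrals by decomposing $\Omega$ according to the position of $x$ relative to $z$ and $w$ and to the scales $|z-z'|$ and $|w-w'|$: on the small ball $\{ |x-z| \leq 2|z-z'| \}$ one uses only the pointwise bounds on the two factors coming from $K_{\gamma_0}$, whereas on its complement the H\"older bound on $K_{\gamma_0}$ applies, and one further splits that complement according to the size of $|x-z|$ relative to $|z-w|$ in order to bound $K_{\gamma_V}(x,w)$. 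Every resulting radial integral converges because $\kappa < 1$, and they combine to give $|K_R(z,w) - K_R(z',w')| \leq c \, \|V\|_\infty \, (|z-z'| + |w-w'|)^\kappa \, |z-w|^{-(d-2+\kappa)}$. This domain-splitting bookkeeping is entirely standard but lengthy; an alternative that avoids it is to repeat the arguments of Lemmas~\ref{lem6.2}--\ref{lem6.3} and Corollary~\ref{cor6.1} with $V \neq 0$---the additional term $\int_\Omega V \, (\gamma_V \varphi) \, \overline v$ in $\gota_V(\gamma_V \varphi, v)$ is precisely the source term furnished by the divergence theorem, since $(\ca + V) \gamma_V \varphi = 0$, and is absorbed automatically---thereby expressing $K_{\cn_V}(z,w) = - \sum_{k,l,k',l'} n_k(z) \, n_{k'}(w) \, c_{kl}(z) \, c_{k'l'}(w) \, (\partial_l^{(1)} \partial_{l'}^{(2)} G_V)(z,w)$, whereupon the required bounds follow from the Green-function estimates of Theorem~\ref{tdtnpc502} exactly as in the proof of Proposition~\ref{prop6.0}.
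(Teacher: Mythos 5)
Your proposal is correct and follows essentially the same route as the paper: reduce via $\cn_V = \cn + \gamma_0^* M_V \gamma_V$ and Proposition~\ref{prop6.0}, represent the perturbation as $\int_\Omega K_{\gamma_0}(x,z)\,V(x)\,K_{\gamma_V}(x,w)\,dx$, and estimate the resulting Riesz-potential convolutions using the size and H\"older bounds of Proposition~\ref{pdtnpc535}\ref{pdtnpc535-3}, with the same near/far splitting in $x$ for the H\"older difference. The only cosmetic difference is that the paper sacrifices half a power (bounding $|x-z|^{-(d-1)}$ by $\diam(\Omega)^{1/2}|x-z|^{-(d-1/2)}$) to get $|z-w|^{-(d-1)}$ uniformly in $d$ and thereby avoid the $d=2$ logarithm you handle separately.
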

\begin{proof} 
First we have $\cn_V = \cn + \gamma_V^* \, M_V \, \gamma_0$ by Corollary~\ref{cdtnpc536}.
We already have the desired estimates for the kernel $K_\cn$. 
It remains to prove the same estimates for the Schwartz kernel $K_Q$ of
 $Q = \gamma_V^* \, M_V \, \gamma_0$. 
Recall that $K_{\gamma_V}$ is the kernel of $\gamma_V$. 
Obviously,
\[
K_Q(z,w) = \int_\Omega K_{\gamma_V}(x,z) \, V(x) \, K_{\gamma_0} (x,w) \, dx
\]
for all $z,w \in \Gamma$ with $z \neq w$.
 We use Proposition~\ref{pdtnpc535}\ref{pdtnpc535-3}.
There exists a constant $c > 0$ such that 
\[
| K_Q(z,w) | 
\le c \int_\Omega \frac{1}{|x-z|^{d-1}} \, \frac{1}{|x-w|^{d-1}} \, dx
\leq c \, \diam(\Omega) \int_\Omega 
    \frac{1}{|x-z|^{d-\frac{1}{2}}} \, \frac{1}{|x-w|^{d-\frac{1}{2}}} \, dx
\]
for all $z,w \in \Gamma$ with $z \neq w$.
We apply \cite{Fri} (Lemma 2, Section 4, Chapter 1) to estimate the RHS by 
$\frac{c_1}{| z-w|^{d-1}}$, uniformly for all $z,w \in \Gamma$ with $z \neq w$.
Next we prove H\"older bounds. 
Let $z,z',w,w' \in \Gamma$ with $z \neq w$, $z' \neq w'$ and $|z-z'| \le \frac{1}{2} \, |z-w|$.
We write
\begin{eqnarray*}
| K_Q(z,w) - K_Q(z',w') | 
&=& \int_\Omega \Big( K_{\gamma_V}(x,z)-  K_{\gamma_V}(x,z') \Big)  
     V(x) \, K_{\gamma_0} (x,w) \, dx \\*
&& \hspace*{10mm} {} + \int_\Omega K_{\gamma_V}(x,z') \, V(x) 
   \Big(  K_{\gamma_0}(x,w)-  K_{\gamma_0}(x,w') \Big) \,  dx \\
&=& I + II.
\end{eqnarray*}
The estimates of $I$ and $II$ are similar. 
We spilt the integral $I$ into two parts
\begin{eqnarray}
I 
&=&  \int_{[|z-z'| \le \frac{1}{2} |x-z|]} 
    \Big( K_{\gamma_V}(x,z)-  K_{\gamma_V}(x,z') \Big) \, V(x) \, K_{\gamma_0} (x,w) \, dx \nonumber \\
&& \hspace*{10mm} {} + \int_{[|z-z'| > \frac{1}{2}|x-z|]} 
    \Big( K_{\gamma_V}(x,z)-  K_{\gamma_V}(x,z') \Big) \, V(x) \, K_{\gamma_0} (x,w) \, dx.
  \label{eprop6.01;1}
\end{eqnarray}
For the first term we use Proposition~\ref{pdtnpc535}\ref{pdtnpc535-3}
and it can be estimated by
\[
c \, | z-z'|^\kappa  \int_\Omega \frac{1}{|x-z|^{d-1 +\kappa}} \, \frac{1}{|x-w|^{d-1}} \, dx
\]
for a suitable $c > 0$.
We apply again  \cite{Fri} (Lemma 2, Section 4, Chapter 1) to estimate the latter integral by
$c' \, |z-z'|^\kappa \, \frac{1}{|z-w|^{d-2 +\kappa}}$
for a suitable $c' > 0$.

The second integral in (\ref{eprop6.01;1}) is more delicate.
If $x \in \Omega$ and $|z-z'| > \frac{1}{2}|x-z|$, then $|x-z'| \le |z-z'| + |x-z| \le 3 |z-z'|$.
Moreover, $|z-w| \le 2 |z'-w|$, since $|z-z'| \le  \frac{1}{2} |z-w|$ by assumption.
Then by Proposition~\ref{pdtnpc535}\ref{pdtnpc535-3} and 
\cite{Fri} (Lemma 2, Section 4, Chapter 1) there are suitable $c_1,c_2 > 0$ such that
\begin{eqnarray*}
\lefteqn{
\int_{[|z-z'| > \frac{1}{2}|x-z|]} 
   \Big| \Big( K_{\gamma_V}(x,z)-  K_{\gamma_V}(x,z') \Big) \, V(x) \, K_{\gamma_0} (x,w) \Big| \, dx
} \hspace*{10mm}  \\*
&\le & c_1 \, \int_{[|z-z'| > \frac{1}{2}|x-z|]} 
    \Big( \frac{1}{|x-z|^{d-1}} +  \frac{1}{|x-z'|^{d-1}} \Big) \,  \frac{1}{|x-w|^{d-1}} \, dx\\
& \le & 3^\kappa \, c_1 \, |z-z'|^\kappa \int_\Omega 
    \Big( \frac{1}{|x-z|^{d-1+ \kappa}} +  \frac{1}{|x-z'|^{d-1 + \kappa}} \Big) 
     \frac{1}{|x-w|^{d-1}} \, dx\\
& \le & c_2 \, |z-z'|^\kappa 
    \Big( \frac{1}{|z-w|^{d-2+ \kappa}} + \frac{1}{|z'-w|^{d-2+ \kappa}} \Big) \\
& \le & (1 + 2^{d-2+\kappa}) \, c_2 \, \frac{|z-z'|^\kappa}{|z-w|^{d-2+ \kappa}}
.
\end{eqnarray*}
The same estimate holds for $II$. 
\end{proof}

We finish this section with the proof of the approximation property used in the 
proof of Lemma~\ref{lem6.3}. 

\begin{prop}\label{prop6.1}
Let $\kappa \in (0,1)$, $\Omega \subset \Ri^d$ be an open bounded set with a $C^{1+\kappa}$-boundary
and $C \in \ce(\Omega)$ real symmetric.
Let $c_{kl}^{(n)}$, $A^{(n)}$, $G^{(n)}$,\ldots\ be as in the  proof of Lemma~\ref{lem6.3}.
Let $K_1$ and $K_2$ be compact and disjoint subsets of $\overline{\Omega}$ such that 
$\overline{\mathring{K_1} \times \mathring{K_2} } = K_1 \times K_2$. 
Let $k,l \in \{1,\dots,d\}$. 
Then
\[
\lim_{n \to \infty} (\partial_k^{(1)} \partial_l^{(2)} G^{(n)})(x,y) 
= (\partial_k^{(1)} \partial_l^{(2)} G)(x,y)
\]
uniformly for all $x \in K_1$ and $y \in K_2$.
\end{prop}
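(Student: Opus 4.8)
The plan is to show convergence of the Green kernels together with their mixed derivatives on compact sets that stay away from the diagonal, by passing through the heat kernels. Write $T_t^{(n)} = e^{-t A_D^{(n)}}$ and $T_t = e^{-t A_D}$, with heat kernels $H_t^{(n)}$ and $H_t$ respectively; recall that $G^{(n)}(x,y) = \int_0^\infty H_t^{(n)}(x,y)\,dt$ and similarly for $G$. Since $C^{(n)} \in \ce^\kappa(\Omega,\mu,M)$ for fixed $\mu,M$ and all $n$ (as established in Step~\ref{pdtnpc533step2} of the proof of Proposition~\ref{pdtnpc533}), Theorem~\ref{tdtnpc401} applies with \emph{uniform} constants $a,b,\omega$: we get
\[
|(\partial_x^\alpha \partial_y^\beta H_t^{(n)})(x,y)| \le a\, t^{-d/2}\, t^{-(|\alpha|+|\beta|)/2}\, e^{-b|x-y|^2/t}\, e^{\omega t}
\]
for all $n$, all $t > 0$, and all $|\alpha|,|\beta| \le 1$, and the same bound holds for $H_t$. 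On the set $K_1 \times K_2$ we have $|x-y| \ge \delta_0 := \dist(K_1,K_2) > 0$, so these bounds are integrable in $t$ over $(0,\infty)$ with a tail that is uniformly small. Hence it suffices to prove, for each fixed $t > 0$,
\[
\lim_{n\to\infty}(\partial_x^\alpha \partial_y^\beta H_t^{(n)})(x,y) = (\partial_x^\alpha \partial_y^\beta H_t)(x,y)
\]
uniformly on $K_1 \times K_2$, and then invoke dominated convergence of the $t$-integral (splitting $\int_0^\infty = \int_0^\varepsilon + \int_\varepsilon^{1/\varepsilon} + \int_{1/\varepsilon}^\infty$ and using the uniform Gaussian bounds on the two outer pieces).

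For the pointwise (in $t$) convergence, I would first show $T_t^{(n)} \to T_t$ strongly on $L_2(\Omega)$. This follows from the convergence of the forms: since $c^{(n)}_{kl} \to c_{kl}$ uniformly on $\overline\Omega$, the forms $\gota^{(n)}$ converge to $\gota$ in the sense needed for strong resolvent convergence (the form domains are all $W^{1,2}_0(\Omega)$ and $\gota^{(n)}(u,u) \to \gota(u,u)$ for each $u$, with uniform ellipticity and boundedness), hence $A_D^{(n)} \to A_D$ in the strong resolvent sense and $T_t^{(n)} \to T_t$ strongly for each $t > 0$. To upgrade strong $L_2$-convergence to uniform convergence of $\partial_x^\alpha \partial_y^\beta H_t^{(n)}$ on $K_1 \times K_2$, I would use the semigroup property $H_t^{(n)}(x,y) = \int_\Omega H_{t/2}^{(n)}(x,z)\, H_{t/2}^{(n)}(z,y)\,dz$ together with the $L_2$--$W^{1+\kappa,\infty}$ bounds of Propositions~\ref{pdtnpc414}--\ref{pdtnpc416}. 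Concretely, fix $x \in K_1$; then $z \mapsto H_{t/2}^{(n)}(x,z)$ equals $T_{t/2}^{(n)} \delta_x$ in a suitable sense, and more usefully one writes $\partial_x^\alpha \partial_y^\beta H_t^{(n)}(x,y) = (\partial_x^\alpha H_{t/4}^{(n)}(x,\cdot),\, \partial_y^\beta H_{3t/4}^{(n)}(\cdot,y))_{L_2(\Omega)}$ by the reproducing property, so the $n$-dependence is carried by an $L_2$-inner product of two families each of which converges in $L_2(\Omega)$ — uniformly in the parameters $x \in K_1$, $y \in K_2$ — by the (uniform-in-$n$) $C^{1+\kappa}$-equicontinuity from Proposition~\ref{pdtnpc416} and the Arzelà–Ascoli theorem applied to the maps $x \mapsto \partial_x^\alpha H_s^{(n)}(x,\cdot) \in L_2(\Omega)$.

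The main obstacle is making the last step genuinely uniform in $x$ and $y$ and handling the derivative in $x$ (the "first" variable): the derivatives $\partial_x^\alpha H_s^{(n)}(x,\cdot)$ are not manifestly of the form $T_s^{(n)}(\text{something})$, so one must be slightly careful. The clean route is to use the representation with three time-steps, $H_t^{(n)}(x,y) = (T_{t/3}^{(n)} g_{x}^{(n)},\, T_{t/3}^{(n)} h_y^{(n)})_{L_2}$ where $g_x^{(n)} = H_{t/3}^{(n)}(x,\cdot)$, $h_y^{(n)} = H_{t/3}^{(n)}(\cdot,y)$, then differentiate in $x$ and $y$ by moving the derivatives onto $g_x^{(n)}$, $h_y^{(n)}$ and using that $x \mapsto \partial_x^\alpha H_{t/3}^{(n)}(x,\cdot)$ is a bounded, equicontinuous (uniformly in $n$, by Proposition~\ref{pdtnpc416}) family of $L_2(\Omega)$-valued maps on $K_1$, hence by Arzelà–Ascoli precompact in $C(K_1; L_2(\Omega))$; every subsequential limit must agree with $x \mapsto \partial_x^\alpha H_{t/3}(x,\cdot)$ because of the strong resolvent convergence $T_s^{(n)} \to T_s$, so the whole sequence converges uniformly on $K_1$. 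The same for $y$ on $K_2$. Combining, $\partial_x^\alpha \partial_y^\beta H_t^{(n)} \to \partial_x^\alpha \partial_y^\beta H_t$ uniformly on $K_1 \times K_2$, and the $t$-integral argument above then yields the claimed uniform convergence of $\partial_k^{(1)} \partial_l^{(2)} G^{(n)}$ to $\partial_k^{(1)} \partial_l^{(2)} G$. The hypothesis $\overline{\mathring K_1 \times \mathring K_2} = K_1 \times K_2$ is used only to know that the limiting function, being continuous on $\{x \ne y\}$ by Theorem~\ref{tdtnpc501}, is determined by its values on the interior — so no subtlety there beyond bookkeeping.
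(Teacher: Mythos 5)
Your strategy is sound and would yield the result, but it is genuinely different from the paper's proof, and longer. The paper stays at the level of the Green function: it tests $\partial_k^{(1)}\partial_l^{(2)}G^{(n)}$ against a product $u(x)\,\overline{v(y)}$ with $u,v$ supported in $K_1,K_2$, integrates by parts to move \emph{both} derivatives onto the test functions, and thereby reduces the weak convergence to the already-established resolvent convergence $((A_D^{(n)})^{-1}\partial_k u,\partial_l v)\to(A_D^{-1}\partial_k u,\partial_l v)$ from (\ref{pdtnpc533;8}); the uniform size and H\"older bounds of Theorem~\ref{tdtnpc501} on $K_1\times K_2$ then give equicontinuity, Arzel\`a--Ascoli produces a uniform subsequential limit, and the weak convergence identifies it (this is where the hypothesis $\overline{\mathring{K_1}\times\mathring{K_2}}=K_1\times K_2$ enters, since the test functions only see the interior). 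You instead go through the heat kernels: uniform Gaussian/H\"older bounds from Theorem~\ref{tdtnpc401}, dominated convergence in $t$, and per-$t$ convergence of $\partial_x^\alpha\partial_y^\beta H_t^{(n)}$ via the reproducing property and strong resolvent convergence. Both proofs share the same engine (uniform H\"older bounds plus Arzel\`a--Ascoli, with the limit identified by a weak convergence that ultimately comes from $c^{(n)}_{kl}\to c_{kl}$ uniformly), but the paper's route avoids the $t$-integral entirely and has a cleaner identification step, because after integrating by parts no derivative of the approximating objects needs to be controlled beyond what (\ref{pdtnpc533;8}) already provides. One point in your write-up deserves tightening: identifying the subsequential limits of $x\mapsto\partial_x^\alpha H^{(n)}_{t/3}(x,\cdot)$ ``because of strong resolvent convergence'' is not immediate, since $T^{(n)}_s\phi\to T_s\phi$ in $L_2(\Omega)$ does not by itself give convergence of $\partial^\alpha T^{(n)}_s\phi$; you need to combine it with the uniform gradient bounds of Lemma~\ref{ldtnpc413} (to get weak $L_2$-convergence of the gradients) or with the uniform $C^{1+\kappa}$-estimates of Proposition~\ref{pdtnpc416} and a further Arzel\`a--Ascoli argument. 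This is routine but should be said; also note that strong (not merely weak) $L_2$-convergence of one of the two factors in your inner-product representation is needed, which you get from the uniform H\"older continuity in the $z$-variable. Finally, in your approach the hypothesis on $K_1\times K_2$ is not actually used, whereas the paper needs it.
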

\begin{proof} 
Let $u, v \in W^{1,2}(\Omega)$ be such that $\supp u \subset K_1$ and $\supp v \subset K_2$. 
Then
\begin{eqnarray*}
\int_{K_1 \times K_2} (\partial_k^{(1)} \partial_l^{(2)} G)(x,y) \, u(x) \, \overline{v(y)} \, d(x,y) 
&=& \int_\Omega \int_\Omega
   (\partial_k^{(1)} \partial_l^{(2)} G)(x,y) \, u(x) \, \overline{v(y)} \, dx \, dy \\ 
&=& \int_\Omega \int_\Omega G(x,y) \, (\partial_k u)(x) \, \overline{(\partial_l v)(y)} \, dx \, dy \\
&=& ( A_D^{-1} \partial_k u, \partial_l v)_{L_2(\Omega)}\\
&=& \lim_{n\to \infty} ( (A_D^{(n)})^{-1} \partial_k u, \partial_l v)_{L_2(\Omega)}\\
&=& \lim_{n\to \infty} \int_{K_1 \times K_2} 
    (\partial_k^{(1)} \partial_l^{(2)} G^{(n)} )(x,y) \, u(x) \, \overline{v(y)}  \, d(x,y), 
\end{eqnarray*}
where the forth equality follows as in (\ref{pdtnpc533;8}).  
Hence 
\begin{equation}\label{eq602}
\int_{K_1 \times K_2} (\partial_k^{(1)} \partial_l^{(2)} G)(x,y) \, u(x) \, \overline{v(y)} \, d(x,y) 
= \lim_{n\to \infty} \int_{K_1 \times K_2} (\partial_k^{(1)} \partial_l^{(2)} G^{(n)} )(x,y) \, 
    u(x) \, \overline{v(y)} \, d(x,y).
\end{equation}
By Theorem~\ref{tdtnpc501}  there exists  a $c > 0$ such that
\[
| (\partial_k^{(1)} \partial_l^{(2)} G^{(n)} )(x,y) | 
\le \frac{c}{| x-y|^d} 
\le  \frac{c}{{\rm dist}(K_1, K_2)^d}
\]
uniformly for all $n \in \Ni$ and $(x,y) \in K_1 \times K_2$. 
Note that we  also have H\"older bounds for 
$(\partial_k^{(1)} \partial_l^{(2)} G^{(n)} )(x,y)$ which are uniform in $n$ and  
$(x,y) \in K_1 \times K_2$ by the same theorem. 
Therefore $(\partial_k^{(1)} \partial_l^{(2)} G^{(n)} )_{n \in \Ni}$ is equicontinuous on 
$K_1 \times K_2$. 
By the Ascoli--Arzel\`a theorem there exists a
$\Phi \in C(K_1 \times K_2)$ such that after passing to a subsequence
if necessary
$(\partial_k^{(1)} \partial_l^{(2)} G^{(n)} )_{n \in \Ni}$ converges  
to $\Phi$ uniformly on $K_1 \times K_2$. 
Then (\ref{eq602}) gives
\begin{eqnarray*}
\int_{K_1 \times K_2} \Phi(x,y) \, u(x) \, \overline{v(y)} \, dx \, dy 
&=&  \lim_{n\to \infty} \int_{K_1 \times K_2} 
    (\partial_k^{(1)} \partial_l^{(2)} G^{(n)} )(x,y) \, u(x) \, \overline{v(y)} \, dx \, dy\\
&=& \int_{K_1 \times K_2} 
    (\partial_k^{(1)} \partial_l^{(2)} G)(x,y) \, u(x) \, \overline{v(y)} \, dx \, dy.
\end{eqnarray*}
Note that by $\partial_k^{(1)} \partial_l^{(2)} G$ is continuous on $K_1 \times K_2$ 
by Theorem~\ref{tdtnpc501}.
Hence
$\Phi(x,y) = (\partial_k^{(1)} \partial_l^{(2)} G)(x,y)$ for all 
$(x,y) \in (K_1 \times K_2)^{\circ}$. 
By continuity this equality extends to all
$(x,y) \in K_1 \times K_2$. 
\end{proof}

\section{$L_p$-commutator estimates}\label{S4}

In this section we aim to derive good bounds on $L_p(\Gamma)$ for the commutator of 
the Dirichlet-to-Neumann operator $\cn_V$ and a multiplication operator $M_g$,
where $g$ is a Lipschitz continuous function on $\Gamma$.
A key ingredient is a commutator estimate by Shen \cite{She2}. 

If the boundary $\Gamma$ is $C^\infty$ and $c_{kl} = \delta_{kl}$ it is 
well known that $\cn$ is a pseudo-differential operator.
In this case 
a well known result of Calder\'on shows that $[\cn, M_g]$ acts boundedly on 
$L_p(\Gamma)$ with norm bounded by $C \, \| \nabla g \|_{L_\infty(\Gamma)}$ for 
some constant $C > 0$.
See also Coifman and Meyer \cite{CM2} for more results on commutators of 
pseudo-differential operators. 

It is our aim here to obtain similar results for less smooth domains and  
variable coefficients $c_{kl}$. 
We start with the following recent result of Z.~Shen who treated the case 
of $L_2$-estimates for bounded Lipschitz domains.
An additional  problem is that it is unclear whether the domain of $\cn$ is 
invariant under the multiplication operator.
Also that is a consequence of the same theorem.  
We formulate the commutator estimate of Shen, \cite{She2} Theorem~1.1, 
in the quadratic form sense.

\begin{thm} \label{thm4.1}
Let $\Omega \subset \Ri^d$ be an open bounded set with Lipschitz boundary.
Let $C \in \ce(\Omega)$ be real symmetric and suppose that each $c_{kl}$ is 
H\"older continuous on $\Omega$.
Then there exists a $c > 0$ such that 
\begin{equation}
|\gotb_V(g \, \varphi, \psi) - \gotb_V(\varphi, \overline{g} \, \psi)|
\leq c \, \|g\|_{C^{0,1}(\Gamma)} \, \|\varphi\|_{L_2(\Gamma)} \, \|\psi\|_{L_2(\Gamma)}
\label{etdtnpc701;1}
\end{equation}
for all $g,\varphi \in C^{0,1}(\Gamma)$ and $\psi \in H^{1/2}(\Gamma)$.
\end{thm}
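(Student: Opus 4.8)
The plan is to derive the form bound (\ref{etdtnpc701;1}) from the operator-norm commutator estimate of Shen \cite{She2}, by first reducing to the case $V = 0$ and then translating the operator statement into a statement about the form $\gotb_0$. Two facts are used repeatedly: the form $\gotb_V$ is bounded on $H^{1/2}(\Gamma) \times H^{1/2}(\Gamma)$, since $\gotb_V(\varphi,\psi) = \gota_V(\gamma_V\varphi,\gamma_V\psi)$ and the harmonic lifting $\gamma_V$ is bounded from $H^{1/2}(\Gamma) = \Tr(W^{1,2}(\Omega))$ into $W^{1,2}(\Omega)$ by (\ref{eq2.1}); and multiplication by $g$ is bounded on $H^{1/2}(\Gamma)$, which follows by extending $g$ to a Lipschitz function on $\overline{\Omega}$, since multiplication by that extension is bounded on $W^{1,2}(\Omega)$ and compatible with the trace.

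First I would reduce to $V = 0$. By Corollary~\ref{cdtnpc536} one has $\cn_V = \cn + \gamma_0^* \, M_V \, \gamma_V$ with $D(\cn_V) = D(\cn)$; evaluating this on the operator domain $D(\cn)$ and extending by continuity of the forms and of $\gamma_V,\gamma_0$ gives
\[
\gotb_V(\alpha,\beta) = \gotb_0(\alpha,\beta) + (M_V \, \gamma_V \alpha, \gamma_0 \beta)_{L_2(\Omega)}
\]
for all $\alpha,\beta \in H^{1/2}(\Gamma)$. Taking $(\alpha,\beta) = (g\varphi,\psi)$ and $(\alpha,\beta) = (\varphi,\overline{g}\,\psi)$ and subtracting, the potential term is bounded, using that $\gamma_V$ and $\gamma_0$ are bounded from $L_2(\Gamma)$ into $L_2(\Omega)$ (Proposition~\ref{pdtnpc535}\ref{pdtnpc535-4}) and that $\|g\eta\|_{L_2(\Gamma)} \leq \|g\|_{C^{0,1}(\Gamma)} \|\eta\|_{L_2(\Gamma)}$, by a constant (depending on $\Omega$, $C$ and $V$) times $\|g\|_{C^{0,1}(\Gamma)} \, \|\varphi\|_{L_2(\Gamma)} \, \|\psi\|_{L_2(\Gamma)}$. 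So it remains to prove (\ref{etdtnpc701;1}) with $\gotb_V$ replaced by $\gotb_0$.

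For $V = 0$, I would invoke Shen \cite{She2}, which yields that $M_g$ maps $D(\cn)$ into $D(\cn)$ and that $[\cn, M_g]$ extends to a bounded operator on $L_2(\Gamma)$ of norm at most $c \, \|g\|_{C^{0,1}(\Gamma)}$. For $\varphi \in D(\cn)$ and $\psi \in H^{1/2}(\Gamma)$ we then have $g\varphi \in D(\cn)$, and since $\gotb_0(\xi,\eta) = (\cn\xi,\eta)_{L_2(\Gamma)}$ for all $\xi \in D(\cn)$ and $\eta \in H^{1/2}(\Gamma)$,
\[
\gotb_0(g\varphi,\psi) - \gotb_0(\varphi,\overline{g}\,\psi)
= (\cn(g\varphi),\psi)_{L_2(\Gamma)} - (\cn\varphi,\overline{g}\,\psi)_{L_2(\Gamma)}
= ([\cn, M_g]\varphi,\psi)_{L_2(\Gamma)},
\]
where the last equality uses $(\cn\varphi,\overline{g}\,\psi)_{L_2(\Gamma)} = (g\,\cn\varphi,\psi)_{L_2(\Gamma)}$; the Cauchy--Schwarz inequality and Shen's bound then give (\ref{etdtnpc701;1}) for such $\varphi,\psi$. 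To pass to all $\varphi \in C^{0,1}(\Gamma) \subseteq H^{1/2}(\Gamma)$, I would use that $D(\cn)$ is a core for $\gotb_0$ whose form norm is equivalent to $\|\cdot\|_{H^{1/2}(\Gamma)}$ (by the mapping properties of $\gamma_0$ together with a Poincar\'e inequality), hence dense in $H^{1/2}(\Gamma)$; approximating $\varphi$ in $H^{1/2}(\Gamma)$, and therefore in $L_2(\Gamma)$, by elements of $D(\cn)$, and using continuity of $\gotb_0$ on $H^{1/2}(\Gamma) \times H^{1/2}(\Gamma)$ and boundedness of $M_g$ there, the estimate passes to the limit.

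The main obstacle I anticipate is the operator-to-form translation in the previous paragraph: it hinges on Shen's theorem providing, besides the commutator norm bound, the invariance $M_g\,D(\cn) \subseteq D(\cn)$ --- precisely the point, flagged before the statement, that $D(\cn)$ is a priori not stable under $M_g$ --- which is what makes the identity $\gotb_0(g\varphi,\psi) - \gotb_0(\varphi,\overline{g}\,\psi) = ([\cn,M_g]\varphi,\psi)_{L_2(\Gamma)}$ meaningful. Once this is in hand, the reduction to $V = 0$ and the density argument are routine, resting on Corollary~\ref{cdtnpc536}, Proposition~\ref{pdtnpc535} and the standard fact that an operator domain is a core for its form.
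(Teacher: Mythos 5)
The paper does not prove Theorem~\ref{thm4.1}: it \emph{is} Shen's Theorem~1.1 of \cite{She2}, restated ``in the quadratic form sense'', and the citation is the proof. Your proposal instead manufactures the form bound out of an operator-level version of Shen's result, namely that $M_g$ leaves $D(\cn)$ invariant and that $\|[\cn,M_g]\|_{2\to 2}\leq c\,\|g\|_{C^{0,1}(\Gamma)}$. Within this paper that is exactly backwards: the invariance of $D(\cn)$ under $M_g$ and the $L_2$ operator bound are the content of Theorem~\ref{tdtnpc701}, which is \emph{deduced from} Theorem~\ref{thm4.1}, and the discussion before the statement explicitly flags that the domain invariance ``is a consequence of the same theorem'' and is not available a priori. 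So the step you yourself single out as the main obstacle is a genuine gap: unless \cite{She2} literally supplies the operator statement together with $M_g\,D(\cn)\subseteq D(\cn)$ --- which is not what the authors import from it --- your argument assumes as a hypothesis the principal corollary of the theorem you are trying to prove. (Granting that input, the translation $\gotb_0(g\varphi,\psi)-\gotb_0(\varphi,\overline g\,\psi)=([\cn,M_g]\varphi,\psi)_{L_2(\Gamma)}$ and the closing density argument are correct.)

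A second, independent gap is the reduction to $V=0$. You invoke Corollary~\ref{cdtnpc536} and the $L_2(\Gamma)\to L_2(\Omega)$ boundedness of $\gamma_0$ and $\gamma_V$ from Proposition~\ref{pdtnpc535}\ref{pdtnpc535-4}; both are proved in the paper only for domains with $C^{1+\kappa}$-boundary, since they rest on the Green function estimates of Theorems~\ref{tdtnpc501} and~\ref{tdtnpc502}, whereas Theorem~\ref{thm4.1} is asserted for bounded Lipschitz domains. The algebraic identity $\gotb_V(\varphi,\psi)=\gotb_0(\varphi,\psi)+(M_V\,\gamma_V\varphi,\gamma_0\psi)_{L_2(\Omega)}$ can in fact be obtained directly from the definition of $\gotb_V$ (test $\gota_V(\gamma_V\varphi,\cdot)$ against $\gamma_0\psi$, using that it vanishes on $W^{1,2}_0(\Omega)$ and that $C$ is real symmetric), so Corollary~\ref{cdtnpc536} is not needed; but to dominate the potential term by $\|g\|_{C^{0,1}(\Gamma)}\,\|\varphi\|_{L_2(\Gamma)}\,\|\psi\|_{L_2(\Gamma)}$ you still need the $L_2$-boundedness of the harmonic liftings, which the paper does not establish in the Lipschitz setting.
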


The theorem gives invariance of the domain of the Dirichlet-to-Neumann operator
under multiplication with a Lipschitz function and commutator estimates. Recall that 
\[
{\rm Lip}_\Gamma(g) = \sup_{z,w \in \Gamma, z \not= w} \frac{ | g(z) - g(w) |}{| z-w|}
\]
for every $g \in C^{0,1}(\Gamma)$.

\begin{thm} \label{tdtnpc701}
Let $\Omega \subset \Ri^d$ be an open bounded set with Lipschitz boundary.
Let $C \in \ce(\Omega)$ be real symmetric and suppose that each $c_{kl}$ is 
H\"older continuous on $\Omega$.
Then $g \, \varphi \in D(\cn)$ for all $g \in C^{0,1}(\Gamma)$ and 
$\varphi \in D(\cn)$.
Moreover, there exists a $c > 0$ such that 
\[
\|[\cn, M_g]\|_{2 \to 2} \leq c \, {\rm Lip}_\Gamma(g)
\]
for all $g \in C^{0,1}(\Gamma)$.
\end{thm}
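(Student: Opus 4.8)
The statement is essentially a reformulation of Theorem~\ref{thm4.1} (with $V=0$) at the level of operators rather than forms, so the plan is to unwind the definition of $\cn$ through its form $\gotb_0$ and feed in the estimate \eqref{etdtnpc701;1}. First I would recall that $\cn$ is the self-adjoint operator associated with the closed symmetric form $\gotb_0$ on form domain $H^{1/2}(\Gamma)=\Tr(W^{1,2}(\Omega))$, so that $\varphi\in D(\cn)$ and $\cn\varphi=\eta$ iff $\varphi\in H^{1/2}(\Gamma)$ and $\gotb_0(\varphi,\psi)=(\eta,\psi)_{L_2(\Gamma)}$ for all $\psi\in H^{1/2}(\Gamma)$. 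Given $g\in C^{0,1}(\Gamma)$ and $\varphi\in D(\cn)$, I first note $g\,\varphi\in H^{1/2}(\Gamma)$: multiplication by a Lipschitz function maps $\Tr(W^{1,2}(\Omega))$ into itself (lift $\varphi$ to $u\in W^{1,2}(\Omega)$, multiply by a Lipschitz extension of $g$ to $\overline\Omega$, and observe the product lies in $W^{1,2}(\Omega)$ with trace $g\,\varphi$). Since $g$ is real valued we have $\overline g=g$, so \eqref{etdtnpc701;1} reads
\[
|\gotb_0(g\,\varphi,\psi)-\gotb_0(\varphi,g\,\psi)|\le c\,\|g\|_{C^{0,1}(\Gamma)}\,\|\varphi\|_{L_2(\Gamma)}\,\|\psi\|_{L_2(\Gamma)}
\]
for all $\psi\in H^{1/2}(\Gamma)$.

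The next step is to rewrite the left-hand side using that $\varphi\in D(\cn)$. For $\psi\in H^{1/2}(\Gamma)$ we have $g\,\psi\in H^{1/2}(\Gamma)$ as well, so $\gotb_0(\varphi,g\,\psi)=(\cn\varphi,g\,\psi)_{L_2(\Gamma)}=(g\,\cn\varphi,\psi)_{L_2(\Gamma)}$, again using that $g$ is real valued. Hence
\[
|\gotb_0(g\,\varphi,\psi)-(g\,\cn\varphi,\psi)_{L_2(\Gamma)}|\le c\,\|g\|_{C^{0,1}(\Gamma)}\,\|\varphi\|_{L_2(\Gamma)}\,\|\psi\|_{L_2(\Gamma)}
\]
for all $\psi\in H^{1/2}(\Gamma)$. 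Since $H^{1/2}(\Gamma)$ is dense in $L_2(\Gamma)$, the functional $\psi\mapsto\gotb_0(g\,\varphi,\psi)$ extends to a bounded conjugate-linear functional on $L_2(\Gamma)$, so by Riesz representation there is $\eta\in L_2(\Gamma)$ with $\gotb_0(g\,\varphi,\psi)=(\eta,\psi)_{L_2(\Gamma)}$ for all $\psi\in H^{1/2}(\Gamma)$; this says precisely $g\,\varphi\in D(\cn)$ with $\cn(g\,\varphi)=\eta$, proving the first assertion. Moreover the displayed inequality then gives $\|\cn(g\,\varphi)-g\,\cn\varphi\|_{L_2(\Gamma)}\le c\,\|g\|_{C^{0,1}(\Gamma)}\,\|\varphi\|_{L_2(\Gamma)}$, i.e. $\|[\cn,M_g]\varphi\|_{L_2(\Gamma)}\le c\,\|g\|_{C^{0,1}(\Gamma)}\,\|\varphi\|_{L_2(\Gamma)}$ for all $\varphi\in D(\cn)$.

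Finally I would upgrade the constant from $\|g\|_{C^{0,1}(\Gamma)}=\|g\|_{L_\infty(\Gamma)}+{\rm Lip}_\Gamma(g)$ to ${\rm Lip}_\Gamma(g)$ alone. This is standard: the commutator $[\cn,M_g]=[\cn,M_{g-\lambda}]$ is unchanged when $g$ is replaced by $g-\lambda$ for any constant $\lambda\in\Ri$ (since $M_\lambda$ commutes with $\cn$), and $\Gamma$ is bounded with finitely many connected components, so one may subtract from $g$ a suitable locally constant function to arrange $\|g-\lambda\|_{L_\infty(\Gamma)}\le(\diam\Gamma)\,{\rm Lip}_\Gamma(g)$, whence $\|g-\lambda\|_{C^{0,1}(\Gamma)}\le(1+\diam\Gamma)\,{\rm Lip}_\Gamma(g)$. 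Then $\|[\cn,M_g]\|_{2\to2}=\|[\cn,M_{g-\lambda}]\|_{2\to2}\le c\,(1+\diam\Gamma)\,{\rm Lip}_\Gamma(g)$, which is the claim after renaming the constant. The only mild subtlety — and the place I would be most careful — is the density/closedness bookkeeping in passing from the form identity on $H^{1/2}(\Gamma)$ to membership in $D(\cn)$, together with checking that $M_g$ indeed preserves $H^{1/2}(\Gamma)$; everything else is routine once Theorem~\ref{thm4.1} is granted.
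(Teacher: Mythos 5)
Your proposal is correct and follows essentially the same route as the paper: unwind the form characterisation of $\cn$, feed Shen's estimate (\ref{etdtnpc701;1}) into it to get membership of $g\,\varphi$ in $D(\cn)$ and the bound with $\|g\|_{C^{0,1}(\Gamma)}$, then remove the $L_\infty$ part of the norm by subtracting $g(z_0)$. Two small points. First, Theorem~\ref{thm4.1} is stated for $\varphi \in C^{0,1}(\Gamma)$, so before applying it to a general $\varphi \in D(\cn)$ you must extend (\ref{etdtnpc701;1}) to all $\varphi \in H^{1/2}(\Gamma)$; the paper does this by observing that $M_g$ is bounded on $L_2(\Gamma)$ and on $H^1(\Gamma)$, hence on $H^{1/2}(\Gamma)$ by interpolation, and then using density of $C^{0,1}(\Gamma)$ in $H^{1/2}(\Gamma)$ — your lifting argument shows $M_g$ preserves $\Tr(W^{1,2}(\Omega))$, but the density passage you flag as ``bookkeeping'' is exactly the step that must be written out. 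Second, in the final reduction it suffices to subtract the genuine constant $g(z_0)$, since ${\rm Lip}_\Gamma(g)$ is defined via the Euclidean distance over all pairs of points of $\Gamma$ (including pairs in different connected components), giving $\|g-g(z_0)\|_{L_\infty(\Gamma)} \leq \diam(\Omega)\,{\rm Lip}_\Gamma(g)$ directly; a non-constant locally constant $\lambda$ need not commute with $\cn$ when $\Gamma$ is disconnected, so that refinement should be dropped.
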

\begin{proof}
Let $c > 0$ be as in Theorem~\ref{thm4.1}.
Let $g \in C^{0,1}(\Gamma)$.
Then $M_g$ is bounded from $L_2(\Gamma)$ into $L_2(\Gamma)$ 
and from $H^1(\Gamma)$ into $H^1(\Gamma)$.
Hence by interpolation the operator $M_g$ is bounded from 
$H^{1/2}(\Gamma)$ into $H^{1/2}(\Gamma)$.
Since $C^{0,1}(\Gamma)$ is dense in $H^{1/2}(\Gamma)$ it 
follows that (\ref{etdtnpc701;1}) extends to all 
$\varphi,\psi \in H^{1/2}(\Gamma)$.

If $\varphi \in D(\cn)$, then 
\begin{eqnarray*}
|\gotb_V(g \, \varphi, \psi)|
& \leq & |\gotb_V(\varphi, \overline{g} \, \psi)| 
   + c \, \| g \|_{C^{0,1}(\Gamma)} \, \|\varphi\|_{L_2(\Gamma)} \, \|\psi\|_{L_2(\Gamma)}  \\
& \leq & \|\cn \varphi\|_{L_2(\Gamma)} \, \|\overline{g} \, \psi\|_{L_2(\Gamma)}
   + c \,  \| g \|_{C^{0,1}(\Gamma)} \, \|\varphi\|_{L_2(\Gamma)} \, \|\psi\|_{L_2(\Gamma)}
\leq c' \, \|\psi\|_{L_2(\Gamma)}
\end{eqnarray*}
for all $\psi \in H^{1/2}(\Gamma) = D(\gotb_V)$,
where $c' = \|\cn \varphi\|_{L_2(\Gamma)} \, \|g\|_{L_\infty(\Gamma)}
   + c \,  \| g \|_{C^{0,1}(\Gamma)} \, \|\varphi\|_{L_2(\Gamma)}$.
Hence $g \, \varphi \in D(\cn)$.
Then the extended version of (\ref{etdtnpc701;1}) gives
\[
\|[\cn, M_g] \varphi\|_{L_2(\Gamma)}
\leq c \,  \| g \|_{C^{0,1}(\Gamma)} \, \|\varphi\|_{L_2(\Gamma)}
\]
for all $\varphi \in D(\cn)$.
So 
\begin{equation}
\|[\cn, M_g]\|_{2 \to 2}
\leq c \,  \| g \|_{C^{0,1}(\Gamma)}.
\label{etdtnpc701;2}
\end{equation}
We next observe  that one can replace $ \| g \|_{C^{0,1}(\Gamma)}$ by 
${\rm Lip}_\Gamma(g)$.

Fix  $z_0 \in \Gamma$ and apply (\ref{etdtnpc701;2}) to $g- g(z_0)$ to obtain
\begin{eqnarray*}
 \| [\cn, M_g] \|_{2 \to 2} 
&=& \| [\cn, M_{g-g(z_0)}] \|_{2 \to 2}\\  
&\le& c \, \| g - g(z_0) \|_{C^{0,1}(\Gamma)}
= c \, \Big( \| g - g(z_0) \|_{L_\infty(\Gamma)} 
   + {\rm Lip}_\Gamma(g) \Big).
\end{eqnarray*}
On the other hand, since  $\Gamma$ is bounded 
\[
|g(z) - g(z_0)| 
\le |z-z_0| \,  {\rm Lip}_\Gamma(g)
\le {\rm diam}(\Omega) \, {\rm Lip}_\Gamma(g)
\]
for all $z \in \Gamma$.
Thus  the desired estimate holds. 
\end{proof}

Next we extend this result to  $L_p(\Gamma)$ for all $p \in (1,\infty)$ when the 
underlying domain is  more regular. 
We even have the result for $\cn_V$ with $V \in L_\infty(\Omega)$.

\begin{thm}\label{thm4.2V}
Let $\kappa \in (0,1)$, $\Omega \subset \Ri^d$ be an open bounded set with a $C^{1+\kappa}$-boundary
and $C \in \ce(\Omega)$ real symmetric.
Let $V \in L_\infty(\Omega, \Ri)$ 
and assume that $0 \notin \sigma(A_D + V)$. 
Then for all $p \in (1, \infty)$ 
 there exists a $c > 0$ such that 
\[
 \| [\cn_V, M_g] \|_{p \to p}  
\le c \, {\rm Lip}_\Gamma(g)
\]
for all $g \in C^{0,1}(\Gamma)$.

In addition the operator  $[\cn_V, M_g]$ is of weak type $(1,1)$ with an estimate
$ c \, {\rm Lip}_\Gamma(g)$ as before.
\end{thm}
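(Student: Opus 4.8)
\textbf{Proof strategy for Theorem~\ref{thm4.2V}.}
The plan is to combine the $L_2$-estimate of Theorem~\ref{tdtnpc701} with Calder\'on--Zygmund theory on the space of homogeneous type $(\Gamma, |\cdot|, d\sigma)$, using the kernel bounds for $K_{\cn_V}$ from Proposition~\ref{prop6.01}. First I would reduce to $\cn$: by Corollary~\ref{cdtnpc536} we have $\cn_V = \cn + \gamma_0^* \, M_V \, \gamma_V$, and since $\gamma_0^* \, M_V \, \gamma_V$ is an integral operator with kernel $K_Q(z,w) = \int_\Omega K_{\gamma_V}(x,z) \, V(x) \, K_{\gamma_0}(x,w) \, dx$ satisfying (by Proposition~\ref{prop6.01} and its proof) a standard Calder\'on--Zygmund bound $|K_Q(z,w)| \le c \, |z-w|^{-(d-1)}$ together with a H\"older estimate, the commutator $[\gamma_0^* \, M_V \, \gamma_V, M_g]$ has kernel $(g(z) - g(w)) \, K_Q(z,w)$, whose absolute value is bounded by $c \, {\rm Lip}_\Gamma(g) \, |z-w|^{-(d-2)}$, which is locally integrable in $w$; hence this commutator is bounded on every $L_p(\Gamma)$, $p \in [1,\infty]$, with norm controlled by ${\rm Lip}_\Gamma(g)$. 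So it suffices to treat $[\cn, M_g]$.

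For $[\cn, M_g]$ the Schwartz kernel is $(z,w) \mapsto (g(z) - g(w)) \, K_\cn(z,w)$. Using the Calder\'on--Zygmund bounds on $K_\cn$ from Proposition~\ref{prop6.0}, namely $|K_\cn(z,w)| \le c \, |z-w|^{-d}$ and the H\"older bound for $K_\cn(z,w) - K_\cn(z',w')$, one checks that the kernel $L(z,w) := (g(z) - g(w)) \, K_\cn(z,w)$ of the commutator satisfies the Calder\'on--Zygmund conditions with constants proportional to ${\rm Lip}_\Gamma(g)$: the size bound $|L(z,w)| \le c \, {\rm Lip}_\Gamma(g) \, |z-w|^{-(d-1)}$ is immediate, and for the regularity (H\"ormander) condition one splits $L(z,w) - L(z',w) = (g(z) - g(z')) \, K_\cn(z,w) + (g(z') - g(w))(K_\cn(z,w) - K_\cn(z',w))$ and estimates the two terms using ${\rm Lip}_\Gamma(g)$ and the H\"older bound on $K_\cn$ respectively (and symmetrically in the second variable). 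Since Theorem~\ref{tdtnpc701} gives $L_2$-boundedness of $[\cn, M_g]$ with the bound $c \, {\rm Lip}_\Gamma(g)$, the Calder\'on--Zygmund theorem on the homogeneous-type space $\Gamma$ (which is $(d-1)$-Ahlfors regular because $\Omega$ has $C^{1+\kappa}$, hence Lipschitz, boundary) yields weak type $(1,1)$ with constant $c \, {\rm Lip}_\Gamma(g)$, and then $L_p$-boundedness for $p \in (1,2]$ by Marcinkiewicz interpolation. For $p \in [2,\infty)$ one applies the same argument to the adjoint: $[\cn, M_g]^* = -[\cn, M_{\overline g}]$ (as $\cn$ and $M_g$ have the stated self-adjointness/Hermitian structure, and for real $g$ simply $[\cn, M_g]^* = -[\cn, M_g]$), whose kernel is $-\overline{L(w,z)} = (g(w) - g(z)) \, K_\cn(w,z)$, again Calder\'on--Zygmund with constant $c \, {\rm Lip}_\Gamma(g)$, so it is of weak type $(1,1)$ and bounded on $L_q(\Gamma)$ for $q \in (1,2]$; dualizing gives $[\cn, M_g]$ bounded on $L_p(\Gamma)$ for $p \in [2,\infty)$.

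The main obstacle I expect is verifying the Calder\'on--Zygmund \emph{regularity} condition for the commutator kernel $L$ cleanly: one must carefully bound the term $(g(z') - g(w))(K_\cn(z,w) - K_\cn(z',w))$ in the regime $|z - z'| \le \tfrac12 |z-w|$, where $|g(z') - g(w)| \le {\rm Lip}_\Gamma(g) \, |z'-w| \le c \, {\rm Lip}_\Gamma(g) \, |z-w|$ and the H\"older bound on $K_\cn$ contributes $|z-z'|^\kappa |z-w|^{-d-\kappa}$, giving the required $c \, {\rm Lip}_\Gamma(g) \, |z-z'|^\kappa \, |z-w|^{-(d-1)-\kappa}$; the bookkeeping, and the analogous computation for $K_Q$ (whose H\"older estimate was only established for $d-2+|\alpha|+|\beta|$-type exponents in the relevant range, so one should double-check the $d=2$ borderline), is routine but must be done with the correct powers. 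A secondary point is confirming that $\Gamma$, equipped with the induced Euclidean distance and the $(d-1)$-dimensional Hausdorff measure, is genuinely a space of homogeneous type with the doubling property, which follows from the $C^{1+\kappa}$ (indeed Lipschitz) regularity of $\partial\Omega$; this is standard and can be quoted.
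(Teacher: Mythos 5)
Your proposal is correct and follows essentially the same route as the paper: decompose $\cn_V = \cn + Q$ via Corollary~\ref{cdtnpc536}, dispose of $[Q,M_g]$ by elementary means, and treat $[\cn,M_g]$ as a Calder\'on--Zygmund operator on $\Gamma$ using the kernel bounds of Proposition~\ref{prop6.0} together with Shen's $L_2$-estimate in the form of Theorem~\ref{tdtnpc701}. The only cosmetic difference is that you control $[Q,M_g]$ through the kernel bound $|K_Q(z,w)|\le c\,|z-w|^{-(d-1)}$ and Schur's test, whereas the paper uses the $L_p(\Gamma)\to L_p(\Omega)$ boundedness of $\gamma_0,\gamma_V$ from Proposition~\ref{pdtnpc535} plus the $g-g(z_0)$ normalization; both are valid and yield the same constant in terms of ${\rm Lip}_\Gamma(g)$.
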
 
\begin{proof}
First, recall from Corollary~\ref{cdtnpc536} that
\[
\cn_V  =  \cn + Q,
\]
where $Q =  \gamma_0^* \, M_V \, \gamma_V = \gamma_V^* \, M_V \, \gamma_0$.
Then
\[
[\cn_V, M_g] = [\cn, M_g] + [Q, M_g].
\]
On the other hand, by Proposition~\ref{pdtnpc535}\ref{pdtnpc535-4}
the operators $\gamma_V$ and $\gamma_0$ 
have bounded extensions from
$L_p(\Gamma)$ to $L_p(\Omega)$ for all $p \in [1, \infty]$. 
Therefore $[Q, M_g]$ is a bounded operator on $L_p(\Gamma)$ with norm 
$\|[Q, M_g]\|_{p \to p} \leq 2 \|Q\|_{p \to p} \, \| g \|_{L_\infty(\Gamma)}$.
Then as in the proof of Theorem~\ref{tdtnpc701}, we fix $z_0 \in \Gamma$ and 
apply the above estimate with the function  
$g- g(z_0)$ and use the trivial bound 
$\|g - g(z_0)\|_{L_\infty(\Omega)} 
\le {\rm diam}(\Omega) \,  {\rm Lip}_\Gamma(g)$ to obtain
\[
 \| [Q, M_g] \|_{p \to p}  \le c' \,  {\rm Lip}_\Gamma(g),
\]
where $c' = 2 \|Q\|_{p \to p} \, {\rm diam}(\Omega)$.
Hence it remains to prove the correspond estimates for $[\cn, M_g]$
if $p \in (1,\infty)$.

The Schwartz kernel $K$ of the commutator $[\cn, M_g]$ is given by
$ K(z,w) := (g(w) - g(z))K_\cn(z,w)$, 
where $K_\cn$ denotes the Schwartz kernel of $\cn$. 
It follows from Proposition~\ref{prop6.0} that there is a suitable $c > 0$ such that 
\begin{equation}\label{4.3}
| K(z,w) | \le c \, {\rm Lip}_\Gamma(g) |z-w|^{-(d-1)} 
\end{equation}
for all $z, w \in \Gamma$ with $z \not= w$. 
On the other hand, using the second bounds in Proposition \ref{prop6.0}
we see that there exists a suitable $c' > 0$ such that 
\begin{eqnarray*}
| K(z,w) - K(z',w)|
& = & | (g(w)-g(z)) \, K_\cn(z,w) - (g(w)-g(z'))K_\cn(z,w)(z',w) |  \\
&\le& (|g(w)| + |g(z')|) \, | K_\cn(z,w)(z,w) - K_\cn(z,w)(z',w)  |   \\*
& & \hspace*{30mm} {}
     + |(g(z') -g(z)) \, K_\cn(z,w)(z,w) | \\
&\le& 2c' \, \| g  \|_{L_\infty(\Gamma)} \, |z-z'|^\kappa \, |z-w|^{-(d + \kappa)} 
   + c' \, |z-z'| \, {\rm Lip}_\Gamma(g) \, |z-w|^{-d} 
\end{eqnarray*}
for all $z, z', w  \in \Gamma$
with $z \not= w$ and $|z - z'| \le \frac{1}{2} \, |z-w|$.
Again as in 
the proof of Theorem~\ref{tdtnpc701}, we fix $z_0 \in \Gamma$ and 
apply the above estimate with the function  
$g - g(z_0)$ and use the trivial bound 
$\|g - g(z_0)\|_{L_\infty(\Gamma)} \le {\rm diam}(\Omega) \,  {\rm Lip}_\Gamma(g)$,  
to deduce that there exists a suitable $M > 0$ such that 
\begin{equation}\label{4.4}
 | K(z,w) - K(z',w) | 
\le M \, |z-z'|^\kappa \, |z-w|^{-(d-1 + \kappa)} \,  {\rm Lip}_\Gamma(g)
 \end{equation}
for all $z, z', w \in \Gamma$ with $z \not = w$ and $|z - z'| \le \frac{1}{2}|z'-w|$.
 Similarly, one proves that
 \begin{equation}\label{4.5}
 | K(z,w) - K(z,w') | 
\le M_1 |w-w'|^{\varepsilon} |z-w|^{-(d-1 + \varepsilon)} \,  {\rm Lip}_\Gamma(g)
 \end{equation}
for all $z, w, w' \in \Gamma$ with $z \not = w$ and $|w - w'| \le \frac{1}{2}|z-w|$. 
 It follows form (\ref{4.3}), (\ref{4.4}) and (\ref{4.5}) that $K(\cdot,\cdot)$ is a 
Calder\'on--Zygmund kernel.
We obtain from this and Theorem~\ref{tdtnpc701} that $[\cn, M_g]$ acts as a 
bounded operator on $L_p(\Gamma)$ for all $p \in (1, \infty)$ with norm estimated by 
$c'' \, {\rm Lip}_\Gamma(g)$.
This shows Theorem \ref{thm4.2V}.  
\end{proof}

\section{Proof of Poisson bounds}\label{S5}

Throughout this section we adopt the notation and assumptions of Theorem~\ref{thm1.1}.

We start with a lemma.

\begin{lemma} \label{ldtnpc801}
For all $p \in [1,\infty)$ the semigroup $S^V$ extends consistently 
to a $C_0$-semigroup on $L_p(\Gamma)$.
\end{lemma}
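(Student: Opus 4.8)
\textbf{Proof proposal for Lemma~\ref{ldtnpc801}.}
The plan is to identify the relevant bounds that are already available for the self-adjoint semigroup $S^V$ on $L_2(\Gamma)$ and to upgrade them to a $C_0$-semigroup on $L_p(\Gamma)$ for every $p \in [1,\infty)$. First I would treat the case $V \geq 0$, where Theorem~\ref{th2.2}\ref{th2.2-2} already gives that $S^V$ is sub-Markovian, hence acts as a contraction $C_0$-semigroup on $L_p(\Gamma)$ for all $p \in [1,\infty)$; the consistency on $L_p \cap L_2$ is built into that statement, so there is nothing further to prove in that case. The whole content of the lemma is therefore the reduction of general real-valued $V \in L_\infty(\Omega,\Ri)$ to the positive case.

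For general $V$, I would use the perturbation identity $\cn_V = \cn + \gamma_0^*\,M_V\,\gamma_V$ from Corollary~\ref{cdtnpc536}, or more conveniently shift $V$: pick $\lambda > 0$ with $V + \lambda \geq 0$. By the positive-potential case the semigroup $S^{V+\lambda} = (e^{-t\cn_{V+\lambda}})_{t>0}$ is a $C_0$-semigroup on $L_p(\Gamma)$, and by Corollary~\ref{cdtnpc536} we have $\cn_{V+\lambda} = \cn_V + \gamma_0^* M_\lambda \gamma_V = \cn_V + \lambda\,\gamma_0^*\gamma_V$ (more carefully, $\cn_{V+\lambda} = \cn + \gamma_0^* M_{V+\lambda}\gamma_V$ and $\cn_V = \cn + \gamma_0^* M_V \gamma_{V}$, using also that $\gamma_V$ depends on $V$; one should instead write $\cn_{V+\lambda} - \cn_V$ as a bounded operator). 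The key point is that $B := \cn_{V+\lambda} - \cn_V$ extends to a bounded operator on $L_p(\Gamma)$: indeed $\cn_{V+\lambda} = \cn_V + \gamma_0^* M_\lambda \gamma_{V+\lambda} - $ correction terms, and by Proposition~\ref{pdtnpc535}\ref{pdtnpc535-4} the harmonic liftings $\gamma_V, \gamma_{V+\lambda}$ and their adjoints are bounded between the appropriate $L_p$ spaces, while $M_\lambda$ and $M_V$ are bounded multiplication operators on $L_p(\Omega)$. Hence $-\cn_V = -\cn_{V+\lambda} + B$ is a bounded perturbation on $L_p(\Gamma)$ of the generator $-\cn_{V+\lambda}$ of a $C_0$-semigroup, so by the standard bounded perturbation theorem it generates a $C_0$-semigroup $\tilde S$ on $L_p(\Gamma)$. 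Consistency of $\tilde S$ with $S^V$ on $L_p(\Gamma) \cap L_2(\Gamma)$ follows because both are obtained from the same bounded perturbation applied to the consistent family $S^{V+\lambda}$: the Duhamel (variation-of-constants) series for $\tilde S_t$ converges in $L_2(\Gamma)$ as well and there reproduces $S^V_t$, since $B$ acts consistently on $L_2$ and $L_p$.

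The step I expect to require the most care is verifying precisely that $\cn_{V+\lambda} - \cn_V$, equivalently a suitable combination of $\gamma_0^* M_V \gamma_V$-type terms, is genuinely $L_p$-bounded for all $p \in [1,\infty)$ and acts consistently with its $L_2$ realisation; this is where Proposition~\ref{pdtnpc535}\ref{pdtnpc535-4} (boundedness of $\gamma_V$ from $L_p(\Gamma)$ into $L_p(\Omega)$ and, by duality, of $\gamma_V^*$ from $L_p(\Omega)$ into $L_p(\Gamma)$) together with boundedness of $M_V$ on $L_p(\Omega)$ does all the work, but one must assemble the identity for $\cn_V$ in terms of $\cn$ and bounded operators carefully, using Corollary~\ref{cdtnpc536}, and check that the resulting bounded operator is the same whether computed on $L_2$ or on $L_p$. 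Once that is in place, the bounded perturbation theorem and the uniqueness of consistent extensions finish the proof; the quasi-contractivity and holomorphy already recorded for $S^V$ on $L_2$ are not needed here, only that $-\cn_V$ differs from a $C_0$-generator by a bounded operator.
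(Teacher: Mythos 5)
Your proposal is correct and follows essentially the same route as the paper: there one simply notes that $S = S^0$ is sub-Markovian by Theorem~\ref{th2.2}\ref{th2.2-2} (hence a contraction $C_0$-semigroup on every $L_p(\Gamma)$), that $Q = \gamma_V^* \, M_V \, \gamma_0$ is bounded on $L_p(\Gamma)$ by Proposition~\ref{pdtnpc535}\ref{pdtnpc535-4}, and that $\cn_V = \cn + Q$ by Corollary~\ref{cdtnpc536}, after which standard bounded perturbation theory finishes the argument. Your detour through $V + \lambda \geq 0$ is unnecessary --- perturbing directly from $\cn = \cn_0$ avoids having to assemble $\cn_{V+\lambda} - \cn_V$ from two $V$-dependent liftings --- but the ingredients and the conclusion are the same.
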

\begin{proof}
By Theorem \ref{th2.2}\ref{th2.2-2}, the semigroup $S$ generated by $-\cn$ is sub-Markovian and 
hence it acts as a contraction semigroup on 
$L_p(\Gamma)$ for all $p \in [1, \infty]$ and it is strongly continuous if $p \in [1, \infty)$. 
We know from  Proposition~\ref{pdtnpc535}\ref{pdtnpc535-4} 
that the operator $Q = \gamma_V^* \, M_V \, \gamma_0$ 
is bounded on $L_p(\Gamma)$ for all $p \in [1, \infty]$. 
Since  
$\cn_V  =  \cn + Q$ by Corollary~\ref{cdtnpc536} it follows from standard perturbation theory 
that the semigroup $S^V$ generated by $-\cn_V$ acts as a $C_0$-semigroup on $L_p(\Gamma)$ 
for all $p \in [1, \infty)$. 
\end{proof}

Let $S^V$ be the semigroup generated by $-\cn_V$ on $L_2(\Gamma)$.

\begin{lemma}\label{lem8.00}
For all $t > 0$ the operator $S_t^V$ has a kernel $K_t^V$. 
In addition, there exist $\omega \in \Ri$ and $c > 0$ such that
\[
| K_t^V(z,w) | \le c \, t^{-(d-1)} \, e^{\omega t} 
\]
for all $t > 0$ and a.e. $z,w \in \Gamma$. 
\end{lemma}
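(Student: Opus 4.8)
The statement to be proved is Lemma~\ref{lem8.00}: for all $t > 0$ the operator $S_t^V$ has a kernel $K_t^V$, and there are $\omega \in \Ri$ and $c > 0$ with $|K_t^V(z,w)| \le c \, t^{-(d-1)} \, e^{\omega t}$ for all $t > 0$ and a.e.\ $z,w \in \Gamma$. The plan is to extract this from the $L_p$--$L_q$ bounds for $S^V$ together with the fact (Lemma~\ref{ldtnpc801}) that $S^V$ acts as a $C_0$-semigroup on all $L_p(\Gamma)$, $p \in [1,\infty)$. The key point is that $L_1$--$L_\infty$ boundedness of an integral operator is equivalent to the existence of a bounded kernel, so the whole game is to show $\|S_t^V\|_{1 \to \infty} \le c \, t^{-(d-1)} \, e^{\omega t}$.

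First I would reduce to the case $V \ge 0$. By Corollary~\ref{cdtnpc536} we have $\cn_V = \cn + Q$ with $Q = \gamma_0^* \, M_V \, \gamma_V$, which by Proposition~\ref{pdtnpc535}\ref{pdtnpc535-4} is bounded on $L_p(\Gamma)$ for every $p \in [1,\infty]$, uniformly in $p$ in the sense that $\|Q\|_{p \to p} \le c_Q$ with $c_Q$ independent of $p$ (interpolating the $L_1$ and $L_\infty$ bounds). Hence by the Dyson--Phillips/bounded-perturbation series, $S_t^V = \sum_{n \ge 0} (-1)^n \int_{\Delta_n} S^0_{t - s_1} Q S^0_{s_1 - s_2} \cdots Q S^0_{s_n} \, ds$, where $S^0$ is the semigroup generated by $-\cn$. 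Alternatively, and more cleanly, I would simply replace $V$ by $V + \lambda$ for $\lambda$ large enough that $V + \lambda \ge 0$; since $A_D + V + \lambda$ differs from $A_D + V$ by a bounded self-adjoint operator, $0 \notin \sigma(A_D + V + \lambda)$ may fail for isolated $\lambda$, but we can choose $\lambda$ avoiding the (discrete) exceptional set, and $e^{-t\cn_{V+\lambda}}$ and $e^{-t\cn_V}$ are related by the bounded perturbation $\gamma_0^*(M_\lambda)\gamma_{V+\lambda}$ acting on $L_p(\Gamma)$; the $e^{\omega t}$ factor absorbs the resulting exponential. Either way, one arrives at: it suffices to prove the bound when $0 \le V \in L_\infty(\Omega)$, and then the perturbation argument carries the result to general $V$ at the cost of enlarging $\omega$.

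With $V \ge 0$, Theorem~\ref{th2.3} gives $\|S_t^V\|_{p \to q} \le C \, (t \wedge 1)^{-(d-1)(\frac1p - \frac1q)} \, e^{-\lambda_1 t}$ with $\lambda_1 \ge 0$. Taking $p = 1$, $q = \infty$ yields $\|S_t^V\|_{1 \to \infty} \le C \, (t \wedge 1)^{-(d-1)} \, e^{-\lambda_1 t} \le C' \, t^{-(d-1)} \, e^{\omega t}$, where for $t \le 1$ one has $(t\wedge 1)^{-(d-1)} = t^{-(d-1)}$ and $e^{-\lambda_1 t} \le 1$, while for $t \ge 1$ one has $(t \wedge 1)^{-(d-1)} = 1 \le t^{-(d-1)} \cdot t^{d-1}$ and $t^{d-1} e^{-\lambda_1 t}$ is bounded by $C'' e^{\omega t}$ for a suitable $\omega$ (e.g.\ $\omega > 0$ if $\lambda_1 = 0$, or any $\omega \in (-\lambda_1, 0)$ otherwise). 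The standard Dunford--Pettis / Grothendieck theorem on integral representation of operators from $L_1$ to $L_\infty$ then produces a measurable kernel $K_t^V$ on $\Gamma \times \Gamma$ with $\|K_t^V\|_{L_\infty(\Gamma \times \Gamma)} = \|S_t^V\|_{1 \to \infty}$, which is exactly the claimed pointwise bound. Consistency of the kernel across $t$ (i.e.\ that these are the kernels of one semigroup) follows from the semigroup/Chapman--Kolmogorov identity $K^V_{t+s}(z,w) = \int_\Gamma K^V_t(z,v) K^V_s(v,w)\,dv$, which holds because $S^V$ is consistently defined on all $L_p(\Gamma)$ by Lemma~\ref{ldtnpc801}.

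The main obstacle is really just the bookkeeping in the perturbation step to remove the hypothesis $V \ge 0$ while keeping track of which $L_p$ norms are controlled: one needs $Q$ (or $M_\lambda$-type perturbations) bounded on $L_1(\Gamma)$ and $L_\infty(\Gamma)$, not merely on $L_2(\Gamma)$, so that the perturbed semigroup still maps $L_1 \to L_\infty$ with a quantitative bound; this is precisely what Proposition~\ref{pdtnpc535}\ref{pdtnpc535-4} supplies, so there is no genuine difficulty, only care. Everything else is a direct appeal to Theorem~\ref{th2.3}, Lemma~\ref{ldtnpc801}, and the classical characterization of $L_1 \to L_\infty$ operators by bounded kernels.
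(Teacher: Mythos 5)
Your route is genuinely different from the paper's. The paper does not pass through Theorem~\ref{th2.3} or any positivity reduction at all: it proves the ultracontractivity bound for general $V$ in one stroke by establishing a Sobolev-type inequality for the form itself, namely $\gotb_V(\varphi,\varphi)+\omega\|\varphi\|_{L_2(\Gamma)}^2 \ge c\,\|\varphi\|_{L_{2(d-1)/(d-2)}(\Gamma)}^2$ (via the trace embedding $\Tr(W^{1,2}(\Omega))\hookrightarrow L_{2(d-1)/(d-2)}(\Gamma)$ for $d\ge 3$, with a separate argument for $d=2$ as in \cite{EO4}), and then invoking the standard Sobolev-inequality-to-ultracontractivity machinery, using Lemma~\ref{ldtnpc801} only to know that $S^V$ acts on all $L_p(\Gamma)$. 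That argument is shorter and sidesteps the perturbation step entirely. Your approach buys a reduction to the already-stated Theorem~\ref{th2.3}, at the price of having to transfer the $L_1$--$L_\infty$ bound across the bounded perturbation $B=\cn_V-\cn_{V+\lambda}$.

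That transfer is the one place where your write-up is too optimistic: it is not ``only care''. The naive Duhamel estimate
\[
\Big\| \int_0^t S^{V+\lambda}_{t-s}\,B\,S^V_s\,ds \Big\|_{1\to\infty}
\le \int_0^t \|S^{V+\lambda}_{t-s}\|_{1\to\infty}\,\|B\|_{1\to 1}\,\|S^V_s\|_{1\to 1}\,ds
\]
diverges, because $(t-s)^{-(d-1)}$ is not integrable at $s=t$ for any $d\ge 2$. To make the argument work you must chain through intermediate exponents: use the full scale of bounds $\|S^{V+\lambda}_s\|_{p\to q}\lesssim (s\wedge 1)^{-(d-1)(1/p-1/q)}$ from Theorem~\ref{th2.3} with $(d-1)(1/p-1/q)<1$ at each step, together with $\|S^V_s\|_{p\to p}\lesssim e^{\omega s}$ from Lemma~\ref{ldtnpc801} (and its dual for $p=\infty$, using self-adjointness), to first get $\|S^V_t\|_{1\to p_1}$, then $\|S^V_t\|_{p_j\to p_{j+1}}$, and finally compose. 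All the ingredients are indeed available in the paper, so the gap is repairable, but as written the key analytic step is asserted rather than proved; alternatively, arguing at the level of the forms (as the paper does) avoids it altogether. The remaining steps — the conversion of $(t\wedge 1)^{-(d-1)}e^{-\lambda_1 t}$ into $t^{-(d-1)}e^{\omega t}$, the choice of $\lambda$ (for $\lambda$ large one has $A_D+V+\lambda>0$, so the spectral caveat you raise is vacuous), and the Dunford--Pettis identification of an $L_1\to L_\infty$ operator with a bounded kernel — are all correct.
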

\begin{proof}
If $d \ge 3$ the Sobolev embedding of 
$\Tr(W^{1,2}(\Omega))$ into $L_{\frac{2(d-1)}{d-2}}(\Gamma)$ 
of \cite{Nec2} Theorem~2.4.2
implies easily that there exist $c,\omega > 0$ such that 
\[
\gotb_V (\varphi, \varphi) + \omega \int_\Gamma |\varphi |^2 
\ge c \, \| \varphi \|_{L_{\frac{2(d-1)}{d-2}}(\Gamma)}
\]
for all $\varphi \in \Tr(W^{1,2}(\Omega))$. 
Since the semigroup $S^V$ acts on $L_p(\Gamma)$ for all $p \in [1, \infty]$ it is well known 
that the later inequality implies  ultracontractivity estimates. 
More precisely, there exist $c,\omega >0$ such that 
\begin{equation}\label{eq8.00}
\|S_t^V \|_{1 \to \infty} 
\le c \, t^{-(d-1)} \, e^{\omega t} 
\end{equation}
for all $t > 0$. 
Note that (\ref{eq8.00}) implies that $S^V_t$ is given by a kernel $K_t^V$ such that
\[
| K_t^V(z,w) | \le c \, t^{-(d-1)} \, e^{\omega t} 
\]
for all $t > 0$ and a.e. $z,w \in \Gamma$. 

If $d = 2$, then the proof is precisely the same as in the proof 
of Theorem~2.6 in \cite{EO4}.
\end{proof}

Using the previous two lemmas, duality and interpolation one 
deduces easily the next lemma.

\begin{lemma} \label{ldtnpc810}
There exists a $c > 0$ such that 
\[
\|S^V_t \|_{p \to q} 
\le c \, t^{-(d-1)(\frac{1}{p} - \frac{1}{q})} 
\]
for all $t \in (0,1]$ and $p,q \in [1,\infty]$ with $p \leq q$.
\end{lemma}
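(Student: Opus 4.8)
The plan is to derive the $L_p$--$L_q$ bounds from the $L_1$--$L_\infty$ ultracontractivity bound of Lemma~\ref{lem8.00} together with the fact (Lemma~\ref{ldtnpc801}, or more directly Theorem~\ref{th2.2}\ref{th2.2-2} combined with the perturbation argument) that $S^V$ is a $C_0$-semigroup on each $L_p(\Gamma)$, by the standard Riesz--Thorin interpolation argument for ultracontractive semigroups.

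First I would record the two endpoint estimates on a finite time interval. From Lemma~\ref{lem8.00} we have a kernel bound $|K^V_t(z,w)| \le c\,t^{-(d-1)} e^{\omega t}$, which for $t \in (0,1]$ gives, after absorbing $e^{\omega t} \le e^{|\omega|}$ into the constant,
\[
\|S^V_t\|_{1\to\infty} \le c\, t^{-(d-1)}
\]
for all $t \in (0,1]$. At the other endpoint, since $S^V$ acts as a $C_0$-semigroup on $L_1(\Gamma)$ and on $L_\infty(\Gamma)$ (the latter by duality from the $L_1$ action of the adjoint semigroup, noting $\cn_V$ is self-adjoint so $S^V$ is its own adjoint semigroup), it is in particular uniformly bounded on $(0,1]$ in both:
\[
\|S^V_t\|_{1\to 1} \le c', \qquad \|S^V_t\|_{\infty\to\infty} \le c'
\]
for all $t \in (0,1]$, for a suitable $c' > 0$. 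Here one uses that a $C_0$-semigroup is locally bounded; alternatively one can quote Theorem~\ref{th2.2}\ref{th2.2-2} when $V \ge 0$ and use the perturbation $\cn_V = \cn + Q$ with $Q$ bounded on every $L_p$ (Proposition~\ref{pdtnpc535}\ref{pdtnpc535-4}) to get the general bounded case.

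Next I would interpolate. Fix $t \in (0,1]$. Writing $S^V_t = S^V_{t/2}\,S^V_{t/2}$ and applying the $L_1$--$L_\infty$ bound to the first factor and the $L_\infty$--$L_\infty$ bound to the second gives $\|S^V_t\|_{1\to\infty} \le c\,t^{-(d-1)}$ (already had this); composing instead $L_1\to L_\infty$ with $L_\infty\to L_\infty$, and $L_1\to L_1$ with $L_1\to L_\infty$, one sees $\|S^V_t\|_{1\to\infty}$, $\|S^V_t\|_{1\to 1}$, $\|S^V_t\|_{\infty\to\infty}$ are all controlled. Then Riesz--Thorin applied to $S^V_t$ acting between the couples $(L_1,L_\infty)$ yields, for $p \le q$ and $\theta = \tfrac1p - \tfrac1q \in [0,1]$,
\[
\|S^V_t\|_{p\to q} \le \|S^V_t\|_{1\to\infty}^{\theta}\,\|S^V_t\|_{1\to 1}^{1-\theta} \cdot(\text{bounded factors}) \le c\, t^{-(d-1)\theta} = c\, t^{-(d-1)(\frac1p-\frac1q)},
\]
after the usual bookkeeping (first interpolate $L_1\to L_q$ from $L_1\to L_1$ and $L_1\to L_\infty$, then interpolate $L_p\to L_q$ from $L_1\to L_q$ and $L_\infty\to L_q$, each step costing only a power of $t^{-(d-1)}$ matched to the relevant exponent). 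All constants depend only on the data, uniformly in $t \in (0,1]$.

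I do not expect a genuine obstacle here; this is the routine "ultracontractivity $\Rightarrow$ $L_p$--$L_q$ smoothing" lemma. The only point requiring a little care is that the interpolation must be carried out on the fixed finite interval $t \in (0,1]$ so that the exponential factors $e^{\omega t}$ and the local boundedness constants of the $C_0$-semigroup are absorbed into a single constant $c$; for $t$ bounded away from $0$ there is nothing to smooth and the $L_1$--$L_\infty$ bound with $t^{-(d-1)}$ frozen at $t=1$ suffices. The honest formulation, as the excerpt states, is precisely "for all $t \in (0,1]$", and the phrase "duality and interpolation" in the paper is exactly this argument.
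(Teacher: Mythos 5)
Your proof is correct and follows essentially the same route as the paper, which deduces the lemma from Lemma~\ref{ldtnpc801} and Lemma~\ref{lem8.00} "using duality and interpolation": the $L_1$--$L_\infty$ kernel bound on $(0,1]$, local boundedness of the $C_0$-semigroup on $L_1(\Gamma)$, the dual bound on $L_\infty(\Gamma)$ via self-adjointness, and Riesz--Thorin. Nothing further is needed.
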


Let $g \in C^{0,1}(\Gamma)$.
Define $\delta_g(\cn_V) = [M_g,\cn_V]$ and for all $j \in \Ni$ 
define inductively $\delta_g^{j+1}(\cn_V) = [M_g, \delta^j(\cn_V)]$.
Define similarly $\delta^j_g(S_t^V)$.

\begin{lemma}\label{lem8.2}
Suppose either $p,q \in (1,\infty)$ with 
$p \leq q$ and $(d-1)(\frac{1}{p} - \frac{1}{q}) \in \{ 0,1,\ldots,d-1 \} $,
or $p=1$ and $q=\infty$.
Then there exists a $c_{p,q} > 0$ such that 
\[
\|\delta_g^j(\cn_V)\|_{p \to q}
\leq c_{p,q} \, ({\rm Lip}_\Gamma(g))^j
\]
for all $g \in C^{0,1}(\Gamma)$,
where $j = 1 + (d-1)(\frac{1}{p} - \frac{1}{q})$.
\end{lemma}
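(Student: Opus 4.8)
The plan is to prove the estimate by induction on $j$, bootstrapping from the two endpoint cases that are already available. First I would settle the base case $j=1$, which corresponds to $p=q$: here the claim is exactly the $L_p$--$L_p$ bound $\|\delta_g(\cn_V)\|_{p\to p} = \|[M_g,\cn_V]\|_{p\to p} \leq c\,\mathrm{Lip}_\Gamma(g)$, which is Theorem~\ref{thm4.2V} (valid for all $p\in(1,\infty)$, and of weak type $(1,1)$). So the only content beyond Theorem~\ref{thm4.2V} is how to handle $j\geq 2$, i.e.\ the genuine $L_p$--$L_q$ smoothing with $q>p$, and the extreme case $p=1$, $q=\infty$ with $j=d$.

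The key algebraic step is the commutation identity that exchanges a factor of $M_g$ past the operator. Writing $\delta_g(T) = [M_g,T]$, one has for any operators the Leibniz-type rule $\delta_g(ST) = \delta_g(S)\,T + S\,\delta_g(T)$, and hence
\[
\delta_g^j(\cn_V) \;=\; \sum_{i+i'=j} \binom{j}{i}\,\delta_g^{i}(\text{(left factor)})\cdots
\]
but the cleaner route is to use that $\delta_g^{j}$ is a derivation of order $j$ and to relate iterated commutators of $\cn_V$ to iterated commutators of its pieces. More concretely, I would induct as follows: suppose the bound holds for $j-1$ (i.e.\ between any admissible pair $p',q'$ with $(d-1)(\tfrac1{p'}-\tfrac1{q'}) = j-2$). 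Given an admissible pair $(p,q)$ with $(d-1)(\tfrac1p-\tfrac1q)=j-1$, choose an intermediate exponent $r$ with $(d-1)(\tfrac1p-\tfrac1r)=1$ and $(d-1)(\tfrac1r-\tfrac1q)=j-2$. Then factor $\delta_g^j(\cn_V)$ using the derivation property so that each summand is a composition of a $j_1$-fold commutator mapping $L_p\to L_r$ and a $j_2$-fold commutator mapping $L_r\to L_q$ with $j_1+j_2=j$, $j_1\geq1$: since $\delta_g^{j}(\cn_V)=\delta_g^{j-1}(\delta_g(\cn_V))$ and $\delta_g(\cn_V)$ already satisfies the $L_p\to L_p$ bound (for all $p$), one peels off one commutator at a time, each costing a factor $\mathrm{Lip}_\Gamma(g)$ and respecting the arithmetic of Sobolev-type exponents. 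The point is that a single commutator $[M_g,\cn_V]$ is bounded $L_p\to L_p$ (no smoothing, $j=1$), so iterating it $j$ times is trivially bounded $L_p\to L_p$; but one should instead observe that the \emph{smoothing} is not produced by the commutator at all — it is produced by combining Theorem~\ref{thm4.2V} with the $L_p$--$L_q$ bounds of Lemma~\ref{ldtnpc810} for the semigroup (or rather, the resolvent/functional-calculus). Let me restate the intended mechanism.

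The correct mechanism is: $\cn_V$ itself has a Schwartz kernel $K_{\cn_V}$ satisfying the Calder\'on--Zygmund bounds $|K_{\cn_V}(z,w)|\leq c|z-w|^{-d}$ and the H\"older bound, by Proposition~\ref{prop6.01}. The kernel of $\delta_g^j(\cn_V)$ is then $(g(w)-g(z))^j K_{\cn_V}(z,w)$, which is bounded by $c\,(\mathrm{Lip}_\Gamma(g))^j\,|z-w|^{-d+j}$. For $j = 1+(d-1)(\tfrac1p-\tfrac1q)$ and $p<q$ this is a kernel of order $-(d-1)+ (d-1)(\tfrac1p-\tfrac1q) \cdot(-1)$... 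I would instead invoke the standard fractional-integration / Hardy--Littlewood--Sobolev estimate on the $(d-1)$-dimensional space $\Gamma$: a convolution-type operator with kernel bounded by $|z-w|^{-(d-1)+\sigma}$ for $\sigma\in(0,d-1)$ maps $L_p(\Gamma)\to L_q(\Gamma)$ whenever $\tfrac1q = \tfrac1p - \tfrac{\sigma}{d-1}$. Here $-d+j = -(d-1) + (j-1) = -(d-1) + (d-1)(\tfrac1p-\tfrac1q)$, so $\sigma = (d-1)(\tfrac1p-\tfrac1q)$ and the exponents match exactly; the additional H\"older regularity of $K_{\cn_V}$ makes the operator weakly singular (away from the diagonal the kernel is genuinely integrable when $j\geq 2$), so the boundedness $L_p\to L_q$ follows directly from Young's inequality plus the generalized HLS estimate, for all $p,q\in(1,\infty)$ in the stated range. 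For the endpoint $p=1$, $q=\infty$: then $j=d$, the kernel is bounded by $c\,(\mathrm{Lip}_\Gamma(g))^d\,|z-w|^{0} = c\,(\mathrm{Lip}_\Gamma(g))^d$ uniformly, so $\delta_g^d(\cn_V)$ has a bounded kernel on the compact set $\Gamma\times\Gamma$ and hence is bounded $L_1\to L_\infty$ with the asserted norm. I expect the main technical obstacle to be the careful treatment of the H\"older-regularity term in $K_{\cn_V}$: when expanding $(g(w)-g(z))^j K_{\cn_V}(z,w)$ one must check that the product is genuinely only weakly singular of the claimed order (not that the $-d$ singularity of $K_{\cn_V}$ survives), which is where the precise form of Proposition~\ref{prop6.01} is used; for the borderline case $j=1$ this fails and one must fall back on the Calder\'on--Zygmund/singular-integral argument of Theorem~\ref{thm4.2V} rather than a fractional-integration argument, so the base case and the inductive step are genuinely different in nature and must be kept separate.
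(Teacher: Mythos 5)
Your final argument is exactly the paper's proof: the kernel of $\delta_g^j(\cn_V)$ is $(g(w)-g(z))^j K_{\cn_V}(z,w)$, bounded by $c\,({\rm Lip}_\Gamma(g))^j\,|z-w|^{-(d-j)}$ via Proposition~\ref{prop6.01}, and then one invokes fractional integration (Stein's theorem on Riesz potentials) for $1<p<q<\infty$, Theorem~\ref{thm4.2V} for $p=q$, and the uniform boundedness of the kernel for $p=1$, $q=\infty$. The opening induction sketch is a false start that you rightly abandon, and the worry about the H\"older term is unneeded for $j\geq 2$ (only the size bound enters there; the H\"older regularity is consumed entirely by the Calder\'on--Zygmund case $p=q$), but these do not affect the correctness of the argument you settle on.
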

\begin{proof}
The kernel $\widetilde{K}$ of $\delta^j_g(\cn)$ is  given by
$\widetilde{K}(z,w) = (g(w) - g(z))^j \, K_{\cn_V}(z,w)$, where we use again $K_{\cn_V}$ to 
denote the Schwartz kernel of $\cn_V$. 
It follows immediately from Proposition~\ref{prop6.01} that there is a suitable $c > 0$
such that 
\begin{equation}\label{5.1}
| \widetilde{K}(z,w) | 
\le c \, |g(z)-g(w)|^j \, |z-w|^{-d} 
\le c \, |z-w|^{-(d-j)} \, ({\rm Lip}_\Gamma(g))^j
\end{equation}
for all $z,w \in \Gamma$ with $z \neq w$.

If $1 < p < q < \infty$, then (\ref{5.1}) implies that 
$\widetilde{K}$ is a Riesz potential. 
Then the boundedness of $\delta^j_g(\cn_V)$ from $L_p(\Gamma)$ to $L_q(\Gamma)$
follows from \cite{Ste1}, Theorem~V.1.

If $ 1 < p = q < \infty$, then the statement of the lemma is given by 
Theorem \ref{thm4.2V}.

Finally, if $p = 1$ and $q= \infty$ then $j= d$.
In this case
\[
| \widetilde{K}(z,w) | 
\le c \, ({\rm Lip}_\Gamma(g))^d
\]
and hence $\delta^d_g(\cn_V)$ is bounded from $L_1(\Gamma)$ into $L_\infty(\Gamma)$ 
with norm estimates by 
$c \, ({\rm Lip}_\Gamma(g))^d$. 
\end{proof}

In order to prove the Poisson bound for the kernel $K^V_t(x,y)$  we proceed  
as in Section~4 of \cite{EO4}.
For the reader's convenience, we repeat the arguments.
Let $c > 0$ be as in Lemma~\ref{ldtnpc810}.
Further let $c_{p,q}$ be as in Lemma~\ref{lem8.2}.

Let $t \in (0,1]$.
Then 
\begin{eqnarray*}
\delta_g^d(S_t^V)
& = & \sum_{k=1}^d (-t)^k
   \sum_{\scriptstyle j_1,\ldots,j_k \in \Ni \atop
         \scriptstyle j_1 + \ldots + j_k = d}
   \int_{H_k}
     S^V_{t_{k+1} \, t} \, \delta^{j_k}(\cn_V) \, S^V_{t_k \, t} 
       \circ \ldots \circ  \\*
& & \hspace*{50mm} {}
   \circ
     S^V_{t_2 \, t} \, \delta^{j_1}(\cn_V) \, S^V_{t_1 \, t} \, d\lambda_k(t_1,\ldots,t_{k+1}),
\end{eqnarray*}
where 
\[
H_k = \{ (t_1,\ldots,t_{k+1}) \in (0,\infty)^{k+1} : t_1 + \ldots + t_{k+1} = 1 \}
\]
and  $d\lambda_k$ denotes Lebesgue measure of the $k$-dimensional surface $H_k$.
We estimate each term in the sum.
Let $k \in \{ 1,\ldots,d \} $, $(t_1,\ldots,t_{k+1}) \in H_k$,
$g \in C^{0,1}(\Gamma)$, $t \in (0,1]$ and $j_1,\ldots,j_k \in \Ni$ with 
$j_1 + \ldots + j_k = d$ and ${\rm Lip}_\Gamma(g) \leq 1$.

If $k = 1$ then $j_1 = d$ and we have for $t \in (0, 1]$ and each $g$ with 
${\rm Lip}_\Gamma(g) \le 1$
\begin{eqnarray*}
t \, \|S^V_{t_2 t} \, \delta_g^{d}(\cn_V) \, S^V_{t_1 t}\|_{1 \to \infty}
& \leq & t \, \|S^V_{t_2 t}\|_{\infty \to \infty} \, 
      \|\delta_g^{d}(\cn_V)\|_{1 \to \infty} \, \|S^V_{t_1 t}\|_{1 \to 1}  \\
& \leq & c^2 \, c_{1,\infty} \, t.  
\end{eqnarray*}
Suppose that $k \in \{ 2,\ldots,d \} $.
There exists an $M \in \{ 1,\ldots,k+1 \} $ such that $t_M \geq \frac{1}{k+1}$.
Note that $\sum_{\ell = 1}^k (j_\ell - 1) = d-k < d-1$.
First suppose $M \not\in \{ 1,k+1 \} $.
Fix $1 = q_0 < p_1 \leq q_1 = p_2 \leq q_2 = p_3 \leq \ldots \leq 
q_{M-2} = p_{M-1} \leq q_{M-1} \leq p_M \leq q_M = p_{M+1} \leq q_{M+1} \leq \ldots \leq 
q_{k-1} = p_k \leq q_k < p_{k+1} = \infty$
such that 
\[
1 - \frac{1}{p_1} = \frac{1}{2(d-1)} = \frac{1}{q_k}
\quad , \quad
\frac{1}{p_\ell} - \frac{1}{q_\ell} = \frac{j_\ell - 1}{d-1}
\quad \mbox{and} \quad
\frac{1}{q_{M-1}} - \frac{1}{p_M} = \frac{k-2}{d-1}
\]
for all $\ell \in \{ 1,\ldots,k \} $.
Then 
\begin{eqnarray*}
\lefteqn{
t^k \, \|S^V_{t_{k+1} t} \, \delta_g^{j_k}(\cn_V)  \ldots \delta_g^{j_1}(\cn_V) \, S^V_{t_1 t}\|_{1 \to \infty}
} \hspace*{5mm}  \\*
& \leq & t^k \, \|S^V_{t_1 t}\|_{q_0 \to p_1}
       \prod_{\ell = 1}^k  \|S^V_{t_{\ell+1} t}\|_{q_\ell \to p_{\ell + 1}} 
                           \, \|\delta_g^{j_\ell}(\cn_V)\|_{p_\ell \to q_\ell} \\
& \leq & t^k \, c \,  (t_1 t)^{-(d-1)(\frac{1}{q_0} - \frac{1}{p_1})}
    \prod_{\ell = 1}^k c_{p_\ell , q_\ell} \, 
       c \, (t_{\ell + 1} t)^{-(d-1)(\frac{1}{q_\ell} - \frac{1}{p_{\ell + 1}})}  \\
& = & c' \,  t^k \, t^{-(k-1)} \, 
   t_1^{-1/2} \, t_K^{-(k-2)} \, t_{k+1}^{-1/2}  \\
& \leq & c' \, (k+1)^{k-2} \,   t\, 
   t_1^{-1/2} \, t_{k+1}^{-1/2},
\end{eqnarray*}
where $c' = c^{k+1} \, \prod_{\ell = 1}^k c_{p_\ell , q_\ell}$.
If $M \in \{ 1,k+1 \} $ then a similar estimate is valid with possibly 
a different constant for $c'$.
Integration and taking the sum gives 
\[
\| \delta^d_g(S^V_t) \|_{1 \to \infty} \le c'' \, t
\]
for a suitable $c'' > 0$, uniformly for all $t \in (0,1]$ and $g \in C^{0,1}(\Gamma)$ such that 
${\rm Lip}_\Gamma(g) \le 1$.
Therefore
\begin{equation}\label{optim}
|g(w) -g(z)|^d \, | K^V_t(z,w)| \le c'' \, t
\end{equation}
for all $w,z \in \Gamma$. 

Note that the metric $d_\Gamma \colon \Gamma \times \Gamma \to [0,\infty)$
given by
\[
d_\Gamma(z,w) := \sup \{ g(z) - g(w):  g \in C^{0,1}(\Gamma), \;  {\rm Lip}_\Gamma(g) \le 1 \}
\]
is equivalent to the Euclidean one. 
Indeed, by the definition of ${\rm Lip}_\Gamma(g) \le 1$ one has
$|g(z) - g(w)| \le |z-w|$ for all $z, w \in \Gamma$. 
Hence $d_\Gamma(z,w) \le |z-w|$.  
To obtain the reverse inequality, let $k \in \{ 1,\ldots,d \} $ and choose
$g(z) = z_k$, where $z = (z_1,\dots,z_d)$. 
Then $|z_k - w_k| \le d_\Gamma(z,w)$ and hence $|z-w| \le d^{d/2} \, d_\Gamma(z,w)$.

Using this fact and optimizing over $g$ in  (\ref{optim}) we  obtain
\[
|w-z|^d \, | K^V_t(z,w) | \le c'' \, d^{d/2} \, t
\]
for all $t \in (0, 1]$ and $z,w \in \Gamma$.
We combine this with Lemma~\ref{lem8.00} and obtain that there is a $c > 0$
such that 
\[
| K^V_t(z,w) | 
\leq \frac{c \, (t \wedge 1)^{-(d-1)} \, e^{\omega t}}
         {\displaystyle \Big( 1 + \frac{|z-w|}{t} \Big)^d }
\]
for all $t \in (0, 1]$ and $z,w \in \Gamma$.
By \cite{Ouh5} Lemma~6.5 and the fact that $\Gamma$ is bounded we improve 
this bound and there is a $c > 0$
\[
| K^V_t(z,w) | 
\leq \frac{c \, (t \wedge 1)^{-(d-1)} \, e^{-\lambda_1 t}}
         {\displaystyle \Big( 1 + \frac{|z-w|}{t} \Big)^d }
\]
for all $t > 0$ and $z,w \in \Gamma$. 
This completes the proof of Theorem~\ref{thm1.1}. \hfill$\Box$

\subsection*{Acknowledgements} 
The authors wish to thank Christophe Prange for several interesting discussions.
This work was carried out when the second named author was visiting the University of
Auckland and the first named author was visiting the University of Bordeaux.
Both authors wish to thank the universities for hospitalities.
The research of A.F.M. ter  Elst  is partly supported by the 
Marsden Fund Council from Government funding, 
administered by the Royal Society of New Zealand. 
The research of E.M.  Ouhabaz  is partly supported by the ANR 
project `Harmonic Analysis at its Boundaries',  ANR-12-BS01-0013-02.


\begin{thebibliography}{DER03}

\bibitem[Alt]{Altdiv}
{\sc Alt, H.~W.}, Analysis III, 2002.
\newblock Lecture Winter Semester 2001/2002, Institute of Applied Mathematics,
  University of Bonn,
  http://www.iam.uni-bonn.de/\mbox{\raisebox{-5pt}{$\widetilde{\:}\,$}}alt/ws2001/EN/analysis3-hyp\_79.html.

\bibitem[AE1]{AE2}
{\sc Arendt, W. {\rm and} Elst, A. F.~M. ter}, Sectorial forms and degenerate
  differential operators.
\newblock {\em J. Operator Theory} {\bf 67} (2012),  33--72.

\bibitem[AE2]{AE7}
\leavevmode\vrule height 2pt depth -1.6pt width 23pt, Ultracontractivity and
  eigenvalues: Weyl's law for the Dirichlet-to-Neumann operator.
\newblock {\em Integral Equations Operator Theory} (2017).
\newblock In press, doi 10.1007/s00020-017-2353-2.

\bibitem[Aus]{Aus1}
{\sc Auscher, P.}, Regularity theorems and heat kernels for elliptic operators.
\newblock {\em J. London Math.\ Soc.} {\bf 54} (1996),  284--296.

\bibitem[AT]{AT2}
{\sc Auscher, P. {\rm and} Tchamitchian, P.}, Square root problem for
  divergence operators and related topics.
\newblock {\em Ast{\'e}risque} {\bf 249} (1998),  viii+172.

\bibitem[AL]{AL1}
{\sc Avellaneda, M. {\rm and} Lin, F.}, Compactness methods in the theory of
  homogenization.
\newblock {\em Comm.\ Pure Appl.\ Math.} {\bf 40} (1987),  803--847.

\bibitem[BE]{BeE1}
{\sc Behrndt, J. {\rm and} Elst, A. F.~M. ter}, Dirichlet-to-Neumann maps on
  bounded Lipschitz domains.
\newblock {\em J. Differential Equations} {\bf 259} (2015),  5903--5926.

\bibitem[Cam]{Cam1}
{\sc Campanato, S.}, Equazioni ellittiche del ${\rm {I}{I}}\deg$ ordine espazi
  $\gotL^{(2,\lambda)}$.
\newblock {\em Ann.\ Mat.\ Pura Appl.\ (4)} {\bf 69} (1965),  321--381.

\bibitem[CM]{CM2}
{\sc Coifman, R.~R. {\rm and} Meyer, Y.}, Commutateurs d'int\'egrales
  singuli\`eres et op\'erateurs multilin\'eaires.
\newblock {\em Ann. Inst. Fourier, Grenoble} {\bf 28} (1978),  177--202.

\bibitem[Dan]{Daners4}
{\sc Daners, D.}, Non-positivity of the semigroup generated by the
  Dirichlet-to-Neumann operator.
\newblock {\em Positivity} {\bf 18} (2014),  235--236.

\bibitem[DER]{DER4}
{\sc Dungey, N., Elst, A. F.~M. ter {\rm and} Robinson, D.~W.}, {\em Analysis
  on Lie groups with polynomial growth}, vol.\ 214 of Progress in Mathematics.
\newblock Birkh{\"a}user Boston Inc., Boston, 2003.

\bibitem[EO1]{EO1}
{\sc Elst, A. F.~M. ter {\rm and} Ouhabaz, E.-M.}, Partial Gaussian bounds for
  degenerate differential operators.
\newblock {\em Potential Analysis} {\bf 35} (2011),  175--199.

\bibitem[EO2]{EO4}
\leavevmode\vrule height 2pt depth -1.6pt width 23pt, Analysis of the heat
  kernel of the Dirichlet-to-Neumann operator.
\newblock {\em J. Funct. Anal.} {\bf 267} (2014),  4066--4109.

\bibitem[EO3]{EO2}
\leavevmode\vrule height 2pt depth -1.6pt width 23pt, Partial Gaussian bounds
  for degenerate differential operators II.
\newblock {\em Ann. Sc. Norm. Super. Pisa Cl. Sci.} {\bf 14} (2015),  37--81.

\bibitem[ERe]{ERe2}
{\sc Elst, A. F.~M. ter {\rm and} Rehberg, J.}, H{\"o}lder estimates for
  second-order operators on domains with rough boundary.
\newblock {\em Adv. Diff. Equ.} {\bf 20} (2015),  299--360.

\bibitem[ERo1]{ER19}
{\sc Elst, A. F.~M. ter {\rm and} Robinson, D.~W.}, Second-order strongly
  elliptic operators on Lie groups with H{\"o}lder continuous coefficients.
\newblock {\em J. Austr.\ Math.\ Soc.\ {\rm (}Series A{\rm )}} {\bf 63} (1997),
   297--363.

\bibitem[ERo2]{ER15}
\leavevmode\vrule height 2pt depth -1.6pt width 23pt, Second-order subelliptic
  operators on Lie groups I: complex uniformly continuous principal
  coefficients.
\newblock {\em Acta Appl.\ Math.} {\bf 59} (1999),  299--331.

\bibitem[Fri]{Fri}
{\sc Friedman, A.}, {\em Partial differential equations of parabolic type}.
\newblock Prentice-Hall, Inc., Englewood Cliffs, N.J., 1964.

\bibitem[Gia]{Gia1}
{\sc Giaquinta, M.}, {\em Multiple integrals in the calculus of variations and
  nonlinear elliptic systems}.
\newblock Annals of Mathematics Studies 105. Princeton University Press,
  Princeton, 1983.

\bibitem[GM]{GiM}
{\sc Giaquinta, M. {\rm and} Martinazzi, L.}, {\em An introduction to the
  regularity theory for elliptic systems, harmonic maps and minimal graphs}.
\newblock Edizioni Della Normale, Pisa, 2005.

\bibitem[GT]{GT}
{\sc Gilbarg, D. {\rm and} Trudinger, N.~S.}, {\em Elliptic partial
  differential equations of second order}.
\newblock Second edition, Grundlehren der mathematischen Wissenschaften 224.
  Springer-Verlag, Berlin etc., 1983.

\bibitem[KLS]{KLS}
{\sc Kenig, C.~E., Lin, F. {\rm and} Shen, Z.}, Periodic homogenization of
  Green and Neumann functions.
\newblock {\em Comm. Pure Appl. Math.} {\bf 67} (2014),  1219--1269.

\bibitem[Mor]{Mor}
{\sc Morrey, C.~B.}, {\em Multiple integrals in the calculus of variations}.
\newblock Springer-Verlag, Heidelberg etc., 1966.

\bibitem[Ne{\v{c}}]{Nec2}
{\sc Ne{\v{c}}as, J.}, {\em Direct methods in the theory of elliptic
  equations}.
\newblock Corrected 2nd printing edition, Springer Monographs in Mathematics.
  Springer-Verlag, Berlin, 2012.

\bibitem[Ouh]{Ouh5}
{\sc Ouhabaz, E.-M.}, {\em Analysis of heat equations on domains}, vol.\ 31 of
  London Mathematical Society Monographs Series.
\newblock Princeton University Press, Princeton, NJ, 2005.

\bibitem[She]{She2}
{\sc Shen, Z.}, Commutator estimates for the Dirichlet-to-Neumann map in
  Lipschitz domains.
\newblock In {\sc Li, J., Li, X. {\rm and} Lu, G.}, eds., {\em Some topics in
  harmonic analysis and applications}, Advanced Lectures in Mathematics 34,
  369--384. International Press; Higher Education Press, Somerville, MA;
  Beijing, 2016.

\bibitem[Ste]{Ste1}
{\sc Stein, E.~M.}, {\em Singular integrals and differential properties of
  functions}.
\newblock Princeton Mathematical Series 30. Princeton University Press,
  Princeton, New Jersey, 1970.

\end{thebibliography}
\end{document}